\newtheorem{thm}{Theorem}[section]
\newtheorem{dfn}[thm]{Definition}
\newtheorem{dfnprop}[thm]{Definition/Proposition}
\newtheorem{lemma}[thm]{Lemma}
\newtheorem{prop}[thm]{Proposition}
\newtheorem{cor}[thm]{Corollary}
\newtheorem{THM}{Theorem}
\newtheorem{COR}[THM]{Corollary}
\newtheorem{LEM}[THM]{Lemma}
\newtheorem{remark}[thm]{Remark}
\newcommand{\mb}{\mathbb}
\newcommand{\mc}{\mathcal}
\newcommand{\R}{\mb R}
\newcommand{\C}{\mb C}
\newcommand{\PP}{\mb P}
\newcommand{\Z}{\mb Z}
\newcommand{\Q}{\mb Q}
\newcommand{\N}{\mb N}
\newcommand{\bS}{\mb S}
\newcommand{\OO}{\mc O}
\newcommand{\F}{\mc F}
\newcommand{\G}{\mc G}
\newcommand{\acts}{\rotatebox[origin=c]{90}{$\curvearrowright$}\ }
\DeclareMathOperator{\im}{Im}
\DeclareFixedFootnote{\repnote}{ These notations are explained in Subsection \ref{sec:sheavesautomorphisms}, but should be meaningful when looking at Picture \ref{pic:SecNorm}.}
\newcommand{\Diff}{\mathrm{Diff}(\C,0)}
\newcommand{\lattice}{\Gamma_\tau}
\newcommand{\an}{\stackrel{\text{an}}{\sim}}
\newcommand{\formal}{\stackrel{\text{for}}{\sim}}
\numberwithin{equation}{section}       
\title[Neighborhoods of elliptic curves]{Two dimensional neighborhoods of elliptic curves: analytic classification in the torsion case.}
\author[F. Loray]{Frank Loray}
\author[F. Touzet]{Fr\'ed\'eric Touzet}
\author[S. M. Voronin]{Sergei M. Voronin}
\address{Univ Rennes, CNRS, IRMAR - UMR 6625, F-35000 Rennes, France}
\email{frank.loray@univ-rennes1.fr}
\email{frederic.touzet@univ-rennes1.fr}
\address{Department of Mathematics, Chelyabinsk State University, Chelyabinsk, Russia}
\email{voron@csu.ru}
\thanks{The two first authors are supported by grant ANR-16-CE40-0008 ``Foliage'',
and thank CAPES-COFECUB project MA 932/19.
The third author is supported by grant RFBR-17-01-00739-a.
This work started when the third author was invited in Rennes, and we would like 
to thank Universit\'e de Rennes 1, IRMAR, CNRS and Henri Lebesgue Center for constant support.
We thank T. Ohsawa to let us know the reference \cite{GS}.}
\begin{document}
\begin{abstract} 
We investigate the analytic classification of two dimensional neighborhoods of an elliptic curve with torsion normal bundle.
We provide the complete analytic classification for those neighborhoods in the simplest formal class and we indicate how to generalize this construction to general torsion case.
\end{abstract}
\maketitle

\sloppy
\tableofcontents

\section{Introduction and results}

Let $C$ be a smooth elliptic curve: $C=\C/\lattice$, where $\lattice=\Z+\tau\Z$, with $\Im({\tau})>0$.
Given an embedding $\iota:C\hookrightarrow U$ of $C$ into a smooth complex surface $U$,
we would like to understand the germ $(U,\iota(C))$ of neighborhood of $\iota(C)$ in $U$.
Precisely, we will say that two embeddings $\iota,{\iota}':C\hookrightarrow U,U'$ are (formally/analytically) equivalent
if there is a (formal/analytic) isomorphism $\Psi:(U,\iota(C))\to(U',\iota'(C))$ between germs of neighborhoods
making commutative the following diagram
\begin{equation}\label{eq:equivNeighbId}
\xymatrix{\relax
    C \ar[r]^\iota \ar[d]_{\text{id}}  & U \ar[d]^\Psi \\
    C \ar[r]^{\iota'} & U'
}\end{equation}
By abuse of notation, we will still denote by $C$ the image $\iota(C)$ of its embedding in $U$, 
and we will simply denote by $(U,C)$ the germ of neighborhood.

\subsection{Some historical background}
The problem of analytic classification of neighborhoods of compact complex curves in complex surfaces
goes back at least to the celebrated work of Grauert \cite{Grauert}. There, he considered the normal 
bundle $N_C$ of the curve in $U$. The neighborhood of the zero section in the total space of $N_C$,
that we denote $(N_C,0)$,
can be viewed\footnote{Strictly speaking, the linear part is more complicated in general, 
as it needs not fiber over the curve, as it is the case for the neighborhood of a conic in $\mathbb P^2$.} 
as the linear part of $(U,C)$. 
A coarse invariant is given by the degree $\deg{N_C}$ which is also the self-intersection $C\cdot C$ of the curve.
In this paper \cite{Grauert}, Grauert proved that the germ of neighborhood is ``linearizable'',
i.e. analytically equivalent to the germ of neighborhood $(N_C,0)$, provided that $\deg(N_C)$ is negative enough,
namely $\deg(N_C)<4-4g$ for a curve of genus $g>0$, and $\deg(N_C)<0$ for a rational curve $g=0$.
It was also clear from his work that even the formal classification was much more complicated when 
$\deg(N_C)>0$. At the same period, Kodaira investigated the deformation of compact submanifolds 
of complex manifolds in \cite{Kodaira}. His result, in the particular case of curves in surfaces, says
that the curve can be deformed provided that $\deg(N_C)$ is positive enough,
namely $\deg(N_C)>2g-2$ for a curve of genus $g>0$, and $\deg(N_C)\ge0$ for a rational curve $g=0$.
Using these deformations, it is possible to provide a complete set of invariants for analytic classification
for $g=0$: $(U,C)$ is linearizable when $\deg(N_C)\le0$ (Grauert for $C.C<0$ and Savelev \cite{Savelev}
for $C.C=0$), and there is a functional moduli\footnote{The moduli space is comparable with the ring of convergent power series $\mathbb C\{X,Y\}$.} 
when $\deg(N_C)>0$ following Mishustin \cite{Mishustin0} (see also \cite{MaycolFrank}).
Also, when $g>0$ and $\deg(N_C)>2g-2$, the analytic classification has been carried out by 
Ilyashenko \cite{Ilyashenko} and Mishustin \cite{Mishustin1}. In all these results, it is important
to notice that formally equivalent neighborhoods are also analytically equivalent: the two 
classifications coincide for such neighborhoods. Such a rigidity property is called \textit{the formal principle} 
(see the recent works \cite{Hwang,JorgeOlivier} on this topic).

The case of an elliptic\footnote{Elliptic means $g=1$ and that we have moreover fixed
a (zero) point on the curve, to avoid considering automorphisms of the curve in our study.} 
 curve $g=1$ with $\deg(N_C)=0$, which is still open today, has been
investigated by Arnold \cite{Arnold} in another celebrated work. In this case, the normal bundle
$N_C$ belongs to the Jacobian curve $\mathrm{Jac}(C)\simeq C=\mathbb C/\Gamma_\tau$
and can be torsion\footnote{Torsion means that some iterate for the group law $\otimes$ is
the trivial bundle $\mathcal O_C$.} 
or not. Torsion points correspond to the image of 
$\mathbb Q+\tau\mathbb Q\subset C$ in the curve. Arnold investigated the non torsion case
and proved in that case
\begin{itemize}
\item if $N_C$ is non torsion, then $(U,C)$ is formally linearizable;
\item if $N_C$ is generic\footnote{i.e. belongs to some subset of total Lebesgue measure 
defined by a certain diophantine condition}  
enough in $\mathrm{Jac}(C)$, then $(U,C)$ is analytically linearizable;
\item for non generic (and still non torsion) $N_C$, there is a huge\footnote{Thanks to the works 
of Yoccoz \cite{Yoccoz} and Perez-Marco \cite{PerezMarco}, 
we can embed at least $\C\{X\}$ in the moduli space with a huge degree of freedom.} 
moduli space for the analytic classification.
\end{itemize}
However, we are still far, nowadays, to expect a complete description of the analytic classification in the non torsion case.
It is the first case where the divergence between formal and analytic classification arises. Also, it is interesting to note that 
the study of neighborhoods of elliptic curves in the case $\deg(N_C)=0$ has strong reminiscence with the classification
of germs of diffeomorphisms up to conjugacy. It will be more explicit later when describing the torsion case.

The goal of this paper is to investigate the analytic classification when the normal bundle is torsion,
and show that we can expect to provide a complete description of the moduli space in that case.
More precisely, the formal classification of such neighborhoods has been  achieved in \cite{LTT};
we provide the analytic classification inside the simplest formal class, 
and we explain in Sections \ref{S:Generalization} and \ref{S:torsion} how we think
it should extend to all other formal classes, according to the same "principle". 

\subsection{Formal classification}\label{SS:formalclass}
An important formal invariant has been introduced by Ueda \cite{Ueda} in the case $\deg(N_C)=0$ and $g>0$.
There, among other results, he investigates the obstruction for the curve to be the fiber of a fibration
(as it would be in the linear case $(N_C,0)$ when $N_C$ is torsion). The Ueda type $k\in\Z_{>0}\cup\{\infty\}$
is the largest integer for which the aforementioned fibration\footnote{More generally, in the non torsion case, 
we may try to extend the foliation defined by the unitary connection on $N_C$.} 
of $N_C$ can be extended to the 
$k^{\text{th}}$ infinitesimal neighborhood of $C$ (see \cite[section 2]{leaves} for a short exposition).
When $k=\infty$, then we have a formal fibration, that can be proved to be analytic; the classification 
in that case goes back to the works of Kodaira, in particular in the elliptic case $g=1$. 

Inspired by Ueda's approach, it has been proved by Claudon, Pereira and the two first named authors of this paper 
(see \cite{leaves}) that a formal neighborhood $(U,C)$ with $\deg(N_C)=0$
carries many regular (formal) foliations such that $C$ is a compact leaf. This construction has been
improved in \cite{LTT,Olivier} showing that one can choose two of these foliations in a canonical 
way and use them to produce a complete set of formal invariants.
In the elliptic case $g=1$, there are $\frac{k}{m}+1$ independant formal invariants for finite fixed Ueda type $k$
where $m$ is the torsion order $N_C^{\otimes m}=\mathcal O_C$ (see \cite{LTT});
for $g>1$ and $N_C=\mathcal O_C$ (the trivial bundle), Thom founds infinitely many 
independant formal invariants in \cite{Olivier}.

In this paper, we only consider the case $g=1$ where $N_C=\mathcal O_C$ is the trivial bundle,
to which we can reduce via a cyclic cover whenever $N_C$ is torsion. Let us recall the formal 
classification in that case. For each Ueda type $k\in\Z_{>0}$, let $P\in\C[X]$ be any polynomial
of degree $<k$ and $\nu\in\mathbb C$ a scalar. To these data, we associate a germ of neighborhood
$(U_{k,\nu,P},C)$ as follows. Writing $C$ as a quotient of $\C^*$ by a contraction:
$$C=\mathbb C_z^*/<z\mapsto q z>\ \ \ \text{with}\ z=e^{2i\pi x}\ \text{and}\ q=e^{2i\pi\tau},\ \ \ (|q|<1)$$
we similarly define $(U_{k,\nu,P},C)$ as the quotient of the germ of neighborhood 
$$(\C_z^*\times \C_y,\{y=0\})$$
by the germ of diffeomorphism
$$F_{k,\nu,P}=\exp( v_0+v_\infty),\ \ \ \text{where}\ 
\left\{\begin{matrix}v_0&=&\frac{y^{k+1}}{1+\nu y^k}\partial_y+2i\pi\tau \frac{y^kP(\frac{1}{y})}{1+\nu y^k}z\partial_z\\
v_\infty&=&2i\pi\tau z\partial_z\hfill\end{matrix}\right.$$
or equivalently
$$\left\{\begin{matrix}v_0&=&\frac{1}{{\xi}^k +\nu} (-\xi{\partial}_{\xi} +2i\pi\tau zP(\xi){\partial_ z})\\
	v_\infty&=&2i\pi\tau z\partial_z\hfill\end{matrix}\right.$$
by setting $\xi=\frac{1}{y}$, in which case $(U_{k,\nu,P})$ is regarded as a quotient of $(\C_z^*\times \overline{\C_\xi},\{\xi=\infty\})$.

\textit {In the specific situation where $k=1$}, and then $P$  is reduced to a constant $P\equiv\mu$,  we will use the notation $(U_{1,\nu,\mu},C)$ for the corresponding neighborhood.

The two vector fields $v_0$ and $v_\infty$ span a commutative Lie algebra, and therefore an infinitesimal $\C^2$-action
on the quotient neighborhood. By duality, we have a $2$-dimensional vector space of closed meromorphic $1$-forms 
spanned by
$$-\omega_0=\frac{dy}{y^{k+1}}+\nu \frac{dy}{y}\ \ \ \text{and}\ \ \ \omega_\infty=\frac{1}{2i\pi\tau}\frac{dz}{z}-\frac{P(\frac{1}{y})}{y}dy.$$ expressed also in the $(z,\xi)$ coordinates as 
$$\omega_0=\xi^{k-1}{d\xi}+\nu \frac{d\xi}{\xi}\ \ \ \text{and}\ \ \ \omega_\infty=\frac{1}{2i\pi\tau}\frac{dz}{z}+\frac{P(\xi)}{\xi}d\xi.$$
In particular, we get a pencil of foliations $\mathcal F_t$, $t\in\PP^1$, by considering 
\begin{itemize}
\item either the phase portrait of the vector fields $v_t=tv_0+v_\infty$,
\item or $\omega_t=0$ where $\omega_t=\omega_0+t\omega_\infty$.
\end{itemize}
When $P=0$, $\mathcal F_\infty$ defines a fibration transversal to the curve $C$ and the neighborhood
is the suspension\footnote{in the sense of foliations} 
of a representation $\varrho:\pi_1(C)\to\mathrm{Diff}(\C,0)$
taking values into the one-parameter group generated by $v_0=\frac{y^{k+1}}{1+\nu y^k}\partial_y$.
For $t\in\C$ finite, $\mathcal F_t$ is always (smooth) tangent to $C$, i.e. $C$ is a compact leaf;
when $P\not=0$, the same holds for $\mathcal F_\infty$.
For $m+\tau n\in\Gamma\setminus\{0\}$, $\mathcal F_{\frac{\tau n}{m+\tau n}}$ is the unique foliation
of the pencil whose holonomy along the corresponding loop $m+\tau n$ in $\pi_1(C)\sim\Gamma$ is trivial (see Section \ref{sec:Foliations}). 
As proved in \cite[Theorem 1.3]{LTT}, the neighborhoods $(U_{k,\nu,P},C)$ span all formal classes 
of neighborhoods with trivial normal bundle $N_C=\mathcal O_C$ and finite Ueda type $k$;
moreover, any two such neighborhoods are formally equivalent $(U_{k,\nu,P},C)\sim^{\text{for}}(U_{k',\nu',P'},C)$ if, and only if there is a $k^{\text{th}}$-root of unity $\zeta$ such that:
$$k=k',\ \ \ \nu=\nu'\ \ \ \text{and}\ \ \ P'(y)=  P(\zeta y),\ \ \ \zeta^k=1.$$

As explained in \cite[Theorem 1.5]{LTT}, the moduli space of those neighborhoods with two convergent foliations
in a given formal class up to analytic conjugacy is infinite dimensional\footnote{isomorphic to \'Ecalle-Voronin moduli spaces}, 
comparable with $\C\{X\}$. {\it A contrario}, if a third foliation is convergent, then the neighborhood is analytically equivalent to 
its formal model $(U_{k,\nu,P},C)$. However, an example of a neighborhood without convergent foliation is given 
by Mishustin in \cite{Mishustin}, and it is expected to be a generic property.
In this paper, we describe the analytic classification of neighborhoods with Ueda type $k=1$ in the most simple situation (Serre's example).
We also provide in Sections \ref{S:Generalization} and \ref{S:torsion} some evidence to the fact that 
a similar result holds more generally for torsion normal bundle $N_C^{\otimes m}=\mathcal O_C$, 
and finite Ueda type $k<\infty$. As we shall see, the moduli space is comparable with $\C\{X,Y\}$.

\subsection{The fundamental isomorphism}\label{sec:IntroFondamIso}
In order to explain our classification result, it is convenient to recall the following classical construction. 
For the simplest formal type $(k,\nu,P)=(1,0,0)$, the neighborhood $(U_{1,0,0},C)$ actually embeds
into a ruled surface  $S_0\to C$, namely one of the two indecomposable ruled surfaces over $C$ after Atiyah \cite{Atiyah}.
Indeed, setting $y=1/\xi$, the ruled surface is defined as the quotient 
$$S_0=\tilde U_0/<F_{1,0,0}>\ \ \ \text{where}\ \ \ \tilde U_0=\C_z^*\times\overline{\C}_\xi\ \ \ 
\text{and}\ \ \ F_{1,0,0}(z,\xi)=(qz,\xi-1)$$
and the infinity section $\xi=\infty$ defines the embedding of the curve $C\subset S_0$.
The complement of the curve $S_0\setminus C$ is known to be isomorphic to the moduli space
of flat line bundles\footnote{i.e. lines bundles together with a holomorphic connection} 
over the elliptic curve, and has the structure of an affine bundle. 
The Riemann-Hilbert correspondance provides an analytic isomorphism with the space of 
characters $\mathrm{Hom}(\pi_1(C),\mathrm{GL}_1(\C))$, which is isomorphic to $\C^*\times\C^*$.
Explicitely, the isomorphism is induced on the quotient $S_0$ by the following map
$$\Pi:S_0\setminus C \stackrel{\sim}{\longrightarrow}\C_X^*\times\C_Y^*\ ;\ (z,\xi)\mapsto(e^{2i\pi\xi},ze^{2i\pi\tau\xi}).$$
In this sense, we can view $S_0$ and $\PP^1_X\times\PP^1_Y\supset\C_X^*\times\C_Y^*$ as two non algebraically
equivalent compactifications of the same analytic variety. In fact, the algebraic structures of the two open sets
are different as $\C_X^*\times\C_Y^*$ is affine, while $S_0\setminus C$ is not: there is no non constant regular function on it.
This construction, due to Serre, provides an example of a Stein quasiprojective variety which is not affine
(see \cite[page 232]{Hartshorne}).
Denote by $D\subset\PP^1_X\times\PP^1_Y$ the compactifying divisor, union of four projective lines:
$$D=L_1\cup L_2\cup L_3\cup L_4\ \ \ \text{with}$$
$$L_1:\{Y=0\},\ \ \ L_2:\{X=\infty\},\ \ \ L_3:\{Y=\infty\}\ \ \ \text{and}\ \ \ L_4:\{X=0\}$$

\begin{figure}[htbp]
\begin{center}
\includegraphics[scale=0.4]{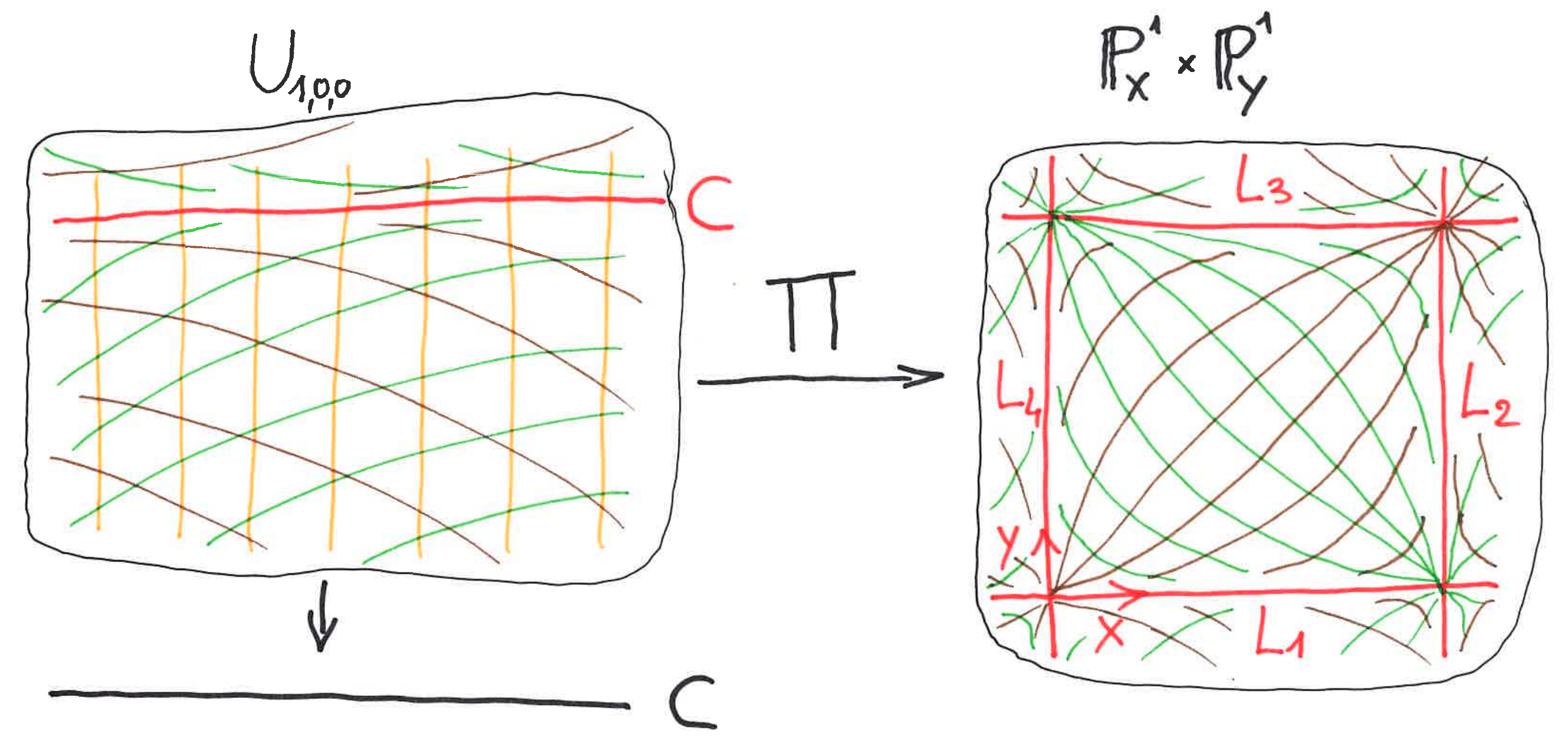}
\caption{{\bf Serre isomorphism}}
\label{pic:SerreIsom}
\end{center}
\end{figure}

Logarithmic one-forms with poles supported on $D$ correspond to the space
of closed one-forms considered above via the isomorphism:
$$\left\{\begin{matrix}\omega_0=d\xi\hfill\\ \omega_\infty=\frac{1}{2i\pi\tau}\frac{dz}{z}\end{matrix}\right.\ \ \ \text{and}\ \ \ 
\left\{\begin{matrix}\frac{1}{2i\pi}\frac{dX}{X}=\omega_0\hfill\\ \frac{1}{2i\pi}\frac{dY}{Y}=\tau(\omega_0+\omega_\infty)
\end{matrix}\right.$$
Therefore, at the level of foliations, we have the following correspondance
\begin{equation}\label{eq:DefFolt}
\mathcal F_t:\{\omega_0+t\omega_\infty=0\}\ \ \ \longleftrightarrow\ \ \ (1- t)\tau\frac{dX}{X}+t\frac{dY}{Y}=0.
\end{equation}
In particular, for $m+\tau n\in\Gamma\sim\pi_1(C)$ in the lattice, the unique foliation with trivial holonomy along  $m+\tau n$ 
corresponds to the one with rational first integral $X^mY^n$:
$$\mathcal F_{\frac{\tau n}{m+\tau n}}\ \ \ \longleftrightarrow\ \ \ m\frac{dX}{X}+n\frac{dY}{Y}=0$$
and the ruling corresponds to a foliation with transcendental leaves:
$$\mathcal F_\infty\ \ \ \longleftrightarrow\ \ \ \tau\frac{dX}{X}-\frac{dY}{Y}=0.$$
Let us now study the isomorphism $\Pi:S_0\setminus C\to \PP^1\times\PP^1\setminus D$ near the compactifying divisors.
Denote by $V_i$ a tubular neighborhood of $L_i$ in $\PP^1_X\times\PP^1_Y$, of the form $L_i\times\text{disc}$ say,
and $V=V_1\cup V_2\cup V_3\cup V_4$ the corresponding neighborhood of $D$. 
Denote by $V^*=V\setminus D$   the complement of $D$ in $V$. 
On may think of $U_{1,0,0}\setminus C$ as $\Pi^{-1}(V^*)$ \footnote{Actually, this correspondance is meaningful 
in the germified setting in the sense that a basis of neighborhood $(V^\alpha)$ of $D$ gives rise to a basis 
of neighborhoods $(U^\alpha)$ of $C$ where $U^\alpha=\Pi^{-1}( V^\alpha\setminus D)\cup C$. }. 
Similarly, define  $V_i^*:=V_i\setminus D$ the complement of the divisor in $V_i$ and by $U_i=\Pi^{-1}(V_i^*)$
the preimage: we have a decomposition neighborhood $U_{1,0,0}\setminus C=U_1\cup U_2\cup U_3\cup U_4$.
One can show that $U_i's$ look like sectorial domains of opening $\pi$ in the variable $y$ 
saturated by variable $z$ (see Section \ref{SS:sheaves}). Our main result is that this sectorial decomposition together
with isomorphisms $\Pi_i:U_i\to V_i^*$ persists for general neighborhoods $(U,C)$ in the formal class $(U_{1,0,0},C)$;
we conjecture and actually give the strategy to prove that a similar result holds true for all formal types, whenever $N_C$ is torsion (see Section \ref{S:Generalization} ).

\begin{figure}[htbp]
\begin{center}
\includegraphics[scale=0.4]{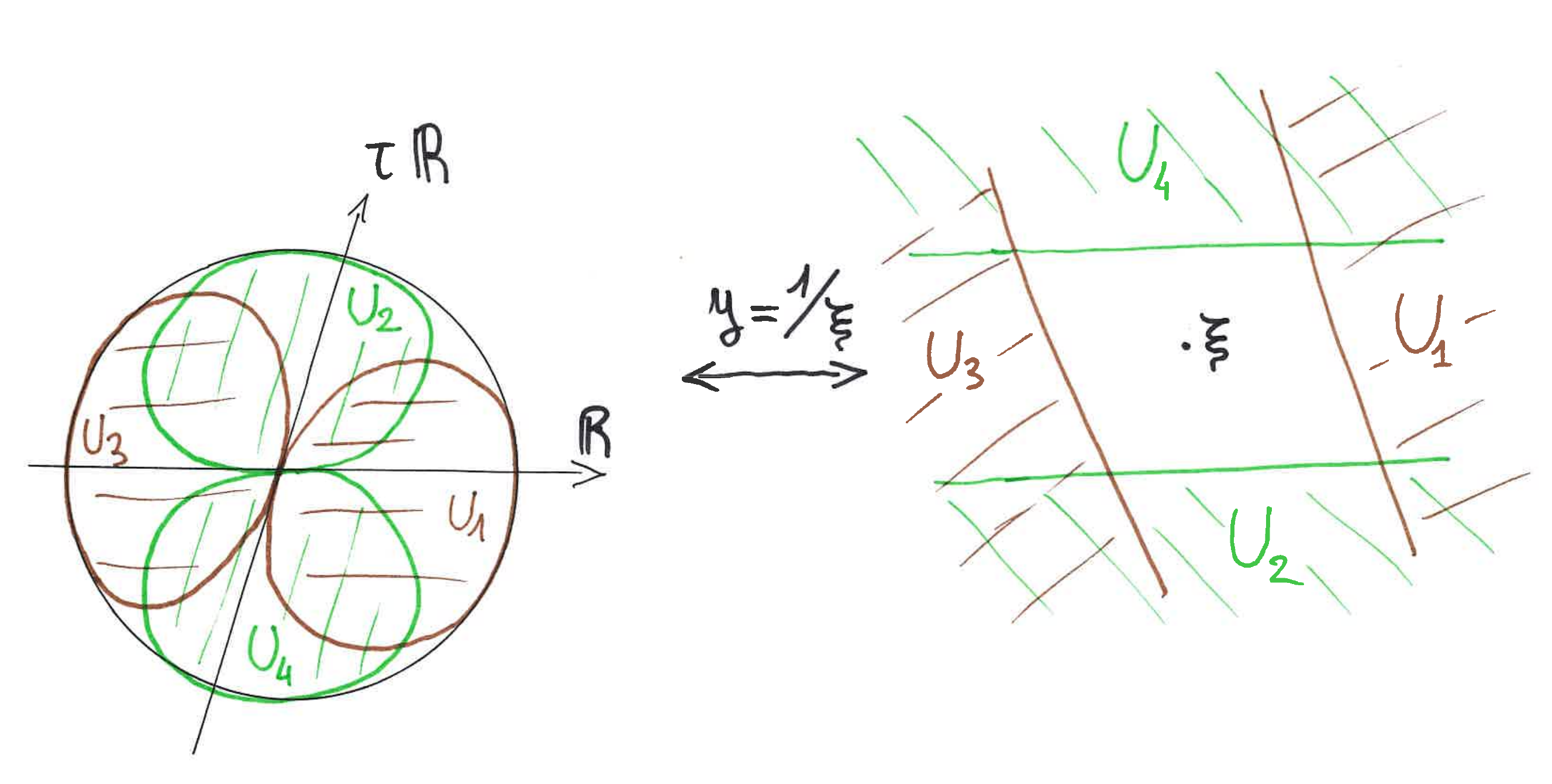}
\caption{{\bf Neighborhoods correspondence}}
\label{pic:NeighCor}
\end{center}
\end{figure}

\subsection{Analytic classification: main result}\label{sec:IntroAnalClass}
A general neighborhood $(U,C)$ formally conjugated to $(U_{1,0,0},C)$ can be described as quotient (see Proposition \ref{prop:PrenormalFormU100})
$$U=\tilde U/<F> \ \ \ \text{where}\ \ \ \tilde U\subset\C_z^*\times\C_y$$
is a neighborhood of the zero section $\tilde C=\{y=0\}$, and 
$$F(z,y)=(qz+O(y^2),y+y^2+y^3+O(y^4)).$$
There is a formal isomorphism 
$$\hat\Psi=\left(z+\sum_{m\ge2}a_m(z)y^m, y+\sum_{n\ge4}b_n(z)y^n\right)$$
such that $\hat\Psi\circ F=F_{1,0,0}\circ\hat\Psi$; we have $a_m,b_n\in\mathcal O(\C^*_z)$ 
and no convergence assumption in $y$-variable. We can also consider $\hat\Psi$ as 
a formal diffeomorphism $(U,C)\to (U_{1,0,0},C)$.
The main ingredient of our classification result, proved in Section \ref{S:sectorialnorm1}, is the

\begin{LEM}\label{LEM:sectorialnormalization}{\bf Sectorial normalization.}
 Denote $\varpi=\arg\tau$. For each interval
\begin{equation}\label{eq:sectory}
I_1=]\varpi-\pi,\varpi[,\ \ \ I_2=]0,\pi[,\ \ \ I_3=I_1+\pi,\ \ \ I_4=I_2+\pi
\end{equation}
there is a transversely sectorial domain\footnote{Given an interval $I=[\theta_1,\theta_2]\subset\R$, 
an open subset $U_0\subset U$ is said to be a \textit{transversely sectorial of opening $I$} 
if the lift $\tilde U_0\subset\tilde U\subset \C_z^*\times\C_y$
contains, for arbitrary large and relatively compact open set $\mathcal C\Subset\C^*$ and arbitrary small $\epsilon>0$, a sector $\mathcal C\times\mathcal S(I_\varepsilon,r)$ where
$$\mathcal S(I_\varepsilon,r)=\{y\in\C\ ;\ \arg(y)\subset I_\varepsilon,\ 0<|y|<r\},\ \ \ I_\varepsilon=]\theta_1+\varepsilon,\theta_2-\varepsilon[$$
for some $r>0$ (See also Definitions \ref{def:sector} and \ref{def:sectorialopen}).} 
$U_i\subset U$ of opening $I_i$ and a diffeomorphism 
$$\Psi_i:U_i\to U_{1,0,0}$$
(onto its image) having $\hat\Psi$ as asymptotic expansion\footnote{The diffeomorphism $\Psi_i:U_i\to U_{1,0,0}$ 
admits $\hat\Psi_i$ as an asymptotic expansion along
$C$ if the entries 
of its lift $\tilde\Psi_i:\tilde U_i\to\C_z^*\times\C_y$ 
admit the entries of $\hat\Psi$ as asymptotic expansion on each open subset $\mathcal C\times\mathcal S(I_\varepsilon,r)$
(see Section \ref{SS:sheaves}).} 
along $C$, satisfying 
$$\Psi_i\circ F=F_{1,0,0}\circ\Psi_i.$$
\end{LEM}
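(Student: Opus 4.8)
The plan is to transpose the classical sectorial (Birkhoff--\'Ecalle--Voronin) normalization of parabolic germs to this fibered two dimensional situation, taking advantage of the fact that, for the Serre type $(k,\nu,P)=(1,0,0)$, the model $F_{1,0,0}$ is \emph{affine} in the coordinate $\xi=1/y$, namely $F_{1,0,0}(z,\xi)=(qz,\xi-1)$. First I would prenormalize to high order: fix a large integer $N$, let $\widehat\Psi^{(N)}$ be the truncation at order $N$ in $y$ of the formal map $\widehat\Psi$ recalled above (a genuine biholomorphism of a neighborhood of $\widetilde C=\{y=0\}$ in $\C^*_z\times\C_y$, with coefficients in $\mathcal O(\C^*_z)$), and replace $F$ by $F_N:=\widehat\Psi^{(N)}\circ F\circ(\widehat\Psi^{(N)})^{-1}$, which coincides with $F_{1,0,0}$ up to order $O(y^{N})$. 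Seeking $\Psi_i$ in the form $\Psi_i=\Phi_i\circ\widehat\Psi^{(N)}$ with $\Phi_i=\mathrm{id}+\phi_i$ and $\phi_i=O(y^{N})$, the equivariance $\Psi_i\circ F=F_{1,0,0}\circ\Psi_i$ is equivalent to $\Phi_i\circ F_N=F_{1,0,0}\circ\Phi_i$, and, $F_{1,0,0}$ being affine, this is the \emph{linear} cohomological equation
\begin{equation*}
\phi_i\circ F_N-(DF_{1,0,0})\cdot\phi_i=F_{1,0,0}-F_N=:E_N,\qquad DF_{1,0,0}=\mathrm{diag}(q,1),\quad E_N=O(y^{N}).
\end{equation*}
So everything reduces to inverting, on suitable sectorial domains, the operator $\phi\mapsto\phi\circ F_N-(DF_{1,0,0})\cdot\phi$; note that this linearity is special to the Serre case (essentially why only this formal type is treated in full).

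Next I would pin down the domains $U_i$. In the formal model they are the preimages $\Pi^{-1}(V_i^*)$ of tubular neighborhoods of the four lines $L_i$: since $V_i^*$ is cut out by $|Y|<\epsilon$, $|X|>1/\epsilon$, $|Y|>1/\epsilon$, $|X|<\epsilon$ respectively, and $X=e^{2i\pi\xi}$, $Y=z\,e^{2i\pi\tau\xi}$ are the $F_{1,0,0}$ invariant coordinates, the corresponding regions in the $y=1/\xi$ chart accumulate on $C$ exactly along the four directions $I_1,\dots,I_4$ of \eqref{eq:sectory} --- which is why their opening is $\pi$, and why there are four of them (each $L_i$ contributes one). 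One then checks that, for a general neighborhood in the formal class, transversely sectorial domains $U_i$ of opening $I_i$ exist, are curved ``petals'' adapted to the dynamics, and, in one direction consistent over $i$, are stable under iteration of $F$ (and of $F_N$) toward $C$: this is the two dimensional avatar of a Fatou petal, the $\xi$-component translating by $\approx-1$ and the $z$-component being multiplied by $\approx q$.

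On such a $U_i$ the cohomological operator is inverted componentwise by an explicit series. The $\xi$-component is the difference operator $\chi\mapsto\chi\circ F_N-\chi$, inverted by the telescoping sum $\sum_{n\ge1}E_N^{\xi}\circ F_N^{-n}$ --- a holomorphic Fatou coordinate integral, convergent because $E_N^{\xi}$ decays like $|\xi|^{-N}$ along the orbits; the $z$-component $\psi\mapsto\psi\circ F_N-q\,\psi$ is inverted by $\sum_{n\ge1}q^{\,n-1}E_N^{z}\circ F_N^{-n}$, convergent thanks to $|q|<1$ (the growth $q^{-n}$ coming from the linear dependence on $z$ of $E_N^{z}$ being exactly compensated). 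One verifies that the resulting $\phi_i$ is holomorphic on $U_i$, is $O(y^{N})$ (the difference operator causes a loss of one order in $y$), and admits along $C$ the formal solution of the cohomological equation as asymptotic expansion, uniformly on the sub-sectors $\mathcal C\times\mathcal S(I_\varepsilon,r)$. Hence $\Psi_i=\Phi_i\circ\widehat\Psi^{(N)}$, corrected if necessary by an exponentially flat element of the centralizer of $F_{1,0,0}$ so as to have exactly $\widehat\Psi$ as asymptotic expansion, is the sought sectorial conjugacy; it descends to the quotient neighborhoods precisely because it intertwines $F$ with $F_{1,0,0}$, and composing with $\Pi$ yields the persisting isomorphisms $\Pi_i:U_i\to V_i^*$.

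The hard part is the construction and the quantitative control of the curved petals $U_i$ and the convergence of the above series on them. Unlike in the one variable parabolic case, one has to handle simultaneously the neutral behaviour in the $\xi$-direction and the contracting behaviour $|q|<1$ in the $z$-direction, and do so uniformly over an arbitrarily large relatively compact part $\mathcal C$ of the \emph{non compact} base $\C^*_z$ --- which is exactly what forces the radius $r$ to shrink as $\mathcal C$ grows. Showing that $F$ can genuinely be iterated toward $C$ inside a slightly enlarged $U_i$, and that the Fatou type series then converge with the claimed asymptotic expansion in this fibered, non compact setting, is the technical core of Section \ref{S:sectorialnorm1}; the affineness of $F_{1,0,0}$ in the chart $\xi=1/y$ is what keeps the problem linear and makes this analysis go through without an additional nonlinear fixed point argument.
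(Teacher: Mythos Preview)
Your observation that affineness of $F_{1,0,0}$ makes the conjugacy equation $\Phi\circ F_N=F_{1,0,0}\circ\Phi$ \emph{linear} in $\phi=\Phi-\mathrm{id}$ is correct and pleasant. The gap is in how you propose to solve that linear equation. The telescoping series
\[
\phi=\sum_{n\ge1}(DF_{1,0,0})^{\,n-1}\cdot\bigl(E_N\circ F_N^{-n}\bigr)
\]
requires evaluating $E_N$ along backward orbits of $F_N$. But $F_N^{-1}(z,\xi)\approx(q^{-1}z,\xi+1)$ with $|q|<1$, so after $n$ steps $|z|$ is of order $|q|^{-n}|z_0|\to\infty$. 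The error $E_N$ is only known to be $O(\xi^{-N})$ \emph{uniformly on compact subsets of $\C_z^*$}; nothing controls it as $|z|\to\infty$ (or $|z|\to0$ if you iterate forward). Your remark that the $q^{n-1}$ factor ``exactly compensates'' the growth would be true only if $E_N^z$ were linear in $z$, which there is no reason to assume. So the series is not defined, let alone convergent, on the required domains. A second, related issue: only half of the four sectors can possibly be stable under backward iteration $\xi\mapsto\xi+1$ (those containing $\arg\xi=0$); for sectors near $\arg\xi=\pi$ the orbit leaves the sector immediately. One-sided Fatou series cannot produce normalizations on all four $I_i$.

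The paper's proof avoids iteration altogether. It fixes an annulus $a\le|z|\le b$, foliates the sectorial domain by leaves $\{Y=ze^{2i\pi\tau\xi}=c\}$ of the invariant fibration, and on each leaf the linearized equation $h\circ F_{1,0,0}-h=\Delta$ becomes a one-variable difference equation $\varphi_c(\zeta+\lambda)-\varphi_c(\zeta)=\Delta_c(\zeta)$ on a vertical strip of \emph{fixed width} $\log b-\log a$ in $\zeta=2i\pi\tau\xi$. This is solved not by telescoping but by Cauchy-type integrals along the two boundary lines of the strip (Theorem~\ref{TH:soldiffeq} and Lemma~\ref{L:majDeltam}), with careful estimates that are uniform in $c$. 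The crucial point is that along a leaf, $z=ce^{-\zeta}$ stays in the annulus as $\zeta$ ranges over the strip --- no escape to $|z|=\infty$. The passage from the linearized equation to the full nonlinear one is then done by a contraction mapping (Section~\ref{SS:solvefunctional}); contrary to your last paragraph, a genuine fixed-point argument \emph{is} used, and the affineness of $F_{1,0,0}$ alone does not eliminate it. The four sectors $I_i$ arise because the relevant translation is by $\lambda=-2i\pi\tau$, coupling the $z$- and $\xi$-directions; the sectors $I_2,I_4$ are obtained by the $\mathrm{SL}_2(\Z)$ change of lattice basis (Section~\ref{SS:othersectors}), not by switching forward/backward iteration.
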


After composition with the fundamental isomorphism $\Pi:U_{1,0,0}\to\C^*_X\times\C^*_Y$, we get

\begin{COR}\label{COR:cocycle}
The composition $\Pi_i=\Pi\circ\Psi_i$ provides an isomorphism germ
$$\Pi_i:(U_i,C)\to(V_i^*,L_i)\ \ \ \text{such that}\ \ \ \Pi_{i}=\varphi_{i,i+1}\circ\Pi_{i+1}\ \text{on}\ U_i\cap U_{i+1}$$
for some diffeomorphism germs $\varphi_{i,i+1}\in\mathrm{Diff}(V_{i,i+1},p_{i,i+1})$ \repnote
\end{COR}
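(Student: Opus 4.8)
The plan is to deduce the corollary directly from Lemma \ref{LEM:sectorialnormalization} together with the geometry of the Serre isomorphism $\Pi$ described in Subsection \ref{sec:IntroFondamIso}. First I would observe that each $\Psi_i:U_i\to U_{1,0,0}$ is an isomorphism onto its image which intertwines $F$ with $F_{1,0,0}$, hence descends to an isomorphism of germs of neighborhoods; composing with $\Pi:U_{1,0,0}\stackrel{\sim}{\to}\C^*_X\times\C^*_Y$ gives $\Pi_i=\Pi\circ\Psi_i$. The content to check is that the image $\Pi_i(U_i)$ is (a germ of) the punctured tubular neighborhood $V_i^*$ of the line $L_i$, and that $\Pi_i$ extends to send $C$ to $L_i$. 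This is where the asymptotic expansion hypothesis on $\Psi_i$ enters: since $\Psi_i$ is asymptotic to $\hat\Psi$ along $C$, and $\hat\Psi$ is tangent to the identity to high order, $\Psi_i$ is $C^0$-close to the identity on $U_i$ uniformly as one approaches $C$; therefore $\Psi_i(U_i)$ and $U_i\subset U_{1,0,0}\setminus C$ have, as germs along $C$, the same image under $\Pi$. By the discussion preceding the statement, $U_i=\Pi^{-1}(V_i^*)$ by definition of the sectorial pieces, so $\Pi_i(U_i)$ is the germ of $V_i^*$ along $L_i$, and the boundary behaviour of $\Pi$ near $C$ (the four lines $L_i$ being the images of the four "ends" of the sectorial decomposition) gives the extension across $C\mapsto L_i$.

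Next I would construct the transition maps. On the overlap $U_i\cap U_{i+1}$ both $\Pi_i$ and $\Pi_{i+1}$ are isomorphisms onto (germs of) open subsets of $\C^*_X\times\C^*_Y$ near the corner point $p_{i,i+1}=L_i\cap L_{i+1}$, so the composition
$$\varphi_{i,i+1}:=\Pi_i\circ\Pi_{i+1}^{-1}$$
is a holomorphic isomorphism between germs of open neighborhoods of $p_{i,i+1}$ in $\C^*_X\times\C^*_Y$; by construction $\Pi_i=\varphi_{i,i+1}\circ\Pi_{i+1}$ on $U_i\cap U_{i+1}$. It remains to argue that $\varphi_{i,i+1}$ extends holomorphically across the divisor, i.e. defines a germ in $\mathrm{Diff}(V_{i,i+1},p_{i,i+1})$. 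For this I would use again that $\Psi_i$ and $\Psi_{i+1}$ are both asymptotic to the \emph{same} formal series $\hat\Psi$ along $C$, so their difference is flat along $C$; hence $\varphi_{i,i+1}=\Pi\circ(\Psi_i\circ\Psi_{i+1}^{-1})\circ\Pi^{-1}$ is asymptotic to the identity along $L_i\cup L_{i+1}$, in particular bounded near $p_{i,i+1}$, and by Riemann's removable singularity theorem (applied on the complement of the divisor, which is a normal crossing pair of discs minus axes) it extends holomorphically to a germ fixing $p_{i,i+1}$. Its inverse extends by the same reasoning, so $\varphi_{i,i+1}$ is a biholomorphic germ.

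The main obstacle I anticipate is the careful passage between the "transversely sectorial" picture in the $(z,y)$-coordinates on $U$ and the bidisc picture near the corner $p_{i,i+1}$ in $\C^*_X\times\C^*_Y$: one must check that the sectors $I_i$ in \eqref{eq:sectory} are glued along $U_i\cap U_{i+1}$ precisely over neighborhoods of the four corner points of $D$, and that the exponential map $\Pi:(z,\xi)\mapsto(e^{2i\pi\xi},ze^{2i\pi\tau\xi})$ turns the condition "$\arg y\in I_i$, $|y|$ small" into "proximity to $L_i$". Concretely, $y=1/\xi\to0$ forces $|\xi|\to\infty$, and the four directions $\arg\xi\in I_i+\pi$ are exactly the four sectors in which one of $e^{2i\pi\xi}$, $e^{-2i\pi\xi}$, $e^{2i\pi\tau\xi}$, $e^{-2i\pi\tau\xi}$ tends to $0$; matching these with $L_1:\{Y=0\}$, $L_2:\{X=\infty\}$, $L_3:\{Y=\infty\}$, $L_4:\{X=0\}$ is a finite but slightly delicate bookkeeping using $\varpi=\arg\tau$. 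Once this correspondence is set up, and one has noted that on each double overlap only \emph{one} of $X^{\pm1},Y^{\pm1}$ tends to $0$ so that the relevant germ genuinely lives near a single corner $p_{i,i+1}$, the extension-across-the-divisor argument above closes the proof; I would also remark that the whole statement is to be read in the germified sense explained in the footnote following Picture \ref{pic:SerreIsom}, so that no global control on $U_i$ is needed, only the behaviour along $C$.
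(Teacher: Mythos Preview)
Your proposal is correct and follows essentially the same route as the paper. The paper organizes the argument by first proving Proposition~\ref{prop:sectorialdecomposition} (the sector--corner correspondence you flag as the ``main obstacle''), then Proposition~\ref{prop:CharacSectorialFunctionsF0} and Proposition~\ref{P:sectorstoP1XP1}, which together say precisely that a flat $F_{1,0,0}$-invariant sectorial diffeomorphism on $I_{i,i+1}$ is of the form $\Pi^{-1}\circ\varphi\circ\Pi$ with $\varphi\in\mathrm{Diff}^1(V_{i,i+1},p_{i,i+1})$; the corollary then follows in one line from $\Phi_{i,i+1}:=\Psi_i\circ\Psi_{i+1}^{-1}\in\mathscr G^\infty[F_{1,0,0}](I_{i,i+1})$. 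Your ``bounded plus Riemann extension'' step is exactly the content of Proposition~\ref{prop:CharacSectorialFunctionsF0}: writing $\Phi_{i,i+1}(z,\xi)=(z(1+f_1),\xi+f_2)$ with $f_1,f_2$ flat and $F_{1,0,0}$-invariant, one gets $\varphi_{i,i+1}(X,Y)=(Xe^{2i\pi f_2},Y(1+f_1)e^{2i\pi\tau f_2})$, and the paper extends $f_1,f_2$ (viewed as functions of $(X,Y)$) across the axes by a Laurent-series estimate on the Reinhardt domain $\Pi(U_{i,i+1})$ rather than a bare appeal to Riemann---but this is a matter of packaging, not of strategy.
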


\begin{figure}[htbp]
\begin{center}
\includegraphics[scale=0.4]{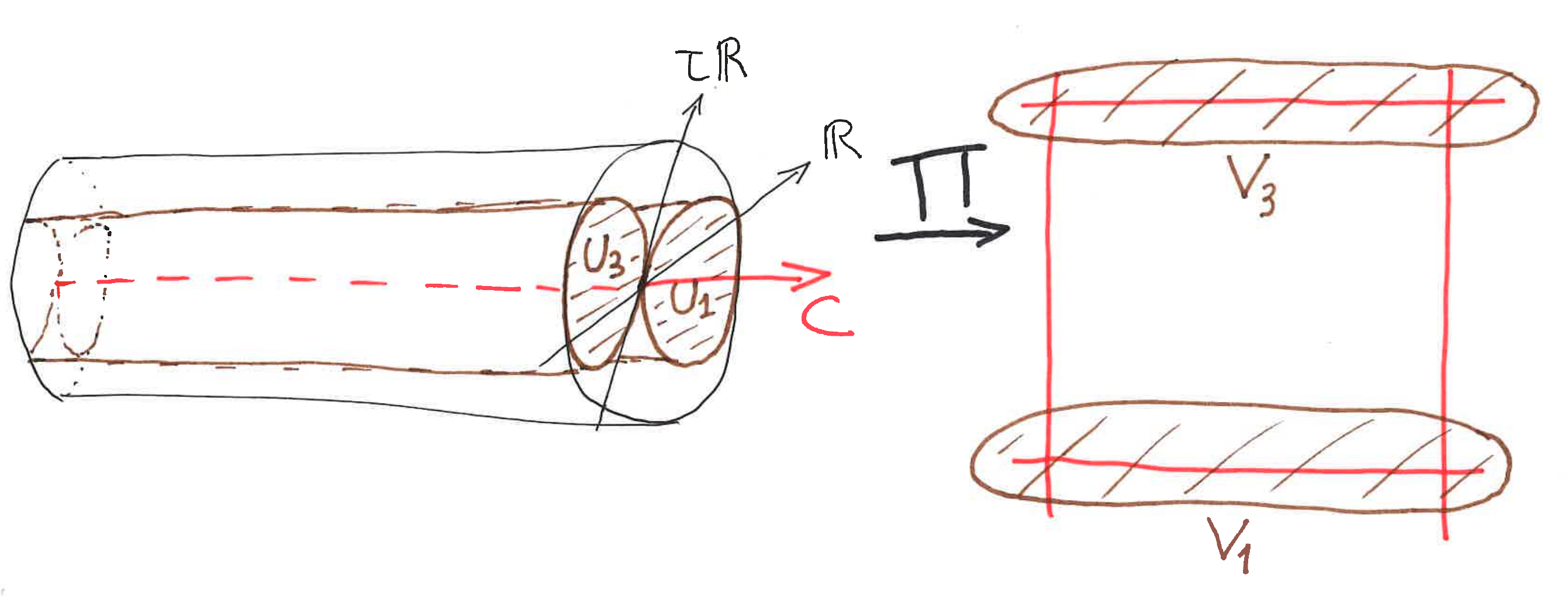}
\caption{{\bf Sectorial normalization}}
\label{pic:SecNorm}
\end{center}
\end{figure}

After patching copies of germs $(V_i,L_i)\simeq(\PP^1_X\times \PP^1_Y,L_i)$ by the $\varphi_{i,i+1}:(V_{i+1},p_{i,i+1})\to(V_{i},p_{i,i+1})$,
we get a new neighborhood germ $(V_\varphi,D)$ of the divisor $D$, 
where $\varphi=(\varphi_{i,i+1})_{i\in\Z_4}$, together with a diffeomorphism germ
$$\Pi:(U\setminus C,C)\stackrel{\sim}{\longrightarrow}(V_\varphi\setminus D,D)$$
which does not depend on the choice of sectorial normalisations $\Psi_i$.

More generally, consider a neighborhood $(V,D)$ in which each component $L_i\subset D$
has zero self-intersection. Then after \cite{Savelev}, the neighborhood $(V,L_i)$ is trivial
(a product $L_i\times\text{disc}$). After identification with our model 
$\psi_i:(V,L_i)\stackrel{\sim}{\rightarrow}(\PP^1_X\times \PP^1_Y,L_i)$, we get that $V$ takes the form $V_\varphi$ 
for a convenient $4$-uple of diffeomorphisms $\varphi$. The gluing data $\varphi$ is not unique as we can 
compose each embedding $\psi_i$ by an automorphism germ $\varphi_i\in\mathrm{Diff}(V_{i},L_{i})$ \repnote
Therefore, it is natural to introduce the following equivalence relation 
$$\varphi\sim\varphi'\ \ \ \Leftrightarrow\ \ \ 
\exists\left(\varphi_i\in\mathrm{Diff}(V_{i},L_{i})\right)_{i\in\Z_4}\ \text{such that}\ \varphi_{i}\circ\varphi_{i,i+1}'=\varphi_{i,i+1}\circ\varphi_{i+1}.$$
Clearly, the moduli space $\mathcal V$ of neighborhoods $(V,D)$ 
up to analytic equivalence\footnote{One requires each component of the cycle to be preserved} 
identifies
with the set of equivalence classes for $\sim$. Notice that each equivalence classe contains a representative
$\varphi$ such that $\varphi_{1,2},\varphi_{2,3},\varphi_{3,4}$ are tangent to the identity, and the linear part 
$$\varphi_{4,1}(X,Y)=(aX+\cdots,bY+\cdots)$$
does not depend on the choice of such representative $\varphi$. Therefore, $a,b\in \C^*$ are invariants 
for the equivalence relation, and we denote by $\mathcal V_{a,b}$ the moduli space of those triples.
With this in hand, we are able to prove:

\begin{THM}\label{THM:MainClassif}
We have a one-to-one correspondance between
$$\mathcal U_{1,0,0}\leftrightarrow\mathcal V_{1,1}$$
\begin{itemize}
\item the moduli space $\mathcal U_{1,0,0}$ of neighborhoods $(U,C)$ formally equivalent to $(U_{1,0,0},C)$ up to analytic equivalence\footnote{More precisely, we allow for this statement analytic isomorphisms inducing translations on $C$; see
Proposition \ref{prop:AnalyticClassif1} for a more precise statement.}
\item the moduli space $\mathcal V_{1,1}$ of neighborhoods $(V_\varphi,D)$ with all $\varphi_{i,i+1}$ tangent to the identity.
\end{itemize}
\end{THM}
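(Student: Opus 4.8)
The plan is to construct the correspondence in both directions and check they are mutually inverse.

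\medskip

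\textbf{From $\mathcal U_{1,0,0}$ to $\mathcal V_{1,1}$.} Given $(U,C)$ formally equivalent to $(U_{1,0,0},C)$, Lemma \ref{LEM:sectorialnormalization} furnishes sectorial domains $U_1,\dots,U_4$ covering $U\setminus C$ together with sectorial normalizations $\Psi_i$, and Corollary \ref{COR:cocycle} packages the resulting transition maps into a gluing cocycle $\varphi=(\varphi_{i,i+1})$, hence a neighborhood $(V_\varphi,D)$ together with $\Pi\colon (U\setminus C,C)\xrightarrow{\sim}(V_\varphi\setminus D,D)$. The first point to verify is that all four $\varphi_{i,i+1}$ are in fact tangent to the identity, so that the class lies in $\mathcal V_{1,1}$ and not merely in some $\mathcal V_{a,b}$: this follows because each $\Psi_i$ has the \emph{same} formal normalization $\hat\Psi$ as asymptotic expansion, so on an overlap $U_i\cap U_{i+1}$ the composite $\Psi_i\circ\Psi_{i+1}^{-1}$ is asymptotic to the identity, whence so is $\varphi_{i,i+1}=\Pi\circ(\Psi_i\circ\Psi_{i+1}^{-1})\circ\Pi^{-1}$ (here one uses that $\Pi$ is a genuine biholomorphism, so it does not alter linear parts along $L_i$). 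One must then check the map $(U,C)\mapsto[\varphi]$ is well defined: independence of the choice of $\Psi_i$ is already asserted for $\Pi$ in the text, and the ambiguity in the $\Psi_i$ precisely produces a cocycle equivalent under $\sim$; analytic equivalences of $(U,C)$ inducing a translation on $C$ translate into the same equivalence relation on the $V$-side after transporting through $\Pi$ (this is where the footnote about translations is used).

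\medskip

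\textbf{From $\mathcal V_{1,1}$ to $\mathcal U_{1,0,0}$.} Conversely, given $(V_\varphi,D)$ with all $\varphi_{i,i+1}$ tangent to the identity, one reverses the construction: pull back the model pieces $(V_i^*,L_i)$ by the fundamental isomorphism $\Pi^{-1}$ to sectorial pieces isomorphic to the $U_i$ inside $U_{1,0,0}$, then reglue them by $\varphi$ to obtain a punctured-neighborhood germ $(W,\emptyset)$ together with a well-defined map $F$ on it induced from $F_{1,0,0}$ — here the key is that $F_{1,0,0}$ is compatible with the whole construction (it descends along $\Pi$ to an automorphism of $\PP^1\times\PP^1$ preserving $D$, as already recorded). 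Then one must add the curve back: because each $\varphi_{i,i+1}$ is tangent to the identity, the reglued object extends across $C$ to a genuine germ of neighborhood $(U,C)$ with $N_C=\OO_C$, and the sectorial normalizations $\Psi_i$ it comes equipped with (by construction) have a common asymptotic expansion, which — by a Borel–Ritt / $1$-summability type argument on the transversely sectorial covering of opening $\pi$, exactly as the $U_i$ overlap in pairs — must be a \emph{convergent} formal series when restricted suitably, forcing $(U,C)$ to be formally equivalent to $(U_{1,0,0},C)$. Well-definedness on $\sim$-classes is the mirror of the previous paragraph.

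\medskip

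\textbf{Mutual inverseness and the main obstacle.} Finally one checks the two constructions are inverse to each other: starting from $(U,C)$, extracting $\varphi$, and regluing returns a neighborhood canonically isomorphic to $(U,C)$ because $\Pi$ itself realizes that isomorphism and does not depend on the $\Psi_i$; the reverse composition is handled symmetrically. I expect the main obstacle to be the gluing/extension step on the $\mathcal V_{1,1}\to\mathcal U_{1,0,0}$ side: one has four sectorial pieces of opening $\pi$ that overlap only in consecutive pairs (a ``cyclic'' cover of the punctured transverse disc by four half-discs), and one must show that regluing by maps tangent to the identity produces an object that (i) is Hausdorff and genuinely two-dimensional, (ii) carries a well-defined descended dynamics $F$, and most delicately (iii) extends analytically across the missing curve $C$ with trivial normal bundle — this last point is a nontrivial extension-across-a-divisor statement and is really the analytic heart of the theorem, the rest being bookkeeping of cocycles and asymptotic expansions. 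A secondary subtlety is tracking the linear-part normalization: pinning down that $\varphi_{1,2},\varphi_{2,3},\varphi_{3,4}$ can be taken tangent to the identity while $\varphi_{4,1}$ carries the monodromy $(a,b)$, and that the case $a=b=1$ is exactly the image of the formal class $(1,0,0)$.
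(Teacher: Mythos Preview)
Your forward direction $\mathcal U_{1,0,0}\to\mathcal V_{1,1}$ matches the paper: the sectorial normalizations $\Psi_i$ share the asymptotic expansion $\hat\Psi$, so $\Psi_i\circ\Psi_{i+1}^{-1}$ is flat to the identity, and Proposition~\ref{P:sectorstoP1XP1} translates this into $\varphi_{i,i+1}\in\mathrm{Diff}^1(V_{i,i+1},p_{i,i+1})$. Well-definedness and injectivity are exactly Proposition~\ref{prop:AnalyticClassif1}.

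The reverse direction has a genuine gap. You correctly isolate the main obstacle --- extending the reglued object across $C$ --- but you do not give a mechanism to overcome it; ``because each $\varphi_{i,i+1}$ is tangent to the identity, the reglued object extends across $C$'' is an assertion, not an argument. The paper's device (Section~\ref{SS:Construction}) is different from what you sketch: one first lifts $\varphi_{i,i+1}$ to sectorial biholomorphisms $\Phi_{i,i+1}\in\mathscr G^\infty[F_{1,0,0}](I_{i,i+1})$ on the cover $\tilde U$, then trivializes this cocycle \emph{smoothly} by a partition-of-unity argument (Lemma~\ref{L:sectordiff}), obtaining $C^\infty$ sectorial diffeomorphisms $\psi_i$ flat to the identity with $\Phi_{i,i+1}=\psi_i\circ\psi_{i+1}^{-1}$. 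The pullback almost complex structure $J=\psi_i^*I$ is then globally defined on $\tilde U\setminus\tilde C$, extends $C^\infty$ across $\tilde C$ by flatness, and is integrable by continuity of the Nijenhuis tensor; Newlander--Nirenberg then yields a genuine complex surface, and Siu's theorem identifies it with the standard neighborhood of $\C^*$. This produces holomorphic $\Psi_i$ with $\Phi_{i,i+1}=\Psi_i\circ\Psi_{i+1}^{-1}$ (Corollary~\ref{cor:surgeries}), and one sets $F_\varphi:=\Psi_i^{-1}\circ F_{1,0,0}\circ\Psi_i$, which is well defined globally and extends across $\tilde C$. Formal equivalence to $(U_{1,0,0},C)$ is then immediate from the common asymptotic expansion $\hat\Psi_i$ --- no Borel--Ritt or summability argument is needed.

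There is also a smaller confusion in your sketch: you work on the quotient (pieces of $U_{1,0,0}$) and speak of ``a well-defined map $F$ on it induced from $F_{1,0,0}$'', but $F_{1,0,0}$ is the deck transformation of $\tilde U\to U_{1,0,0}$ and acts trivially on the quotient; the construction must take place on the cover $\tilde U$, where the $\Phi_{i,i+1}$ live and commute with $F_{1,0,0}$, and the neighborhood is recovered as $(\tilde U,\tilde C)/\langle F_\varphi\rangle$.
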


\begin{remark}
A thorough look to the proof of the Sectorial Normalization Lemma (\ref{S:sectorialnorm1}) may prove that the correspondance is analytic in the sense that analytic families of neighborhoods $t\mapsto (U_t,C)$
correspond to analytic families of cocycles $t\mapsto\varphi_t$. 
As the freedom lies in the choice of (essentially) one-dimensional diffeomorphisms $\varphi_i$, it is quite clear that 
the moduli space is essentially parametrized by two-dimensional diffeomorphisms, and therefore quite huge. 
\end{remark}

In a similar vein, it is reasonable to expect that the analytic moduli space $\mathcal U_{1,\nu,\mu}$ of neighborhoods $(U,C)$ formally equivalent to $(U_{1,\nu,\mu},C)$ is in one to one correspondance with $\mathcal V_{a,b}$ 
with $a=e^{-4\pi^2\nu}$ and $b=e^{-4\pi^2\tau(\nu+\mu)}$. Actually, we explain in Section \ref{S:Generalization}
how to construct an embedding $\mathcal V_{a,b}\hookrightarrow \mathcal U_{1,\nu,\mu}$,
but the surjectivity needs to adapt our Sectorial Normalization Lemma. This creates additional issues (of purely technical nature) and we just indicate briefly how to proceed.
Actually, one directly adresses in loc.cit  the general case   Ueda type $= k$, where we inherit $4k$  sectors with opening $\frac{\pi}{k}$ 
and the moduli space would be then equivalent to the moduli of neighborhoods of cycles 
of $4k$ rational curves (the model must be thought as a degree $k$ cyclic \'etale cover of $(V,D)$). 
A precise statement, summarizing the structure of the analytic moduli space when $N_C\backsimeq\mathcal {\mathcal O}_C$, is given in Section \ref{S:Generalization}, Theorem \ref{prop:AnalyticClassifgen}.
With this in hand, it is not difficult to undertake the analytic classification of neighborhood when $N_C$ is torsion. The idea consists in reducing to the case of trivial normal bundle by an appropriate cyclic cover. This is settled in Section \ref{S:torsion}.

\subsection{Foliations}

A neighborhood $(U,C)$ formally conjugated to $(U_{1,0,0},C)$ admits a pencil of formal foliations
$\hat{\mathcal F}_t$ corresponding to ${\mathcal F}_t$ in (\ref{eq:DefFolt}) via the formal normalization $\hat\Psi$.

\begin{THM}\label{THM:ExistenceFoliations}
The foliation $\hat{\mathcal F}_t$ is convergent if, and only if, there exists a representative
$\varphi$ in the corresponding equivalence class such that each $\varphi_{i,i+1}$ preserves the 
foliation 
$$(1-t)\tau\frac{dX}{X}+t\frac{dY}{Y}=0.$$ 
In that case, these two foliations are conjugated
via the isomorphism $U\setminus C\to V\setminus D$.
\end{THM}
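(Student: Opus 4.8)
The plan is to transport everything through the isomorphism $\Pi:U\setminus C\xrightarrow{\sim}V_\varphi\setminus D$ produced by Corollary \ref{COR:cocycle} and Theorem \ref{THM:MainClassif}, and to use the fact that each sectorial normalization $\Psi_i$ carries $\hat{\mathcal F}_t$ to the model foliation $\mathcal F_t$ on $U_{1,0,0}$, hence — after composing with the Serre isomorphism and using the dictionary \eqref{eq:DefFolt} — to the constant-coefficient logarithmic foliation $\mathcal G_t:\{(1-t)\tau\frac{dX}{X}+t\frac{dY}{Y}=0\}$ on each chart $V_i$. Thus on $U_i\cap U_{i+1}$ the two local pieces $\Pi_i^{-1}(\mathcal G_t)$ and $\Pi_{i+1}^{-1}(\mathcal G_t)$ of the foliation $\hat{\mathcal F}_t$ agree; equivalently, the transition map $\varphi_{i,i+1}=\Pi_i\circ\Pi_{i+1}^{-1}$ sends the germ of $\mathcal G_t$ at $p_{i,i+1}$ (coming from the $V_{i+1}$ side) to the germ of $\mathcal G_t$ at $p_{i,i+1}$ (coming from the $V_i$ side). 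This is the structural identity underlying both implications.

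First I would prove the ``only if'' direction. Suppose $\hat{\mathcal F}_t$ is convergent, i.e. the formal foliation is the formalization of an honest analytic foliation $\mathcal F$ on $(U,C)$. Since $\Psi_i$ has $\hat\Psi$ as asymptotic expansion and $\hat\Psi$ conjugates $\hat{\mathcal F}_t$ to $\mathcal F_t$, the image $\Psi_i(\mathcal F)$ is an analytic foliation on $U_i$ asymptotic to $\mathcal F_t$; but on the model $U_{1,0,0}$ the foliation $\mathcal F_t$ is itself analytic, and a genuinely convergent object whose asymptotic expansion in a sector of opening $\pi$ equals $\mathcal F_t$ must coincide with $\mathcal F_t$ there (a Phragmén–Lindelöf / Ramis–Sibuya type uniqueness for $1$-Gevrey, or simply the fact that a holomorphic foliation agreeing to infinite order along $C$ with $\mathcal F_t$ equals it). Hence $\Pi_i$ carries $\mathcal F$ to $\mathcal G_t$ on each $V_i^*$, so the cocycle $\varphi=(\varphi_{i,i+1})$ built from these particular $\Pi_i$ consists of germs preserving $\mathcal G_t$. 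This $\varphi$ is the desired representative. Conversely, for the ``if'' direction, suppose some representative $\varphi'$ in the equivalence class has all $\varphi'_{i,i+1}$ preserving $\mathcal G_t$. Then $\mathcal G_t$ is globally well-defined on $V_{\varphi'}$ near $D$ (the pieces match), so it pulls back under $\Pi^{-1}$ to a genuine analytic foliation $\mathcal F$ on $U\setminus C$, which extends across $C$ (Riemann-type extension of a foliation across a curve of the complement, using that $\mathcal G_t$ extends across $D$ on the model). By construction the asymptotic expansion of $\mathcal F$ along $C$ on each sector $U_i$ is $\hat\Psi^{-1}(\mathcal F_t)=\hat{\mathcal F}_t$, so $\hat{\mathcal F}_t$ is convergent, with convergent sum $\mathcal F$; and $\Pi$ conjugates $\mathcal F$ to $\mathcal G_t$ on $U\setminus C\to V\setminus D$ by the very way $\mathcal F$ was defined. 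The last sentence of the theorem is then immediate from this construction.

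I would be careful about two points. One must check that replacing the distinguished representative $\varphi$ (coming from the canonical $\Pi_i$) by another representative $\varphi'$ in the $\sim$-class, obtained by post-composing the embeddings $\psi_i$ with $\varphi_i\in\mathrm{Diff}(V_i,L_i)$, does not destroy the ``$\mathcal G_t$-preserving'' property unless the $\varphi_i$ themselves preserve $\mathcal G_t$; this is why the statement is ``there exists a representative'' rather than ``every representative''. Here one uses that the $\varphi_i$ act only in the disc-direction transverse to $L_i$ and that the set of germs preserving $\mathcal G_t$ is a subgroup, so the two formulations ``$\hat{\mathcal F}_t$ convergent'' and ``$\exists\varphi'$ preserving $\mathcal G_t$'' are genuinely equivalent and not sensitive to the choice. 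The main obstacle, and the step I expect to require the most care, is the uniqueness/rigidity input in the ``only if'' direction: establishing that a convergent foliation whose $\pi$-sectorial asymptotic expansion agrees with $\mathcal F_t$ must literally equal $\Psi_i^{-1}(\mathcal F_t)$ on $U_i$ — i.e. that the asymptotic expansion determines the analytic foliation on a half-plane sector. This is where the opening $\pi$ of the sectors (exactly the Gevrey-$1$ borderline) is used, and it is the analytic heart of the argument; everything else is bookkeeping with the cocycle and the dictionary \eqref{eq:DefFolt}.
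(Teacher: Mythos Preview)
Your backward (``if'') direction is essentially the paper's argument: glue the model foliation $\mathcal G_t$ through the $\varphi'_{i,i+1}$, pull it back to $U\setminus C$ via $\Pi$, and extend across $C$ by Riemann. That part is fine.

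The forward (``only if'') direction, however, rests on a rigidity claim that is false. You assert that $(\Psi_i)_*\mathcal F$, being a sectorial foliation with asymptotic expansion $\mathcal F_t$, must literally equal $\mathcal F_t$ on the sector $U_i$. But the sectors have opening exactly $\pi$, and on such sectors there are plenty of non-trivial $F_{1,0,0}$-invariant flat functions: by Proposition \ref{prop:CharacSectorialFunctionsF0} they are precisely the functions $g\circ\Pi$ with $g\in\mathcal O^0(V_i,L_i)$ (so on $V_1$, say, any holomorphic function of $Y$ vanishing at $Y=0$). Consequently the foliation defined by $\omega_0-(t+f(Y))\omega_\infty=0$ on the sector $I_1$ has the same asymptotic expansion as $\mathcal F_t$ without being equal to it. Neither Phragm\'en--Lindel\"of nor 1-Gevrey uniqueness applies at opening $\pi$, and the alternative you mention (``a holomorphic foliation agreeing to infinite order with $\mathcal F_t$ along $C$ equals it'') would require both foliations to be defined on a full neighborhood of $C$, whereas $(\Psi_i)_*\mathcal F$ lives only on a sector.

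What the paper does instead (Lemmas \ref{lem:SectorialFolFtNonRat}, \ref{lem:SectorialFolFtNonRat01} and Corollary \ref{cor:ExistenceFol}) is to compute this flat discrepancy explicitly: writing $(\Psi_i)_*\mathcal F$ as $\omega_0-u\,\omega_\infty=0$ with $u\in\mathcal A[F_{1,0,0}](I_i)$ and $\hat u=t$, one finds $u=t+f$ with $f$ a holomorphic function of $X$ or $Y$ alone, so that $\Pi_{i*}\mathcal F$ is defined on $V_i$ by a closed logarithmic form $\theta_i=(1-t)\tau\frac{dX}{X}+t\frac{dY}{Y}+\eta_i$ with $\eta_i$ closed holomorphic. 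Remark \ref{rem:LogFormVi} then provides $\varphi_i\in\mathrm{Diff}^1(V_i,L_i)$ with $\varphi_i^*\theta^0=\theta_i$, and replacing the cocycle $\varphi$ by the equivalent one $\varphi_i\circ\varphi_{i,i+1}\circ\varphi_{i+1}^{-1}$ gives the representative preserving $\theta^0$. In short, the missing step is not uniqueness but a computation of the sectorial defect followed by its absorption into the chart changes $\varphi_i$.
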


When $\mathcal F_t$ is not of rational type, i.e. $\tau(1-\frac{1}{t})\not\in\mathbb Q\cup\{\infty\}$, 
then $\mathcal F_t$ is defined by a closed meromorphic $1$-form and the logarithmic $1$-form
of the statement is also preserved by all $\varphi_{i,i+1}$ and defines a global logarithmic $1$-form
on $(V,D)$. On the other hand, in the rational case, \'Ecalle-Voronin moduli of the holonomy provide
obstruction to define the foliation by a closed meromorphic $1$-form. For instance, when $\mathcal F_0$ is convergent,
Martinet-Ramis cocycle are given by the $X$-coordinate of $\varphi_{1,2}\circ\varphi_{2,3}$ and $\varphi_{3,4}\circ\varphi_{4,1}$
(see Section \ref{prop:ExampleMartinetRamis} for details).

In \cite{LTT}, the two first authors with O. Thom provided the analytic classification of neighborhoods with $2$ convergent foliations.
In Section \ref{sec:Foliations}, we provide examples of neighborhoods with only one foliation, and also without foliation
which is the generic case. An example without foliations has been given by Mishustin in \cite{Mishustin} few years ago and 
it would be nice to understand what is the corresponding invariant  $\varphi$.

In Section \ref{sec:symmetries}, we investigate the automorphism group of neighborhood germs.
We prove in Theorem \ref{thm:symmetries} that it can be of three types: finite (the generic case), 
one dimensional and we get an holomorphic vector field (and in particular a convergent foliation), or two dimensional
only in the Serre example.

\subsection{$\mathrm{SL}_2(\Z)$ action}\label{ss:SL2intro}
The analytic classification of resonant diffeomorphism germs of one variable is reminiscent in our
classification result. However, there are strong differences like the fact that the sectorial trivialization 
is not unique in our case. 
Indeed, our sectorial decomposition $U\setminus C=U_1\cup U_2\cup U_3\cup U_4$
has been imposed by our choice of a basis for the lattice $\Gamma=\Z+\tau\Z$.
It comes from the sectorial decomposition of the holonomy maps of the two foliations $\mathcal F_0$ 
and $\mathcal F_1$ having cyclic holonomy, trivial along $1$ and $\tau$ respectively.
If we change for another basis  $(m+\tau n,m'+\tau n')$, with 
$$\begin{pmatrix}m& m'\\ n& n'\end{pmatrix}\in\mathrm{SL}_2(\Z)$$
then the change of coordinates 
$$x'=\frac{x}{m+\tau n},\ \ \ \xi'=(m+\tau n)\xi+nx\ \ \ \rightsquigarrow\ z'=e^{2i\pi x'}=z^{\frac{1}{m+\tau n}}$$
gives $(S_0,C)$ as the quotient of $\C_{z'}^*\times\overline{\C}_{\xi'}$ \footnote{It may be useful to think of $\C_{z'}^*\times\overline{\C}_{\xi'}$ as the cyclic cover of $S_0$ associated to the subgroup $\langle m+n\tau \rangle$ of $\Gamma$ (see \ref{SS:overview}).} 
by the transformation
$$(z',\xi')\mapsto(q' z',\xi'-1),\ \ \ q'=e^{2i\pi\tau'},\ \ \ \tau'=\frac{m'+\tau n'}{m+\tau n}.$$
The new isomorphism is related to the previous one by a monomial transformation
$$(X',Y')=(e^{2i\pi\xi'},{z'}e^{2i\pi\tau'\xi'})=(X^mY^{n},X^{m'}Y^{n'}).$$
Using sectorial normalization for a general neighborhood $(U,C)$ with this new basis gives 
a new compactification $(V',D)$ which is bimeromorphically equivalent to $(V,D)$.

\subsection{Concluding remarks}
Contrary to the diophantine case (non torsion normal bundle), the classification of neighborhoods of elliptic curves with torsion 
normal bundle can be completely described, as shown in Theorem \ref{prop:AnalyticClassifgen}. A naive
reading of Arnold's work \cite{Arnold} might suggest 
that classification of neighborhoods of elliptic curves with topologically trivial normal bundle could be  similar to that
of germs of one dimensional diffeomorphisms. In fact, the suspension of a representation $\Pi_1(C)\to\Diff$
permits to embed the moduli space of diffeomorphisms into that of neighborhoods. However, this latter one
turns out to be much more complicated, even if the general approach by sectorial normalization and classifying cocycle
is still in the spirit of \'Ecalle-Voronin classification for resonant diffeomorphisms, or Martinet-Ramis' version. 
For instance, an unexpected phenomenon in the case of neighborhoods is that the sectorial covering is not unique,
due to the $\mathrm{SL}_2(\Z)$-action. We can expect that the sectorial normalizations involve resurgent functions
with lattice of singularities isomorphic to the lattice of the elliptic curve. Recall that the lattice of resurgence for 
resonant diffeomorphism has rank one. It would be interesting to better understand this phenomenon. 

An important motivation to study neighborhoods was initially raised by Arnold: 
there is a close link with the study of germs of analytic
diffeomorphisms of $(\C^2,0)$. Indeed, if we consider our model $F_{1,0,0}(z,y)=(qz,\frac{y}{1-y})$ at the neighborhood
of $(z,y)=(0,0)$, then we get a semi-hyperbolic map whose space of orbits (when deleting $z=0$) is obviously
the neighborhood $(U_{1,0,0},C)$ where $C=\C/<qz>$. One can investigate the analytic classification of small perturbations
$F:=F_{1,0,0}+\cdots$ where dots are vanishing at sufficiently high order at the origin. Then, it is a classical fact that 
$F$ has also an invariant manifold in the contracting direction $\vert q\vert<1$, and the space of orbits gives rise 
to a neighborhood $(U,C)$ formally equivalent to $(U_{1,0,0},C)$. The analytic classification of these germs of 
semi-hyperbolic maps has been done by the last author with P. A. Fomina-Sha\u{\i}khullina (see \cite{VF,SV})
and comparing the two moduli shows that moduli of maps embed in moduli of neighborhoods but is infinite 
codimensional. In fact, the analytic extension of the map $F$ to the origin imposes strong restrictions on the 
corresponding invariants $\varphi$ defined in subsection \ref{sec:IntroAnalClass}.
It is interesting to consider the following hierarchy:
\begin{enumerate}
\item one dimensional resonant diffeomorphisms in $(\C,0)$,
\item singular points of foliation  in $(\C^2,0)$ of resonant-saddle or saddle-node type,
\item singular points of vector fields in $(\C^2,0)$ of resonant-saddle or saddle-node type,
\item singular points of diffeomorphisms in $(\C^2,0)$ of resonant-saddle or saddle-node 
(i.e. semi-hyperbolic) type,
\item neighborhoods of elliptic curves with torsion normal bundle.
\end{enumerate}
The first occurence gives rise to \'Ecalle-Voronin moduli (see \cite{Ecalle2,Voronin}, and also \cite{Malgrange}). 
One dimensional resonant diffeomorphisms also occur as monodromy map of those foliations
arising in case (2). These latter ones have been classified by Martinet-Ramis in \cite{MartinetRamis,MartinetRamis2}
and the classification on resonant-saddles and their monodromy map (1) turns out to be equivalent;
however, saddle-node impose strong restriction to the invariants of its holonomy map
(we can realize half of the moduli only). See \cite{MartinetRamis,MartinetRamis2,Malgrange} for details.
Classification of vector fields has been done by the third author with Meshcheryakova
(see \cite{VM} for instance) and independently by Teyssier \cite{Teyssier}. 
This gives rise to twice the moduli space of foliations:
the classification of vector fields with same underlying foliation is solved by the linearization 
of the conjugacy equation for foliations. Still, the moduli space is parametrized by finitely many
copies of $\C\{x\}$ (power-series in one variable). There is a huge step when we pass to diffeomorphisms in $(\C^2,0)$
as the moduli space is now parametrized by copies of $\C\{x,y\}$. Diffeomorphisms occuring in (4)
are actually one-time-map of formal vector fields of type (3), but divergent as a rule.
We expect that resonant-saddle diffeomorphisms (4)
have same classification as neighborhoods (5), but classification of former ones looks somehow more delicate.
It would also be nice to understand how Ueda's results \cite{Ueda1,Ueda2} can be related to our work
from that point of view. 
Also, diffeomorphisms of $(\C^2,0)$ arise as monodromy map of reduced singular foliations by curves in $(\C^3,0)$
and we can expect that the analytic classification is similar under generic conditions on the spectrum.

One might expect to investigate higher dimensional neighborhood of elliptic curve with trivial normal bundle
by mimicking what has been done in \cite{LTT} for the formal classification, and in the present paper regarding
analytic classification. We haven't considered this direction at all. However, it might be interesting to note
that Ueda's Theory has been generalized to higher codimension by Koike in \cite{Koike}. What would be 
the higher dimensional analogue of Serre's isomorphism ?

\section{Preliminary remarks}\label{S:preliminary}

Recall that $C=\C^*/<q>$, and we denote by $\tilde C\simeq\C^*\to C$ the corresponding cyclic cover.
Denote $\tilde U=\C_z^*\times \C_y$ and $\tilde C=\{y=0\}\subset\tilde U$. The following is already mentioned 
by Arnol'd \cite{Arnold}.
 
\begin{lemma}\label{L:firstpresentation}
Any germ of neighborhood $(U,C)$ with $C^2=0$ is biholomorphic to a germ of the form 
$(\tilde U,\tilde C)/<F>$ where
\begin{equation}\label{eq:SiuNeighborhood}
F(z,y)= (q z + yf(z,y), \lambda(z) y+ y^2g(z,y))
\end{equation}
with $f,g$ holomorphic on a neighborhood of $\{y=0\}$, where $q=e^{2i\pi\tau}$ and $\lambda\in{\OO}^*(\C_z^*)$.
\end{lemma}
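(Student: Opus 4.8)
The key input is that $N_C = \mathcal{O}_C$ when $C^2 = 0$... wait, no — the hypothesis is just $C^2 = 0$, so $N_C$ is a degree-zero line bundle, and a general such bundle is $\lambda(z)$-type, i.e. given by a multiplier $\lambda \in \mathbb{C}^*$ (flat, unitary representative). Actually the lemma allows $\lambda(z) \in \mathcal{O}^*(\mathbb{C}_z^*)$, which is more flexible than a constant — this is the ambient freedom before normalization.

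Let me think about this carefully. We want to show any $(U,C)$ with $C^2=0$ is a quotient of a neighborhood of the zero section in $\mathbb{C}_z^* \times \mathbb{C}_y$.
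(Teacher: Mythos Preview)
Your proposal is not yet a proof --- it is a restatement of the goal together with some remarks about the normal bundle. The actual argument requires two concrete ingredients that you have not identified.

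First, you need to pass to the infinite cyclic cover. Since $C^2=0$, the germ $(U,C)$ is topologically a product $C\times\mathrm{disc}$, so the cyclic cover $\tilde C=\C^*\to C$ extends to a cyclic cover $\tilde U\to U$. This gives a germ of surface neighborhood of the \emph{non-compact} curve $\tilde C\simeq\C^*$, together with a deck transformation $F$ generating $\Z$. Second, you need Siu's theorem \cite{Siu} (every Stein subvariety has a Stein neighborhood, and in particular the germ of neighborhood of $\C^*$ in any smooth surface is biholomorphic to the germ of the zero section in its normal bundle). Since $N_{\tilde C}$ is trivial over $\C^*$, this identifies $(\tilde U,\tilde C)$ with a neighborhood of $\{y=0\}$ in $\C^*_z\times\C_y$. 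The deck transformation $F$ then fixes $\tilde C$ setwise and induces $z\mapsto qz$ on it, which forces exactly the shape \eqref{eq:SiuNeighborhood}.

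Your observation that $\lambda(z)$ is allowed to be an arbitrary unit on $\C^*$ (rather than a constant) is correct and is simply because no normalization of the $y$-coordinate has been performed yet; but this is a comment on the form of the conclusion, not a step toward proving it.
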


\begin{proof}
 The self-intersection $C\cdot C$ determines topologically the germ of neighborhood.
Then, by taking a suitable  small representative, $U$ is homeomorphic to a product $\mathbb D \times C$. 
So, one can consider the cyclic covering $\tilde U\to U$ extending the cyclic cover $\tilde C\to C$.
This gives rise to a neighborhood $\tilde U$  of $\tilde C\simeq{\C}^*$. Following Siu \cite{Siu}, 
the germ of this neighborhhood along $\tilde C$ is isomorphic to the germ of a neighborhood 
of the zero section $\{y=0\}$ in the normal bundle $N_{\tilde C}\simeq\C^*_z\times \C_y$. 
The deck transformation of the (germ of) covering takes the form $F$ of the statement.
\end{proof}

\begin{dfnprop}\label{def:an/forequivalence} \footnote{Actually, one can, as usually, define  analytic/formal conjugations between both neighborhoods in terms involving only the structural analytic/formal sheaves along $C$. 
The two definitions obviously coincide.} 
Any two quotients $(\tilde U,\tilde C)/<F>$ and $(\tilde U,\tilde C)/<F'>$ are analytically
(resp. formally) equivalent, and we note
$$(U,C)\an(U',C)\ \ \ \text{(resp. $(U,C)\formal(U',C)$)},$$
if there is a germ of analytic (resp. formal) diffeomorphism 
\begin{equation}\label{eq:PrenormNeigh100}
\Psi(z,y)=\left(z+\sum_{n=1}^\infty a_n(x)y^n, \sum_{n=1}^\infty b_n(x)y^n\right)\ \ \ 
\text{such that}\ \ \ \Psi\circ F=F'\circ \Psi.
\end{equation}
\end{dfnprop}

Although the formal classification is already done in \cite{LTT}, we need the following formulation
and give some basic steps.

\begin{prop}\label{prop:PrenormalFormU100}
A germ of neighborhood $(U,C)$ is formally equivalent to 
$$(U_0,C)=(\tilde U, \tilde C)/<F_{1,0,0}>,\ \ \ F_{1,0,0}(z,y)=\left(qz,\frac{y}{1-y}\right)$$ 
if, and only if, it is biholomorphic to a germ of the form $(\tilde U, \tilde C)/<F>$ where
\begin{equation}\label{eq:PrenormalFormU100}
F(z,y)= (q z + y^2f(z,y), y+y^2+y^3+ y^4g(z,y)).
\end{equation}
In that case, there exists a formal diffeomorphism (tangent to the identity along $C$)
\begin{equation}\label{eq:FormalDiffeoTangentIdentity}
\hat\Psi(z,y)=\left(z+\sum_{n>0}a_n(z)y^n,y+\sum_{n>1}b_n(z)y^n\right)
\end{equation}
with $a_n,b_n\in\mathcal O(\C^*_z)$ (and no convergence condition on $y$), such that
\begin{equation}\label{eq:FormalConjugacy}
\hat\Psi\circ F=F_{1,0,0}\circ \hat\Psi.
\end{equation}
Moreover, any other formal diffeomorphism $\hat\Psi'$ of the form (\ref{eq:FormalDiffeoTangentIdentity})
satisfying (\ref{eq:FormalConjugacy}) writes
\begin{equation}\label{eq:LackUnicityFormalConjugacy}
\hat\Psi'=\Phi\circ\hat\Psi\ \ \ \text{where}\ \ \ \Phi(z,y)=(z,\frac{y}{1-ty})=(z,y+ty^2+\cdots),\ \ \ t\in\C.
\end{equation}
\end{prop}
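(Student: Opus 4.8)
The plan is to prove Proposition \ref{prop:PrenormalFormU100} in three stages: first reduce a general neighborhood to the prenormal form (\ref{eq:PrenormalFormU100}), then solve the conjugacy equation (\ref{eq:FormalConjugacy}) order-by-order in $y$, and finally analyze the lack of uniqueness by studying the centralizer of $F_{1,0,0}$ among formal diffeomorphisms tangent to the identity along $C$.

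\textbf{Step 1: Reduction to prenormal form.} Starting from Lemma \ref{L:firstpresentation}, we have $F(z,y)=(qz+yf(z,y),\lambda(z)y+y^2g(z,y))$ with $\lambda\in\mathcal O^*(\C^*_z)$. The formal class is detected by the action of $F$ on the normal bundle and its infinitesimal neighborhoods. I would first normalize the linear part: since $\deg N_C=0$ and the normal bundle is trivial, one shows that after a coordinate change $z\mapsto z+O(y)$, $y\mapsto u(z)y+O(y^2)$ one can arrange $\lambda(z)\equiv 1$ (this uses that the cohomological equation $u(qz)=\lambda(z)u(z)$ is solvable in $\mathcal O^*(\C^*_z)$ precisely when $N_C=\mathcal O_C$, i.e. the multiplier $\prod$ over a fundamental domain is $1$). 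Once $\lambda\equiv 1$, the Ueda type $k=1$ condition together with the specific formal model $(1,0,0)$ says that the coefficient of $y^2$ in the $y$-component is a nonvanishing function of $z$ which is cohomologous to the constant $1$, and the coefficient of $y^3$ is then pinned to $1$ as well after a further change; the relevant computations are essentially those of \cite{LTT} and I would quote them. The key point is that each successive normalization is governed by a linear cohomological equation over $\C^*_z$ of the form $a(qz)-a(z)=(\text{known})$, which is always solvable in $\mathcal O(\C^*_z)$ because $H^1$ of the relevant sheaf on the elliptic curve vanishes in the ranges that occur (no resonance obstructs these particular coefficients — the obstructions, if any, would only appear at the level of the genuine formal invariants $\nu,P$, which here are $0$). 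Conversely, any $F$ of the form (\ref{eq:PrenormalFormU100}) is formally conjugate to $F_{1,0,0}$ by Step 2, so the equivalence is established.

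\textbf{Step 2: Solving the conjugacy equation.} Write $\hat\Psi(z,y)=(z+\sum_{n\ge1}a_n(z)y^n,\,y+\sum_{n\ge2}b_n(z)y^n)$ and impose $\hat\Psi\circ F=F_{1,0,0}\circ\hat\Psi$. Expanding both sides in powers of $y$ and comparing coefficients, at order $y^n$ one obtains a pair of linear equations for $(a_{n-1},b_n)$ of the schematic shape
\begin{equation}\label{eq:cohomstep}
a_{n-1}(qz)-a_{n-1}(z)=A_n(z),\qquad b_n(qz)-b_n(z)=B_n(z),
\end{equation}
where $A_n,B_n\in\mathcal O(\C^*_z)$ are explicit polynomial expressions in the lower-order unknowns $a_1,\dots,a_{n-2}$, $b_2,\dots,b_{n-1}$ and the coefficients of $F$ and $F_{1,0,0}$. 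The shift operator $g\mapsto g(qz)-g(z)$ on $\mathcal O(\C^*_z)$ is surjective (its cokernel is $H^1(C,\mathcal O_C)\cong\C$, detected by the residue / the Laurent coefficient of order $0$ under the identification $z=e^{2i\pi x}$, and one checks the constant term of $A_n$, $B_n$ can always be absorbed — indeed for $b_n$ the freedom reflects exactly the one-parameter ambiguity of (\ref{eq:LackUnicityFormalConjugacy})). Hence (\ref{eq:cohomstep}) is solvable at every step, with $a_{n-1}$ unique and $b_n$ unique up to an additive constant; choosing these constants, say by normalizing $b_n$ to have vanishing $0$-th Laurent coefficient for $n\ge3$ while leaving $b_2$'s constant free (or fixing $b_2$ as well to get strict uniqueness of one branch), produces $\hat\Psi$. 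No convergence in $y$ is claimed or needed: each $a_n,b_n$ is a genuine element of $\mathcal O(\C^*_z)$, and $\hat\Psi$ is a formal power series in $y$ with holomorphic coefficients. The main obstacle in this step is bookkeeping the precise form of $A_n,B_n$ so as to see that the obstruction (the order-$0$ Laurent coefficient) indeed lands in the expected one-dimensional space — but this is a routine, if tedious, verification, and the structure is forced by the fact that $v_0$ generates the centralizer (see Step 3).

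\textbf{Step 3: Uniqueness up to the flow of $v_0$.} Suppose $\hat\Psi'$ also satisfies (\ref{eq:FormalConjugacy}). Then $\Phi:=\hat\Psi'\circ\hat\Psi^{-1}$ is a formal diffeomorphism tangent to the identity along $C$ commuting with $F_{1,0,0}$. The claim is that the formal centralizer of $F_{1,0,0}=(qz,\frac{y}{1-y})$ inside diffeomorphisms tangent to the identity along $C$ is exactly the one-parameter group $\{(z,\frac{y}{1-ty})\}_{t\in\C}$, i.e. the time-$t$ flow of the vector field $v_0=y^2\partial_y$ (up to reparametrization). To see this, write $\Phi(z,y)=(z+\sum\alpha_n(z)y^n,\,y+\sum\beta_n(z)y^n)$ and impose $\Phi\circ F_{1,0,0}=F_{1,0,0}\circ\Phi$; comparing coefficients gives at each order a \emph{homogeneous} version of (\ref{eq:cohomstep}), namely $\alpha_{n-1}(qz)=\alpha_{n-1}(z)$ forcing $\alpha_{n-1}$ constant, hence $\alpha_{n-1}\equiv0$ since it is also the value of a holomorphic function on $C$ vanishing at the right order — so $\Phi$ preserves the fibration $z=\text{const}$ — and then a triangular recursion in the $\beta_n$ whose solution space is one-dimensional, parametrized by $\beta_2$, with all higher $\beta_n$ determined. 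Matching this one-parameter family against the explicit flow $(z,\frac{y}{1-ty})$, which manifestly commutes with $F_{1,0,0}$ (both $z\mapsto qz$ and $y\mapsto\frac{y}{1-y}$ are of this type and these maps commute), identifies $\Phi$ with $(z,\frac{y}{1-ty})$ for the appropriate $t$, giving (\ref{eq:LackUnicityFormalConjugacy}). I expect Step 3's centralizer computation to be the conceptual heart of the argument, though it is short; Step 1's reduction, while more laborious, is largely a citation of \cite{LTT}.
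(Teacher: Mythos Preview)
Your Steps 1--2 follow essentially the same strategy as the paper: iteratively solve cohomological equations of the form $\phi(qz)-\phi(z)=g(z)$ on $\C^*_z$ to normalize $F$ order by order. Two imprecisions worth noting. First, you write that the shift operator is ``surjective (its cokernel is $\C$)''---these contradict each other; the cokernel is indeed $\C$ (detected by the $z^0$ Laurent coefficient), and the paper handles the resulting obstruction explicitly by introducing, at step $N$, free constants in the conjugating map that absorb the constant term at the next order. Second, the $z$-component of the conjugacy equation does not yield $a_{n-1}(qz)-a_{n-1}(z)=A_n$ but rather $a_{n}(qz)-q\,a_{n}(z)=A_n$; the paper removes the factor $q$ by writing $a_n(z)=z\tilde a_n(z)$ (compare Remark~\ref{R:renorm}), after which your schematic form becomes correct.

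Your Step 3 takes a genuinely different route. The paper does not compute the centralizer of $F_{1,0,0}$ by a direct recursion; instead it invokes Lemma~\ref{L:formalcentralizer}, whose proof uses the classification of formal foliations on $(S_0,C)$ from \cite{LTT}: any formal automorphism must preserve the two-dimensional space $E=\C\frac{dz}{z}\oplus\C\,d\xi$ of closed $1$-forms, and this rigidity forces $\Phi(z,\xi)=(z,\xi+\gamma)$, i.e.\ $\Phi(z,y)=(z,\frac{y}{1-ty})$. Your direct approach can be made to work and is more self-contained, but your sketch contains an error: the homogeneous equation for the $z$-component is $\alpha_n(qz)=q\,\alpha_n(z)+(\text{lower order})$, not $\alpha_n(qz)=\alpha_n(z)$. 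At leading order this gives $\alpha_1(z)=cz$ rather than $\alpha_1$ constant, and one must push the recursion one step further (or use the $y$-component constraint) to force $c=0$. The paper's foliation argument sidesteps this bookkeeping entirely.
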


\begin{proof}Let $g=\sum_{n\in\Z}g_nz^n$ be holomorphic on $\C_z^*$. The functional equation 
\begin{equation}\label{eq:qdiffeq0}
\phi(qz)-\phi(z)=g(z)
\end{equation}
admits a solution $\phi$ holomorphic on $\C_z^*$ if, and only if, $g_0=0$; then $\phi$ is unique up to the choice of $\phi(0)$.
Indeed, if we write $\phi(z)=\sum_{n\in\Z}\phi_nz^n$, then equation (\ref{eq:qdiffeq0}) writes $\phi_n(q^n-1)=g_n$ for all $n$.

Let $f$ be a holomorphic non vanishing function on $\C_z^*$. The functional equation 
\begin{equation}\label{eq:qdiffeq}
\varphi(qz)/\varphi(z)=f(z)
\end{equation}
admits a solution $\varphi$ holomorphic and non vanishing on $\C_z^*$ if, and only if, 
\begin{itemize}
\item $f:\C^*\to\C^*$ has topological index 0 so that $g=\log(f)$ is well-defined,
\item the coefficient $g_0$ of $g=\sum_{n\in\Z}g_nz^n$ vanishes.
\end{itemize}
Indeed, topological index is multiplicative and those of $\varphi(qz)$ and $\varphi(z)$ 
are equal and cancel each other.
Then we can solve the corresponding equation (\ref{eq:qdiffeq}) for $g$ and set $\varphi=\exp(\phi)$,
which is unique up to a multiplicative constant. Note that, if $g_0\not=0$, then we can solve
\begin{equation}\label{eq:qdiffeqa}
\varphi(qz)/\varphi(z)=\frac{f(z)}{a}
\end{equation}
for $a=\exp(g_0)$.

Let us start with $F$ like in (\ref{eq:SiuNeighborhood}). The change of coordinate $\Psi_1(z,y)=(z,f(z)y)$ yields
$$\Psi_1^{-1}\circ F\circ\Psi_1(z,y)=(qz+O(y),\frac{\varphi(z)}{\varphi(qz)}f(z)y+O(y^2)).$$
We can easily check that the coefficient $f$ in $F$ defines the normal bundle $N_C$ in the quotient,
and its topological index coincides with $\deg(N_C)$ which is zero in our case. Then we can find
$\varphi\in\OO^*(\C_z^*)$ satisfying (\ref{eq:qdiffeq}) and get
$$F_1(z,y)=\Psi_1^{-1}\circ F\circ\Psi_1(z,y)=(qz+O(y),ay+O(y^2)).$$
Moreover, $\varphi$ is unique up to a multiplicative constant. The coefficient $a$ can be interpreted 
as a flat connection on $N_C$ with trivial monodromy along the loop $1\in\Gamma$
and monodromy $a$ along the loop $\tau\in\Gamma$. 
In our case, $N_C=\OO_C$ and $a=1$ and we can write 
$$F_1(z,y)=(qz+O(y),y+g(z)y^2+O(y^3)).$$
Now the change of coordinate $\Psi_2(z,y)=(z,y+\phi(z)y^2)$ gives
$$\Psi_2^{-1}\circ F_1\circ\Psi_2(z,y)=(qz+O(y),y+[g(z)+\phi(z)-\phi(qz)]y^2+O(y^3)).$$
Solving equation (\ref{eq:qdiffeq0}), we get 
$$F_2(z,y)=\Psi_2^{-1}\circ F_1\circ\Psi_2(z,y)=(qz+O(y),y+by^2+O(y^3)).$$
In our case, $b\not=0$ (i.e. Ueda type $k=1$). By using a change $(z,\lambda y)$
(freedom in the choice of $\varphi$ above) we can set $b=1$ and write 
$$F_2(z,y)=(qz+zf(z)y+ O(y^2),y+y^2+g(z)y^3+O(y^4)).$$ 
The change of coordinate 
$\Psi_3(z,y)=(z+\varphi(z)y,y+\phi(z)y^3)$ gives 
$$F_3(z,y)=\Psi_3^{-1}\circ F_2\circ\Psi_3(z,y)=$$
$$(qz+z[f(z)+\varphi(z)-\varphi(qz)]y+O(y^2),y+y^2+[g(z)+\phi(z)-\phi(qz)]y^3+O(y^4)).$$
Solving twice equation (\ref{eq:qdiffeq0}), we get 
$$F_3(z,y)=(qz+\alpha zy+O(y^2),y+y^2+\beta y^3+O(y^4)).$$
Here, we have no freedom and $\alpha,\beta$ are formal invariants corresponding to $\mu,\nu$ in the end of Section \ref{sec:IntroAnalClass}: in the formal class $U_{1,0,0}$
we get $\alpha=0$ and $\beta=1$. Then, we can kill-out all higher order terms in $F$ by a formal change of 
coordinate, or better normalize it to $F_{1,0,0}$. Indeed, at the $N^{th}$ step, we get 
$$F_N(z,y)=(qz+zf(z)y^{N-1}+O(y^N),y+y^2+y^3+\cdots+g(z)y^{N+1}+O(y^{N+2}));$$
the coordinate change $\Psi_{N+1}(z,y)=(z+azy^{N-2}+\varphi(z)y^{N-1},y+by^N+\phi(z)y^{N+1})$ gives
$$F_{N+1}(z,y)=\Psi_{N+1}^{-1}\circ F_N\circ\Psi_{N+1}(z,y)=$$
$$(qz+z[f(z)+\varphi(z)-\varphi(qz)-(N-2)aq]y^{N-1}+O(y^N),$$
$$y+y^2+y^3+\cdots+[g(z)+\phi(z)-\phi(qz)-(N-4)b]y^{N+1}+O(y^{N+2})).$$
We can clearly normalize the two coefficients into brackets by a constant, and can even choose
the constant by means of $a,b$. 

The composition of all changes of coordinates $\hat\Psi^{-1}:=\Psi_1\circ\Psi_2\circ\Psi_3\circ\cdots$
converges in the formal topology as a formal diffeomorphism satisfying (\ref{eq:FormalConjugacy}).
For any other formal diffeomorphism $\hat\Psi'$ of the form (\ref{eq:FormalDiffeoTangentIdentity})
satisfying (\ref{eq:FormalConjugacy}), we have that $\hat\Phi:=\hat\Psi'\circ\hat\Psi^{-1}$ is 
an automorphism of $(U_0,C)$ inducing the identity on $C$. 
As we shall see in Lemma \ref{L:formalcentralizer}, $\hat\Phi$ is necessarily convergent and of the form (\ref{eq:LackUnicityFormalConjugacy}).
\end{proof}

\section{Sectorial decomposition and sectorial symmetries}\label{sec:SectorialDecompSym}
In this section, we introduce the sectorial decomposition of $U$ by transversely sectorial domains $U_i=\Pi^{-1}(V_i^*)$
and compare spaces of functions on both sides. From now on, we work in the variable $\xi=1/y$,
at the neighborhood of $\xi=\infty$; this is much more convenient for computations.
Notations are as in Section \ref{sec:IntroFondamIso}.

\subsection{Some sheaves of functions on the circle of directions}\label{SS:sheaves}
Let $\bS^1:=\R/2\pi\mathbb Z$  and $I$ be an open interval of $\R$ 
(regarded as the universal covering of $\bS^1$). 

\begin{dfn}\label{def:sector}
For $(c,R)\in ]0,+\infty]\times [0,+\infty[,$ denote by
$$S(I,R;c)=\{(z,\xi)\in\C^*_z \times {\C}_\xi\ ;\ \arg(\xi)\subset I,\ R<|\xi|,\ e^{-c}<|z|<e^c\}.$$
A sector of opening $I$ is an open subset $\Sigma_I\subset S(I,0;\infty)$
such that for all $c>>0$, there exists $R_c>0$ such that
$$S(I,R_c;c)\subset\Sigma_I.$$
\end{dfn}

Let $\Sigma_I$ be an open sector as above. Then, ${\OO}(\Sigma_I)$ contains the subalgebra 
${\mc A}(\Sigma_I)$ of holomorphic functions admitting an asymptotic expansion along ${\C}_z^*$ in the sense defined below:

\begin{dfn}\label{def:asymptoticexpansion}
Let $m$ be a positive integer. A function $f\in{\OO}(\Sigma_I)$ belongs to ${\mathcal A}^m(\Sigma_I)$ 
if there exists a polynomial $P_m(f)=\sum_{0\leq k\leq m} a_k\xi^{-k}\in{\OO}({\C}_z^*)[\xi^{-1}]$ 
such that $\forall\ c>>0$,
$\exists\ C_{c},R_{c} >0$ such that $\forall (z,\xi)\in S(I,R_{c};c)\subset \Sigma_I$, we have

\begin{equation}\label{E:asymptexpan}
\left| f(z,\xi)-P_m(f)(z,\xi)\right|\leq \frac{C_{c}}{|\xi^{m+1}|}.
\end{equation}
Note that $P_m(f)$ is necessarily unique. 

Define $$\mathcal A(\Sigma_I)= \bigcap_m {\mathcal A}^m(\Sigma_I).$$ Then one can associate to $f$ its \textit{asymptotic expansion} along $\{\xi=\infty\}$. This is a formal power series $\hat f\in {\OO}({\C}_z^*)[[\xi^{-1}]]$ whose truncation at order $m$ coincide with $P_m(f)$. The asymptotic expansion is then unique, and we have a well-defined morphism of $\C$-algebras
$$\mathcal A(\Sigma_I)\to{\OO}({\C}_z^*)[[\xi]]\ ;\ f\mapsto \hat f,$$
whose kernel, denoted ${\mathcal A}^\infty(\Sigma_I)$, consists of flat functions.
\end{dfn}

When fixing only $I$ and taking inductive limits associated to restriction maps, 
the collection of algebras of the form ${\OO}(\Sigma_I)$ define an algebra of germs  ${\mathcal O}_I$.  
The presheaf on $\bS^1$ defined by $I\to {\OO}_I$ naturally gives rise to a sheaf on $\bS^1$ which we will denote by $\mathcal O$. One can define on the same way the sheaves ${\mathcal A}^m$, ${\mathcal A}$, $\mathcal A^\infty$ respectively associated to $I\to {\mathcal A}_I^m$, $I\to {\mathcal A}_I$, $I\to {\mathcal A}_I^\infty$ and the last two are sheaves of differential algebras with respect to $\partial_z$ and $\partial_\xi$.
The stability by derivation is indeed a straighforward consequence of Cauchy's formula. 
As the asymptotic expansion is independant of the representative, we have a morphism of sheaves
$$\mathcal A\to{\OO}({\C}_z^*)[[\xi]]\ ;\ f\mapsto \hat f$$
whose kernel is $\mathcal A_I^\infty$ (here ${\OO}({\C}_z^*)[[\xi]]$ is viewed as a constant sheaf over $\bS^1$).

\begin{remark}
Mind that the inclusion ${\mathcal O}_I\to {\mathcal O}(I)$ (resp. ${\mathcal A}_I\to {\mathcal A}(I)$) is  strict. 
For instance, one must think that a section  $f\in{\mathcal A}(I)$ can be represented for every interval $J\Subset I$ by a function belonging to ${\mathcal A}(\Sigma_J)$ for suitable sectors of opening $J$ but does not necessarily admit a representative on a sector of the form $\Sigma_I$. In other words, the domain of definition of $f$ is a transversely sectorial open set in the following sense.
\end{remark}

\begin{dfn}\label{def:sectorialopen}
Given an interval $I=]\theta_1,\theta_2[\subset\R$, an open subset $\Sigma\subset S(I,0;\infty)\subset \C_z^*\times\C_\xi$ is said transversely sectorial of opening $I$ if, for arbitrary large $c>>0$ and small $\epsilon>0$, there exists $R_{c,\epsilon}>0$ such that
$$\mathcal S(I_\epsilon,R_{c,\epsilon};c)\subset\Sigma,\ \ \ \text{where}\ I_\epsilon=]\theta_1+\epsilon,\theta_2-\epsilon[.$$
\end{dfn}

\begin{remark} The sheaves $\OO$, $\mc A$ and $\mc A^\infty$ are invariant
under the action of a diffeomorphism $F$ of the form (\ref{eq:PrenormalFormU100}) (expressed in the $(z,\xi)$ coordinates).
Moreover, this action is stalk-preserving due to the fact that $F$ is tangent to the identity
along $\tilde C$ on the transversal direction $\xi$. In particular, they
define similar sheaves of sectorial functions on the quotient $(U,C)=(\tilde U,\tilde C)/<F>$ by considering
those sections invariant under $F$. We will denote by $\OO[F]$, $\mc A[F]$ and $\mc A^\infty[F]$
these latter sheaves. In the next section, we characterize sections of $\mc A^\infty[F_{1,0,0}](I)$ for 
special intervals $I$.
\end{remark}

\subsection{Sectorial decomposition}\label{S:SectorialDecomposition}
Denote $\varpi=\arg(\tau)\in]0,\pi[$ and let us define\footnote{Mind that these intervals for $\arg(\xi)$ 
correspond to those defined in Lemma \ref{LEM:sectorialnormalization} for $\arg(y)=-\arg(\xi)$.} 
$$I_1=]-\varpi,\pi-\varpi[,\ \ \ I_2=]-\pi,0[,\ \ \ I_3=I_1+\pi\ \ \ \text{and}\ \ \ I_4=I_2+\pi.$$
Denote by $V_i$ a (small enough) neighborhood of $L_i\subset\PP^1_X\times\PP^1_Y$ where
$$L_1:\{Y=0\},\ \ \ L_2:\{X=\infty\},\ \ \ L_3:\{Y=\infty\}\ \ \ \text{and}\ \ \ L_4:\{X=0\}.$$
Denote $D=L_1\cup L_2\cup L_3\cup L_4$, and $V_i^*=V_i\setminus D$.
Let $V_{i,i+1}=V_i\cap V_{i+1}$ for $i\in\Z_4$ and $V_{i,i+1}^*=V_{i,i+1}\setminus D$.
Recall that
$$\Pi:S_0\setminus C \stackrel{\sim}{\longrightarrow}\C_X^*\times\C_Y^*\ ;\ (z,\xi)\mapsto(e^{2i\pi\xi},ze^{2i\pi\tau\xi}).$$
Then we have:

\begin{figure}[htbp]
\begin{center}
\includegraphics[scale=0.5]{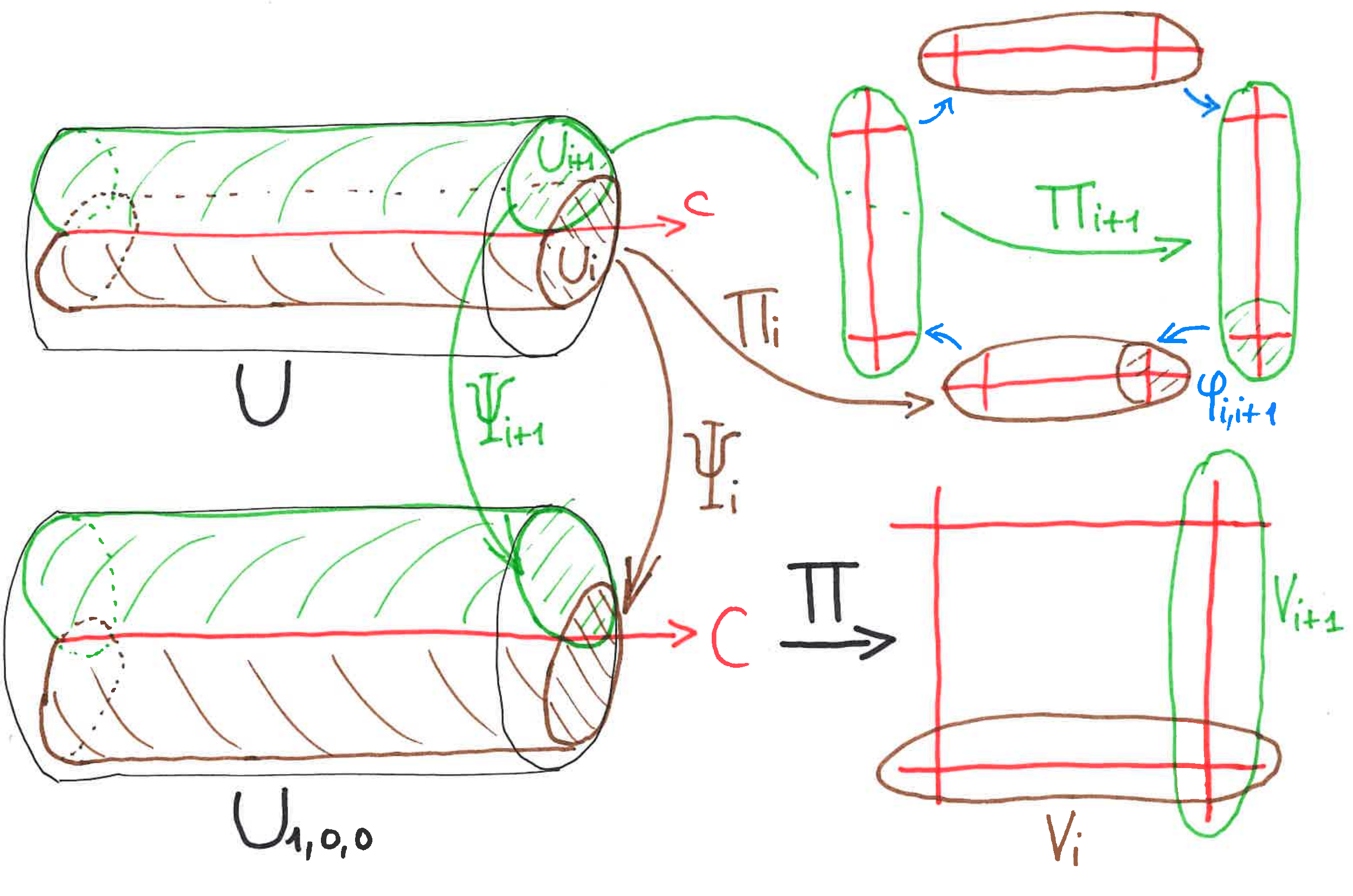}
\caption{{\bf Sectorial open sets}}
\label{pic:SecOpen}
\end{center}
\end{figure}

\begin{prop}\label{prop:sectorialdecomposition}
The preimage $U_i=\Pi^{-1}(V_i^*)$ lifts on $\tilde U=\C^*_z\times\C_\xi$ as a transversely sectorial open set of opening $I_i$
(in the sense of Definition \ref{def:sectorialopen}). Moreover, the lift of $\Pi^{-1}(V_{i,i+1}^*)=U_i\cap U_{i+1}$ is a transversely sectorial of opening 
$I_i\cap I_{i+1}$.
\end{prop}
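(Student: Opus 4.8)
The plan is to make the lift of each $U_i=\Pi^{-1}(V_i^*)$ completely explicit and then to read the transversely sectorial structure off a couple of elementary identities. Since $\Pi(z,\xi)=(e^{2i\pi\xi},z\,e^{2i\pi\tau\xi})$ is invariant under $F_{1,0,0}(z,\xi)=(qz,\xi-1)$, the lift of $U_i$ to $\tilde U$ is the $F_{1,0,0}$-saturated set $\{(z,\xi)\in\tilde U\ ;\ (e^{2i\pi\xi},z\,e^{2i\pi\tau\xi})\in V_i^*\}$. Taking $V_i$ of the announced product form, with disc of radius $\rho$, the conditions $(X,Y)\in V_i^*$ reduce (the finiteness and non-vanishing of $X,Y$ being automatic) to $|z\,e^{2i\pi\tau\xi}|<\rho$ for $i=1$, $|e^{2i\pi\xi}|>\rho^{-1}$ for $i=2$, $|z\,e^{2i\pi\tau\xi}|>\rho^{-1}$ for $i=3$, $|e^{2i\pi\xi}|<\rho$ for $i=4$. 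Everything then follows from
\[
|e^{2i\pi\xi}|=e^{-2\pi\,\im(\xi)},\qquad |z\,e^{2i\pi\tau\xi}|=|z|\,e^{-2\pi\,\im(\tau\xi)},\qquad \im(\tau\xi)=|\tau|\,|\xi|\,\sin(\varpi+\arg\xi).
\]

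Next I would verify the two inclusions in Definition \ref{def:sectorialopen}. For the containment of sectors, fix $c,\epsilon$ and restrict to $e^{-c}<|z|<e^{c}$, where $\log|z|$ is bounded; if $\arg\xi$ lies in the interval $I_{i,\epsilon}$ got by shrinking $I_i$ by $\epsilon$ at each end, then $\sin(\arg\xi)$ (for $i=2,4$), resp. $\sin(\varpi+\arg\xi)$ (for $i=1,3$), is bounded away from $0$ with the sign that forces the relevant inequality to hold as soon as $|\xi|>R(c,\epsilon)$, which is exactly $\mathcal S(I_{i,\epsilon},R(c,\epsilon);c)\subset U_i$. For the opposite inclusion $U_i\subset S(I_i,0;\infty)$ I would restrict instead to a fundamental annulus $1\le|z|<|q|^{-1}$ for $z\mapsto qz$: there $\log|z|\ge0$, so whenever $\arg\xi\notin I_i$ the relevant sine is $\le0$ and the inequality fails once $|\xi|$ is large, i.e. the germ of the lift along $\tilde C$ sits in $\{\arg\xi\in I_i\}$. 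Together these give that the lift of $U_i$ is transversely sectorial of opening $I_i$. The assertion about $U_i\cap U_{i+1}=\Pi^{-1}(V_{i,i+1}^*)$ is then immediate, since it is cut out by the conjunction of the two systems of inequalities: the same estimates make it transversely sectorial of opening $I_i\cap I_{i+1}$, which a short check shows is a nonempty arc of length $\varpi$ for $i=1,3$ and $\pi-\varpi$ for $i=2,4$ (e.g. $I_1\cap I_2=\,]-\varpi,0[$, $I_4\cap I_1=\,]0,\pi-\varpi[$), using only $\varpi=\arg\tau\in\,]0,\pi[$.

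The inequalities are routine; the point needing care is their interplay with the $F_{1,0,0}$-action in Definition \ref{def:sectorialopen}. Because $F_{1,0,0}$ shifts $\xi$ and rescales $|z|$, the full $F_{1,0,0}$-saturated set $\Pi^{-1}(V_i^*)$ is not literally contained in $S(I_i,0;\infty)$: its orbits drift in $\arg\xi$ as $|z|\to0,\infty$. So the upper inclusion must be stated for a suitable representative of the germ along $C$ (equivalently, after cutting down to a fundamental annulus before lifting), and I would check that this does not spoil the containment of the sectors $\mathcal C\times\mathcal S(I_{i,\epsilon},r)$, which is clear since these are $F_{1,0,0}$-translates of pieces lying over the fundamental annulus. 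Finally I plan to confirm that the intervals $I_i$ here match, through $\arg(y)=-\arg(\xi)$, the ones in Lemma \ref{LEM:sectorialnormalization}, so that the two sectorial pictures agree.
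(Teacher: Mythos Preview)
Your approach is exactly the paper's: write the conditions $|X|\lessgtr\rho$, $|Y|\lessgtr\rho$ as inequalities on $\im(\xi)$ and $\im(\tau\xi)$, and read off the angular intervals. The paper's own proof is in fact terser than yours: it only exhibits, for the single case $U_{4,1}$, a sector $\{e^{-c}<|z|<e^c,\ \im(\xi)>\tfrac{a}{2\pi},\ \im(\tau\xi)>\tfrac{b+c}{2\pi}\}$ contained in the lift, and declares the remaining cases ``similar and straightforward''.

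One comment on the subtlety you raise. You are right that the full $F_{1,0,0}$-saturated preimage $\Pi^{-1}(V_i^*)$ is not literally contained in $S(I_i,0;\infty)$, so Definition~\ref{def:sectorialopen} does not apply to it verbatim; the paper does not address this. But your proposed fix via a fundamental annulus is a bit tangled: restricting $z$ to $1\le|z|<|q|^{-1}$ kills the containment of sectors with large $c$, and your appeal to $F_{1,0,0}$-translates then just returns you to the full saturated set. The clean resolution is simply to take $\Sigma:=\Pi^{-1}(V_i^*)\cap S(I_i,0;\infty)$: it is contained in $S(I_i,0;\infty)$ by construction, it still contains every sector $\mathcal S(I_{i,\epsilon},R;c)$ (these already lie in $S(I_i,0;\infty)$), and it has the same image in $U_{1,0,0}$ as $\Pi^{-1}(V_i^*)$ since, as your fundamental-annulus computation shows, every $F_{1,0,0}$-orbit in $\Pi^{-1}(V_i^*)$ meets $S(I_i,0;\infty)$. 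With that adjustment your argument is complete.
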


\begin{proof} For instance, for $a,b,c>0$, we easily check that
$$U_{4,1}=\{(X,Y)\in\C^*\times\C^*\ ;\ \vert X\vert<\exp(-a),\ \vert Y\vert<\exp(-b)\}$$
contains the sectorial open set 
$$\left\{(z,\xi)\in\C^*\times\C\ ;\ e^{-c}<\vert z\vert<e^c,\ \im(\xi)>\frac{a}{2\pi},\ \im(\tau\xi)>\frac{b+c}{2\pi}\right\}.$$
The remaining cases are similar and straightforward.
\end{proof}

Denote $p_{i,i+1}=L_i\cap L_{i+1}$. 
Denote by $\OO^0(V_i,L_i)$ (resp. $\OO^0(V_{i,i+1},p_{i,i+1})$) the set of germs of holomorphic functions 
on $(V_i,L_i)$ (resp.  $(V_{i,i+1},p_{i,i+1})$) vanishing along $L_i$ (resp. at $p_{i,i+1}$). 
Denote by $\mc A^\infty[F_{1,0,0}]$ the subsheaf
of $\mc A^\infty$ whose sections $f$ are invariant by $F_{1,0,0}(z,\xi)=(qz,\xi-1)$.

\begin{prop}\label{prop:CharacSectorialFunctionsF0}
A section $f\in\OO(I_i)$ (resp. $\OO(I_{i,i+1})$) 
belongs to $\mc A^\infty[F_{1,0,0}](I_i)$ (resp. $\mc A^\infty[F_{1,0,0}](I_{i,i+1})$
if, and only if, $f=g\circ\Pi$ with $g\in\OO^0(V_i,L_i)$ (resp. $\OO^0(V_{i,i+1},p_{i,i+1})$).
\end{prop}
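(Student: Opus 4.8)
The plan is to pass through the Serre isomorphism $\Pi$ and reduce the statement to a removable-singularity property across the divisor $D$. The key structural inputs are that $\Pi$ is $F_{1,0,0}$-invariant (indeed $\Pi\circ F_{1,0,0}=\Pi$, since $q=e^{2i\pi\tau}$, so in particular any $f=g\circ\Pi$ is automatically $F_{1,0,0}$-invariant), and that, by Proposition \ref{prop:sectorialdecomposition} together with the germ correspondence $U_i=\Pi^{-1}(V_i^*)$, the map $\Pi$ carries the transversely sectorial domains of opening $I_i$ onto germs of $V_i^*=V_i\setminus D$ along $L_i$ (resp. of $V_{i,i+1}^*$ at $p_{i,i+1}$). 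Hence the rule $f=g\circ\Pi$ already identifies $\OO[F_{1,0,0}](I_i)$ with the space of germs along $L_i$ of holomorphic functions on $V_i^*$, and $\OO^0(V_i,L_i)$ embeds in the latter by trivial extension; the content of the proposition is therefore to show that such an $f$ lies in the subsheaf $\mc A^\infty$ exactly when the corresponding $g$ extends holomorphically across $L_i$ and vanishes along it (resp. extends across $p_{i,i+1}$ and vanishes there).

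For the implication $g\in\OO^0(V_i,L_i)\Rightarrow f=g\circ\Pi\in\mc A^\infty[F_{1,0,0}](I_i)$, I would write $g=t_i\,h$, where $t_i$ is the coordinate cutting out $L_i$ (so $t_1=Y$, $t_2=1/X$, $t_3=1/Y$, $t_4=X$) and $h$ is holomorphic, hence bounded by some $M$ on a relatively compact representative of $(V_i,L_i)$. Then I would estimate $f$ on each sector $S(J,R;c)$ with $J\Subset I_i$: by Proposition \ref{prop:sectorialdecomposition}, $\Pi(S(J,R;c))$ lands in that relatively compact representative once $R$ is large, while a direct computation of $|t_i\circ\Pi|$ in the $(z,\xi)$-variables --- using that on $J$ the relevant sine ($\sin(\varpi+\arg\xi)$, $-\sin(\arg\xi)$, etc., according to $i$) is bounded below by some $\delta>0$ --- gives $|t_i\circ\Pi(z,\xi)|\le e^{c}e^{-2\pi\delta|\xi|}$. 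Therefore $|f|\le Me^{c}e^{-2\pi\delta|\xi|}$ on $S(J,R;c)$, which is flat of every order; letting $J$ exhaust $I_i$ yields $f\in\mc A^\infty[F_{1,0,0}](I_i)$. The overlap case is identical once one writes $g=t_i\,a+t_{i+1}\,b$ (Taylor expansion at $p_{i,i+1}$, using $g(p_{i,i+1})=0$) and notes that both $|t_i\circ\Pi|$ and $|t_{i+1}\circ\Pi|$ are exponentially small on $J\Subset I_i\cap I_{i+1}$.

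For the converse, let $f\in\mc A^\infty[F_{1,0,0}](I_i)$ and let $g$ be the corresponding germ of holomorphic function on $V_i^*$ near $L_i$. Already the zeroth-order estimate $|f|\le C_c/|\xi|$ translates, through the same relation between $|\xi|$ and the distance to $D$ (applied on the standard sectors $S(I_{i,\epsilon},R_c;c)$, to which one reduces using the $F_{1,0,0}$-invariance of both $f$ and the estimate), into: $g$ is bounded on a punctured neighborhood of $L_i$ and $g\to0$ as one approaches $L_i$. Away from the corners $p_{i-1,i}$, $p_{i,i+1}$, the set $L_i$ is cut out by a single coordinate, so Riemann's removable singularity theorem (slice-wise in that coordinate, then jointly via the Cauchy integral in it) extends $g$ holomorphically across $L_i$, and the extension vanishes on $L_i$ by continuity; at each corner $p_{i,i+1}$, $V_{i,i+1}^*$ is a bidisc minus its two coordinate axes, and the removable-singularity theorem for bounded holomorphic functions on $(\mathbb D^*)^2$ extends $g$ across $p_{i,i+1}$, with value $0$ at $p_{i,i+1}$ by continuity. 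By the identity theorem these local extensions glue to a single $g\in\OO^0(V_i,L_i)$ with $f=g\circ\Pi$. The overlap statement is the same argument localized at $p_{i,i+1}$, with $\OO^0(V_{i,i+1},p_{i,i+1})$ meaning vanishing at the point.

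The main obstacle is not the removable-singularity input, which is classical, but the dictionary of the first paragraph: one must make precise that $\Pi$ sends the transversely sectorial domains of opening $I_i$ onto germs of the $V_i^*$, and one must control the degeneration of the decay rate of $|t_i\circ\Pi|$ as $\arg\xi$ tends to the endpoints of $I_i$ (where the relevant sine vanishes) --- which is exactly why one repositions points into the standard sectors $S(I_{i,\epsilon},R_c;c)$ via $F_{1,0,0}$-invariance before estimating. Both points are essentially the content of Proposition \ref{prop:sectorialdecomposition}, so once they are granted the remaining argument is routine.
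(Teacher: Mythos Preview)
Your forward direction is fine and matches the paper. The converse, however, has a genuine gap. You assert that ``$\Pi$ carries the transversely sectorial domains of opening $I_i$ onto germs of $V_i^*$'', and then invoke Riemann's removable-singularity theorem for $g$ on a punctured neighborhood (resp.\ on $(\mathbb D^*)^2$). But Proposition~\ref{prop:sectorialdecomposition} only says that the \emph{preimage} $\Pi^{-1}(V_i^*)$ is transversely sectorial; it says nothing about the \emph{image} of an arbitrary transversely sectorial domain under $\Pi$. Since Definition~\ref{def:sectorialopen} allows $R_{c,\epsilon}$ to blow up as $\epsilon\to0$ or $c\to\infty$, the domain of a section $f\in\mc A^\infty[F_{1,0,0}](I_{i,i+1})$ may map under $\Pi$ to a set that misses points $(X_0,Y_0)$ with $|X_0|,|Y_0|$ of moderate size; the paper itself flags this (``$\Pi(U_{4,1})$ might not be of the form $W\setminus(W\cap D)$''). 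So $g$ is not a priori defined on a punctured bidisc, and your appeal to the removable-singularity theorem is unjustified. The ``repositioning via $F_{1,0,0}$-invariance'' you mention does not help here: iterating $F_{1,0,0}$ moves $z$ but keeps $\Pi(z,\xi)$ fixed, so it cannot enlarge $\Pi(\Sigma)$.

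The paper circumvents this by a different mechanism. It shows that $\Pi(\Sigma)$ contains a \emph{Reinhardt domain} $W'$ (by saturating the image of a slightly smaller sectorial set under the $(\bS^1)^2$-action), on which $g$ has a convergent Laurent expansion $\sum a_{m,n}X^mY^n$. The coefficients $a_{m,n}$ are then bounded by a torus integral over the orbit of a single point $(z_0,\xi_0)$, giving $|a_{m,n}|\le\|f\|\,|z_0|^n e^{2\pi\Im((m+\tau n)\xi_0)}$; letting $\xi_0\to\infty$ in a direction $\theta\in I_{i,i+1}$ chosen so that $\Im((m+\tau n)e^{i\theta})<0$ kills $a_{m,n}$ whenever $(m,n)$ lies outside the relevant cone. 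This Laurent/Reinhardt argument is the missing idea; it is what replaces Riemann extension once one gives up the assumption that $g$ is defined on a full punctured neighborhood.
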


\begin{proof}As before, we only give the proof for $I_{4,1}$, the other cases are similar.
If $f=g\circ\Pi$ with $g\in\OO^0(V_{4,1},p_{4,1})$, then $g(X,Y)=X g_1(X,Y)+Y g_2(X,Y)$ 
with $g_k$ holomorphic at $p_{4,1}$ (and therefore bounded), 
so that $f(z,\xi)=e^{2i\pi\xi}f_1(z,\xi)+e^{2i\pi\tau\xi}f_2(z,\xi)$ with $f_k$ bounded: 
clearly, $f$ is (exponentially) flat at $\xi=\infty$ in restriction to any sector $S(J,R;c)\subset U_{4,1}$,
with $J\Subset I_{4,1}$.

Conversely, let $f\in\mc A^\infty[F_{1,0,0}](I_{4,1})$, defined on a sectorial open set 
$U_{4,1}$ of opening $I_{4,1}$. Let $U_{4,1}$ be the domain of definition of $f$,
a transversely sectorial open set of opening $I_{4,1}$ (see definition \ref{def:sectorialopen}).
One can find another one $U_{4,1}'\subset U_{4,1}$ such that
$$\forall(z_0,\xi_0)\in U_{4,1}',\ \ \ \forall(s,t)\in[0,1]\times[0,1],\ \ \ \mbox{then}\ (z,\xi)=(\xi_0+s,e^{2i\pi(\tau s-t)}z_0)\in U_{4,1}.$$
If we denote $(X_0,Y_0)=\Pi(z_0,\xi_0)$, then the image of $(z,\xi)$ while $(s,t)$ runs over the square is
$$(X,Y)=\Pi(z,\xi)=(e^{2i\pi s}X_0,e^{2i\pi t}Y_0)$$
a product of two loops. Therefore, the image $\Pi(U_{4,1})$ contains an open set $W'$ which is saturated 
by the toric action of $\bS^1\times\bS^1$ on $\C^*_X\times\C^*_Y$, i.e. a Reinhardt domain (see \cite[Chap.1,sec.2]{Shabat}), and which contains $U_{4,1}'$
(just take $W'$ to be the image of all those $(z,\xi)$ like above when $(z_0,\xi_0)$ runs over $U_{4,1}'$).
Since $f$ is invariant under $F_{1,0,0}$, i.e. $f\circ F_{1,0,0}=f$, then it factors through $\Pi$ and, maybe passing to another representative,
we have $f=g\circ\Pi$ where $g\in\mathcal O(W')$. Mind that $W'$ (as well as $\Pi(U_{4,1})$) might not 
be of the form $W\setminus(W\cap D)$ for a neighborhood $W$ of $p_{4,1}$, but we will prove that 
the holomorphic hull of $g$ is such a neighborhood.

As $W'$ is a Reinhardt domain, let us consider the (convergent) Laurent series of $g$:
$$g(X,Y)=\sum_{m,n\in\Z}a_{m,n}X^mY^n.$$
The coefficients are given by the integral 
$$a_{n,m}=\frac{1}{2i\pi}\int_{\beta_{\xi_0}}(\frac{1}{2i\pi}\int_{\alpha_{\xi_0}} g(X,Y)X^{-n-1}Y^{-m-1}dX)dY$$
where $\alpha_{\xi_0} (s)=(e^{2i\pi s}X_0, Y_0)$ and $\beta_{\xi_0} (t)= (X_0, e^{2i\pi t}Y_0)$. This can be rewritten as
$$ a_{n,m}=\int_{t=0}^1(\int_{s=0}^1 g(X,Y)X_0^{-m}Y_0^{-n}e^{-2i\pi(ms+nt)}ds)dt$$
from which we deduce the estimate
$$\vert a_{n,m}\vert\le \int_{t=0}^1(\int_{s=0}^1\vert g(X,Y)X_0^{-m}Y_0^{-n}\vert ds)dt$$
$$\vert a_{n,m}\vert\le \Vert g(X,Y)\Vert_{W'} \vert z_0\vert^{n} \vert e^{-2i\pi(m+\tau n)\xi_0}\vert $$
$$\vert a_{n,m}\vert\le \Vert f\Vert_{U_{4,1}}\vert z_0\vert^{n}e^{2\pi\Im\{(m+\tau n)\xi_0\}}.$$

Now, given $m,n\in\Z$, assume that there exists $\theta\in I_{4,1}$ such that  $\Im({e^{i\theta}(m+\tau n)})> 0$. 
The above inequality  promptly implies that $a_{n,m}=0$ by fixing $z_0$ and making $\xi_0\to\infty$
in the direction $\theta$ (which is possible in $U_{4,1}'$ as its opening is $I_{4,1}$).
This is possible if, and only if
$$\arg(m+\tau n)+I_{4,1}\ \ \ \text{intersects}\ \ \ ]-\pi,0[\ \mod\ 2\pi$$
which, since $I_{4,1}=]0,\pi-\varpi[$, means that
$$\arg(m+\tau n)\in\ ]-\pi,0[\ -\ ]0,\pi-\varpi[\ =\ ]-\pi,0[\ +\ ]\varpi-\pi,0[\ =\ ]\varpi-2\pi,0[.$$
It promptly follows that the only non zero coefficients $a_{m,n}$ occur when
$$\arg(m+\tau n)\in [0,\varpi]$$
which means that $m,n\ge0$, and $g$ extends holomorphically at $p_{4,1}:X=Y=0$.
Finally, since $f\to 0$ as $\xi\to\infty$, we get that $a_{0,0}=0$ and $g(0,0)=0$.
\end{proof}

\begin{remark}\label{rem:BoundedSectorFunctions}
The second part of the proof does not use the fact that $f$ is flat
(i.e. admits asymptotic expansion zero) along $\tilde C$, but only the fact that it is bounded.
As a consequence, any bounded holomorphic function on a transversely sectorial 
open set $U_i$ or $U_{i,i+1}$ as above automatically admits a constant as asymptotic expansion along $C$.
We note that bounded functions on $U_1$, $U_3$ (resp. $U_2$, $U_4$) therefore correspond to first integrals
of the foliation $\mathcal F_{1}$ (resp. $\mathcal F_0$).
\end{remark}

\subsection{Sheaves of sectorial automorphisms}\label{sec:sheavesautomorphisms}

Denote by $\mathrm{Aut}(S_0)$ the automorphism group of the ruled surface $S_0$ whose elements induce translations on $C$.
It preserves the ruling as well as the section $C\subset S_0$, inducing an action
on the neighborhood of $C$. The subgroup $\mathrm{Aut}^C(S_0)$ of elements fixing $C$ point-wise
is the one-parameter group generated by the flow of 
the vector field\footnote{See notations of section  \ref{sec:IntroFondamIso}.} 
$$\partial_\xi=2i\pi(X\partial_X+\tau Y\partial_Y).$$
We have an exact sequence
\begin{equation}\label{eq:exactsequenceautomorphism}
1\longrightarrow \mathrm{Aut}^C(S_0)\longrightarrow \mathrm{Aut}(S_0)\longrightarrow \mathrm{Aut}^0(C)\longrightarrow 1
\end{equation}
where $\mathrm{Aut}^0(C)$ is the translation group on $C$.
The  group $\mathrm{Aut}(S_0)$ is connected and generated by the flows of 
\begin{equation}\label{eq:generator2Dgroup}
\partial_\xi+2i\pi\tau z\partial_z=2i\pi X\partial_X\ \ \ \text{and}\ \ \ -2i\pi z\partial_z=2i\pi Y\partial_Y
\end{equation}
It is then easy to check that the full group of automorphisms  $\widetilde{\mathrm{Aut}(S_0)}$  of $S_0$ is generated by $\mathrm{Aut}(S_0)$ and a finite order map which,
for a general curve $C$, is just an involution that can be chosen  to be $(z,\xi)\to (\frac{1}{z}, -\xi)$. 
In fact, specializing $\widetilde{\mathrm{Aut}(S_0)}$ to the neighborhood of the curve, we get all analytic, and even 
formal automorphisms of the neighborhood $(S_0,C)$:

\begin{lemma}\label{L:formalcentralizer}
Any formal automorphism $\hat\Phi:\acts(S_0,C)$ fixing $C$ point-wise is actually
convergent and belongs to $\mathrm{Aut}^C(S_0)$.
\end{lemma}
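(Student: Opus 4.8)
The plan is to work in the lift $\tilde U = \C_z^* \times \overline{\C}_\xi$ with the deck transformation $F_{1,0,0}(z,\xi) = (qz, \xi-1)$, and write a formal automorphism fixing $C = \{\xi = \infty\}$ point-wise as
\[
\hat\Phi(z,\xi) = \left(z\,\hat u(z,\xi),\ \xi + \hat v(z,\xi)\right),
\]
where $\hat u \in \OO(\C_z^*)[[\xi^{-1}]]$ with $\hat u = 1 + O(\xi^{-1})$ and $\hat v \in \OO(\C_z^*)[[\xi^{-1}]]$ with $\hat v = O(\xi^{-1})$ (the point-wise fixing of $C$ forces these leading terms; a priori $\hat v$ could contain a constant term, but an automorphism of $(S_0,C)$ inducing the identity on $C$ kills it). Commuting with $F_{1,0,0}$ yields the two functional equations $\hat u(qz, \xi-1) = \hat u(z,\xi)$ and $\hat v(qz,\xi-1) = \hat v(z,\xi)$, i.e. both $\hat u$ and $\hat v$ are formally $F_{1,0,0}$-invariant. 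First I would expand both series in powers of $\xi^{-1}$, say $\hat v = \sum_{n\ge 1} v_n(z)\,\xi^{-n}$, and plug into the invariance relation; comparing coefficients of $\xi^{-n}$ after Taylor-expanding $\xi \mapsto \xi - 1$ gives a triangular system that determines $v_n(qz) - v_n(z)$ in terms of the lower-order $v_j$, $j < n$.

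The next step is to run the induction. Using the solvability criterion for the $q$-difference equation $\phi(qz) - \phi(z) = g(z)$ recalled in the proof of Proposition \ref{prop:PrenormalFormU100} (a solution holomorphic on $\C_z^*$ exists iff the $0$-th Laurent coefficient of $g$ vanishes, and is then unique up to an additive constant), I would show by induction on $n$ that each $v_n$ is forced, modulo the two-parameter freedom, to be a \emph{constant}; the obstruction term fed into the $q$-difference equation at each stage is (up to the ambiguity) the constant $0$, so solvability holds, and the kernel of the difference operator on constants contributes exactly the ambiguity. Concretely, the formal invariants of $F_{1,0,0}$ in $\OO(\C_z^*)[[\xi^{-1}]]$ are generated by the two functions $X = e^{2i\pi\xi}$ and $Y = z\,e^{2i\pi\tau\xi}$ — this is essentially Proposition \ref{prop:CharacSectorialFunctionsF0} at the formal level — so $\hat u$ and $\hat v$ are formal power series in $X$ and $Y$; since they must have non-negative order in $\xi^{-1}$ and $X, Y$ are flat (not formal series in $\xi^{-1}$), the only way $\hat u, \hat v \in \OO(\C_z^*)[[\xi^{-1}]]$ is that they are \emph{constants}: $\hat u \equiv c \in \C^*$ and $\hat v \equiv t \in \C$.

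Once $\hat\Phi(z,\xi) = (cz, \xi + t)$ is established, convergence is immediate and membership in $\mathrm{Aut}^C(S_0)$ follows by comparing with the generators \eqref{eq:generator2Dgroup}: the map $(z,\xi)\mapsto(cz,\xi+t)$ is the time-$(\log c)/(2i\pi\tau)$ flow of $\partial_\xi + 2i\pi\tau z\partial_z$ composed with the time-$t$ flow of a suitable combination — more directly, it is the time-$t$ flow of $\partial_\xi$ composed with the flow of $-2i\pi z\partial_z$, both of which generate $\mathrm{Aut}^C(S_0)$; since it fixes $C$ point-wise it lies in $\mathrm{Aut}^C(S_0)$, which after \eqref{eq:exactsequenceautomorphism} is the one-parameter group generated by $\partial_\xi$ — so in fact $c=1$ is forced if we additionally demand $\hat\Phi$ commute with the ruling, but as stated we only need membership in the automorphism group, which holds. \emph{The main obstacle} is making the "invariants of $F_{1,0,0}$ are generated by $X$ and $Y$" step rigorous at the purely formal level (as opposed to the sectorial/bounded setting of Proposition \ref{prop:CharacSectorialFunctionsF0}): one must check that a formal series $\sum_{n\ge 0} w_n(z)\xi^{-n}$ with $w_n \in \OO(\C_z^*)$ satisfying $w(qz,\xi-1) = w(z,\xi)$ is necessarily constant, which is precisely the triangular $q$-difference induction above — the subtlety being that $\OO(\C_z^*)$ is infinite-dimensional and one must track the Laurent coefficients carefully to see that no non-constant solution survives.
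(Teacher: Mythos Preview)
Your approach is correct and is genuinely different from the paper's. The paper proves the lemma by invoking the classification of formal regular foliations on $(S_0,C)$ from \cite{LTT}: any formal automorphism fixing $C$ point-wise must preserve the two-dimensional space $E = \C\frac{dz}{z} \oplus \C\,d\xi$ of closed $1$-forms (these define the full pencil of formal foliations), and unwinding what this means for the components of $\hat\Phi$ forces it to be $(z,\xi)\mapsto(z,\xi+t)$. Your route is more elementary and self-contained: you reduce to showing that any $w \in \OO(\C_z^*)[[\xi^{-1}]]$ invariant under $F_{1,0,0}$ is constant, via a triangular $q$-difference induction. That induction is sound: at level $m$ the invariance reads $w_m(qz)-w_m(z)=c_m$ with $c_m$ a constant built from the already-determined $w_j$'s ($j<m$), and since a holomorphic solution on $\C^*$ exists only when the constant right-hand side vanishes, one gets $c_m=0$ and then $w_m$ constant, which in turn is killed at the next step. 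The foliation argument is cleaner once \cite{LTT} is available; yours avoids that dependency.

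Two points to clean up. First, the detour through ``formal invariants are generated by $X=e^{2i\pi\xi}$ and $Y=ze^{2i\pi\tau\xi}$'' is misleading: $X$ and $Y$ are not elements of $\OO(\C_z^*)[[\xi^{-1}]]$ at all, so the phrase ``power series in $X,Y$'' has no content here. The direct triangular induction you sketch afterwards is the actual argument and stands on its own; drop the appeal to Proposition~\ref{prop:CharacSectorialFunctionsF0}. Second, fixing $C$ point-wise does \emph{not} force $\hat v = O(\xi^{-1})$: the curve $C=\{\xi=\infty\}$ is parametrized by $z$, so the condition only gives $\hat u|_{\xi=\infty}=1$, while $\hat v$ may have a constant term $v_0(z)$. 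This is harmless --- the invariance immediately gives $v_0(qz)=v_0(z)$, hence $v_0$ constant, and then your induction yields $\hat v\equiv v_0=:t$. Combined with $\hat u=1+O(\xi^{-1})$ constant, hence $\hat u\equiv 1$ (not a general $c$), the conclusion is exactly $\hat\Phi(z,\xi)=(z,\xi+t)$, the time-$t$ flow of $\partial_\xi$, so the final paragraph about $c$ and the flows of other generators is unnecessary.
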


\begin{proof}Recall \cite{LTT} that the only formal regular foliations on $(S_0,C)$ are those defined
by $\omega=0$ where $\omega$ belongs to the vector space of closed $1$-forms 
$E=\C \frac{dz}{z}+\C d\xi$. Moreover, for $\omega\in E\setminus\C \frac{dz}{z}$, $\mathcal F_\omega$ does not 
admit non constant formal meromorphic first integral, and the only formal closed meromorphic $1$-forms
defining $\mathcal F_\omega$ must be a constant multiple of $\omega$, thus belonging to $E$. If
$$\hat\Phi(z,\xi)=\left(z+\sum_{n>0}\frac{a_n(z)}{\xi^n},\sum_{n\ge0}\frac{b_n(z)}{\xi^n}\right)$$
is a formal automorphism of $(S_0,C)$ fixing $C$ point-wise, then it must preserve the vector space $E$. In particular,
it must preserve $\C \frac{dz}{z}$ (and $z$ actually as it fixes $C$ point-wise) and sends $d\xi$ to some other element $\alpha \frac{dz}{z}+\beta d\xi$. 
A straightforward computation shows that $\hat\Phi$ writes
$$\hat\Phi(x,\xi)=\left(z,\alpha\log(z)+\beta \xi +\gamma\right),\ \ \ \gamma\in\C,$$
and we have $\alpha=0$.
Finally, as $\hat\Phi$ must commute with $F_{1,0,0}(z,\xi)=(qz,\xi-1)$, we get $\beta=1$.
\end{proof}

\begin{cor}\label{cor:formalcentralizer}\footnote{We will generalize this result in subsection \ref{L:formalcentralizergen} 
using the notion of periods as defined in \cite[Section 2.4]{LTT},  and their invariance under automorphisms.} 
Any formal automorphism $\hat\Phi:\acts(S_0,C)$ is actually convergent and belongs to $\widetilde{\mathrm{Aut}(S_0)}$.
\end{cor}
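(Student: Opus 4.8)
The plan is to bootstrap from Lemma \ref{L:formalcentralizer}, which already handles automorphisms fixing $C$ point-wise, by reducing the general case to that one. Given a formal automorphism $\hat\Phi:\acts(S_0,C)$, it induces a formal automorphism of the curve $C$ itself; since we only consider automorphisms inducing translations (and $S_0$ is a ruled surface over $C$ preserved together with its $C$-section and ruling), the induced map on $C$ is a translation $t_c:z\mapsto e^{2i\pi c}z$ in the multiplicative coordinate, hence in particular \emph{convergent}. The first step is therefore to produce, from this translation, an honest element $\Phi_0\in\mathrm{Aut}(S_0)$ lifting it. This is exactly what the explicit generators in \eqref{eq:generator2Dgroup} provide: the flows of $2i\pi X\partial_X$ and $2i\pi Y\partial_Y$ generate a group surjecting onto $\mathrm{Aut}^0(C)$ via the exact sequence \eqref{eq:exactsequenceautomorphism}, so one can choose $\Phi_0\in\mathrm{Aut}(S_0)$ (convergent, in fact algebraic) inducing the same translation $t_c$ on $C$ as $\hat\Phi$ does.

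The second step is then to consider $\hat\Psi:=\Phi_0^{-1}\circ\hat\Phi$. This is a formal automorphism of $(S_0,C)$ which now induces the identity on $C$. Here one must be slightly careful: inducing the identity on $C$ as a \emph{formal} map on the surface is a priori weaker than fixing $C$ point-wise in the sense of Lemma \ref{L:formalcentralizer}, but since $C$ is a reduced curve and $\hat\Psi$ restricts to the identity on it, $\hat\Psi$ does fix $C$ point-wise in the required sense (the restriction of a formal automorphism to the reduced subscheme $C$ is the honest restriction, which here is $\mathrm{id}$). Now Lemma \ref{L:formalcentralizer} applies verbatim: $\hat\Psi$ is convergent and lies in $\mathrm{Aut}^C(S_0)$, the one-parameter group generated by $\partial_\xi$. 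Consequently $\hat\Phi=\Phi_0\circ\hat\Psi$ is a composition of two convergent automorphisms of $(S_0,C)$, hence convergent, and it lies in the group generated by $\mathrm{Aut}(S_0)$ — which, by the discussion preceding Lemma \ref{L:formalcentralizer} (the group $\mathrm{Aut}(S_0)$ is connected, generated by the flows of \eqref{eq:generator2Dgroup}, and $\widetilde{\mathrm{Aut}(S_0)}$ is obtained by adjoining one finite-order involution) is contained in $\widetilde{\mathrm{Aut}(S_0)}$. Since we have not used the involution, in fact $\hat\Phi\in\mathrm{Aut}(S_0)\subset\widetilde{\mathrm{Aut}(S_0)}$.

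The main obstacle — really the only subtle point — is the reduction of the first step: knowing that a formal automorphism of the \emph{surface} germ $(S_0,C)$ induces an automorphism of the curve $C$ that is a \emph{translation}, and that this induced automorphism is convergent and lifts to a global automorphism of $S_0$. The convergence of the induced map on $C$ is automatic because $C$ is a compact curve and any formal automorphism of a germ of neighborhood restricts to a biholomorphism of the compact curve (a formal self-map of a smooth projective curve that is invertible is algebraic, a fortiori convergent); the fact that it is a translation is part of the hypothesis (we only classify automorphisms inducing translations, as stated in the footnote to the definition of equivalence and reiterated when introducing $\mathrm{Aut}(S_0)$); and the lifting is handled by the explicit generators \eqref{eq:generator2Dgroup} together with surjectivity in \eqref{eq:exactsequenceautomorphism}. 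Everything else is a routine application of Lemma \ref{L:formalcentralizer} and the already-established structure of $\widetilde{\mathrm{Aut}(S_0)}$.
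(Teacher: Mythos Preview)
Your approach is exactly the paper's: reduce to Lemma \ref{L:formalcentralizer} by composing with a convergent automorphism that matches the induced action on $C$, then apply the lemma to the composition. The paper's proof is a three-line version of what you wrote.

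There is, however, a genuine gap in your reduction step. You assert that the induced automorphism of $C$ is a translation, claiming this is ``part of the hypothesis''. It is not: the corollary concerns \emph{any} formal automorphism of $(S_0,C)$, and its conclusion places $\hat\Phi$ in $\widetilde{\mathrm{Aut}(S_0)}$, the \emph{full} automorphism group, not just the translation part $\mathrm{Aut}(S_0)$. The induced map on $C$ can perfectly well be the elliptic involution $x\mapsto -x$ (or, for special $\tau$, an automorphism of order $4$ or $6$). Your lift $\Phi_0\in\mathrm{Aut}(S_0)$ via the exact sequence \eqref{eq:exactsequenceautomorphism} therefore does not exist in general, and your final sentence (``we have not used the involution, in fact $\hat\Phi\in\mathrm{Aut}(S_0)$'') is false as stated.

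The fix is exactly what the paper does: compose with a convenient element of $\widetilde{\mathrm{Aut}(S_0)}$ rather than $\mathrm{Aut}(S_0)$. The description preceding Lemma \ref{L:formalcentralizer} tells you that $\widetilde{\mathrm{Aut}(S_0)}$ is generated by $\mathrm{Aut}(S_0)$ together with the finite-order map $(z,\xi)\mapsto(1/z,-\xi)$ (or its analogue for special curves), and this larger group does surject onto all of $\mathrm{Aut}(C)$. Once you lift via $\widetilde{\mathrm{Aut}(S_0)}$, the rest of your argument goes through verbatim.
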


\begin{proof} The formal diffeomorphism $\hat\Phi$ induces an automorphism of $C$. 
Using exact sequence (\ref{eq:exactsequenceautomorphism}), after composing $\hat\Phi$
by a convenient element of $\widetilde{\mathrm{Aut}(S_0)}$, we can assume that it fixes $C$ point-wise,
and then apply Lemma \ref{L:formalcentralizer}.
\end{proof}

\begin{dfn}\label{D:sectorialbiholo}
Let us consider the germs of sectorial biholomorphisms in the direction $\arg(\xi)=\theta$ of $(\tilde U,\tilde C)$
that are tangent to the identity:
$$\Phi(z,\xi)=(z+\frac {f_1(z,\xi)}{\xi},\xi+\frac {f_2(z,\xi)}\xi),\ \ \ f_1,f_2\in{\mathcal A}_{\theta}.$$
The collection of these germs when varying $\theta$ 
naturally gives rise to a sheaf of groups (with respect to the composition law) on $\bS^1$ that will be denoted 
by $\mathscr G^1$. We will consider for further use the subsheaf $\mathscr G^\infty$ of $\mathscr G^1$ of 
germs of sectorial biholomorphisms flat to identity, i.e. when $f_1,f_2\in {\mathcal A}^\infty_{\theta}$.
Denote by $\mathscr G^1 [F_{1,0,0}]$ (resp. $\mathscr G^\infty [F_{1,0,0}]$) the subsheaf of  $\mathscr G^1$ (resp. $\mathscr G^\infty$) defined 
by germs of transformations $\Phi$ commuting with $F_{1,0,0}$: $\Phi\circ F_{1,0,0}=F_{1,0,0} \circ \Phi$.
\end{dfn}

\begin{remark}\label{rem:sectorialcentralizer}
Note that $\Phi\in \mathscr G^1 [F_{1,0,0}]$ implies that its asymptotic expansion $\hat{\Phi}$ also commutes with $F_{1,0,0}$,
i.e. $\hat{\Phi}\circ F_{1,0,0}=F_{1,0,0}\circ\hat{\Phi}$. According to the description of the formal centralizer of $F_{1,0,0}$ in Lemma \ref{L:formalcentralizer}, it turns out that $\mathscr G^1[F_{1,0,0}]=\mathscr G^\infty[F_{1,0,0}]\rtimes \mathrm{Aut}^C(S_0)$ where $\mathrm{Aut}^C(S_0)$ is regarded as a constant sheaf on $\bS^1$.
\end{remark}

We would like to apply characterization of $\mc A^\infty[F_{1,0,0}](I)$ obtained in the previous section
for our special sectors $I_i$ and $I_{i,i+1}$ to obtain a similar characterization of sections of $\mathscr G^\infty [F_{1,0,0}]$.
For this, denote by $\mathrm{Diff}(V_i,L_i)$ the group of germs of biholomorphisms of $(V_i,L_i)$
which preserves the  divisor $(D\cap V_i)$, for instance:
\begin{equation}\label{F:diffV}
\mathrm{Diff}(V_1,L_1)=\{ \varphi(X,Y)=(Xa(Y),Yb(Y))\ ;\ a,b\in\C\{Y\},\ a(0),b(0)\not=0\}
\end{equation}
and by $\mathrm{Diff}^1(V_i,L_i)$ the subgroup of germs tangent to the identity along $L_i$,
i.e. $a(0)=b(0)=1$ in example (\ref{F:diffV}). In a similar way, denote by $\mathrm{Diff}(V_{i,i+1},p_{i,i+1})$ 
the group of germs of biholomorphisms of $(V_{i,i+1},p_{i,i+1})$ which preserve the germ of divisor $(D\cap V_{i,i+1},p_{i,i+1})$
and by $\mathrm{Diff}^1(V_{i,i+1},p_{i,i+1})$ the subgroup of germs tangent to the identity at $p_{i,i+1}$.
For instance:
$$\mathrm{Diff}(V_{4,1},p_{4,1})=\{ \varphi(X,Y)=(Xa(X,Y),Yb(X,Y))\ ;\ a,b\in\C\{X,Y\},\ a(0),b(0)\not=0\},$$
and $\mathrm{Diff}^1(V_{4,1},p_{4,1})$ is characterized by $a(0)=b(0)=1$.

\begin{prop}\label{P:sectorstoP1XP1} We have the following characterizations:
\begin{itemize}
\item $\Phi\in {\mathscr G}^\infty [F_{1,0,0}](I_i)$ if and only if $\Pi\circ \Phi=\varphi\circ\Pi$ where $\varphi\in\mathrm{Diff}^1(V_i^,L_i)$;
\item $\Phi\in{\mathscr G}^\infty [F_{1,0,0}](I_{i,i+1})$ if and only if $\Pi\circ \Phi=\varphi\circ\Pi$ where $\varphi\in\mathrm{Diff}^1(V_{i,i+1},p_{i,i+1})$.
\end{itemize}
\end{prop}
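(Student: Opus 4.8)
The plan is to deduce Proposition \ref{P:sectorstoP1XP1} from the already-established characterization of $\mc A^\infty[F_{1,0,0}]$ in Proposition \ref{prop:CharacSectorialFunctionsF0}, applied component-wise to a sectorial biholomorphism $\Phi$. I will treat only the case of $I_{4,1}$ (and $I_4$, or rather one of the single sectors), the other cases being entirely analogous after relabelling, exactly as in the proofs of Propositions \ref{prop:sectorialdecomposition} and \ref{prop:CharacSectorialFunctionsF0}.

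\textbf{From $\Phi$ to $\varphi$.} Suppose $\Phi\in{\mathscr G}^\infty[F_{1,0,0}](I_{4,1})$, written in the normal form of Definition \ref{D:sectorialbiholo} as $\Phi(z,\xi)=(z+\frac{f_1(z,\xi)}{\xi},\xi+\frac{f_2(z,\xi)}{\xi})$ with $f_1,f_2\in\mathcal A^\infty$. The condition $\Phi\circ F_{1,0,0}=F_{1,0,0}\circ\Phi$ forces $f_1,f_2$ to be $F_{1,0,0}$-invariant: writing it out with $F_{1,0,0}(z,\xi)=(qz,\xi-1)$ gives $\frac{f_1(qz,\xi-1)}{\xi-1}=q\frac{f_1(z,\xi)}{\xi}$... — more conveniently, I will instead rescale and observe that the relevant invariant quantities are $z'/z$ and $\xi'-\xi$ where $\Phi(z,\xi)=(z',\xi')$; since $\frac{z'}{z}-1$ and $\xi'-\xi$ are $F_{1,0,0}$-invariant flat sections of $\mc A^\infty$ on $I_{4,1}$, Proposition \ref{prop:CharacSectorialFunctionsF0} applies to each of them. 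Hence each descends through $\Pi$ to a germ in $\OO^0(V_{4,1},p_{4,1})$. Now I must translate the conjugation $\varphi := \Pi\circ\Phi\circ\Pi^{-1}$ into coordinates $(X,Y)=\Pi(z,\xi)=(e^{2i\pi\xi},ze^{2i\pi\tau\xi})$: one computes $X' = e^{2i\pi\xi'} = X\cdot e^{2i\pi(\xi'-\xi)}$ and $Y' = z'e^{2i\pi\tau\xi'} = Y\cdot\frac{z'}{z}\cdot e^{2i\pi\tau(\xi'-\xi)}$, so that $\varphi(X,Y) = (X\,a(X,Y),\,Y\,b(X,Y))$ with $a = e^{2i\pi(\xi'-\xi)}\circ\Pi^{-1}$ and $b = \frac{z'}{z}\cdot e^{2i\pi\tau(\xi'-\xi)}\circ\Pi^{-1}$. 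Since $\xi'-\xi$ and $\frac{z'}{z}-1$ are $F_{1,0,0}$-invariant and exponentially flat, the expressions $e^{2i\pi(\xi'-\xi)}-1$, $e^{2i\pi\tau(\xi'-\xi)}-1$ and $\frac{z'}{z}-1$ are again $F_{1,0,0}$-invariant sections of $\mc A^\infty$ on $I_{4,1}$ (the exponential of a flat, bounded function being $1$ plus flat), hence by Proposition \ref{prop:CharacSectorialFunctionsF0} they are of the form $(\text{germ in }\OO^0(V_{4,1},p_{4,1}))\circ\Pi$; this gives $a-1, b-1\in\OO^0(V_{4,1},p_{4,1})$, i.e. $a,b\in\C\{X,Y\}$ with $a(0)=b(0)=1$. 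Thus $\varphi\in\mathrm{Diff}^1(V_{4,1},p_{4,1})$, and $\varphi$ is genuinely a biholomorphism germ (not just holomorphic) because $\Phi$ is invertible in ${\mathscr G}^\infty[F_{1,0,0}]$ and the same argument applied to $\Phi^{-1}$ produces the inverse germ.

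\textbf{From $\varphi$ to $\Phi$.} Conversely, given $\varphi\in\mathrm{Diff}^1(V_{4,1},p_{4,1})$, write $\varphi(X,Y)=(Xa(X,Y),Yb(X,Y))$ with $a-1,b-1\in\OO^0(V_{4,1},p_{4,1})$. I set $\Phi := \Pi^{-1}\circ\varphi\circ\Pi$ on the sectorial open set $U_{4,1}$; this is well-defined because $\varphi$, preserving each component of the divisor, in particular preserves the $\bS^1\times\bS^1$-orbit structure so that its pullback by the covering-type map $\Pi$ is single-valued after choosing $\log a = \frac{1}{2i\pi}\cdot(\text{a branch})$ appropriately — concretely $\xi' - \xi = \frac{1}{2i\pi}\log a(X,Y)\circ\Pi$ and $\frac{z'}{z} = b(X,Y)a(X,Y)^{-\tau}\circ\Pi$, and since $a,b$ are holomorphic at $p_{4,1}$ with value $1$ these branches are well-defined and holomorphic on the corresponding sector. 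By the "if" direction of Proposition \ref{prop:CharacSectorialFunctionsF0} (the elementary half: $g\circ\Pi$ with $g\in\OO^0$ is flat on any sector $S(J,R;c)\subset U_{4,1}$ with $J\Subset I_{4,1}$, because $X,Y$ are exponentially small there), the functions $\xi'-\xi$ and $\frac{z'}{z}-1$ are in $\mc A^\infty$ and $F_{1,0,0}$-invariant, so $\Phi$ has the normal form of Definition \ref{D:sectorialbiholo} with flat entries and commutes with $F_{1,0,0}$; that is, $\Phi\in{\mathscr G}^\infty[F_{1,0,0}](I_{4,1})$, and by construction $\Pi\circ\Phi=\varphi\circ\Pi$. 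The case of the single sector $I_i$ is identical with $V_{i,i+1},p_{i,i+1}$ replaced by $V_i,L_i$ and $\C\{X,Y\}$ by $\C\{Y\}$ (or $\C\{X\}$), using the corresponding half of Proposition \ref{prop:CharacSectorialFunctionsF0}.

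\textbf{Main obstacle.} The one delicate point, which I would want to state carefully rather than wave away, is that $\Pi$ restricted to $U_{4,1}$ is not literally a biholomorphism onto a punctured neighborhood $V_{4,1}\setminus D$ but only onto a $\bS^1\times\bS^1$-saturated (Reinhardt-type) open set, so the conjugation $\Pi\circ\Phi\circ\Pi^{-1}$ must be shown to be single-valued and to extend holomorphically across $D$; but this is precisely what Proposition \ref{prop:CharacSectorialFunctionsF0} was built to handle (via the Laurent-coefficient vanishing argument), so the proof is genuinely a corollary of it applied coordinate-by-coordinate to the two "invariant differences" $\xi'-\xi$ and $\frac{z'}{z}-1$ attached to $\Phi$, plus the observation that $\mathcal A^\infty$ is stable under $\exp$ and under the ring operations. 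The remaining verifications — that the normal form of Definition \ref{D:sectorialbiholo} corresponds exactly to $a(0)=b(0)=1$, and that invertibility on one side matches invertibility on the other — are routine and I would only indicate them.
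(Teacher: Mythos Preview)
Your proof is correct and follows essentially the same route as the paper's: both reduce the statement to Proposition \ref{prop:CharacSectorialFunctionsF0} applied to the two $F_{1,0,0}$-invariant flat functions attached to $\Phi$, namely $\xi'-\xi$ and $\frac{z'}{z}-1$ (the paper writes $\Phi(z,\xi)=(z(1+f_1),\xi+f_2)$ from the outset, so these are exactly its $f_2$ and $f_1$), and then compute $\Pi\circ\Phi$ in $(X,Y)$-coordinates to obtain $\varphi(X,Y)=(Xa,Yb)$ with $a=e^{2i\pi f_2}$, $b=(1+f_1)e^{2i\pi\tau f_2}$. Your explicit discussion of the ``main obstacle'' (single-valuedness and extension of $\Pi\circ\Phi\circ\Pi^{-1}$) is a welcome clarification the paper leaves implicit.
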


\begin{proof}
For any interval $I$, a section $\Phi$ of $\mathscr G^\infty(I)$ can be written $\Phi(z,\xi)=(z(1+f_1),\xi +f_2)$
with $f_1,f_2\in{\mathcal A}^\infty(I)$. Then $\Phi$ belongs to $\mathscr G^\infty [F_{1,0,0}](I)$
if, and only if, $f_1,f_2$ are invariant by $F_{1,0,0}$, i.e. $f_1,f_2\in{\mathcal A}^\infty[F_{1,0,0}](I)$.
Assume now $I=I_{4,1}$, say. 
Then, by Proposition \ref{prop:CharacSectorialFunctionsF0}, one can write $f_k=g_k\circ\Pi$, i.e. $f_k(z,\xi)=g_k(X,Y)$, 
with $g_k\in\OO^0(V_{4,1},p_{4,1})$. Therefore, one can write
$$\Pi\circ\Phi=(Xa(X,Y),Yb(X,Y))\ \ \ \text{with}\ 
\left\{\begin{matrix}a=e^{2i\pi g_2(X,Y)},\hfill\\ 
b=e^{2i\pi\tau g_2(X,Y)}(1+g_1(X,Y))^{-1}.
\end{matrix}\right.$$
Clearly, $a,b$ are holomorphic at $(X,Y)=(0,0)$ and $a(0,0)=b(0,0)=1$. Conversely,
given $\varphi\in\mathrm{Diff}^1(V_{4,1},p_{4,1})$, thus of the form $\varphi(X,Y)=(Xa(X,Y),Yb(X,Y))$,
we recover $f_1,f_2\in{\mathcal A}^\infty[F_{1,0,0}](I_{4,1})$, and $\Phi(z,\xi)=(z(1+f_1),\xi +f_2)$,  by setting
$$f_1=\left(\frac{a^{\tau}}{b}-1\right)\circ\Pi\ \ \ \text{and}\ \ \ f_2=\frac{\log(a)}{2i\pi}\circ\Pi.$$
The description of elements of ${\mathscr G}^\infty [F_{1,0,0}](I_{i})$, ${\mathscr G}^\infty [F_{1,0,0}](I_{i,i+1})$ can be carried out exactly along the same line. 
\end{proof}

\section{Analytic classification: an overview}\label{sec:AnalyticClass}

Here, we would like to detail our main result, namely the analytic classification
of all neighborhoods that are formally equivalent to $(U_{1,0,0},C)$. The most 
technical ingredient is the sectorial normalization (Lemma \ref{LEM:sectorialnormalization} in the introduction)
which now reads as follows. Let $F$ be a biholomorphism like in Proposition \ref{prop:PrenormalFormU100}
$$F(z,\xi)=\left(qz+\sum_{n\ge2}\frac{\alpha_n(z)}{\xi^n},\xi-1+\sum_{n\ge2}\frac{\beta_n(z)}{\xi^n}\right).$$
In particular, there is a formal diffeomorphism $\hat\Psi$ (that can be assumed to be tangent to the identity along $C$)  
conjugating $F$ to $F_{1,0,0}(z,\xi)=(qz,\xi-1)$,
i.e. $F\circ \hat\Psi=\hat\Psi\circ F_{1,0,0}$.

\begin{lemma}\label{lem:sectorialRappel}
Denote $\varpi=\arg\tau$. For each interval
\begin{equation}\label{eq:sectorxi}
I_1=]-\varpi,\pi-\varpi[,\ \ \ I_2=]-\pi,0[,\ \ \ I_3=I_1+\pi\ \ \ \text{and}\ \ \ I_4=I_2+\pi,
\end{equation}
there is a section $\Psi_i$ of ${\mathscr G}^1(I_i)$ (see Definition \ref{D:sectorialbiholo})
such that
$$\Psi_i\circ F=F_{1,0,0}\circ \Psi_i.$$
\end{lemma}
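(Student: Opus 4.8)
The plan is to construct the sectorial normalizing maps $\Psi_i$ as solutions of a fixed-point problem on each transversely sectorial domain of opening $I_i$, using the fact that $F_{1,0,0}$ is a genuine translation $\xi\mapsto\xi-1$ in the transverse direction (up to the contraction in $z$). First I would write the unknown as $\Psi=\hat\Psi+R$, or better $\Psi=\mathrm{id}+\Theta$ with $\Theta=(\theta_1,\theta_2)$ whose components lie in ${\mathcal A}^1$ on a sector of opening $I_i$, and turn the conjugacy equation $\Psi\circ F=F_{1,0,0}\circ\Psi$ into a \emph{cohomological equation} of the form
\begin{equation*}
\Theta\circ F - T\circ\Theta = E(F),
\end{equation*}
where $T$ denotes the linear part $(z,\xi)\mapsto(qz,\xi-1)=F_{1,0,0}$ and $E(F)$ collects the nonlinear error terms coming from the higher-order coefficients $\alpha_n,\beta_n$ of $F$ together with the nonlinearity in $\Theta$. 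The key structural point is that, because $F$ is tangent to $F_{1,0,0}$ with the perturbation starting at order $\xi^{-2}$, the operator $\Theta\mapsto \Theta\circ F - T\circ\Theta$ is, to leading order, the finite-difference operator $u(z,\xi)\mapsto u(qz,\xi-1)-u(z,\xi)$ acting on functions defined on a half-strip in the $\xi$-variable with an extra $z\in\mathcal C$ parameter.

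The central analytic step is then to \emph{invert} this difference operator on each sector $I_i$ with good bounds. Here is where the choice of the four intervals in \eqref{eq:sectorxi} enters: each $I_i$ is an open interval of length $\pi$, and the orbits of the real translation $\xi\mapsto\xi-1$ (combined with the direction of $q=e^{2i\pi\tau}$, i.e. $\varpi=\arg\tau$) stay inside a sector of opening $I_i$ and escape to $\xi=\infty$, so one can solve $u(qz,\xi-1)-u(z,\xi)=v(z,\xi)$ by a convergent series $u=-\sum_{k\ge0} v\circ T^{k}$ (or $\sum_{k\ge 1} v\circ T^{-k}$, depending on orientation), with the series converging because $v$ decays like $|\xi|^{-m}$ along the $T$-orbit and $\sum_k (|\xi|+k)^{-m}$ converges for $m\ge 2$. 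This gives a bounded right-inverse $\mathcal T$ of the difference operator sending ${\mathcal A}^m$-data to ${\mathcal A}^{m-1}$-data (or, on the subsheaf of flat functions, ${\mathcal A}^\infty\to{\mathcal A}^\infty$), with norm estimates uniform on $\mathcal C\times S(I_{i,\varepsilon},R;c)$. One should also record that the kernel of the difference operator on such a sector consists precisely of $F_{1,0,0}$-invariant functions, which by Proposition \ref{prop:CharacSectorialFunctionsF0} are pulled back from $\OO^0(V_i,L_i)$ — this is what will later give the freedom measured by $\mathscr G^\infty[F_{1,0,0}]$, but for existence we just need one solution.

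With the right-inverse $\mathcal T$ in hand, I would set up the contraction: define $\mathcal N(\Theta) = \mathcal T\big(E(F,\Theta)\big)$ on a ball in a Banach space of ${\mathcal A}^1$-sections on a sector $S(I_{i,\varepsilon},R;c)$ with $R$ large, and check that for $R$ large enough $\mathcal N$ is a contraction: the gain comes from the fact that $E$ is at least quadratic in $\Theta$ plus a term of size $O(|\xi|^{-2})$ coming from $F-F_{1,0,0}$, while $\mathcal T$ loses only a bounded factor, so on $|\xi|>R$ everything is small. The fixed point $\Theta_i$ then gives $\Psi_i=\mathrm{id}+\Theta_i$, a section of $\mathscr G^1(I_i)$, solving $\Psi_i\circ F=F_{1,0,0}\circ\Psi_i$; a standard argument (Borel–Ritt on sectors, or directly from the order-$m$ estimates by letting $m\to\infty$ sector by sector) shows that $\Psi_i$ has the prescribed asymptotic expansion $\hat\Psi$ along $C$, which pins it down modulo $\mathscr G^\infty[F_{1,0,0}]$ as in Proposition \ref{P:sectorstoP1XP1}. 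The main obstacle, and the part needing genuine care rather than routine estimation, is the uniform control of the difference operator's right-inverse as the compact $\mathcal C\Subset\C^*$ grows and $\varepsilon\to0$: one must arrange the half-strip in $\xi$ so that the whole $T$-orbit of a point of $S(I_{i,\varepsilon},R;c)$ remains in the domain of $F$ (using that $F$ is defined on a full neighborhood of $\tilde C$, with radius possibly shrinking as $|z|\to 0,\infty$), so the estimates and the choice of $R$ depend on $(\mathcal C,\varepsilon)$ — exactly the reason the conclusion is phrased with transversely sectorial domains rather than honest sectors of opening $I_i$. The remaining three sectors are handled identically, replacing the forward orbit by the backward orbit where the geometry of $I_i$ relative to $\varpi$ demands it.
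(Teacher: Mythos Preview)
Your overall architecture (linearize, invert the difference operator, run a contraction, then recover the asymptotic expansion) matches the paper's, but the core analytic step---your proposed inversion of the cohomological equation by an orbit sum $u=-\sum_{k\ge0}v\circ T^k$---does not go through, and this is precisely the place where the paper has to work hardest. The map $T=F_{1,0,0}$ sends $(z,\xi)\mapsto(qz,\xi-1)$, so along any forward orbit $q^kz\to0$ (and along the backward orbit $q^{-k}z\to\infty$). The data $\Delta_i$ are holomorphic on a neighborhood of $\tilde C$ in $\C^*_z\times\overline{\C}_\xi$, with bounds $|\Delta_i(z,\xi)|\le C_K|\xi|^{-N}$ only uniform on compacta $K\Subset\C^*_z$; there is no control as $z\to0,\infty$, so $\sum_k \Delta_i(q^kz,\xi-k)$ need not converge, let alone decay. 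Equivalently, the transversely sectorial domain on which you want $u$ to live is, for fixed annulus $a\le|z|\le b$, a strip in the variable $\zeta=2i\pi\tau\xi$, and the shift $\zeta\mapsto\zeta+\lambda$ (with $\Re\lambda>0$) exits that strip after finitely many steps. So the claim that ``orbits stay inside a sector of opening $I_i$ and escape to $\xi=\infty$'' is false once the $z$-variable is taken into account, and orbit summation cannot produce a right inverse.

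What the paper does instead is to foliate the domain by the $F_{1,0,0}$-invariant levels $\{Y=ze^{2i\pi\tau\xi}=c\}$, reducing to a one-parameter family of difference equations $\varphi_c(\zeta+\lambda)-\varphi_c(\zeta)=\Delta_c(\zeta)$ on vertical strips $\Sigma_{A_c,B_c}$ of fixed width $\log b-\log a$. These are solved not by orbit sums but by a Cauchy-integral construction: one writes $\Delta=F_0^+-F_0^-$ via integrals along the two boundary lines $L_\pm$ of the strip and sets $\varphi=F_0^-+\sum_{n\ge1}(F_n^++F_n^-)$ with $F_n^\pm(\zeta)=F_0^\pm(\zeta\mp n\lambda)$, obtaining uniform bounds (Lemma~\ref{L:majDeltam}) that depend only on the strip width. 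A further normalization by an additive constant (Lemma~\ref{L:imto0}) yields the decay $|\varphi|\le C\|\Delta\|_m/\sqrt{|\Im(\zeta/\lambda)|}$ needed for uniqueness and for the contraction in \S\ref{SS:solvefunctional}. The asymptotic expansion is then obtained not by Borel--Ritt but by comparing, via uniqueness, the solution for $F$ with the solutions for the truncations $(J^k\hat\Psi)^{-1}\circ F\circ J^k\hat\Psi$; and the sectors $I_2,I_4$ are handled by the $\mathrm{SL}_2(\Z)$-action of \S\ref{ss:SL2intro}, not by the same argument with reversed orientation. In short, your fixed-point framework is right, but the right inverse $\mathcal T$ has to be built by leafwise Cauchy integrals, not by summing along $T$-orbits.
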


Section \ref{S:sectorialnorm1} is devoted to the proof of this lemma. Let us see how to use it 
in order to provide a complete set of invariants for the neighborhood $(U,C)=(\tilde U,\tilde C)/<F>$.
First of all, we note that $\Psi_i$ is unique up to left-composition by a section of ${\mathscr G}^1[F_{1,0,0}](I_i)$,
i.e. the composition of an element of the one-parameter group $\mathrm{Aut}^C(S_0)$ with a section
of $\mathscr G^\infty[F_{1,0,0}](I_i)$ (see Remark \ref{rem:sectorialcentralizer}). Using this freedom, 
we may assume that asymptotic expansions coincide:
$$\hat\Psi_i=\hat\Psi_j=\hat\Psi.$$
It follows that, on intersections $I_{i,i+1}=I_i\cap I_{i+1}$, we get sections
$$\Phi_{i,i+1}:=\Psi_i\circ\Psi_{i+1}^{-1}\in\mathscr G^\infty[F_{1,0,0}](I_{i,i+1}).$$
Using Proposition \ref{P:sectorstoP1XP1}, we have
$$\Pi\circ \Phi_{i,i+1}=\varphi_{i,i+1}\circ\Pi\ \ \ \text{for some}\ \ \ \varphi_{i,i+1}\in\mathrm{Diff}^1(V_{i,i+1},p_{i,i+1}).$$
In other words, setting $\Pi_i:=\Pi\circ\Psi_i$, we get
\begin{equation}\label{eq:PiVarphiCocycleIdentity}
\Pi_i=\Pi\circ\Psi_i=\Pi\circ\Phi_{i,i+1}\circ\Psi_{i+1}=\varphi_{i,i+1}\circ\Pi\circ\Psi_{i+1}=\varphi_{i,i+1}\circ\Pi_{i+1}
\end{equation}
which proves Corollary \ref{COR:cocycle}. 
We have therefore associated to each neighborhood $(U,C)$ formally equivalent to $(U_{1,0,0},C)$
a cocycle $\varphi=(\varphi_{i,i+1})_{i\in\Z_4}$ which is unique up to the freedom for the choice of $\Psi_i$'s.

\begin{dfn}\label{def:EquivalentCocycles}
We say that two cocycles $\varphi$ and $\varphi'$ are equivalent if 
$$ \exists t\in\C,\ \exists\varphi_{i}\in\mathrm{Diff}^1(V_i,L_i)$$
\begin{equation}\label{eq:anequivcocycle}
\text{such that}\ \ \ \varphi_{i,i+1}'=\phi^t\circ\varphi_i\circ\varphi_{i,i+1}\circ\varphi_{i+1}^{-1}\circ\phi^{-t}
\end{equation}
where $\phi^t=(e^{2i\pi t}X,e^{2i\pi\tau t}Y)$ is the one-parameter group of the vector field 
$v_\tau=2i\pi(X\partial_X+\tau Y\partial_Y)$.

We will denote this equivalence relation by $\approx$.
\end{dfn}

\begin{prop}\label{prop:AnalyticClassif1} (\textbf{Proof of Theorem \ref{THM:MainClassif}})
Two neighborhood $(U,C)$ and $(U',C)$ formally equivalent to $(U_{1,0,0},C)$ are
analytically equivalent if, and only if, the corresponding cocycles are equivalent
$$(U,C)\stackrel{\text{an}}{\sim}(U',C)\ \ \ \Leftrightarrow\ \ \ \varphi\approx\varphi'.$$
\end{prop}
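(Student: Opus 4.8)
The plan is to prove the equivalence in two directions, using the cocycle $\varphi = (\varphi_{i,i+1})$ as a complete invariant. First I would establish \textbf{well-definedness}: the construction of $\varphi$ from $(U,C)$ via Lemma \ref{lem:sectorialRappel} and Corollary \ref{COR:cocycle} involves a choice of sectorial normalizations $\Psi_i$, each unique up to left-composition by a section of $\mathscr{G}^1[F_{1,0,0}](I_i) = \mathscr{G}^\infty[F_{1,0,0}](I_i) \rtimes \mathrm{Aut}^C(S_0)$ (Remark \ref{rem:sectorialcentralizer}). Replacing $\Psi_i$ by $\Phi_i \circ \Psi_i$ with $\Phi_i \in \mathscr{G}^\infty[F_{1,0,0}](I_i)$ conjugates $\Phi_{i,i+1}$ and hence, via Proposition \ref{P:sectorstoP1XP1}, changes $\varphi_{i,i+1}$ by $\varphi_i \circ \varphi_{i,i+1} \circ \varphi_{i+1}^{-1}$ with $\varphi_i \in \mathrm{Diff}^1(V_i,L_i)$; the residual freedom of choosing a different element of the one-parameter group $\mathrm{Aut}^C(S_0)$ (whose image under $\Pi$ is exactly $\phi^t$) accounts for the conjugation by $\phi^t$ in \eqref{eq:anequivcocycle}. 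This shows the class of $\varphi$ under $\approx$ depends only on the neighborhood, and is unchanged if we replace $F$ by a formally-conjugate presentation.

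Next I would prove the \textbf{forward implication}: if $(U,C) \stackrel{\text{an}}{\sim} (U',C)$, then $\varphi \approx \varphi'$. An analytic isomorphism $(U,C) \to (U',C)$ lifts to an analytic diffeomorphism $H$ of $(\tilde U, \tilde C)$ with $H \circ F = F' \circ H$, which we may assume is of the form \eqref{eq:PrenormNeigh100}; composing with a formal normalization we may reduce to the case where $H$ is a formal automorphism of $(S_0, C)$ — hence, by Lemma \ref{L:formalcentralizer}, convergent and in $\mathrm{Aut}^C(S_0)$, so inducing $\phi^t$ through $\Pi$. Then $\Psi_i' := \Psi_i \circ H^{-1}$ are sectorial normalizations for $F'$ with the same asymptotic expansion (up to the formal normalization), and computing $\Phi_{i,i+1}' = \Psi_i' \circ (\Psi_{i+1}')^{-1} = \Psi_i \circ \Psi_{i+1}^{-1} = \Phi_{i,i+1}$, one reads off after applying $\Pi$ that the cocycles differ only by the uniformity relations already folded into $\approx$. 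One must be slightly careful here because $H$ may induce a translation on $C$, not just fix it; this is where the "more precise statement" alluded to in the footnote enters, and I would handle it by allowing the $\Psi_i$ to be composed with the corresponding automorphism of $S_0$ permuting/translating the sectors, which again lands in $\mathrm{Aut}(S_0)$.

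The \textbf{converse implication} is the real substance: given $\varphi \approx \varphi'$, we must produce an analytic isomorphism $(U,C) \to (U',C)$. The strategy is: starting from the sectorial normalizations $\Psi_i: U_i \to U_{1,0,0}$ for $F$ and $\Psi_i': U_i' \to U_{1,0,0}$ for $F'$, the hypothesis $\varphi \approx \varphi'$ furnishes (after absorbing $\phi^t$ and the $\varphi_i \in \mathrm{Diff}^1$, using Proposition \ref{P:sectorstoP1XP1} to pull them back to sections of $\mathscr{G}^\infty[F_{1,0,0}]$ over the $I_i$) a modification of the normalizations so that $\Phi_{i,i+1} = \Phi_{i,i+1}'$ on every overlap $I_{i,i+1}$. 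Then the collection $(\Psi_i')^{-1} \circ \Psi_i$ defines local isomorphisms $U_i \to U_i'$ (over the sectors) which agree on overlaps — precisely because the two cocycles now coincide — and hence glue to a single analytic isomorphism $U \setminus C \to U' \setminus C$ commuting with the respective $F$'s; since both glued maps have the common formal expansion $\hat\Psi'^{-1} \circ \hat\Psi$ as asymptotic expansion along $C$, the map extends holomorphically across $C$ (a bounded-near-$C$, Riemann-extension type argument using that the four sectors cover a punctured neighborhood of $C$). \textbf{The main obstacle} I anticipate is this last gluing-and-extension step: one has to check that the four pieces, defined only over transversely sectorial domains whose openings $I_i$ barely cover $\bS^1$, actually patch to a genuine neighborhood isomorphism and that the resulting map is holomorphic up to and including $C$ — this requires carefully tracking asymptotic expansions (that the pieces share $\hat\Psi$ forces agreement to infinite order, killing the discrepancy as $\xi \to \infty$) and invoking that a holomorphic function on a punctured neighborhood of a curve with prescribed asymptotics extends, which is where the sheaf-theoretic setup of Section \ref{SS:sheaves} and the characterizations of $\mathscr{G}^\infty[F_{1,0,0}]$ do the work. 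The equivalence relation $\approx$ was designed exactly so that this matching is possible, so once the bookkeeping of the $\mathrm{Diff}^1$-freedom versus the $\mathscr{G}^\infty$-freedom is organized cleanly (via Proposition \ref{P:sectorstoP1XP1}), the argument should close.
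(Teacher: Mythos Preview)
Your proposal is correct and follows essentially the same approach as the paper: both directions hinge on comparing two systems of sectorial normalizations via the decomposition $\mathscr{G}^1[F_{1,0,0}](I_i)=\mathscr{G}^\infty[F_{1,0,0}](I_i)\rtimes\mathrm{Aut}^C(S_0)$ and pushing through $\Pi$ using Proposition~\ref{P:sectorstoP1XP1}. The ``main obstacle'' you flag in the converse is lighter than you fear: once the sectorial pieces $(\Psi_i')^{-1}\circ\Psi_i$ agree on overlaps, the glued map is holomorphic on a full punctured neighborhood of $\tilde C$ and bounded there (being asymptotic to the identity), so Riemann extension across $\tilde C$ is immediate---the paper simply says ``reverse the implications'' for this reason.
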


\begin{proof}  According to the description of $\mathrm{Aut}^C(S_0)$, a biholomorphism germ $(U,C)\to(U',C)$ is indeed tangent to identity along $C$ lifts-up to a global section 
$\Psi\in\mathscr G^1(\bS^1)$ satisfying $\Psi\circ F=F'\circ\Psi$. Let $(\Psi_i)$ and 
$(\Psi_i')$ be the sectorial normalizations used to compute the invariants $\varphi$
and $\varphi'$. Clearly, $\Psi_i'\circ\Psi$ provides a new collection of sectorial trivializations
for $(U,C)$. We can write (using Remark \ref{rem:sectorialcentralizer})
$$\Psi_i'\circ\Psi=\exp(t_i\partial_\xi)\circ\Phi_i\circ\Psi_i\ \ \ \text{with}\ \ \ \Phi_i\in\mathscr G^\infty[F_{1,0,0}](I_i).$$
However, as $\hat\Psi_i=\hat\Psi_j$ and $\hat\Psi_i'=\hat\Psi_j'$, we have $t_i=t_j=:t$ for all $i,j$. Therefore,
we have
$$\Phi_{i,i+1}'=(\Psi_i'\circ\Psi)\circ(\Psi_{i+1}'\circ\Psi)^{-1}$$
$$=(\exp(t\partial_\xi)\circ\Phi_i\circ\Psi_i)\circ(\exp(t\partial_\xi)\circ\Phi_{i+1}\circ\Psi_{i+1})^{-1}$$
$$=\exp(t\partial_\xi)\circ\Phi_i\circ\Phi_{i,i+1}\circ\Phi_{i+1}^{-1}\circ\exp(-t\partial_\xi).$$
After factorization through $\Pi$, using (\ref{eq:generator2Dgroup}) and Proposition \ref{P:sectorstoP1XP1},
we get the expected equivalence relation (\ref{eq:anequivcocycle}) for $\varphi$ and $\varphi'$.
Conversely, if $\varphi\an \varphi'$, then we can trace back the existence of an analytic conjugacy 
$\Phi:(U,C)\to(U',C)$ by reversing the above implications.
\end{proof}

\begin{remark}\label{rem:weakequivalence}
We can weaken the notion of analytic equivalence between neighborhoods by considering 
biholomorphism germs $\Phi:(U,C)\to(U',C)$ inducing translations on $C$.
This means that, in Definition \ref{def:an/forequivalence}, we now allow conjugacies 
$\Phi(z,y)=(cz+O(y),O(y))$ with $c\in\C^*$ in formula (\ref{eq:PrenormNeigh100}), i.e. translations on the elliptic curve.
In that case, the corresponding cocycles are related by 
$$\varphi_{i,i+1}'=\phi\circ\varphi_i\circ\varphi_{i,i+1}\circ\varphi_{i+1}^{-1}\circ\phi^{-1}$$
where $\phi(X,Y)=(aX,bY)$ for arbitrary $a,b\in\C^*$ \footnote{Note that those are precisely the transformation arising from the natural torus action on $\mathbb P^1\times \mathbb P^1$ (see also Section \ref{sec:symmetries}). For the sake of clarity, we will state our general result (Section \ref{S:Generalization}) modulo analytic isomorphisms inducing tranlations (and not only the identity) on $C$. } . 
Having in mind the description given in (\ref{F:diffV}), one observes that two cocycles are equivalent iff they lie on the same orbit over some action (that the reader will easily explicit) of the fiber product ${({\mathcal O}^* \times{\mathcal O}^*)}^{\times_{\C^* \times\C^* }^4}$ of $4$ copies of ${\mathcal O}^* \times{\mathcal O}^*$ with respect to the natural morphism ${\mathcal O}^* \times{\mathcal O}^* \ni(f,g)\to (f(0),g(0))\in \C^* \times\C^*$.
\end{remark}

To summarize, we have just associated to each $(U,C)\formal (U_{1,0,0},C)$ a cocycle
\begin{equation}\label{eq:Cocycle}
\varphi=(\varphi_{i,i+1})_{i\in\Z_4},\ \ \ \varphi_{i,i+1}\in\mathrm{Diff}^1(V_{i,i+1}^*,p_{i,i+1})
\end{equation}
and constructed a map from the moduli space $\mc U_{1,0,0}$ of such neighborhood up to analytic equivalence ${\an}$
to the moduli space $\mc C$ of cocycles $\varphi$ like (\ref{eq:Cocycle}) up to equivalence (\ref{eq:anequivcocycle}):
\begin{equation}\label{eq:ModularMap}
\mu\ :\ \mc U_{1,0,0}=\{(U,C)\formal (U_{1,0,0},C)\}/_{\an}\longrightarrow\mc C=\{\varphi\}/_{\approx}
\end{equation}
which is proved to be injective in Proposition \ref{prop:AnalyticClassif1}. In Section \ref{SS:Construction},
we prove the surjectivity by constructing an inverse map $\varphi\mapsto U_\varphi$.
Before that, we want to reinterpret the cocycle $\varphi$ as transition maps of an atlas 
for a neighborhood $(V_\varphi,D)$.

\section{Construction of $U_\varphi$.}\label{SS:Construction}

In this section, we construct a large class of non analytically equivalent neighborhoods,
all of them formally equivalent to $(U_{1,0,0},C)$. This is done by sectorial surgery, 
extending the complex structure along $C$ by means of Newlander-Nirenberg Theorem.
In order to do this, we have to work with smooth functions (i.e. of class $C^\infty$).

\begin{dfn}\label{def:flatsmoothsectorialfunctions}
For any open sector $\Sigma_I$ (Definition \ref{def:sector}), we denote by $\mc E^\infty(\Sigma_I)$ the $\C$-algebra
of those complex smooth functions $f:\Sigma_I\to\C$ satisfying
the following estimates
$$\forall \alpha=(\alpha_1,\alpha_2, \alpha_3, \alpha_4)\in{\mathbb N}^4,\ 
\forall n\in\N,\ \forall K\subset {\C}^*\ \text{compact},\ \exists C>0\ \text{such that:}$$
$$\forall (z,\xi)\in \Sigma_I,\ z\in K,\ \text{we have}\ 
\left|\dfrac{\partial^{\alpha_1 +\alpha_2+\alpha_3+\alpha_4}f(z,\xi)}{\partial z^{\alpha_1}\partial\xi^{\alpha_2}\partial\bar{ z}^{\alpha_3}\partial\bar{\xi}^{\alpha_2}}\right|\leq \frac{C}{|\xi^{n+1}|}.$$
\end{dfn}

Passing to inductive limits and sheafification as in Section \ref{SS:sheaves}, 
we get a sheaf $\mc E^\infty$ of differential algebra on the circle $\bS^1$.
Like in Section \ref{sec:sheavesautomorphisms}, we can also define 
the sheaf of groups $\mc D^{\infty}$ on the circle, whose sections
$\Psi\in\mc D^{\infty}(I)$ are smooth sectorial diffeomorphisms asymptotic
to the identity, i.e. of the form $\Psi(z,\xi)=(z+h_1,\xi+h_2)$ with $h_1,h_2\in\mc E^\infty(I)$.
The following property somehow expresses that a cocycle defined by a collection 
of sectorial biholomorphisms is a coboundary in the $C^\infty$ category.

\begin{lemma}\label{L:sectordiff}
Let $(J_i)_{i\in I}$ be a covering of $\bS^1$ by open intervals threewise disjoints. 
Assume also that there exist on non empty intersections $J_{ij}:=J_i\cap J_j$ 
a family of sectorial biholomorphisms $\Phi_{ij}\in {\mathscr G}^\infty (J_{ij})$ 
with $\Phi_{ji}={\Phi_{ij}}^{-1}$ (in particular $\Phi_{ii}=\mbox{id}$). 
Then, there exist smooth sectorial diffeomorphisms flat to identy $\psi_i\in {\mathcal D}^\infty (J_i)$ 
such that $\Phi_{ij}=\psi_i\circ {\psi_j}^{-1}$. 
\end{lemma}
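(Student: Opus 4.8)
The plan is to construct the $\psi_i$ by a partition-of-unity argument, the standard way of proving that a $1$-cocycle of sectorial biholomorphisms becomes a coboundary in the $C^\infty$ category (this is the "Cartan's Lemma modulo flat terms" type of statement underlying Ramis--Sibuya). Since the $\Phi_{ij}$ are flat to the identity, I first work additively: writing $\Phi_{ij}=\mathrm{id}+\varphi_{ij}$ with $\varphi_{ij}=(\varphi_{ij}^{(1)},\varphi_{ij}^{(2)})$ whose components lie in $\mc E^\infty(J_{ij})$ (indeed in ${\mathcal A}^\infty(J_{ij})$, hence certainly smooth and flat), the cocycle relation $\Phi_{ik}=\Phi_{ij}\circ\Phi_{jk}$ gives, to leading order, the additive cocycle identity $\varphi_{ik}=\varphi_{ij}+\varphi_{jk}+(\text{higher order flat corrections})$. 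Fixing a smooth partition of unity $(\rho_i)$ on $\bS^1$ subordinate to the cover $(J_i)$ — which exists because the $J_i$ form a locally finite open cover of the circle and are pairwise-or-threewise controlled — I set, as a first approximation, $h_j^{(0)}:=-\sum_k \rho_k\,\varphi_{kj}$, a smooth flat function on $J_j$ (each summand $\rho_k\varphi_{kj}$ extends by zero off $J_{jk}$ and is flat along $\tilde C$), and correspondingly $\psi_j^{(0)}:=\mathrm{id}+h_j^{(0)}\in\mc D^\infty(J_j)$. A direct computation shows $\psi_i^{(0)}\circ(\psi_j^{(0)})^{-1}$ agrees with $\Phi_{ij}$ modulo terms that are still flat but of "higher order" (quadratic in the $\varphi$'s).

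Then I would run a Newton/iteration scheme to remove these higher-order errors. At each stage one has diffeomorphisms $\psi_i^{(n)}\in\mc D^\infty(J_i)$ with $(\psi_i^{(n)})\circ(\psi_j^{(n)})^{-1}=\Phi_{ij}\circ E_{ij}^{(n)}$ where $E_{ij}^{(n)}$ is flat to the identity and whose "size" (measured in a suitable family of seminorms controlling $\mc E^\infty$ on compacta $K\Subset\C^*$ and on subsectors) improves at each step; applying the additive partition-of-unity correction again to the new cocycle $E^{(n)}$ produces $\psi_i^{(n+1)}$. The key analytic point is that all functions involved are \emph{flat} along $\tilde C=\{\xi=\infty\}$, so all estimates are of the form $O(|\xi|^{-N})$ for every $N$ uniformly on compact sets in $z$; flatness is preserved under composition, inversion, multiplication by smooth bounded $\rho_k$, and the additive corrections, and it forces the iteration to converge in the $\mc E^\infty$-topology (or one can appeal to a Borel-type summation to produce the limit directly, since only asymptotics matter). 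The limit $\psi_i:=\lim_n\psi_i^{(n)}$ lies in $\mc D^\infty(J_i)$ and satisfies $\Phi_{ij}=\psi_i\circ\psi_j^{-1}$.

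I would also record the bookkeeping that makes the partition-of-unity step legitimate on $\bS^1$: the hypothesis that the $J_i$ are threewise disjoint (no point of the circle lies in three of them) means the nerve of the cover is a cycle graph, so on each $J_{ij}$ only the two bump functions $\rho_i,\rho_j$ are nonzero, and the sums defining $h_j^{(0)}$ have at most two terms; this keeps all compositions and the cocycle manipulations finite and transparent, and it is exactly the combinatorial situation $(J_i)=(I_i)_{i\in\Z_4}$ that will be used in the application. One subtlety worth flagging: because these are diffeomorphisms rather than additive data, the correction must be done multiplicatively/by composition, so at the linear level one is inverting the map $(\psi_i)\mapsto(\psi_i\circ\psi_j^{-1})$, whose linearization is the Čech coboundary operator $\delta$ on the sheaf $\mc E^\infty\oplus\mc E^\infty$ of vector-valued flat smooth functions; the partition of unity furnishes a bounded right inverse to $\delta$, and everything reduces to showing $\mc E^\infty$ (a fine sheaf) has vanishing $H^1$, with uniform bounds. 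The main obstacle is purely this quantitative convergence of the iteration — ensuring that the seminorm estimates on the successive error cocycles $E^{(n)}$ genuinely contract, uniformly over the relevant compacta $K\Subset\C^*_z$ and subsectors $I_\epsilon\Subset J_{ij}$, despite the loss of derivatives and the shrinking of domains at each step; flatness is what saves the day, and I would make this precise by carrying a family of weighted norms $\|f\|_{K,m}=\sup_{K\times\{|\xi|>R\}}|\xi|^{m}|\partial^\alpha f|$ and checking that the quadratic error terms gain a uniform factor in $|\xi|^{-1}$ at each step.
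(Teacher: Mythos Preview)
Your strategy is sound in spirit, but it is significantly more elaborate than what is needed, and the convergence of your Newton scheme is left as an unproved assertion (you yourself flag it as ``the main obstacle''). The paper's argument avoids iteration entirely by exploiting the very combinatorial fact you mention but do not fully leverage: since no three $J_i$ meet, after passing to a finite cyclic subcover $(J_k)_{k\in\Z_n}$ one can write down the $\psi_k$ \emph{explicitly in one step}. With a partition of unity $(\theta_k)$ and $\Phi_{k,k+1}=\mathrm{id}+(h^1_{k,k+1},h^2_{k,k+1})$, the paper first sets
\[
\tilde\psi_k=\begin{cases}\mathrm{id}&\text{on }J_k\setminus J_{k,k+1},\\ \mathrm{id}+\theta_{k+1}(\arg\xi)\,(h^1_{k,k+1},h^2_{k,k+1})&\text{on }J_{k,k+1},\end{cases}
\]
and then defines $\psi_k$ to equal $\tilde\psi_k$ on $J_k\setminus J_{k-1,k}$ and $\Phi_{k,k-1}\circ\tilde\psi_{k-1}$ on $J_{k-1,k}$. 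A direct check shows these two pieces glue smoothly (since $\tilde\psi_{k-1}\to\Phi_{k-1,k}$ as one leaves $J_{k-1}$, so $\Phi_{k,k-1}\circ\tilde\psi_{k-1}\to\mathrm{id}$) and that $\psi_k\circ\psi_{k+1}^{-1}=\Phi_{k,k+1}$ exactly on $J_{k,k+1}$.

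The point is that with a cyclic cover you only need to interpolate on the overlap with \emph{one} neighbour and then \emph{rigidly impose} the cocycle on the overlap with the other; there is no quadratic error to iterate away. Your approach treats the problem as if it were the general \v{C}ech-coboundary situation (where an iteration or an appeal to fineness of $\mc E^\infty$ is indeed the standard route), and would ultimately work, but the direct construction is both shorter and sidesteps all the analytic bookkeeping you were worried about.
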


\begin{proof}
One can extract from this covering a finite covering $(J_k), k\in {\mathbb Z}_n$ such that only consecutive
sectors $J_k$ and $J_{k+1}$ intersect. It clearly suffices to prove the Lemma for this particular subcovering.   
Let $(\theta_k)$ a partition of the unity subordinate to this covering. 
Write 
$$\Phi_{k,k+1}(z,\xi)=(z+h_{k,k+1}^1,\xi+ h_{k,k+1}^2)\ \ \ \text{with}\ \ \ h_{k,k+1}^1,h_{k,k+1}^2\in {\mathcal A}^\infty (J_{k,k+1}).$$ 
First define $\tilde\psi_k\in {\mathcal D}^\infty (J_k)$ for $k\in {\mathbb Z}_n$ by
$$\tilde\psi_k=\left\{\begin{matrix}
\mbox{Id}&\text{when}&\arg(\xi)\in J_k\setminus J_{k,k+1},\\
\mbox{Id}+\theta_{k+1}(\arg\xi)(h_{k,k+1}^1,h_{k,k+1}^2)&\text{when}&\arg(\xi)\in J_{k,k+1}
\end{matrix}\right.$$
Next, define $\psi_k\in {\mathcal D}^\infty (J_k)$ by 
$$\psi_k=\left\{\begin{matrix}
\Phi_{k,k-1}\circ\tilde\psi_{k-1}&\text{when}&\arg(\xi)\in J_{k-1,k}\hfill\\
\tilde\psi_k&\text{when}&\arg(\xi)\in J_k\setminus J_{k-1,k}\hfill
\end{matrix}\right.$$
One easily check that $\psi_k$ are smooth, equal to the identity outside intersections, 
and satisfy $\psi_{k}=\Phi_{k,k-1}\circ\psi_{k-1}$ on intersections as expected.
\end{proof}

\begin{cor}\label{cor:surgeries}
Notations and assumptions like in Lemma \ref{L:sectordiff}.
There exist sectorial biholomorphisms tangent to identity $\Psi_i\in{\mathscr G}^1 (J_i)$ 
such that $\Phi_{ij}={\Psi_i}\circ {\Psi_j}^{-1}$. 
In particular, asymptotic expansions coincide $\hat\Psi_i=\hat\Psi_j$.
\end{cor}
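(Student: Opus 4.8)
The plan is to combine Lemma \ref{L:sectordiff} with a Newlander--Nirenberg argument, exactly in the spirit of the surgery construction announced at the head of Section \ref{SS:Construction}. First I would invoke Lemma \ref{L:sectordiff} to obtain smooth sectorial diffeomorphisms $\psi_i\in\mathcal D^\infty(J_i)$, flat to the identity, with $\Phi_{ij}=\psi_i\circ\psi_j^{-1}$ on the intersections $J_{ij}$. The key point is that, since each $\Phi_{ij}$ is holomorphic (a section of $\mathscr G^\infty$), the collection $(\psi_i)$ glues the standard complex structure on the open sectors $\Sigma_{J_i}\subset\tilde U$ into a single almost-complex structure $J$ on a transversely sectorial neighborhood of $\tilde C$: on each chart one transports the standard structure by $\psi_i^{-1}$, and on overlaps the transported structures agree because $\psi_i\circ\psi_j^{-1}=\Phi_{ij}$ is biholomorphic for the standard structure. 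Because the $\psi_i$ are flat to the identity, $J$ extends smoothly across $\tilde C=\{\xi=\infty\}$ and agrees there with the standard complex structure to infinite order; in particular $J$ is integrable (it is integrable off $\tilde C$, being locally the pushforward of the standard structure, and the integrability tensor, being continuous and vanishing off a nowhere dense set, vanishes identically).

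Next I would apply the Newlander--Nirenberg theorem to produce, on a possibly smaller transversely sectorial neighborhood of $\tilde C$, holomorphic coordinates for $J$; since $J$ equals the standard structure to infinite order along $\tilde C$, these coordinates can be taken of the form $(z,\xi)\mapsto(z,\xi)+(\text{flat})$, i.e. asymptotic to the identity along $\tilde C$. Composing, on each $J_i$, the new $J$-holomorphic coordinate with $\psi_i$ yields a map $\Psi_i$ which is holomorphic for the \emph{standard} structure (it sends standard-holomorphic to $J$-holomorphic to standard-holomorphic) and which is of the form $\mathrm{Id}+(\text{flat})$, hence a section of $\mathscr G^1(J_i)$ — in fact of $\mathscr G^\infty$ up to the ambiguity in the choice of Newlander--Nirenberg chart. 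On overlaps one has $\Psi_i\circ\Psi_j^{-1}=\psi_i\circ\psi_j^{-1}=\Phi_{ij}$, as desired. Finally, since $\Psi_i=\mathrm{Id}+(\text{flat})$ for every $i$, all the $\Psi_i$ share the same asymptotic expansion, namely the identity, so $\hat\Psi_i=\hat\Psi_j$.

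I expect the main obstacle to be the careful bookkeeping in the Newlander--Nirenberg step: one must check that integrability genuinely holds up to and including $\tilde C$ (using flatness of the $\psi_i$), that the resulting holomorphic coordinates can be chosen transversely sectorial with controlled domains shrinking only in the transverse direction, and that they differ from the identity by a function in $\mathcal E^\infty$ rather than merely a smooth flat function — the asymptotic estimates of Definition \ref{def:flatsmoothsectorialfunctions} must propagate through the elliptic regularity in the proof of Newlander--Nirenberg. This is routine but technical; none of it affects the stated conclusion, which only records the existence of the $\Psi_i$ and the coincidence of their asymptotic expansions.
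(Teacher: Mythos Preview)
Your approach is the same as the paper's: trivialize the cocycle smoothly via Lemma~\ref{L:sectordiff}, transport the standard structure by the $\psi_i$ to get an integrable almost complex structure $J$ on a neighborhood of $\tilde C$, then straighten $J$ globally and set $\Psi_i=\psi_i\circ\psi$. Two points need tightening.

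First, Newlander--Nirenberg only yields \emph{local} $J$-holomorphic charts; to obtain a single global biholomorphism $\psi:(\tilde U,I)\to(\tilde U,J)$ on a full neighborhood of $\tilde C\simeq\C^*$ you need the fact (recalled in Section~\ref{S:preliminary}, via Siu) that any two germs of smooth surface neighborhoods of $\C^*$ with trivial normal bundle are analytically equivalent. The paper's proof invokes this explicitly; you should too.

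Second, your claim that $\psi$ can be taken \emph{flat} to the identity (and hence $\Psi_i\in\mathscr G^\infty$) is too strong. One can only arrange $\psi$ to be tangent to the identity along $\tilde C$: its formal expansion $\hat\psi$ along $\tilde C$ is a formal diffeomorphism in $\mathcal O(\C^*)[[y]]$ which need not be convergent, so it cannot in general be cancelled by composing with an analytic automorphism of $(\tilde U,I)$. The correct conclusion, as in the paper, is $\Psi_i=\psi_i\circ\psi\in\mathscr G^1(J_i)$ with common asymptotic expansion $\hat\Psi_i=\hat\psi$; this is exactly what the corollary asserts, and your overclaim is not needed for it.
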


\begin{proof} Let $\tilde U_i$ be the sectorial domain of definition of $\psi_i$ and 
$\tilde U$ be their union together with the section $\tilde C$ defined by $\xi=\infty$.
Lemma \ref{L:sectordiff} allows to write  $\Phi_{ij}={\psi_i}\circ{\psi_{j}}^{-1}$ where $\psi_i\in{\mathcal D}^\infty (I_i)$. 
In particular, denoting by $I$ the standart complex structure on ${\C}^2$, $J:={\psi_i}^*I={\psi_j}^*I$ 
is a new complex structure on $\tilde{U}\setminus\tilde{C}$ which extends to $\tilde{U}$ 
as a complex structure by Newlander-Nirenberg's Theorem. 
In fact, because of flatness of $\psi_i$ to the identity, the almost complex structure $J$ extends at $0$ as a $C^\infty$ 
almost complex structure on $\tilde U$; by construction, it is integrable on $\tilde U_i$'s and therefore Nijenhuis tensor vanishes
identically on $\tilde U\setminus\tilde C$, and by continuity on $\tilde U$. Then, Newlander-Nirenberg's Theorem tells us that $J$ is integrable.
Note that $I=J$ in restriction to $\tilde{C}$ which is then conformally equivalent to ${\C}^*$ for both structures. 
Now, we use the fact that two-dimensional germs of neighborhood of ${\C}^*$ are analytically equivalent as recalled 
in Section \ref{S:preliminary}. 
This can be translated into the existence of a smooth diffeomorphism $\psi$ of $(\tilde{U},\tilde{C})$ such that $\psi_* I=J$. Up to making a right composition by a biholomorphism of $(\tilde{U}, \tilde{C})$ with respect to $I$, one can suppose (exploiting that $I=J$ on $T\tilde{U}_{|\tilde{C}}$) that $\psi$ is tangent to the identity along $\tilde{C}$. This implies that for every $i$, $\Psi_i:=\psi_i\circ\psi\in\mathscr G^1(U_i)$ and, because the $\Phi_{ij}$'s are flat to identity, admit an asymptotic expansion $\hat{\Psi}_i$ along $\tilde C={\C}^*$ independant of $i$. By construction, we have $\Phi_{ij}=\Psi_i\circ {\Psi_j}^{-1}$ as desired.
Obviously, all along this proof, we might have shrinked the domain $\tilde U$ of definition without mentionning it. 
\end {proof}

\begin{remark}
The use of the Newlander-Nirenberg in this context is not new and can be traced back to Malgrange \cite{Malgrange} and Martinet-Ramis \cite{MartinetRamis}.
\end{remark}

We now specialize to our covering of $\bS^1$ determined by the intervals $I_i$ defined by
(\ref{eq:sectorxi}) in Lemma \ref{lem:sectorialRappel}.
Let us show how to construct a neighborhood realizing a given cocycle $\varphi=(\varphi_{i,i+1})$
as in (\ref{eq:Cocycle}). We first define $\Phi_{i,i+1}\in{\mathscr G}^\infty [F_{1,0,0}](I_{i,i+1})$ satisfying 
$\Pi\circ \Phi_{i,i+1}=\varphi_{i,i+1}\circ\Pi$. 
Then use Corollary \ref{cor:surgeries} to obtain $\Psi_i\in{\mathscr G}^1 (I_i)$ 
such that $\Phi_{i,{i+1}}={\Psi_i}\circ \Psi_{i+1}^{-1}$. As $\Phi_{i,i+1}$ commute to $F_{1,0,0}$,
we have on intersections:
$$({\Psi_i}\circ \Psi_{i+1}^{-1})\circ F_{1,0,0}=F_{1,0,0}\circ ({\Psi_i}\circ \Psi_{i+1}^{-1})$$
which rewrites
$$\Psi_{i+1}^{-1}\circ F_{1,0,0}\circ\Psi_{i+1}=\Psi_i^{-1}\circ F_{1,0,0}\circ\Psi_i.$$
Therefore, we can define a global diffeomorphism of $(\tilde U,\tilde C)$ by setting
$$F_\varphi:=\Psi_i^{-1}\circ F_{1,0,0}\circ\Psi_i$$
on $U_i$'s and extending by continuity as the identity mapping on $\tilde C$.
By construction, the quotient 
$$(U_\varphi,C):=(\tilde U,\tilde C)/<F_\varphi>$$
has cocycle $\varphi$ and is formally equivalent to $U_{1,0,0}$. This proves the surjectivity of the map (\ref{eq:ModularMap})
whose injectivity has been proved in Proposition \ref{prop:AnalyticClassif1}.
It remains to prove the Sectorial Normalization Lemma \ref{lem:sectorialRappel}
(i.e. Lemma \ref{LEM:sectorialnormalization} in the introduction),
which will be done in Section \ref{S:sectorialnorm1}.
Modulo this technical but central Lemma, we have achieved the proof of Theorem \ref{THM:MainClassif}.

\section{Construction of $V_\varphi$.}\label{SS:Vphi}
In this section, keeping notations of Section \ref{sec:IntroFondamIso}, we generalize Serre isomorphism\newline
 $\Pi:U_{1,0,0}\setminus C\to\C^*_X\times\C^*_Y$ to the case of a general neighborhood $(U,C)\formal(U_{1,0,0},C)$.

\begin{thm}\label{thm:SerreIsomGerm}
Given a germ of neighborhood $(U_\varphi,C)\formal(U_{1,0,0},C)$, there exists a neighborhood germ
$(V_\varphi,D)$ of $D$ where each $L_i$ have trivial normal bundle, and an isomorphism germ
\begin{equation}\label{eq:NonLinSerreIsom}
\Pi_\varphi\ :\ (U_\varphi\setminus C,C)\stackrel{\sim}{\longrightarrow} (V_\varphi\setminus D,D)
\end{equation}
canonically attached to the analytic class of $(U_\varphi,C)$ in the following sense: 
if $(U_{\varphi'},C)$ is another neighborhood germ, then
\begin{equation}\label{eq:EquivU=EquivV}
(U_\varphi,C)\an(U_{\varphi'},C)\ \ \ \Leftrightarrow\ \ \ (V_\varphi,D)\an(V_{\varphi'},D)
\end{equation}
where the analytic equivalence allows translations\footnote{We emphasize that this is not exactly 
the equivalence relation defined in Definition \ref{def:an/forequivalence}.} 
on $C$ for the left-hand-side, 
and preserves the numbering of lines $L_i$ on the right-hand-side.
\end{thm}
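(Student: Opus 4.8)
The plan is to build $(V_\varphi,D)$ as an explicit quotient/surgery space, parallel to the construction of $(U_\varphi,C)$ in Section \ref{SS:Construction}, and then read off the isomorphism $\Pi_\varphi$ sector by sector. Starting from the cocycle $\varphi=(\varphi_{i,i+1})$ attached to $(U_\varphi,C)$ (Corollary \ref{COR:cocycle} and the surjectivity argument of Section \ref{SS:Construction}), one has sectorial biholomorphisms $\Psi_i\in\mathscr G^1(I_i)$ with $\Psi_i\circ F=F_{1,0,0}\circ\Psi_i$ and $\Phi_{i,i+1}=\Psi_i\circ\Psi_{i+1}^{-1}\in\mathscr G^\infty[F_{1,0,0}](I_{i,i+1})$, and $\Pi\circ\Phi_{i,i+1}=\varphi_{i,i+1}\circ\Pi$. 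The first step is to define $V_\varphi$: take four copies $(V_i,L_i)\simeq(\PP^1_X\times\PP^1_Y,L_i)$, with $L_i$ of self-intersection zero, and glue $V_{i+1}$ to $V_i$ along a neighborhood of $p_{i,i+1}$ by the germ $\varphi_{i,i+1}\in\mathrm{Diff}^1(V_{i,i+1},p_{i,i+1})$. Because each $L_i$ has trivial normal bundle, Savelev's theorem (already invoked in the introduction) guarantees $(V_i,L_i)$ is a product, so these local pieces genuinely assemble into a germ of neighborhood $(V_\varphi,D)$ of the cycle $D=L_1\cup L_2\cup L_3\cup L_4$.

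The second step is to construct $\Pi_\varphi$. On the sector $U_i$ of $U_\varphi$, the composition $\Pi\circ\Psi_i$ maps $U_i$ isomorphically onto $V_i^*=V_i\setminus D$ (this is exactly how the local charts of $V_\varphi$ were set up, i.e.\ Corollary \ref{COR:cocycle} run in reverse). On overlaps $U_i\cap U_{i+1}$, the two candidate maps $\Pi\circ\Psi_i$ and $\Pi\circ\Psi_{i+1}$ differ precisely by $\varphi_{i,i+1}$, which is the gluing map of $V_\varphi$; hence they patch to a well-defined biholomorphism $\Pi_\varphi:(U_\varphi\setminus C,C)\to(V_\varphi\setminus D,D)$. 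The key point to check here is independence of the choice of sectorial normalizations $\Psi_i$: any other choice replaces $\Psi_i$ by $\exp(t\partial_\xi)\circ\Phi_i\circ\Psi_i$ with $\Phi_i\in\mathscr G^\infty[F_{1,0,0}](I_i)$ and a common $t$ (as in the proof of Proposition \ref{prop:AnalyticClassif1}), which changes $\varphi$ by the equivalence $\approx$ of Definition \ref{def:EquivalentCocycles}; this is absorbed by composing $\Pi_\varphi$ with the automorphism $\phi^t$ of $\PP^1_X\times\PP^1_Y$ and with the $\varphi_i$'s, so the analytic class of $(V_\varphi,D)$, and the map $\Pi_\varphi$ up to the allowed equivalences, are well-defined.

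The third step is the equivalence (\ref{eq:EquivU=EquivV}). For ($\Rightarrow$): an analytic isomorphism $(U_\varphi,C)\to(U_{\varphi'},C)$ inducing a translation on $C$ lifts to $\Psi\in\mathscr G^1(\bS^1)$ (possibly with linear part $cz\partial_z$ along $C$) conjugating $F$ to $F'$; composing the sectorial normalizations shows, exactly as in Proposition \ref{prop:AnalyticClassif1} and Remark \ref{rem:weakequivalence}, that $\varphi$ and $\varphi'$ are related by the $\approx$-relation enlarged by a monomial map $\phi(X,Y)=(aX,bY)$. Via the local charts this monomial map, together with the $\varphi_i$'s, assembles into an analytic isomorphism $(V_\varphi,D)\to(V_{\varphi'},D)$ preserving the numbering of the $L_i$. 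For ($\Leftarrow$): an analytic isomorphism $(V_\varphi,D)\to(V_{\varphi'},D)$ preserving the numbering restricts on each $(V_i,L_i)$ to an element whose composition with the Savelev trivializations lands in the monomial-plus-$\mathrm{Diff}^1$ form (here one uses the description (\ref{F:diffV}) of $\mathrm{Diff}(V_i,L_i)$ and the fiber-product description at the end of Remark \ref{rem:weakequivalence}); transporting through $\Pi^{-1}$ and the $\Psi_i$'s yields the required sectorial data, which glue to an analytic isomorphism $(U_\varphi,C)\to(U_{\varphi'},C)$ allowing a translation on $C$, by reversing the construction $F_\varphi=\Psi_i^{-1}\circ F_{1,0,0}\circ\Psi_i$ of Section \ref{SS:Construction}.

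The main obstacle I expect is not any single hard computation but the bookkeeping of the various equivalence relations and exactly which ``analytic equivalence'' is meant on each side: one must be careful that the $2i\pi$-periodic ambiguity in $\log$ entering Proposition \ref{P:sectorstoP1XP1}, the one-parameter freedom $\exp(t\partial_\xi)$ in the sectorial normalizations, and the translation freedom on $C$ all match up under $\Pi$ with the torus action $\phi(X,Y)=(aX,bY)$ and the groups $\mathrm{Diff}^1(V_i,L_i)$ on the $V$-side, so that (\ref{eq:EquivU=EquivV}) is an honest biconditional and $\Pi_\varphi$ is genuinely canonical. A secondary technical point is verifying that the abstract gluing of the four local germs $(V_i,L_i)$ along the corner points really produces a Hausdorff germ of neighborhood of the cycle $D$ with each $L_i$ a genuine embedded rational curve of zero self-intersection; this is where Savelev's triviality theorem does the essential work, exactly as it did for the model $(V,D)$ in Section \ref{sec:IntroAnalClass}.
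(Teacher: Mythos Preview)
Your proposal is correct and follows essentially the same route as the paper: construct $(V_\varphi,D)$ by gluing the four germs $(V_i,L_i)$ along the corners via the cocycle $\varphi$, obtain $\Pi_\varphi$ by patching the sectorial maps $\Pi\circ\Psi_i$ using (\ref{eq:PiVarphiCocycleIdentity}), and deduce the biconditional (\ref{eq:EquivU=EquivV}) from the cocycle equivalences already established in Proposition \ref{prop:AnalyticClassif1} and Remark \ref{rem:weakequivalence}. The paper's own argument is organized a bit differently---it first sets up the dictionary $(V_\varphi,D)\an(V_{\varphi'},D)\Leftrightarrow\varphi\sim\varphi'$ for arbitrary cocycles (not necessarily tangent to the identity), extracts the linear invariant $(a,b)$, and then observes that when both cocycles are tangent to the identity the conjugating $\varphi_i$'s are forced to share a common linear part, which is precisely the translation freedom on $C$---but this is the same content you invoke through Remark \ref{rem:weakequivalence}. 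Your worries about Hausdorffness of the glued germ and about Savelev's role are harmless: the gluing of germs along open neighborhoods of the corner points by biholomorphisms fixing $D$ is automatically Hausdorff at the germ level, and Savelev is only needed for the converse direction (showing an arbitrary $(V,D)$ arises from some cocycle), not for the forward construction of $V_\varphi$.
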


\begin{proof}
Given a cocycle not necessarily tangent to the identity
$$\varphi=(\varphi_{i,i+1})_{i\in\Z_4},\ \ \ \varphi_{i,i+1}\in\mathrm{Diff}(V_{i,i+1},p_{i,i+1}),$$
we define a new germ of analytic neighborhood of $D$ as follows. We consider the disjoint union 
of neighborhood germs $(V_i,L_i)$, 
and patch them together through the transition maps 
$$\varphi_{i,i+1}:(V_{i+1},p_{i,i+1})\stackrel{\sim}{\longrightarrow}(V_i,p_{i,i+1}).$$
The resulting analytic manifold $V_\varphi$ contains a copy of $D$, namely the union of lines $L_i$
identified at points $p_{i,i+1}$, and only the germ of neighborhood $V_\varphi$ makes sense
$$(V_\varphi,D):=\sqcup_i (V_i,L_i)/(\varphi_{i,i+1}).$$
This germ of neighborhood comes with embeddings 
$$\psi_i:(V_i,L_i)\hookrightarrow(V_\varphi,D).$$
Conversely, if $(V,D)$ is a germ of neighborhood of $D$ where all lines $L_i$ have zero self-intersection,
then there exist trivialization maps (preserving D) 
$$\psi_i:(V_i,L_i)\stackrel{\sim}{\longrightarrow}(V,L_i)$$
(where $ (V,L_i)$ denotes  the germ of $V$ along $D$) 
in such a way that, near $p_{i,i+1}$ we have
$$\psi_i=\varphi_{i,i+1}\circ\psi_j\ \ \ \text{for some}\ \ \ \varphi_{i,i+1}\in\mathrm{Diff}(V_{i,i+1},p_{i,i+1}).$$
It is clear from above arguments that, for another cocycle $\varphi'$, we have
$$(V_\varphi,D)\an (V_{\varphi'},D)\ \ \ \Leftrightarrow\ \ \ \varphi\sim{\varphi'}$$
where
$$\varphi\sim{\varphi'}\ \ \ \stackrel{\text{def}}{\Leftrightarrow}\ \ \ \exists\varphi_i\in\mathrm{Diff}(V_{i},L_{i}),\ \varphi_{i}\circ\varphi_{i,i+1}'=\varphi_{i,i+1}\circ\varphi_{i+1},$$
and in that case, the isomorphism $V_\varphi\stackrel{\sim}{\longrightarrow}V_{\varphi'}$ is given by patching 
$$(V_\varphi,D)\stackrel{\psi_i}{\longleftarrow}(V_i,L_i)\stackrel{\varphi_i}{\longrightarrow}(V_i,L_i)\stackrel{\psi_i'}{\longrightarrow}(V_{\varphi'},D).$$
From the linear part of equivalence relation $\varphi\sim{\varphi'}$, we see that any cocycle $\varphi$
is equivalent to a cocycle such that 
\begin{itemize}
\item $\varphi_{1,2},\varphi_{2,3},\varphi_{3,4}\in\mathrm{Diff}^1(V_{i,i+1},p_{i,i+1})$ (tangent to the identity),
\item $\varphi_{4,1}(X,Y)=(aX+\cdots,bY+\cdots)$ for $a,b\in\C^*$ independant of the choice.
\end{itemize}
The pair $(a,b)$ is an invariant of the neighborhood $V_\varphi$.
Cocycles arising from $(U,C)\formal(U_{1,0,0},C)$ have invariants $a=b=1$.
In order to prove the equivalence (\ref{eq:EquivU=EquivV}), we just have to note
that, for equivalent cocycles $\varphi\sim{\varphi}'$ normalized as above 
(in particular when all $\varphi_{i,i+1},\varphi_{i,i+1}'$ are tangent to the identity)
then all four conjugating maps $\varphi_i$ have the same linear part. Then apply 
Remark \ref{rem:weakequivalence} to show that it corresponds to analytic
equivalence of $(U_\varphi,C)$ and  $(U_{\varphi'},C)$ up to a translation of the curve.

Finally, we construct the isomorphism (\ref{eq:NonLinSerreIsom}) by patching together the 
sectorial ones 
$$U_i\stackrel{\Pi_i}{\longrightarrow} V_i^*\stackrel{\psi_i}{\hookrightarrow} V_\varphi\setminus D$$
using the identity (\ref{eq:PiVarphiCocycleIdentity}) $\Pi_i=\varphi_{i,i+1}\circ\Pi_{i+1}$.
\end{proof}

\section{Foliations}\label{sec:Foliations}

Recall that our model $(U_{1,0,0},C)$ carries a pencil of foliations 
$$\mathcal F_t\ :\ \{\omega_0+t\omega_\infty=0\},\ \ \ \text{where}\ \ \ \omega_0=d\xi\ \ \ \text{and}\ \ \ \omega_\infty=\frac{1}{2i\pi\tau}\frac{dz}{z};$$
moreover, there is no other formal foliation on $(U_{1,0,0},C)$ either tangent, or transversal to $C$ (see \cite[Section 2.3]{LTT}).
Via the isomorphism $\Pi:U_{1,0,0}\setminus C\to V_0\setminus D$, we get the corresponding pencil
$$ \Pi_*\mathcal F_t\ :\ (1-t)\tau\frac{dX}{X}+t\frac{dY}{Y}.$$
The monodromy (or holonomy) of $\mathcal F_t$ is given by 
$$\pi_1(C)\to\mathrm{Aut}(\C)\ ;\ \left\{\begin{matrix}
1\mapsto [\xi\mapsto \xi+\frac{t}{\tau}]\hfill\\
\tau\mapsto [\xi\mapsto \xi+t-1]
\end{matrix}\right.$$
In particular, for $\frac{m}{n}\in\Q\cup\{\infty\}$, are equivalent
\begin{itemize}
\item $\mathcal F_t$ has trivial monodromy along $m+\tau n\in\Gamma\setminus\{0\}$, viewed as a loop of $\pi_1(C)\simeq\Gamma$;
\item $t=\frac{\tau n}{m+\tau n}$, or equivalently $\left(\frac{1}{t}-1\right)\tau=\frac{m}{n}$;
\item $\Pi_*\mathcal F_t$ admits the rational first integral $X^mY^n$.
\end{itemize}
We will say that $\mathcal F_t$ is of {\it rational type} if there is $\frac{m}{n}\in\Q\cup\{\infty\}$ with these properties, 
and of {\it irrational type} if not. We note that rational type foliations are characterized by the fact that their
holonomy group is cyclic (one generator), and also that the space of leaves (after deleting $C$ and regarding them as global foliations on $S_0$) is somehow "rational",
and not "elliptic".

If $(U,C)$ is any analytic neighborhood with a formal conjugacy 
$$\hat\Psi:(U,C)\stackrel{\sim}{\longrightarrow}(U_{1,0,0},C),$$ 
then it also
carries the pencil of formal foliations $\hat{\mathcal F}_t:=\hat\Psi^*\mathcal F_t$.
As we shall prove, these foliations are divergent in general. In fact, recall (see \cite[Theorem 4]{LTT})

\begin{thm}Let $(U, C)$ be an analytic neighborhood formally equivalent to $(U_{1,0,0},C)$. Assume
\begin{itemize} 
\item three elements $\hat{\mathcal F}_{t_1}$, $\hat{\mathcal F}_{t_2}$, $\hat{\mathcal F}_{t_3}$ 
of the pencil are convergent,
\item or two elements $\hat{\mathcal F}_{t_1}$, $\hat{\mathcal F}_{t_2}$ of the pencil are convergent,
both of irrational type: $\left(\frac{1}{t_i}-1\right)\tau\not\in\Q$ for $i=1,2$. 
\end{itemize}
Then the full pencil $\hat{\mathcal F}_{t}$ is convergent, and $(U, C)$ is analytically equivalent to $(U_{1,0,0},C)$
(in fact $\hat\Psi$ is convergent).
\end{thm}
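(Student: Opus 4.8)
By Theorem \ref{THM:MainClassif} together with Theorem \ref{thm:SerreIsomGerm}, it suffices to prove that the classifying cocycle $\varphi$ of $(U,C)$ is equivalent to the trivial one, equivalently that the compactification $(V_\varphi,D)$ is biholomorphic — with the numbering of the lines $L_i$ preserved — to the model $(\PP^1_X\times\PP^1_Y,D)$. Granting this, Theorem \ref{thm:SerreIsomGerm} produces an analytic isomorphism $(U,C)\stackrel{\sim}{\longrightarrow}(U_{1,0,0},C)$ inducing at most a translation on $C$; composing with $\hat\Psi^{-1}$ gives a formal automorphism of $(U_{1,0,0},C)$, which is convergent by Corollary \ref{cor:formalcentralizer}, so $\hat\Psi$ itself is convergent, as claimed. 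To recognize $(V_\varphi,D)$ I will exhibit two \emph{convergent closed} logarithmic $1$-forms $\omega_X,\omega_Y$ on $(V_\varphi,D)$, with polar locus exactly $D$ and residues along $(L_1,L_2,L_3,L_4)$ equal to $(0,-1,0,1)$ and $(1,0,-1,0)$ respectively — the residue patterns of $\frac{dX}{X}$ and $\frac{dY}{Y}$. Since these residues are integers and $H_1(V_\varphi\setminus D)$ is generated by the meridians of the $L_i$, the primitives exponentiate to single-valued meromorphic functions $X'=\exp\int\omega_X$ and $Y'=\exp\int\omega_Y$ on $V_\varphi$, and a residue/order count (simple zero of $X'$ along $L_4$, simple pole along $L_2$, degree-one restrictions $X'|_{L_1},X'|_{L_3}$, and symmetrically for $Y'$) shows that $(X',Y')$ maps $(V_\varphi,D)$ biholomorphically onto $(\PP^1\times\PP^1,D)$.

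\textbf{The irrational case (hypothesis (ii), and hypothesis (i) when at least two of the $t_i$ are of irrational type).} Say $\hat{\mathcal F}_{t_1}$ and $\hat{\mathcal F}_{t_2}$ are convergent and of irrational type. By Theorem \ref{THM:ExistenceFoliations} and the remark following it, a convergent member of the pencil that is not of rational type is defined on $(V_\varphi,D)$ by a \emph{global closed} logarithmic $1$-form $\eta_j$, whose residue vector is $\rho_j=\big(t_j,-(1-t_j)\tau,-t_j,(1-t_j)\tau\big)$ up to a nonzero scalar. (The mechanism is that a tangent-to-identity germ preserving $\{(1-t_j)\tau\frac{dX}{X}+t_j\frac{dY}{Y}=0\}$ must preserve this very logarithmic form once $\lambda_{t_j}=\tau(\tfrac1{t_j}-1)\notin\Q$, since the would-be correction term satisfies a resonance equation admitting no nonzero holomorphic solution in the irrational regime; hence the forms glue globally.) As $t_1\neq t_2$, the vectors $\rho_1,\rho_2$ span the plane $\{(a,-b,-a,b)\}$, so suitable complex-linear combinations of $\eta_1,\eta_2$ produce the desired $\omega_X$ and $\omega_Y$, and the plan above concludes.

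\textbf{The remaining case (hypothesis (i) with at most one irrational $t_i$): the main obstacle.} After renumbering, $\hat{\mathcal F}_{t_1}$ and $\hat{\mathcal F}_{t_2}$ are then convergent of rational type, say $\lambda_{t_j}=m_j/n_j$ in lowest terms, while $\hat{\mathcal F}_{t_3}$ is convergent as well. First, each $\hat{\mathcal F}_{t_j}$ ($j=1,2$), being convergent and formally conjugate to the model, has holonomy formally trivial along the loop $\gamma_j=m_j+n_j\tau\in\Gamma\simeq\pi_1(C)$, hence genuinely trivial. On $(V_\varphi,D)$ it carries the rational first integral $X^{m_j}Y^{n_j}$ of the model chart, and the obstruction to its being defined by a closed logarithmic form is the Écalle–Voronin invariant of its holonomy: a cocycle $(\Gamma^{(j)}_i)_{i\in\Z_4}$ of germs in $\mathrm{Diff}(\C,0)$ tangent to the identity, read off from the components of the $\varphi_{i,i+1}$ in the manner of the Martinet–Ramis computation indicated after Theorem \ref{THM:ExistenceFoliations}. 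The crux — and where I expect the real work to lie, this being precisely the content imported from \cite[Theorem 4]{LTT} — is that the two transverse convergent foliations $\hat{\mathcal F}_{t_1},\hat{\mathcal F}_{t_2}$, through the pair $(\Gamma^{(1)}_i),(\Gamma^{(2)}_i)$, already determine the analytic class of $(U,C)$ up to this ``free'' Écalle–Voronin data, so that imposing convergence of the \emph{third} member $\hat{\mathcal F}_{t_3}$ is an overdetermined constraint which, spelled out in these coordinates, forces $(\Gamma^{(1)}_i)=(\Gamma^{(2)}_i)=\mathrm{id}$. Once this is settled, both $m_j\frac{dX}{X}+n_j\frac{dY}{Y}$ ($j=1,2$) are closed logarithmic forms on $(V_\varphi,D)$; as $m_1n_2-m_2n_1\neq0$ their residue vectors span $\{(a,-b,-a,b)\}$, and the plan concludes exactly as before (equivalently, every $\varphi_{i,i+1}$ preserves both these forms, which forces $\varphi_{i,i+1}=\mathrm{id}$, i.e. $\varphi\approx\mathrm{id}$).
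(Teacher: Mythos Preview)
The paper does not give its own proof of this statement: it is quoted verbatim from \cite[Theorem~4]{LTT} and left unproved here. Your proposal is thus an attempt at an \emph{independent} proof via the classification machinery developed in this paper, which is worth evaluating on its own.

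Your treatment of hypothesis~(ii) (two convergent irrational-type foliations) is correct and rather elegant: Proposition~\ref{prop:IrrationalFolClosed1Form} yields two global closed logarithmic $1$-forms on $(V_\varphi,D)$ with independent residue vectors, a suitable linear combination produces forms with integer residues $(0,-1,0,1)$ and $(1,0,-1,0)$, and the exponentiated primitives $(X',Y')$ give the desired biholomorphism $(V_\varphi,D)\simeq(\PP^1\times\PP^1,D)$. This is a genuinely different route from \cite{LTT}, exploiting the Serre-type compactification rather than the bifoliated normal forms.

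The rational case, however, contains a real gap. You write that the crux --- convergence of the third foliation forcing the \'Ecalle--Voronin data to vanish --- is ``precisely the content imported from \cite[Theorem~4]{LTT}''. But that \emph{is} the theorem you are proving, so the appeal is circular; and even if you meant \cite[Theorem~5]{LTT}, you still owe the computation that a third convergent member kills the moduli. This can in fact be done with the tools at hand: normalise the cocycle as in Proposition~\ref{prop:ExampleMartinetRamis} (for $t_1=0$, $t_2=1$; the general pair of rational types needs either the $\mathrm{SL}_2(\Z)$-action of \S\ref{ss:SL2intro} or a direct analogue with first integrals $X^{m_j}Y^{n_j}$), then impose the gluing condition $\varphi_{i,i+1}^*\theta_i\wedge\theta_{i+1}=0$ of Corollary~\ref{cor:ExistenceFol} for $\mathcal G_{t_3}$ at each corner $p_{i,i+1}$. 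Separating variables in these identities successively forces $g_1=0$, then $\alpha_4=\mathrm{id}$ and $g_4=0$, then $\alpha_3=\mathrm{id}$ and $g_3=0$, and finally $\alpha_2=\alpha_1=\mathrm{id}$. Until you actually carry this out (and handle the case where the two rational lattice vectors do not form a $\Z$-basis), the rational branch of your argument is not a proof.
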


In \cite[Theorem 5]{LTT}, the two first authors and O. Thom construct infinite dimensional deformations
of neighborhoods with two convergent foliations $\hat{\mathcal F}_{t_1}$ and $\hat{\mathcal F}_{t_2}$,
provided that one or two of them is of rational type. In fact, \'Ecalle-Voronin moduli spaces are shown to embed 
in moduli spaces of neighborhoods through these bifoliated constructions. Now, we know from our main
result Theorem \ref{THM:MainClassif}, that the moduli space of neighborhood is larger, comparable with 
$\C\{X,Y\}$, in contrast with \'Ecalle-Voronin moduli space which is comparable with $\C\{X\}$. The point  is 
that we have missed all neighborhoods with only one, or with no convergent foliation in the aforementioned work.

\subsection{Existence of foliations}\label{ssec:ExistenceFoliations}

We now start examinating under which condition on the glueing cocycle $\varphi=(\varphi_{i,i+1})$
the neighborhood $(U_\varphi,C)$ admits a convergent foliation. Here, we follow notations $\Psi_i$, $\Pi_i$, $\Phi_i$, ...
of Section \ref{sec:AnalyticClass}.

\begin{lemma}\label{lem:SectorialFolFtNonRat}
Let $t\in\PP^1\setminus\{0,1\}$. If the formal foliation $\hat{\mathcal F}_t$ of $(U,C)$ is convergent,
then the induced foliation ${\Pi_i}_*\hat{\mathcal F}_t$ on $V_i^*$ extends as a singular foliation on $V_i$ 
and is defined by a closed logarithmic $1$-form
\begin{equation}\label{eq:LogarithmicTheta}
\theta_i=(1-t)\tau\frac{dX}{X}+t\frac{dY}{Y}+\eta_i
\end{equation}
with $\eta_i$ closed holomorphic on $V_i$.
\end{lemma}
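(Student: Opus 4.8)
The statement is about a single sector $V_i$; fix $i$ once and for all, say $i=1$, the others being identical up to the relabeling given in (\ref{eq:sectorxi}) and the symmetry of $D$. The hypothesis is that the formal foliation $\hat{\mathcal F}_t = \hat\Psi^*\mathcal F_t$ on $(U,C)$ is actually convergent, i.e. it is a genuine holomorphic (regular) foliation near $C$ with $C$ as a leaf. The plan is to pull this convergent foliation onto $V_i^*$ via the sectorial isomorphism $\Pi_i = \Pi\circ\Psi_i$ of Corollary \ref{COR:cocycle}, and then show it extends across the divisor $D\cap V_i$ as a singular foliation defined by a logarithmic $1$-form of the announced shape. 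First I would recall that on the model $U_{1,0,0}$ the foliation $\mathcal F_t$ corresponds under Serre's isomorphism to $\Pi_*\mathcal F_t : (1-t)\tau\frac{dX}{X}+t\frac{dY}{Y}=0$ (this is (\ref{eq:DefFolt})); so on the sector $V_i^*$ the foliation ${\Pi_i}_*\hat{\mathcal F}_t$ is, \emph{a priori}, some holomorphic foliation on $V_i^*$ whose asymptotic behaviour along $D\cap V_i$ matches that of the model foliation, because $\Psi_i$ has $\hat\Psi$ as asymptotic expansion.

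The heart of the argument is the extension across $D\cap V_i$. Since $\hat{\mathcal F}_t$ is convergent and tangent to $C$, it is defined near $C$ (in the coordinates $(z,\xi)$ on $\tilde U$, say on the sector $U_i$) by a holomorphic $1$-form; transporting through $\Pi_i$ gives a holomorphic $1$-form $\Theta_i$ on $V_i^*$ defining ${\Pi_i}_*\hat{\mathcal F}_t$, with $\Theta_i$ asymptotic along $D$ to a constant multiple of $(1-t)\tau\frac{dX}{X}+t\frac{dY}{Y}$. Normalising, I would write $\Theta_i = (1-t)\tau\frac{dX}{X}+t\frac{dY}{Y}+\eta_i$ with $\eta_i$ holomorphic on $V_i^*$ and \emph{flat along $D\cap V_i$} (asymptotic expansion zero), by Proposition \ref{prop:CharacSectorialFunctionsF0} applied to the coefficients of $\eta_i$ — these coefficients, being invariant under $F_{1,0,0}$ and flat, are of the form $g\circ\Pi$ with $g$ holomorphic along $L_i$ (resp. $p_{i,i+1}$) and vanishing. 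This already shows $\Theta_i$, hence ${\Pi_i}_*\hat{\mathcal F}_t$, extends holomorphically across $D\cap V_i$ as a (possibly singular) foliation; the pole divisor of the extended $1$-form is contained in $D\cap V_i$ and is of order one there, so the extension is logarithmic. It remains to upgrade "$\eta_i$ holomorphic, poles at worst logarithmic on $D$" to "$\eta_i$ holomorphic on all of $V_i$ (no poles) and closed": closedness of $\Theta_i$ follows from closedness of $\hat{\mathcal F}_t$'s defining form together with $d\bigl((1-t)\tau\frac{dX}{X}+t\frac{dY}{Y}\bigr)=0$, giving $d\eta_i=0$; and the residue of $\Theta_i$ along each component of $D\cap V_i$ is forced to equal that of the model form (because $\eta_i$ is flat, hence its restriction to a small loop around a component of $D$ has zero residue), so $\eta_i$ itself has vanishing residues along $D\cap V_i$ and is therefore genuinely holomorphic on $V_i$.

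Concretely, in a chart where $D\cap V_1 = \{Y=0\}$ (recall $L_1:\{Y=0\}$), I would expand the holomorphic coefficients of $\Theta_1$ in $Y$, isolate the $\frac{dY}{Y}$ term and the $\frac{dX}{X}$ (equivalently $dX$) term, subtract the model $(1-t)\tau\frac{dX}{X}+t\frac{dY}{Y}$, and check directly that what remains has no $\frac1Y$ singularity (the would-be residue is flat, hence identically zero) and no $\frac1X$ singularity near $L_1$ (as $L_1$ does not meet $X=0$ inside $V_1$), so $\eta_1$ is holomorphic on $V_1$; the exactness/closedness check $d\eta_1=0$ is then the routine computation $d\Theta_1 = 0$. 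The main obstacle — and the only genuinely non-formal point — is justifying that the transported form $\Theta_i$ is honestly holomorphic on the sector $V_i^*$ with the stated flat correction term, i.e. correctly invoking the characterisation of $\mathscr G^\infty[F_{1,0,0}]$-invariant / $\mathcal A^\infty[F_{1,0,0}]$-type objects from Propositions \ref{prop:CharacSectorialFunctionsF0} and \ref{P:sectorstoP1XP1} at the level of $1$-forms rather than functions; once that bookkeeping is in place, the extension and the logarithmic/closed form of $\theta_i$ fall out. I should also remark why $t\notin\{0,1\}$ is needed: for $t=0$ or $t=1$ the model form degenerates to $\frac{dX}{X}$ or $\frac{dY}{Y}$ alone (one of the two divisor components becomes invariant in a degenerate way / the holonomy picture changes), and the statement as phrased — with both $\frac{dX}{X}$ and $\frac{dY}{Y}$ present with nonzero coefficients — would have to be modified; this exclusion also matches the discussion following Theorem \ref{THM:ExistenceFoliations} about the rational case $\mathcal F_0$ and Martinet--Ramis obstructions.
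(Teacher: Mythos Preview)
Your overall strategy --- push the convergent foliation to $V_i^*$ via $\Pi_i$, compare with the model form, and invoke Proposition \ref{prop:CharacSectorialFunctionsF0} to extend across $L_i$ --- is sound and is exactly the spirit of the paper's proof. But two points are not justified as written. First, your ``normalising'' step is doing all the work and is left vague: the foliation is only defined up to multiplication by a unit, and an arbitrary choice of defining form does \emph{not} yield a correction $\eta_i$ that is simultaneously holomorphic on $V_i$ and closed. Second, your closedness argument assumes that $\hat{\mathcal F}_t$ is globally defined by a closed $1$-form; this is not part of the hypothesis (convergence of the foliation does not give a closed defining form in general --- compare Proposition \ref{prop:IrrationalFolClosed1Form}, which isolates precisely when this holds).

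The paper resolves both issues at once by a concrete choice of normalisation. On the sector one writes the pushed-forward foliation as $\omega_0-u\,\omega_\infty=0$ for a single scalar $u\in\mathcal A[F_{1,0,0}](I_i)$ with asymptotic expansion $t$; then $u-t\in\mathcal A^\infty[F_{1,0,0}](I_i)$ is, by Proposition \ref{prop:CharacSectorialFunctionsF0}, a holomorphic function of one variable (of $Y$ on $V_1,V_3$, of $X$ on $V_2,V_4$), say $u=t+f$. The crucial move is then to rescale by $t/u$ on one pair of sectors and by $(1-t)/(1-u)$ on the other (this is exactly where $t\ne 0,1$ enters): after this rescaling the correction $\eta_i$ is explicitly of the form $g(Y)\,dY$ (resp.\ $g(X)\,dX$), hence automatically both holomorphic on $V_i$ and closed. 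So the missing idea in your outline is this sector-dependent rescaling; without it neither holomorphicity on all of $V_i$ nor closedness of $\eta_i$ follows. The case $t=\infty$ is handled separately in the paper (the foliation is then transverse to $C$ and already admits a closed holomorphic defining form extending $dx$ on $C$), which your uniform treatment does not address.
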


\begin{remark}\label{rem:LogFormVi}
On $V_i$ with $i=1,3$ (resp. $i=2,4$), the closed holomorphic $1$-form writes 
$$\eta_i=df\ \ \ \text{with}\ \ \ f\in\C\{X\}\ \ (\text{resp.}\ f\in\C\{Y\}).$$
We note that are equivalent:
\begin{itemize}
\item $\theta$ is a closed logarithmic $1$-form on $V_i$ with poles supported by $D$,
\item $\theta=\alpha\frac{dX}{X}+\beta\frac{dY}{Y}+\eta$ with $\eta$ holomorphic and closed on $V_i$,
\item $\theta={\varphi_i}^*\left\{\alpha\frac{dX}{X}+\beta\frac{dY}{Y}\right\}$ where $\varphi_{i}\in\mathrm{Diff}^1(V_i,L_i)$.
\end{itemize}
For instance, on $V_1$, if $\eta_1=df$ with $f\in\C\{X\}$, then $\varphi_i=(X e^{f(X)},Y)$.
\end{remark}

\begin{proof}Let us start with the tangent case $t\not=\infty$. Then, by transversality of ${\F}_{0}$ 
and ${\F}_{\infty}$, and the fact that the $\omega_t$'s  are $F_{1,0,0}$-invariant, one deduces that
$\hat{\mathcal F}_t$ is defined by a unique $1$-form  writing as $\omega=\omega_0-u\cdot\omega_\infty$ for a function 
$u\in{\mathcal A}[F_{1,0,0}](I_i)$, obviously satisfying $\hat u=t$. If $i=1,3$, then $u=t+f(X)$ with $f\in\C\{X\}$, $f(0)=0$
(see Proposition \ref{prop:CharacSectorialFunctionsF0}). Then, 
$$\frac{t}{u}\omega=t\frac{1-u}{u}\tau\frac{dX}{X}+t\frac{dY}{Y}
=(1-t)\tau\frac{dX}{X}+t\frac{dY}{Y}-\frac{f}{t+f}\tau\frac{dX}{X}.$$
If $i=2,4$, then $u=t+f(Y)$ with $f\in\C\{Y\}$, $f(0)=0$, and we arrive at a similar situation
$$\frac{1-t}{1-u}\omega=(1-t)\tau\frac{dX}{X}+t\frac{dY}{Y}+\frac{f}{1-t-f}\tau\frac{dX}{X}.$$
Of course, we have used $t\not=0,1$ in order to divide. 

Let us end with the case $\hat{\mathcal F}_\infty$. The foliation, in that case, can be defined by a closed holomorphic 
$1$-form $\omega$ extending the holomorphic $1$-form on $C$. Now, up to a multiplicative
constant, we can write $\omega=\omega_t+\eta$ for a closed holomorphic $1$-form $\eta$.
One concludes as above.
\end{proof}

\begin{lemma}\label{lem:SectorialFolFtNonRat01}
If the formal foliation $\hat{\mathcal F}_0$ (resp. $\hat{\mathcal F}_1$) of $(U,C)$ is convergent,
then  the induced foliation ${\Pi_i}_*\hat{\mathcal F}_t$ on $V_i^*$ extends as a singular foliation on $V_i$ 
and is defined by a $1$-form 
$$\theta_i=\left\{\begin{matrix}
\frac{dX}{X}+f_i(X)\frac{dY}{Y}&\text{if}&i=1,3\\
\frac{dX}{X}+\eta_i\hfill&\text{if}&i=2,4
\end{matrix}\right.\ \ \ \left(\text{resp.}\ \ \ 
\theta_i=\left\{\begin{matrix}
\frac{dY}{Y}+\eta_i\hfill&\text{if}&i=1,3\\
\frac{dY}{Y}+f_i(Y)\frac{dX}{X}&\text{if}&i=2,4
\end{matrix}\right.
\right)$$
with $f_i\in\C\{X\}$ (resp. $\C\{Y\}$), $f_i(0)=0$, and $\eta_i$ closed holomorphic $1$-form on $V_i$.
\end{lemma}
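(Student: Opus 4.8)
The strategy is to mirror the proof of the previous lemma, the only difference being that for $t=0$ (resp. $t=1$) one of the two logarithmic poles degenerates, so dividing by $u$ (resp. by $1-u$) is no longer allowed. First I would treat the case $\hat{\mathcal F}_0$. As in Lemma \ref{lem:SectorialFolFtNonRat}, transversality of $\mathcal F_0$ and $\mathcal F_\infty$ together with $F_{1,0,0}$-invariance of the forms $\omega_t$ shows that $\hat{\mathcal F}_0$ is defined near $L_i$ by a unique $1$-form of the shape $\omega=\omega_0-u\cdot\omega_\infty$ with $u\in\mathcal A[F_{1,0,0}](I_i)$ and asymptotic expansion $\hat u=0$; concretely, after the fundamental isomorphism $\Pi_i$, this reads $\omega=\tau\frac{dX}{X}-u(\tau\frac{dX}{X}+\frac{dY}{Y})$ up to the fixed normalization of the forms recalled at the beginning of Section \ref{sec:Foliations}.

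For $i=1,3$, Proposition \ref{prop:CharacSectorialFunctionsF0} gives $u=f_i(X)$ with $f_i\in\C\{X\}$, $f_i(0)=0$; dividing $\omega$ by the unit $1-f_i(X)$ (which is a holomorphic nonvanishing function near $L_i$, hence does not change the foliation) yields a form proportional to $\frac{dX}{X}+\tilde f_i(X)\frac{dY}{Y}$ with $\tilde f_i=\frac{f_i}{1-f_i}\in\C\{X\}$ vanishing at $0$, of the announced type. For $i=2,4$, Proposition \ref{prop:CharacSectorialFunctionsF0} gives $u=f_i(Y)$ with $f_i\in\C\{Y\}$, $f_i(0)=0$; now the coefficient of $\frac{dX}{X}$ is the unit $\tau(1-f_i(Y))$, so after division we obtain $\frac{dX}{X}+\eta_i$ where $\eta_i=-\frac{f_i(Y)}{\tau(1-f_i(Y))}\frac{dY}{Y}$ is holomorphic (the simple zero of $f_i$ kills the pole of $\frac{dY}{Y}$) and closed on $V_i$, as required. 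The case of $\hat{\mathcal F}_1$ is entirely symmetric: one writes the defining $1$-form so that the residue along $\{X=0,\infty\}$ degenerates, exchanges the roles of $X$ and $Y$ (equivalently of the intervals of type $1,3$ and $2,4$), and repeats the same division argument, this time dividing by a unit of the form $1-(\text{something vanishing at }L_i)$.

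The only point requiring a little care — and the mild obstacle here — is checking that after the relevant division the remaining part is genuinely \emph{holomorphic and closed} on all of $V_i$, i.e. that the simple zero of $f_i$ at the origin exactly compensates the logarithmic pole of the divided-out $\frac{dY}{Y}$ (resp. $\frac{dX}{X}$), and that one has stayed inside the class of closed forms throughout (each $\omega_t$ is closed, and multiplication by a holomorphic unit preserves closedness up to adding an exact form $d\log(\text{unit})$, which one absorbs into $\eta_i$ since that unit is itself a function of a single variable on $V_i$ with $i=1,3$ or $i=2,4$). This is exactly the bookkeeping already implicit in Remark \ref{rem:LogFormVi}, and presents no real difficulty; it just has to be stated explicitly so that the output one-forms $\theta_i$ have the precise shape claimed in the statement.
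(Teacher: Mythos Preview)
Your approach is essentially identical to the paper's: write the foliation as $\omega=\omega_0-u\,\omega_\infty$ with $\hat u=0$, identify $u$ via Proposition \ref{prop:CharacSectorialFunctionsF0} as a one-variable holomorphic function vanishing at the origin, and divide by the resulting unit $1-u$ to reach the stated normal form; the paper's proof does exactly this (it even writes $\frac{1}{1-u}\omega=\tau\frac{dX}{X}+\frac{u}{1-u}\frac{dY}{Y}$ and then specializes according to whether $u$ depends on $X$ or $Y$).

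One small correction to your last paragraph: the sentence ``multiplication by a holomorphic unit preserves closedness up to adding an exact form $d\log(\text{unit})$'' is not right and is in any case unnecessary. For $i=1,3$ the form $\theta_i=\frac{dX}{X}+f_i(X)\frac{dY}{Y}$ is \emph{not} claimed to be closed (and indeed generically it is not), so there is nothing to check there. For $i=2,4$ the closedness of $\eta_i$ is immediate for the simple reason that $\eta_i$ is a holomorphic $1$-form in the single variable $Y$ (resp.\ $X$), hence automatically closed; no bookkeeping about what division does to closed forms is needed.
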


\begin{proof}It is similar to the proof of Lemma \ref{lem:SectorialFolFtNonRat}.
For the case $t=0$, once we have defined $\hat{\mathcal F}_t$ by $\omega=\omega_0-u\cdot\omega_\infty$
for a function $u\in{\mathcal A}^\infty[F_{1,0,0}](I_i)$, then
$$\frac{1}{1-u}\omega=\tau\frac{dX}{X}+\frac{u}{1-u}\frac{dY}{Y}.$$
Again, by Proposition \ref{prop:CharacSectorialFunctionsF0}, we see that if $i=2,4$, then $u=f(Y)$ with $f\in\C\{Y\}$, $f(0)=0$, and we are done. However, when $i=1,3$, then $u=f(X)$, but we cannot divide by 
$\frac{f}{1-f}$ to get a closed logarithmic $1$-form as before: as $f(0)=0$, the polar locus will increase (see remark \ref{rem:NonLog01}).
\end{proof}

\begin{remark}\label{rem:NonLog01}
In Lemma \ref{lem:SectorialFolFtNonRat01}, we can always define the foliation ${\Pi_i}_*\hat{\mathcal F}_t$
by a closed meromorphic $1$-form on $V_i$ provided that we allow non logarithmic poles. For instance, 
in the case $t=0$ and $i=1,3$, if $f\equiv0$ (is identically vanishing), there is nothing to do, it is the logarithmic case; 
if $f\not\equiv 0$, then after division, we get 
$$\frac{1}{f}\omega=\tau\frac{1-f}{f}\frac{dX}{X}+\frac{dY}{Y}=\tilde f(X)\frac{dX}{X^{k+1}}+\frac{dY}{Y}$$
with $\tilde f\in\C\{X\}$, $\tilde f(0)\not=0$, and $k\in\Z_{>0}$. As it is well-known (see \cite[Section 2.2]{LTT}),
we can write 
$${\varphi_i}^*\frac{\omega}{f}=\frac{dX}{X^{k+1}}+\alpha\frac{dX}{X}+\frac{dY}{Y}$$
for some $\alpha\in\C$ (the residue of $\tilde f(X)\frac{dX}{X^{k+1}}$) and $\varphi_{i}\in\mathrm{Diff}^1(V_i,L_i)$.
\end{remark}

We can now prove Theorem \ref{THM:ExistenceFoliations}.

\begin{cor}\label{cor:ExistenceFol}
The formal foliation $\hat{\mathcal F}_t$ of $(U_\varphi,C)$ is convergent
if, and only if, there exist $\eta_i$ closed holomorphic $1$-forms on $(V_i,L_i)$ such that
$$({\varphi_{i,i+1}}^*\theta_i)\wedge\theta_{i+1}=0\ \ \ \text{where}\ \ \ \theta_i=(1-t)\tau\frac{dX}{X}+t\frac{dY}{Y}+\eta_i.$$
Equivalently, there exists an equivalent cocycle $\varphi'\sim\varphi$ such that
$$({\varphi_{i,i+1}'}^*\theta^0)\wedge\theta^0=0\ \ \ \text{where}\ \ \ \theta^0=(1-t)\tau\frac{dX}{X}+t\frac{dY}{Y}.$$
\end{cor}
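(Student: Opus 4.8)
The plan is to deduce Corollary~\ref{cor:ExistenceFol} from Lemmas~\ref{lem:SectorialFolFtNonRat} and~\ref{lem:SectorialFolFtNonRat01} together with the description of the cocycle $\varphi$ in terms of sectorial normalizations from Section~\ref{sec:AnalyticClass}. First recall that $\hat{\mathcal F}_t$ convergent on $(U_\varphi,C)$ means that the foliation extends analytically across $C$; restricting to each sector $U_i$ and transporting by the normalization $\Pi_i=\Pi\circ\Psi_i$, we obtain a genuine analytic foliation ${\Pi_i}_*\hat{\mathcal F}_t$ on $V_i^*$. By Lemma~\ref{lem:SectorialFolFtNonRat} (for $t\neq 0,1$; and Lemma~\ref{lem:SectorialFolFtNonRat01} together with Remark~\ref{rem:NonLog01} for the exceptional values, although the statement as written concerns general $t$) this foliation extends as a singular foliation on the full bidisc $V_i$, defined by the closed logarithmic $1$-form $\theta_i=(1-t)\tau\frac{dX}{X}+t\frac{dY}{Y}+\eta_i$ with $\eta_i$ closed holomorphic on $V_i$. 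Conversely, if such $\eta_i$ exist with $({\varphi_{i,i+1}}^*\theta_i)\wedge\theta_{i+1}=0$, then the $\theta_i$ define the same foliation on the overlaps $V_{i,i+1}^*$ (two closed $1$-forms with proportional kernels), hence glue to a global singular foliation on $(V_\varphi,D)$; pulling back by $\Pi_\varphi$ of Theorem~\ref{thm:SerreIsomGerm} gives an analytic foliation on $U_\varphi\setminus C$ whose restriction to each $U_i$ is ${\Psi_i}^*{\mathcal F}_t$, i.e.\ it has asymptotic expansion $\hat{\mathcal F}_t$; since it extends analytically across $C$ (being defined there as the foliation tangent to $C$, by the logarithmic nature of $\theta_i$ at the components of $D$ meeting $L_i$), this forces $\hat{\mathcal F}_t$ to be convergent. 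This establishes the first equivalence.

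Second, I would prove the equivalence with the normalized statement $({\varphi_{i,i+1}'}^*\theta^0)\wedge\theta^0=0$. The key observation is Remark~\ref{rem:LogFormVi}: a closed logarithmic $1$-form $\theta_i=\alpha\frac{dX}{X}+\beta\frac{dY}{Y}+\eta_i$ on $V_i$ with $\eta_i$ holomorphic closed is exactly a pullback $\theta_i={\varphi_i}^*\!\left(\alpha\frac{dX}{X}+\beta\frac{dY}{Y}\right)$ for a suitable $\varphi_i\in\mathrm{Diff}^1(V_i,L_i)$ (explicitly $\varphi_i=(Xe^{f(X)},Y)$ when $\eta_i=df$, $f\in\C\{X\}$, on $V_1$, and similarly on the other components). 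Given the $\eta_i$ produced by the first part, choose such $\varphi_i$ and set $\varphi_{i,i+1}'=\varphi_i^{-1}\circ\varphi_{i,i+1}\circ\varphi_{i+1}$, so that $\varphi'\sim\varphi$ in the sense of Section~\ref{sec:IntroFondamIso} / Definition~\ref{def:EquivalentCocycles}. Then pulling back the relation $({\varphi_{i,i+1}}^*\theta_i)\wedge\theta_{i+1}=0$ by $\varphi_{i+1}^{-1}$ and using $\theta_i={\varphi_i}^*\theta^0$, $\theta_{i+1}={\varphi_{i+1}}^*\theta^0$, we get $({\varphi_{i,i+1}'}^*\theta^0)\wedge\theta^0=0$, since pullback by a diffeomorphism commutes with wedge and preserves vanishing of a $2$-form. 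The converse direction is immediate by running the same computation backwards: from $({\varphi_{i,i+1}'}^*\theta^0)\wedge\theta^0=0$ and a representative $\varphi'\sim\varphi$ realized by $\varphi_i\in\mathrm{Diff}^1(V_i,L_i)$, set $\theta_i={\varphi_i}^*\theta^0$ — a closed logarithmic form of the required shape — and recover $({\varphi_{i,i+1}}^*\theta_i)\wedge\theta_{i+1}=0$.

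I would also note the compatibility of this with the final clause of Theorem~\ref{THM:ExistenceFoliations}: the condition ``each $\varphi_{i,i+1}$ preserves the foliation $(1-t)\tau\frac{dX}{X}+t\frac{dY}{Y}=0$'' is precisely ${\varphi_{i,i+1}}^*\theta^0\wedge\theta^0=0$, and the two foliations being conjugated via $\Pi_\varphi:U_\varphi\setminus C\to V_\varphi\setminus D$ is built into the construction of $\Pi_\varphi$ as a patching of the sectorial isomorphisms $\Pi_i$, which conjugate $\hat{\mathcal F}_t|_{U_i}$ to ${\mathcal F}_t|_{V_i^*}$ by definition of the formal normalization $\hat\Psi$.

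\textbf{Main obstacle.} The delicate point is not the $1$-form bookkeeping, which is essentially formal, but justifying that a foliation defined sectorially by forms $\theta_i$ with coinciding asymptotic expansion $\hat{\mathcal F}_t$ actually \emph{glues} to an analytic object and \emph{extends across $C$}. Concretely: one must check that the transition maps $\Phi_{i,i+1}=\Psi_i\circ\Psi_{i+1}^{-1}\in\mathscr G^\infty[F_{1,0,0}](I_{i,i+1})$ send ${\mathcal F}_t|_{V_{i+1}^*}$ to ${\mathcal F}_t|_{V_i^*}$ exactly (not just up to flat error) — this is what the wedge condition $({\varphi_{i,i+1}}^*\theta_i)\wedge\theta_{i+1}=0$ encodes once transported through $\Pi$, and conversely it is what lets the glued foliation on $V_\varphi\setminus D$ descend through $\Pi_\varphi^{-1}$ to something extending across $C$ rather than merely flatly tangent to it. The logarithmic (as opposed to higher-order) pole along $D$ is exactly what guarantees $C$ is an analytic leaf on the $U$-side; for the exceptional values $t=0,1$ one has to invoke Remark~\ref{rem:NonLog01} and allow a possibly higher-order pole on two of the four charts, which is why I would either state the corollary for $t\notin\{0,1\}$ and treat $t=0,1$ separately, or absorb the pole order into the $\eta_i$ with a remark.
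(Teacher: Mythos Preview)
Your argument for $t\notin\{0,1\}$ and for the second equivalence is correct and is essentially the paper's own. One simplification for the converse direction: the extension across $C$ is more elementary than you suggest. Once the $\theta_i$ glue on $V_\varphi\setminus D$, pulling back by $\Pi_\varphi$ yields a foliation on $U_\varphi\setminus C$ which, on each sector $U_i$, coincides with $\Psi_i^*\mathcal F_t$ and hence is \emph{flat} to $\hat{\mathcal F}_t$ along $C$. A holomorphic object on $U_\varphi\setminus C$ with a prescribed formal expansion along $C$ extends across $C$ by Riemann's removable singularity theorem; no further geometric input (``logarithmic nature along $D$'') is needed.

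There is, however, a genuine gap at $t\in\{0,1\}$. Your proposed fix---to allow higher-order poles via Remark~\ref{rem:NonLog01}, or to ``absorb the pole order into the $\eta_i$''---does not work, since the corollary asserts that $\eta_i$ is \emph{holomorphic}. The paper's argument is instead a rigidity observation. Take $t=0$: by Lemma~\ref{lem:SectorialFolFtNonRat01}, on $V_i$ with $i=2,4$ the form $\theta_i$ is already logarithmic and the foliation $\theta_i=0$ is regular, whereas on $V_i$ with $i=1,3$ one has $\theta_i=\frac{dX}{X}+f_i(X)\frac{dY}{Y}$, which for $f_i\not\equiv 0$ defines a foliation with a \emph{saddle-node} singular point at each of the two crossing points $p_{i-1,i}$, $p_{i,i+1}$. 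Under the gluing $\varphi_{i,i+1}$ this would have to agree with the regular foliation on the adjacent chart, which is impossible. Hence the global compatibility condition $({\varphi_{i,i+1}}^*\theta_i)\wedge\theta_{i+1}=0$ forces $f_i\equiv 0$; all the $\theta_i$ are then logarithmic of the required shape, and one is reduced to the generic argument. This saddle-node exclusion is the step your proposal is missing.
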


\begin{proof}When $t\not=0,1$, the proof easily follows from Lemma \ref{lem:SectorialFolFtNonRat}.
Indeed, all ${\Pi_i}_*\hat{\mathcal F}_t$ are defined by $\theta_i=0$ and have to patch via the 
glueing maps $\varphi_{i,i+1}$. Conversely, if $\theta_i=0$ patch via the 
glueing maps $\varphi_{i,i+1}$, then this means that we get a foliation $\mathcal F$ on $U_\varphi\setminus C$
which is flat to $\hat{\mathcal F}_t$ along $C$, and therefore extends by Riemann. 
Using Remark \ref{rem:LogFormVi} and Definition \ref{def:EquivalentCocycles}, one easily derives the second 
(equivalent) assertion. Finally, in the case $t=0$ for instance, after applying Lemma \ref{lem:SectorialFolFtNonRat01}
in a very similar way, we note that $\theta_i$ defines a regular foliation on $V_i$ for $i=2,4$ (as $\eta_i=df$, $f\in\C\{X\}$).
On the other hand, on $V_i$ for $i=1,3$, $\theta_i$ defines a singular foliation as soon as $f_i\not\equiv 0$
(non identically vanishing), i.e. with a saddle-node singular points at the two points $p_{i,i+1}$ and $p_{i-1,i}$;
therefore, $f_i\equiv0$ in the case we have a global foliation and we are back to the logarithmic case.
The proof ends-up like before.
\end{proof}

\begin{remark}\label{rem:FolVvarphi}
The statement of Corollary \ref{cor:ExistenceFol} can be reformulated as follows. 
The formal foliation $\hat{\mathcal F}_t$ of $(U_\varphi,C)$ is convergent
if, and only if, there exists a foliation $\mathcal G_t$ on $(V_\varphi,D)$
which is locally defined by a closed logarithmic $1$-form with poles supported on $D$ and having residues
$t$ on $L_1$ and $(1-t)\tau$ on $L_4$ (we have automatically opposite residues on opposite sides of $D$). 
Indeed, the local foliations $\theta_i=0$ patch together.
\end{remark}

We can precise Corollary \ref{cor:ExistenceFol} for generic $t$ as follows.

\begin{prop}\label{prop:IrrationalFolClosed1Form}
If $\hat{\mathcal F}_t$ is not of rational type, i.e. $\left(\frac{1}{t}-1\right)\tau\not\in\Q$,
then are equivalent
\begin{enumerate}
\item $\hat{\mathcal F}_t$ is convergent,
\item $\hat{\mathcal F}_t$ is defined by a closed (convergent) meromorphic $1$-form $\omega$,
\item $({\varphi_{i,i+1}})^*\theta_i=\theta_{i+1}$ with $\theta_i$ like in Corollary \ref{cor:ExistenceFol},
\item there is a closed logarithmic $1$-form $\theta$ on $(V_\varphi,D)$
with poles supported on $D$ and having residues $t$ on $L_1$ and $(1-t)\tau$ on $L_4$.
\end{enumerate}
Obviously, $\omega=\Pi_\varphi^*\theta$ up to a constant.
\end{prop}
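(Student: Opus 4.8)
The plan is to prove Proposition~\ref{prop:IrrationalFolClosed1Form} by establishing the cyclic chain of implications $(1)\Rightarrow(3)\Rightarrow(4)\Rightarrow(2)\Rightarrow(1)$, using the irrationality hypothesis precisely at the step where we must upgrade a wedge-product (tangency) condition to an equality of $1$-forms. The implications $(2)\Rightarrow(1)$ and $(4)\Rightarrow(2)$ are essentially formal: if $\hat{\mathcal F}_t$ is defined by a convergent closed meromorphic $1$-form then it is in particular convergent, and if $\theta$ is a global closed logarithmic $1$-form on $(V_\varphi,D)$ with the prescribed residues then $\Pi_\varphi^*\theta$ is a closed meromorphic $1$-form on $U_\varphi\setminus C$ whose asymptotic expansion along $C$ agrees (up to a constant) with $\omega_t$, hence defines $\hat{\mathcal F}_t$ and extends meromorphically across $C$ by Riemann's extension theorem (using that a bounded-type pole behaviour along $C$ is controlled). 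The equivalence $(3)\Leftrightarrow(4)$ is just the reformulation already recorded in Remark~\ref{rem:FolVvarphi}: condition $(3)$ says exactly that the local logarithmic forms $\theta_i$ on the charts $V_i$ glue via the transition maps $\varphi_{i,i+1}$ into a global closed logarithmic $1$-form $\theta$ on $(V_\varphi,D)$, and the residues of $\theta^0=(1-t)\tau\frac{dX}{X}+t\frac{dY}{Y}$ are $t$ on $L_1$ and $(1-t)\tau$ on $L_4$ by the normalization of Section~\ref{sec:IntroFondamIso}.

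The heart of the matter is therefore the implication $(1)\Rightarrow(3)$, i.e. promoting the conclusion of Corollary~\ref{cor:ExistenceFol} from $({\varphi_{i,i+1}}^*\theta_i)\wedge\theta_{i+1}=0$ to ${\varphi_{i,i+1}}^*\theta_i=\theta_{i+1}$. First I would invoke Corollary~\ref{cor:ExistenceFol}: convergence of $\hat{\mathcal F}_t$ yields closed holomorphic $1$-forms $\eta_i$ on $(V_i,L_i)$ with $\theta_i=(1-t)\tau\frac{dX}{X}+t\frac{dY}{Y}+\eta_i$ and $({\varphi_{i,i+1}}^*\theta_i)\wedge\theta_{i+1}=0$ on $V_{i,i+1}^*$. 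The wedge condition means that on each overlap there is a nowhere-zero holomorphic function $u_{i,i+1}$ with ${\varphi_{i,i+1}}^*\theta_i=u_{i,i+1}\,\theta_{i+1}$. The key point is that the residues of a closed logarithmic $1$-form are transported faithfully by any biholomorphism preserving $D$; comparing residues of ${\varphi_{i,i+1}}^*\theta_i$ and of $\theta_{i+1}$ along the component of $D$ through $p_{i,i+1}$ forces $u_{i,i+1}(p_{i,i+1})=1$. To conclude $u_{i,i+1}\equiv1$ I would argue that the $u_{i,i+1}$ are themselves first integrals (constants on leaves) of the common foliation, and that the only such functions available — because $\hat{\mathcal F}_t$ is of irrational type, hence its holonomy group is non-cyclic and the leaf space on $S_0$ is "elliptic" rather than "rational" — are constants; combined with $u_{i,i+1}(p_{i,i+1})=1$ this gives $u_{i,i+1}\equiv1$. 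Concretely one can phrase this via Proposition~\ref{prop:CharacSectorialFunctionsF0}: a holomorphic function on $V_{i,i+1}$ invariant under the local flow of $\hat{\mathcal F}_t$ pulls back, through $\Pi$, to a function on a neighborhood of $p_{i,i+1}$ invariant under the flow of $(1-t)\tau X\partial_X+tY\partial_Y$, and when $(1-t)\tau/t=(1-\tfrac1t)\tau\notin\Q\cup\{\infty\}$ this flow has dense orbits on real tori, so only constants are invariant.

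I expect the main obstacle to be the rigidity argument at the last step: making precise that the gluing factors $u_{i,i+1}$ are first integrals of the converged foliation and that the irrationality of the "rotation number" $(1-\tfrac1t)\tau$ rules out non-constant such functions on the relevant sectorial/local domains. One must be careful that $\theta_{i+1}$ can vanish (its zero divisor contains the singular points $p_{i,i+1}$ of the foliation on $V_i$), so the relation ${\varphi_{i,i+1}}^*\theta_i=u_{i,i+1}\theta_{i+1}$ with $u_{i,i+1}$ holomorphic and nonvanishing must be read on $V_{i,i+1}^*$ and then extended; here the logarithmic (simple-pole) nature of both sides — itself a consequence of the irrational-type hypothesis via Lemma~\ref{lem:SectorialFolFtNonRat}, in contrast with the $t=0,1$ situation of Remark~\ref{rem:NonLog01} — guarantees that $u_{i,i+1}$ has neither zeros nor poles on all of $V_{i,i+1}$, so that the residue comparison and the first-integral argument apply cleanly. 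Once $u_{i,i+1}\equiv1$ for all $i\in\Z_4$ we obtain $(3)$, and with it $(4)$, closing the cycle; the final sentence $\omega=\Pi_\varphi^*\theta$ up to a constant is then immediate from the construction of $\Pi_\varphi$ in Theorem~\ref{thm:SerreIsomGerm} and the matching of asymptotic expansions with $\omega_t$.
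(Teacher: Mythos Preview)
Your argument is correct and the core step $(1)\Rightarrow(3)$ is exactly the paper's: write $\varphi_{i,i+1}^*\theta_i=u_{i,i+1}\theta_{i+1}$, show $u_{i,i+1}$ is a first integral of the irrational-type foliation hence constant, and fix the constant to $1$ via residues. The paper makes the first-integral claim explicit by differentiating: since both sides are closed, $0=d(u_{i,i+1}\theta_{i+1})=du_{i,i+1}\wedge\theta_{i+1}$, so $u_{i,i+1}$ is constant on leaves; you assert this without saying why, and your dense-orbit argument via Proposition~\ref{prop:CharacSectorialFunctionsF0} is more work than needed. The one genuine difference is how $(2)$ is obtained: you go $(4)\Rightarrow(2)$ by pulling back $\theta$ through $\Pi_\varphi$ and invoking Riemann extension across $C$, whereas the paper proves $(1)\Rightarrow(2)$ directly from the holonomy---since $\hat{\mathcal F}_t$ is not of rational type its holonomy group is non-cyclic (two independent translations), hence preserves a meromorphic $1$-form on the transversal which one extends to the closed $\omega$. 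Both routes work; the paper's avoids having to justify the meromorphic extension of $\Pi_\varphi^*\theta$ across $C$, which is the point in your sketch that is left vaguest.
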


\begin{proof}When $\hat{\mathcal F}_t$ is not of rational type, then we have
$$({\varphi_{i,i+1}}^*\theta_i)\wedge\theta_{i+1}=0\ \ \ \Leftrightarrow\ \ \ ({\varphi_{i,i+1}}^*\theta_i)=\theta_{i+1}.$$
Indeed, if ${\varphi_{i,i+1}}^*\theta_i$ is colinear to $\theta_{i+1}$, then it is proportional to $\theta_{i+1}$, i.e. it writes
$f_i\cdot\theta_i$ with $f_i$ meromorphic on $V_i$ (de Rham-Saito Lemma). But since it is also closed, 
we have 
$$0=d(f_i\cdot\theta_i)=df_i\wedge \theta_i+f_i\wedge \underbrace{d\theta_i}_{=0}$$
and $f_i$ is a meromorphic first integral for $\theta_i=0$, which must be constant in the irrational type.
This constant must be $=1$ as the residues are preserved. As a consequence, all $\theta_i$ patch together 
on $V_\varphi$. Finally, note that if $\hat{\mathcal F}_t$ is convergent, then its holonomy is not cyclic 
(because not of rational type) and therefore preserves a meromorphic $1$-form on the transversal
that we can extend as a closed meromorphic $1$-form $\omega$ defining the foliation.
\end{proof}

Let us now illustrate how different is the situation for foliations of rational type 
by revisiting the classification \cite[Theorem 5]{LTT} of neighborhoods with $2$ 
convergent foliations, in the particular case of  $\hat{\mathcal F}_{0}$ and $\hat{\mathcal F}_{1}$,
corresponding respectively to vertical and horizontal foliations on $V_\varphi$.
The proof is a straightforward application of the above criteria.

\begin{prop}\label{prop:ExampleMartinetRamis}
The formal foliations $\hat{\mathcal F}_{0}$ and $\hat{\mathcal F}_{1}$ on $(U,C)$
are convergent if, and only if,  $(U,C)$ can be defined by a cocycle of the form
$$\left\{\begin{matrix}
\varphi_{i,i+1}(X,Y)=(\alpha_{i}(X),Y)&\text{for}&i=1,3\\
\varphi_{i,i+1}(X,Y)=(X,\alpha_{i}(Y))&\text{for}&i=2,4
\end{matrix}\right.$$
for $1$-variable diffeomorphisms $\alpha_{i}$ tangent to the identity,
and the corresponding foliations on $V_\varphi$ 
are respectively defined in charts $V_i$ by $dX=0$ and $dY=0$.
Moreover, this normalization is unique up to conjugacy by $\phi(X,Y)=(e^tX,e^{\tau t}Y)$.
\end{prop}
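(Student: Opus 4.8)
The plan is to apply the criterion of Corollary \ref{cor:ExistenceFol} to the two cases $t=0$ and $t=1$ simultaneously. First I would use the $t=0$ case of that corollary: $\hat{\mathcal F}_0$ is convergent if and only if there is an equivalent cocycle $\varphi\sim\varphi'$ for which each $\theta_i$ (in the normalization of Lemma \ref{lem:SectorialFolFtNonRat01}) patches via the glueing maps. Concretely, on $V_i$ with $i=2,4$ the $1$-form is $\frac{dX}{X}+df$ with $f\in\C\{X\}$, which after the logarithmic normalization of Remark \ref{rem:LogFormVi} becomes $\frac{dX}{X}$, i.e. the foliation $dX=0$; and the requirement that $\hat{\mathcal F}_0$ be a \emph{global} foliation forces $f_i\equiv0$ on $V_i$ for $i=1,3$ as well (a nonzero $f_i$ produces saddle-node singularities at the $p_{i,i+1}$, as in Remark \ref{rem:NonLog01}), so the foliation is again $dX=0$ there. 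Hence convergence of $\hat{\mathcal F}_0$ is equivalent to the existence of a cocycle representative all of whose $\varphi_{i,i+1}$ preserve the foliation $dX=0$. By the explicit form of $\mathrm{Diff}^1(V_{i,i+1},p_{i,i+1})$, preserving $dX=0$ means $\varphi_{i,i+1}(X,Y)=(\alpha_i(X),\beta_i(X,Y))$ for $i=1,3$ and $\varphi_{i,i+1}(X,Y)=(X,\beta_i(X,Y))$ for $i=2,4$.

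Next I would run the same argument for $t=1$, which by the symmetry of the pencil swaps the roles of $X$ and $Y$: convergence of $\hat{\mathcal F}_1$ is equivalent to the existence of a representative all of whose $\varphi_{i,i+1}$ preserve $dY=0$, i.e. $\varphi_{i,i+1}(X,Y)=(\alpha_i(X,Y),\beta_i(Y))$ for $i=2,4$ and $(\alpha_i(X,Y),\beta_i(Y))$ with the second coordinate depending only on $Y$ for $i=1,3$. The main point is then to show that the two normalizations can be achieved \emph{by a single representative}: one has to check that after normalizing the cocycle so that the $\varphi_{i,i+1}$ for $i=1,3$ preserve $dX=0$, the remaining gauge freedom (composition by $\varphi_i\in\mathrm{Diff}^1(V_i,L_i)$, which for $i=1,3$ is of the form $(Xa(Y),Yb(Y))$ by \eqref{F:diffV} and hence still preserves $dX=0$) suffices to simultaneously straighten the $dY=0$ foliation on the charts $i=2,4$ without destroying what was done on $i=1,3$. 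Combining the two conditions forces, on each intersection, $\varphi_{i,i+1}$ to be $(\alpha_i(X),Y)$ for $i=1,3$ and $(X,\alpha_i(Y))$ for $i=2,4$, with $\alpha_i$ tangent to the identity; conversely such a cocycle manifestly makes both $dX=0$ and $dY=0$ patch, so by Corollary \ref{cor:ExistenceFol} both $\hat{\mathcal F}_0$ and $\hat{\mathcal F}_1$ are convergent, and by construction they are defined in the charts $V_i$ by $dX=0$ and $dY=0$.

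Finally, for the uniqueness statement I would observe that any two cocycle representatives of the stated normal form differ by a gauge $(\varphi_i)$ with $\varphi_i\in\mathrm{Diff}^1(V_i,L_i)$ that preserves \emph{both} $dX=0$ and $dY=0$ on the relevant charts; the $1$-variable diffeomorphisms $\alpha_i$ being tangent to the identity pins down each $\varphi_i$ up to the diagonal torus action, and tracking the linear parts through the equivalence relation \eqref{eq:anequivcocycle} (together with Remark \ref{rem:weakequivalence}) shows the ambiguity is exactly conjugacy by $\phi(X,Y)=(e^tX,e^{\tau t}Y)$, $t\in\C$. The step I expect to be the main obstacle is the simultaneous straightening: verifying that the gauge group acting on the $i=1,3$ charts is large enough to normalize the $t=1$ data there while being constrained enough not to reintroduce $Y$-dependence in the $t=0$ data — this is where the precise shape \eqref{F:diffV} of $\mathrm{Diff}(V_i,L_i)$ (second coordinate depending only on the ``transverse'' variable) does the real work, and it is worth spelling out rather than leaving to the reader.
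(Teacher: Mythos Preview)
Your plan is correct and matches the paper's approach exactly: the paper's own proof is the single sentence ``The proof is a straightforward application of the above criteria,'' meaning Corollary~\ref{cor:ExistenceFol} applied at $t=0$ and $t=1$, which is precisely what you do.

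Two small remarks. First, your intermediate claim that after normalizing for $t=0$ one already has $\varphi_{i,i+1}(X,Y)=(X,\beta_i(X,Y))$ for $i=2,4$ is slightly premature: preserving the foliation $dX=0$ only gives $\varphi_{i,i+1}(X,Y)=(\alpha_i(X),\beta_i(X,Y))$ for \emph{all} $i$. The reduction to $\alpha_i=\mathrm{id}$ for $i=2,4$ comes later, once both foliations are straightened and you use the residual gauge. This does not affect the argument.

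Second, your ``main obstacle'' dissolves once you observe that on each $V_i$ one of the two foliations is \emph{already} in standard form before any gauge is applied. Indeed, by Lemma~\ref{lem:SectorialFolFtNonRat01} and the saddle-node argument in the proof of Corollary~\ref{cor:ExistenceFol}, on $V_1,V_3$ the foliation $\mathcal G_0$ is literally $dX=0$ (the function $f_i$ must vanish), while $\mathcal G_1$ is $\frac{dY}{Y}+\eta_i=0$ with $\eta_i=dg(Y)$; on $V_2,V_4$ the roles are reversed. Straightening the remaining foliation on $V_1$ uses a gauge of the form $(X,Yb(Y))$, which visibly preserves $dX=0$; symmetrically on the other charts. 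So the simultaneous straightening is immediate and yields $\varphi_{i,i+1}$ of product type $(\alpha(X),\beta(Y))$ everywhere. The final reduction to the stated form and the uniqueness up to $\phi^t$ then go exactly as you outline: the constraints determine each $\varphi_i$ uniquely (no cyclic obstruction arises, since each $\varphi_i$ has only one nontrivial component and it is fixed by a single equation), leaving only the diagonal one-parameter group $\phi^t$.
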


The space of leaves of $\hat{\mathcal F}_{0}$ on $U\setminus C$ 
corresponds to the space of orbits for its holonomy map,
and therefore to Martinet-Ramis' ``{\it Chapelet de sph\`eres}'' (see \cite[page 591]{MartinetRamis2}).
It is given by two copies of $\C_X^*$ patched together by means of diffeomorphism germs
$\alpha_{1}(X)$ at $X=\infty$ and $\alpha_3(X)$ at $X=0$. A similar description holds
for $\hat{\mathcal F}_{0}$ with Martinet-Ramis' cocycle $\alpha_2$ and $\alpha_4$.
The invariants found by the third author in \cite{Voronin} are related with the corresponding periodic 
transformations in variable $\xi$.

\begin{figure}[htbp]
\begin{center}
\includegraphics[scale=0.5]{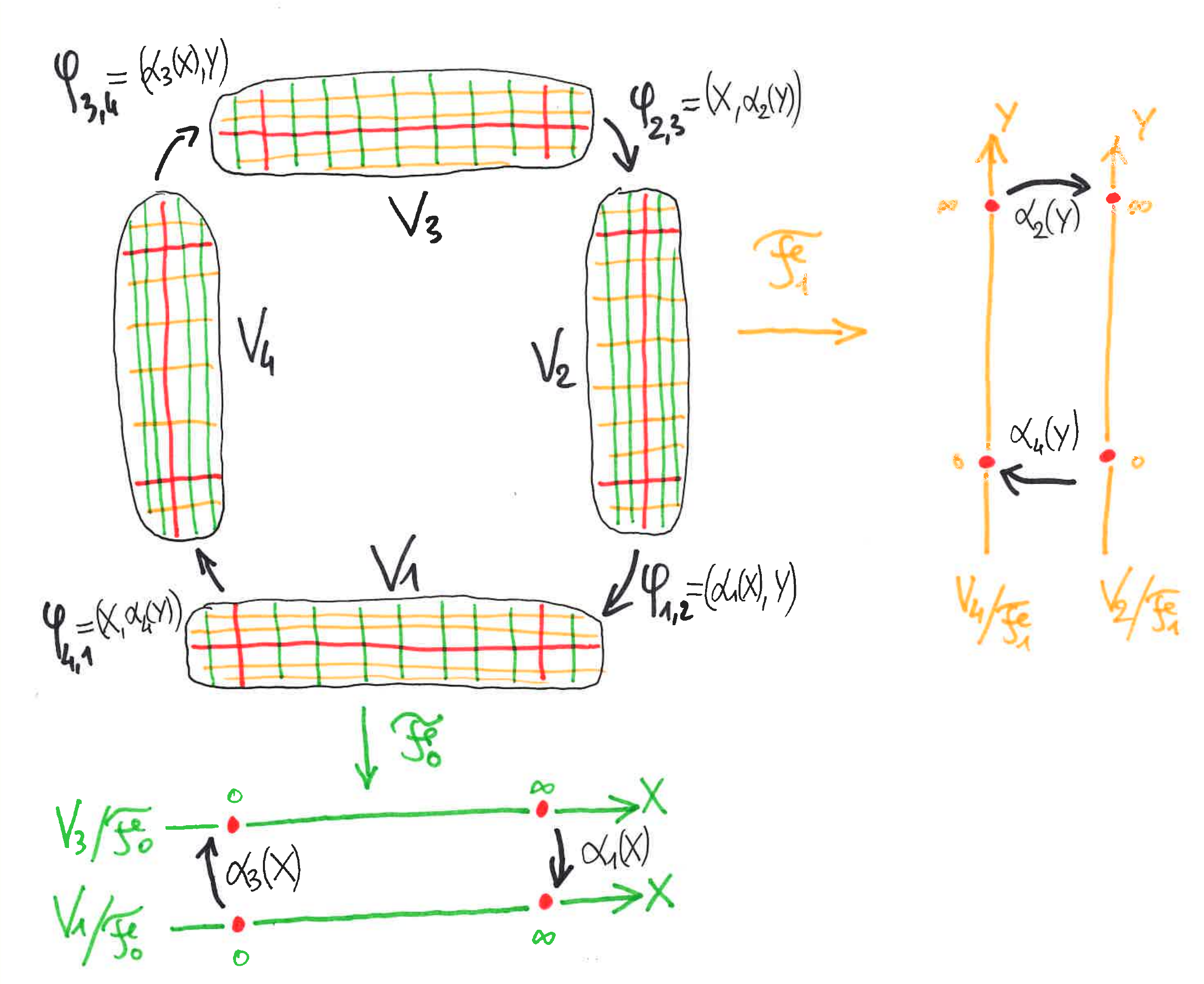}
\caption{{\bf Martinet-Ramis moduli}}
\label{pic:MRmoduli}
\end{center}
\end{figure}

\begin{remark}\label{rem:UnicityFolt}
It follows from \cite{LTT}, or from the unicity of the formal pencil $\hat{\F}_t$, 
that for given $t\in\PP^1$, one cannot find two different collections $(\theta_i)_i$ and $(\theta_i')_i$ 
defining two global logarithmic foliations $\G_t$ and $\G_t'$ on $V$, like in Corollary \ref{cor:ExistenceFol}. 
One way to see this directly from the point of view of this section is as follows.
In the irrational case $\left(\frac{1}{t}-1\right)\tau\not\in\Q$, we see from Proposition \ref{prop:IrrationalFolClosed1Form}
that $\theta_i$'s patch as a global closed logarithmic $1$-form. But the difference between two closed logarithmic $1$-forms 
with the same residues is a closed holomorphic $1$-form $\eta$ on $(V,C)$. Now, $\eta$ must be zero, 
even if we restrict on two consecutive line neighborhoods $(V_i,L_i)\cup_{\varphi_{i,i+1}} (V_{i+1},L_{i+1})$, 
as it only depends on $X$ or on $Y$ depending on the sector. In the rational case, $\left(\frac{1}{t}-1\right)\tau\not\in\Q$,
there is also unicity of $\theta_i$'s on two consecutive line neighborhoods whose residues have quotient $>0$;
indeed, after blowing-up, we get a rational fibration which must be unique by Blanchard Lemma.
\end{remark}

\subsection{Non existence of foliations}
For a generic neighborhood $(U_\varphi,C)$, there is no convergent foliation.
In order to prove this, it is enough to provide a single example without foliation.
Such an example has been given quite recently by Mishustin in \cite{Mishustin}.
With our Corollary \ref{cor:ExistenceFol}, it is not too difficult to provide an example
without foliations.

\begin{thm}\label{TH:nonexistence}Let $(U_\varphi,C)$ be a neighborhood
such that 
$$\varphi_{1,4}(X,Y)=(X(1+XY),Y(1+X^2Y)).$$ Then all foliations $\hat{\F}_t$ belonging
to the formal pencil $(\hat{\F}_t)_{t\in{\PP}^1}$ are divergent.
\end{thm}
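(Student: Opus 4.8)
The plan is to argue by contradiction using Corollary~\ref{cor:ExistenceFol}. Realize $(U_\varphi,C)$ by the cocycle with $\varphi_{1,2}=\varphi_{2,3}=\varphi_{3,4}=\mathrm{id}$ and $\varphi_{4,1}$ the stated map $\varphi_{1,4}(X,Y)=(X(1+XY),Y(1+X^2Y))$; whether one places this map in the slot $(4,1)$ or uses its inverse is irrelevant, since everything below is symmetric under $\varphi\mapsto\varphi^{-1}$. Since $\varphi_{4,1}$ is tangent to the identity, Theorem~\ref{thm:SerreIsomGerm} (equivalently Section~\ref{SS:Construction}) tells us this cocycle does define a neighborhood formally equivalent to $(U_{1,0,0},C)$. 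Suppose now that $\hat{\F}_t$ is convergent for some $t\in\PP^1$. By Corollary~\ref{cor:ExistenceFol} there are closed holomorphic $1$-forms $\eta_i$ on $(V_i,L_i)$ such that, writing $\theta_i=\theta^0+\eta_i$ with $\theta^0=(1-t)\tau\frac{dX}{X}+t\frac{dY}{Y}$ (and $\theta^0=\tau\frac{dX}{X}-\frac{dY}{Y}$ when $t=\infty$), one has $\varphi_{i,i+1}^*\theta_i\wedge\theta_{i+1}=0$ on the germ $(V_{i,i+1},p_{i,i+1})$ for every $i\in\Z_4$.

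\textbf{Step 1: the three trivial transitions force $\theta_i=\theta^0$.} For $i=1,2,3$ the relation is $\theta_i\wedge\theta_{i+1}=0$, hence $\theta_i=h_i\theta_{i+1}$ for some meromorphic $h_i$ near $p_{i,i+1}$. Comparing residues along $D$ (which are the same for $\theta_i$ and $\theta_{i+1}$, as $\theta_j-\theta^0$ is holomorphic) gives $h_i\equiv 1$ along $D$, so $h_i-1$ vanishes on $\{XY=0\}$ in local coordinates at $p_{i,i+1}$; and, $\theta_i$ being closed, $h_i$ is a first integral of $\theta_{i+1}=0$. Now look at $\theta_i-\theta_{i+1}=(h_i-1)\theta_{i+1}$. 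By Remark~\ref{rem:LogFormVi}, on two consecutive charts $\eta_i$ and $\eta_{i+1}$ are closed holomorphic forms in the two \emph{different} local coordinates around $p_{i,i+1}$, so every Taylor monomial of $\theta_i-\theta_{i+1}$ has the shape $X^k\,dX$ or $Y^k\,dY$; whereas if $h_i\not\equiv 1$ the lowest order part of $(h_i-1)\theta_{i+1}$ is a nonzero multiple of $X^pY^q\theta^0$ with $p,q\ge 1$, which contains monomials $X^{p-1}Y^q\,dX$ and $X^pY^{q-1}\,dY$ that are never of the previous shape (this would require $t=0$ and $t=1$ at once). Hence $h_i\equiv 1$, so $\theta_i=\theta_{i+1}$ near $p_{i,i+1}$; then $\eta_i=\eta_{i+1}$ there, and a form lying simultaneously in both one-variable ``slots'' vanishes, so $\eta_i=0$ near $p_{i,i+1}$, whence $\eta_i\equiv 0$. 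In particular $\theta_4=\theta_1=\theta^0$ on $V_{4,1}$, and the fourth relation becomes $\varphi_{4,1}^*\theta^0\wedge\theta^0=0$ on $(V_{4,1},p_{4,1})$, i.e. $\varphi_{4,1}$ preserves the foliation $\{\theta^0=0\}$ near $(0,0)$.

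\textbf{Step 2: the twist obstructs this.} A direct computation gives
\[
\varphi_{4,1}^*\theta^0=\theta^0+(1-t)\tau\,\frac{Y\,dX+X\,dY}{1+XY}+t\,\frac{2XY\,dX+X^2\,dY}{1+X^2Y},
\]
whose correction term has leading part $(1-t)\tau\,d(XY)$; wedging with $\theta^0$ yields leading part $(1-t)\tau\bigl(t-(1-t)\tau\bigr)\,dX\wedge dY$, nonzero unless $t\in\{0,1,\infty,\tfrac{\tau}{1+\tau}\}$ (recall $\tau\ne 0$ and $\im\tau>0$, so $\tau\ne -1$). The values $t=0,1,\infty$ are handled by the same computation specialized to $\theta^0=\tau\frac{dX}{X}$, $\theta^0=\frac{dY}{Y}$, $\theta^0=\tau\frac{dX}{X}-\frac{dY}{Y}$, where already the first order correction produces a nonzero $2$-form. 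For $t=\tfrac{\tau}{1+\tau}$ one has $\theta^0=t(\frac{dX}{X}+\frac{dY}{Y})$, so $\F_t$ is of rational type with first integral $XY$, and $\varphi_{4,1}$ preserves the member $\{XY=\mathrm{const}\}$ iff the pullback $(XY)\circ\varphi_{4,1}=XY(1+XY)(1+X^2Y)$ depends on $(X,Y)$ only through $XY$; it does not, since applying $X\partial_X$ and $Y\partial_Y$ to it gives different series (concretely the $X^3Y^2$-coefficients are $3$ and $2$), the obstruction coming exactly from the factor $X^2Y$ in the second component. In every case we reach a contradiction, so no $\hat{\F}_t$ is convergent.

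\textbf{Main obstacle.} The delicate parts are the colinearity$\,\to\,$equality upgrade in Step~1 when $\F_t$ is of rational type at the corners (the model foliation may then carry a holomorphic first integral, and one must exclude a nontrivial ratio $h_i$), and the exceptional value $t=\tfrac{\tau}{1+\tau}$, where the obstruction is of second order: a naive choice such as $\varphi_{4,1}=(X(1+XY),Y)$ would in fact preserve both $\{XY=c\}$ (the pencil member $\F_{\tau/(1+\tau)}$) and $\{Y=c\}$ ($\F_1$), and it is precisely the second component $Y(1+X^2Y)$ that destroys all pencil members at once.
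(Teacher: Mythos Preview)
Your approach differs substantially from the paper's. The paper works entirely at the single corner $p_{4,1}$: it takes arbitrary
\[
\theta_1=\alpha\tfrac{dX}{X}+\beta\tfrac{dY}{Y}+f_1(Y)\,dY,\qquad
\theta_4=\alpha\tfrac{dX}{X}+\beta\tfrac{dY}{Y}+f_4(X)\,dX,
\]
computes $(\varphi_{1,4})^*\theta_1$ explicitly, and then reads off the residues of $\theta_4\wedge(\varphi_{1,4})^*\theta_1$ along $X=0$ and $Y=0$. These residues are $\alpha f_1(Y)$ and $\beta f_4(X)$; a short case analysis ($\alpha=0$, $\beta=0$, $\alpha\beta\neq0$) then forces enough of $f_1,f_4$ to vanish that the remaining explicit $2$-form is visibly nonzero. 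None of the other three transitions are ever used, so the paper actually proves the statement for \emph{every} cocycle $\varphi$ with the prescribed $\varphi_{1,4}$, which is what the theorem asserts. Your argument, by fixing $\varphi_{1,2}=\varphi_{2,3}=\varphi_{3,4}=\mathrm{id}$ and using those three corners to reduce to $\theta_i=\theta^0$, establishes only that particular instance (admittedly the one that matters if the goal is merely to exhibit an example without foliations).

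There is also a genuine gap in your Step~1 for $t\in\{0,1\}$. The residue comparison only gives $h_i\equiv1$ along the branch of $D$ where $\theta^0$ actually has a pole; when $t=0$ that is $L_2,L_4$ only, so $h_i-1$ need not vanish on $L_1,L_3$, and the Taylor monomial argument (which requires $p,q\ge1$) collapses. Concretely, at $p_{1,2}$ with $t=0$ the relation $\theta_1\wedge\theta_2=0$ forces $f_1\equiv0$ but leaves $\eta_2$ entirely free; likewise $\eta_4$ is unconstrained at $p_{3,4}$. The repair is easy: for $t=0$ (resp.\ $t=1$) the unconstrained $\eta_i$ are proportional to $dX$ (resp.\ $dY$), so the \emph{foliations} $\{\theta_i=0\}$ still all coincide with $\{\theta^0=0\}$, and your Step~2 check of foliation preservation goes through unchanged. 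Adopting the paper's local residue extraction from the start avoids this detour altogether and yields the stronger conclusion.
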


Even the transversal fibration $\hat{\F}_\infty$ is divergent in that case.

\begin{proof}
Suppose by contradiction that there exists at least one convergent foliation in the pencil. Then, by Corollary \ref{cor:ExistenceFol}, there exists on each $V_i$ a non trivial logarithmic $1$-form
$$\theta_i=\alpha \frac{dX}{X}+\beta\frac{dY}{Y}+ 
\left\{\begin{matrix}f_i(X)dX&\text{if $i$ even}\\f_i(Y)dY&\text{if $i$ odd}\end{matrix}\right.$$ 
with $f_i:(\C,0) \to\C$ holomorphic, such that $\theta_{i+1}\wedge (\varphi_{i,i+1})^*\theta_i=0$.
One has 
$$(\varphi_{1,4})^*\theta_1=\alpha\frac{dX}{X}+\alpha \frac{d(XY)}{1+XY} +\beta\frac{dY}{Y} + \beta\frac{d(X^2Y)}{1+X^2Y}$$
$$+f_1(Y(1+X^2Y))\cdot((1+2X^2Y)dY+2XY^2dX).$$
The residual parts of the $2$-form $\theta_4\wedge(\varphi_{1,4})^*\theta_1$ at $X=0$ and $Y=0$ respectively write
\begin{equation}\label{eq:NoFolResXY}
\alpha f_1(Y)\frac{dX}{X}\wedge dY
\ \ \ \text{and}\ \ \ \beta f_4(X)dX\wedge \frac{dY}{Y}.
\end{equation}
Both expressions must be vanishing identically. If $\beta=0$, then $\alpha\not=0$ and we deduce from (\ref{eq:NoFolResXY}) 
that $f_1\equiv0$. This implies that $(\varphi_{1,4})^*\theta_1$ only depends on $X(1+XY)$, while
$\theta_4$ only depends on $X$, contradiction. Assume now $\alpha=0$ (and $\beta\not=0$); then, 
by (\ref{eq:NoFolResXY}), we have $f_4\equiv0$. Again, we conclude that
 $(\varphi_{1,4})^*\theta_1$ only depends on $X(1+X^2Y)$, while $\theta_4$ only depends on $Y$, contradiction. 
Finally, assume that $\alpha\not=0$ and $\beta\not=0$; then, by (\ref{eq:NoFolResXY}), we have $f_1,f_2\equiv0$ and  we obtain
$$\theta_4\wedge(\varphi_{1,4})^*\theta_1=
\frac{\alpha(\alpha-\beta)+\beta(\alpha-2\beta)X+(\alpha^2-2\beta^2)X^2Y}{(1+XY)(1+X^2Y)}dX\wedge dY.$$
Clearly, this expression cannot be zero if $\alpha$ and $\beta$ are both non zero. 
\end{proof}

\subsection{Only one convergent foliation}
To complete the picture, it is interesting to provide an example of a foliation  having only one convergent foliation  $\hat{\F}_{t_0}$
in the pencil ${ (\hat{\F}_t)}_{  
t\in{\mathbb P}^1}$ for an arbitrary $t_0$.

\begin{thm}\label{TH:OnlyOneFoliation}
Let $t_0=[u_0:v_0]\in{\mathbb P}^1$, and let $(U_\varphi,C)$ be the neighborhood
such that 
$$\left\{\begin{matrix} 
	\varphi_{1,4}=(X{(1+XYe^Y)}^{-\beta_0}, Y{(1+XYe^Y)}^{\alpha_0})\\
\text{where}\ \ \ \alpha_0=\tau(v_0-u_0)\ \text{and}\ \beta_0=u_0\hfill\end{matrix}\right.
\ \ \ \text{and}\ \ \ 
 \left\{\begin{matrix}
 \varphi_{i,i+1}=\mbox{Id}\hfill\\
\text{for}\ i=1,2,3.\end{matrix}\right.$$
Then $\hat{\F}_{t_0}$ is the unique convergent foliation in the formal pencil 
$(\hat{\F}_t)_{t\in{\PP}^1}$.
\end{thm}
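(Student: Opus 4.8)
The plan is to run the same machinery as in the proof of Theorem~\ref{TH:nonexistence}, based on Corollary~\ref{cor:ExistenceFol}, the key feature being that every transition map except $\varphi_{1,4}$ is the identity. For $t=[u:v]\in\PP^1$ write $\theta^0=\alpha\tfrac{dX}{X}+\beta\tfrac{dY}{Y}$ for the associated homogeneous residue form, where $(\alpha,\beta)=(\tau(v-u),u)$; thus $(\alpha:\beta)=(\alpha_0:\beta_0)$ exactly when $t=t_0$. With $h=1+XYe^Y$ one has $\varphi_{1,4}(X,Y)=(Xh^{-\beta_0},Yh^{\alpha_0})$, and the identity driving the whole argument is
\begin{equation*}
\varphi_{1,4}^{*}\!\left(\alpha\tfrac{dX}{X}+\beta\tfrac{dY}{Y}\right)=\alpha\tfrac{dX}{X}+\beta\tfrac{dY}{Y}+c\,\tfrac{dh}{h},\qquad c:=\beta\alpha_0-\alpha\beta_0=\tau(uv_0-u_0v),
\end{equation*}
which follows at once from $\varphi_{1,4}^{*}\tfrac{dX}{X}=\tfrac{dX}{X}-\beta_0\tfrac{dh}{h}$ and $\varphi_{1,4}^{*}\tfrac{dY}{Y}=\tfrac{dY}{Y}+\alpha_0\tfrac{dh}{h}$; note $c=0$ iff $t=t_0$. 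For $t=t_0$ this gives $\varphi_{1,4}^{*}\theta^0=\theta^0$, so taking all $\eta_i=0$ in Corollary~\ref{cor:ExistenceFol} satisfies every wedge condition (trivially for $i=1,2,3$ since $\varphi_{i,i+1}=\mathrm{Id}$, by the identity for $i=4$); hence $\hat{\F}_{t_0}$ is convergent, and only uniqueness remains.

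For the converse I would assume $\hat{\F}_t$ convergent with $t\neq t_0$, so $c\neq0$. By Corollary~\ref{cor:ExistenceFol} there are closed holomorphic $1$-forms $\eta_i$ on $V_i$ --- of the shape $f_i(X)\,dX$ or $f_i(Y)\,dY$ according to the parity of $i$, exactly as in the proof of Theorem~\ref{TH:nonexistence} --- with $\theta_i=\theta^0+\eta_i$ and $\theta_{i+1}\wedge\varphi_{i,i+1}^{*}\theta_i=0$; since $\varphi_{1,2},\varphi_{2,3},\varphi_{3,4}=\mathrm{Id}$, the only surviving equation is $\theta_4\wedge\varphi_{1,4}^{*}\theta_1=0$. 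Following the same lines as in Theorem~\ref{TH:nonexistence}, I would expand $\varphi_{1,4}^{*}\theta_1$ and extract the residual parts of the $2$-form $\theta_4\wedge\varphi_{1,4}^{*}\theta_1$ along $L_1=\{Y=0\}$ and $L_4=\{X=0\}$, which turn out to be $\alpha f_1(Y)\tfrac{dX}{X}\wedge dY$ and $\beta f_4(X)\,dX\wedge\tfrac{dY}{Y}$ (the $(1+XYe^Y)$ factor restricting to $1$ on each $L_i$). Their vanishing forces $f_1\equiv0$ and $f_4\equiv0$ when $\alpha\beta\neq0$. The degenerate cases $t\in\{0,1\}$, which are then necessarily distinct from $t_0$, are handled by Lemma~\ref{lem:SectorialFolFtNonRat01} together with the observation (from the proof of Corollary~\ref{cor:ExistenceFol}) that a foliation defined over the whole chain $V_1\cup V_2\cup V_3\cup V_4$ cannot carry a saddle-node, which again forces the extra terms to vanish and brings one back to $\theta_i=\theta^0$. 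In every case one arrives at $\theta_1=\theta_4=\theta^0$.

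It then remains to contradict $\theta^0\wedge\varphi_{1,4}^{*}\theta^0=0$, i.e.\ $c\,\theta^0\wedge\tfrac{dh}{h}=0$. The computation to carry out is $dh=e^{Y}\big(Y\,dX+X(1+Y)\,dY\big)$, whence
\begin{equation*}
\theta^0\wedge\frac{dh}{h}=\frac{\big(\alpha(1+Y)-\beta\big)e^{Y}}{h}\;dX\wedge dY,
\end{equation*}
which is not identically zero because $(\alpha,\beta)\neq(0,0)$. Since $c\neq0$, this contradicts the gluing equation, so no $\hat{\F}_t$ with $t\neq t_0$ is convergent, and the theorem follows. The main obstacle I foresee is precisely the reduction $\theta_1=\theta_4=\theta^0$ in the cases where one residue vanishes and division as in Lemma~\ref{lem:SectorialFolFtNonRat} is unavailable; there one must argue via Remark~\ref{rem:NonLog01} and the absence of saddle-nodes over the chain. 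Everything else is a routine computation parallel to Theorem~\ref{TH:nonexistence}.
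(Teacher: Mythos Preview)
Your argument is essentially the paper's: same appeal to Corollary~\ref{cor:ExistenceFol}, same residue extraction $\alpha f_1\equiv0$, $\beta f_4\equiv0$, and in the generic case $\alpha\beta\neq0$ the same reduction to $\theta_1=\theta_4=\theta^0$ followed by the wedge with $dh/h$. Your organizing identity $\varphi_{1,4}^{*}\theta^0=\theta^0+c\,dh/h$ is exactly what the paper uses (written there as $(\alpha_0\beta-\alpha\beta_0)\,d(XYe^Y)/(1+XYe^Y)$).

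The one substantive divergence is in the degenerate cases $t\in\{0,1\}$, and here the obstacle you flag is real: the saddle--node argument from the proof of Corollary~\ref{cor:ExistenceFol} only kills the non-logarithmic term on $V_1,V_3$ (for $t=0$); it says nothing about $\eta_4$, so your claimed reduction to $\theta_4=\theta^0$ is not justified by that route. The paper sidesteps this entirely with a more direct observation. Take $\beta=0$: the residue already gives $f_1\equiv0$, so $\theta_1=\alpha\,dX/X$ and the foliation $\varphi_{1,4}^{*}\theta_1=0$ has first integral $Xh^{-\beta_0}$; since $t_0\neq0$ forces $\beta_0\neq0$, this genuinely depends on $Y$, whereas $\theta_4=\alpha\,dX/X+f_4(X)\,dX$ defines a foliation depending on $X$ alone --- contradiction, with no need to control $f_4$. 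The case $\alpha=0$ is symmetric (now $f_4\equiv0$, $\theta_1$ depends only on $Y$, $\varphi_{1,4}^{*}\theta_1$ has first integral $Yh^{\alpha_0}$ with $\alpha_0\neq0$). This replaces your detour through Lemma~\ref{lem:SectorialFolFtNonRat01} and Remark~\ref{rem:NonLog01} by a one-line first-integral comparison.
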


\begin{proof}
Note that $\varphi$ preserves the foliation defined by the logarithmic form $\alpha_0 \frac{dX}{X} +\beta_0\frac{dY}{Y}$ which then descends on $V_\varphi$, i.e. the formal foliation $\hat{\F}_{t_0}$ is indeed convergent on $U_\varphi$. Assuming by contradiction that there is another convergent foliation $\hat{\F}_t$ with $t\not= t_0$, let $\theta_i$ be the associated logarithmic $1$-form on $V_i$. As in the proof of Theorem \ref{TH:nonexistence}, we get 
$$\theta_1=\alpha\frac{dX}{X}+\beta\frac{dY}{Y} + f_1 (Y)dY\ \ \ \text{and}\ \ \ 
\theta_4=\alpha\frac{dX}{X}+\beta\frac{dY}{Y} + f_4 (X)dX$$
with $f_i:(\C,0) \to\C$ holomorphic, and $[\alpha:\beta]\not=[\alpha_0:\beta_0]$. We derive
$$(\varphi_{1,4})^*\theta_1= \alpha\frac{dX}{X} +\beta\frac{dY}{Y}+ (\alpha_0\beta -\alpha\beta_0) \frac{d(XYe^Y)}{1+XYe^Y}$$
$$ +f_1(Y{(1+XYe^Y)}^{\alpha_0})\cdot d(Y(1+XYe^Y)).$$ 
The residual parts of the $2$-form $\theta_4\wedge(\varphi_{1,4})^*\theta_1$ at $X=0$ and $Y=0$ respectively write
\begin{equation}\label{eq:OneFolResXY}
\alpha f_1(Y)\frac{dX}{X}\wedge dY
\ \ \ \text{and}\ \ \ \beta f_4(X)dX\wedge \frac{dY}{Y}.
\end{equation}
We are led to a similar discussion as in the proof of Theorem \ref{TH:nonexistence}. 
When $\beta=0$, then $f_1\equiv0$ and $(\varphi_{1,4})^*\theta_1$ only depends on 
$X{(1+XY)}^{-\beta_0}$, while $\theta_4$ only depends on $X$; we get a contradiction since $\beta_0\not=0$ in this case. 
When $\alpha=0$, then $f_4\equiv0$ and $(\varphi_{1,4})^*\theta_1$ only depends on 
$Y{(1+XY)}^{\alpha_0}$, while $\theta_4$ only depends on $Y$; contradiction.
Finally, when both $\alpha\not=0$ and $\beta\not=0$, then 
$$\theta_4\wedge(\varphi_{1,4})^*\theta_1=
\underbrace{(\alpha_0\beta -\alpha\beta_0)}_{\not=0} \frac{d(XYe^Y)}{1+XYe^Y}\wedge
\left(\alpha\frac{dX}{X} +\beta\frac{dY}{Y}\right)$$
which cannot be zero, again a contradiction.
\end{proof}

\begin{remark}\label{rem:OneFol}
More generally, given $f\in\C\{X,Y\}$ vanishing along $X=0$ and $Y=0$, the same proof shows that 
the cocycle defined by $\varphi_{1,4}=(Xe^{-\beta_0 f}, Ye^{\alpha_0 f})$ and $\varphi_{i,i+1}=\mbox{Id}$ otherwise
also provides a neighborhood $(U_\varphi,C)$ with only one convergent foliation, namely $\hat{\F}_{t_0}$, 
provided that $df\wedge d(X^pY^q)\not\equiv0$ for all $p,q\in\Z_{>0}$. Moreover, one easily checks 
that two different such $f$, says $f$ and $f'$, define non equivalent neighborhoods provided that their
difference do not take the form $f'-f=g(X)+h(Y)$. 
\end{remark}

\section{Symmetries}\label{sec:symmetries}

Let $(U_\varphi,C)$ be a neighborhood formally equivalent to $(U_{1,0,0},C)$.
Via formal conjugation, formal symmetries (or automorphisms) of $(U_\varphi,C)$  which restrict to translations on $C$
are those of $(U_{1,0,0},C)$, i.e. of the form (see Corollary \ref{cor:formalcentralizer}):
$$\left\{\begin{matrix}(z,\xi)\mapsto(cz,\xi+t)\\ c\in\C^*,\ t\in\C\end{matrix}\right.
\ \ \ \leftrightarrow\ \ \ 
(a,b):(X,Y)\mapsto(\underbrace{e^{2i\pi t}}_{a}X,\underbrace{c^{-1}e^{2i\pi\tau t}}_{b}Y)$$
The subgroup $\mathrm{Aut}(U_\varphi,C)$ of convergent automorphisms thus identifies with a 
subgroup of the two dimensional linear algebraic torus:
$$G\subset\mathrm{Aut}^0(\mathbb P^1 \times \mathbb P^1,D)\simeq\C^*\times\C^*.$$

\begin{thm}\label{thm:symmetries}
Let $(U_\varphi,C)$ and $G$ be as above. Then the subgroup $G\subset\C^*\times\C^*$ is algebraic.
In particular, we are in one of the following cases:
\begin{itemize}
\item $G$ is finite and $G=\{(a,b)\ ;\ a^pb^q=a^{p'}b^{q'}=1\}$ for some  non proportional $(p,q),(p',q')\in\Z^2\setminus(0,0)$;
\item $G=\{(a,b)\ ;\ a^pb^q=1\}$ for some $(p,q)\in\Z^2\setminus(0,0)$; in particular, a finite index subgroup of $G$
is generated by the flow of the rational vector field $p X\partial_X+q Y\partial_Y$; 
\item $G=\C^*\times\C^*$ and $(U_\varphi,C)\an(U_{1,0,0},C)$.
\end{itemize}
Moreover, in the first two cases, up to equivalence $\approx$, the cocycle takes the form
$$\varphi_{i,i+1}(X,Y)=(X\cdot u_{i,i+1},Y\cdot v_{i,i+1})$$
where $u_{i,i+1},v_{i,i+1}$ are Laurent series in $X^pY^q$ and $X^{p'}Y^{q'}$ (resp. in $X^pY^q$)
and the action of $G$ is linear in each chart $(V_i,L_i)$.
\end{thm}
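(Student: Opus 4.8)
The plan is to work on the normalized model $(V_\varphi,D)$ via the canonical isomorphism $\Pi_\varphi:(U_\varphi\setminus C,C)\to(V_\varphi\setminus D,D)$ of Theorem \ref{thm:SerreIsomGerm}, under which, by Corollary \ref{cor:formalcentralizer}, any formal (hence, a posteriori convergent) automorphism of $(U_\varphi,C)$ inducing a translation on $C$ corresponds to a diffeomorphism of $(V_\varphi,D)$ that is linear in each chart, of the form $(X,Y)\mapsto(aX,bY)$. So $G$ is precisely the group of pairs $(a,b)\in\C^*\times\C^*$ for which the $4$-uple of linear maps $\phi=(aX,bY)$ conjugates the cocycle $\varphi$ to an equivalent one, i.e. for which there exist $\varphi_i\in\mathrm{Diff}^1(V_i,L_i)$ with $\varphi_i\circ\varphi_{i,i+1}=\phi\circ\varphi_{i,i+1}\circ\phi^{-1}\circ\varphi_{i+1}$ (using Remark \ref{rem:weakequivalence}). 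First I would record that $G$ is a subgroup of $\C^*\times\C^*$ closed under this realizability condition, and that the condition ``$(a,b)\in G$'' is an analytic (indeed, in each Taylor coefficient, polynomial) condition on $(a,b)$ once one fixes normalizations; this already shows $G$ is an analytic subgroup of the torus.

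The key step is to show $G$ is \emph{algebraic}. For this I would use the invariance of periods under automorphisms, as in \cite[Section 2.4]{LTT} and the footnote to Corollary \ref{cor:formalcentralizer}: an element $(a,b)\in G$ must preserve, for every $(m,n)\in\Z^2$, the period attached to the monomial $X^mY^n$ along the foliation $\mathcal F_{\frac{\tau n}{m+\tau n}}$ of rational type. These periods are holomorphic functions of the neighborhood that transform by an explicit monomial $a^mb^n$-factor under $(X,Y)\mapsto(aX,bY)$; hence $(a,b)\in G$ forces $a^mb^n=1$ for every $(m,n)$ such that the corresponding period of $(U_\varphi,C)$ is nonzero. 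Dually, if a period is zero, there is no constraint. So $G=\{(a,b)\ ;\ a^mb^n=1\ \text{for all}\ (m,n)\in\Lambda\}$ where $\Lambda\subset\Z^2$ is the set of ``resonant exponents'' with vanishing period — a sublattice-like subset; taking the subgroup of $\Z^2$ it generates, $G$ is cut out by finitely many multiplicative equations $a^{p}b^{q}=1$ (by the rank of that subgroup: $0$, $1$, or $2$), which is exactly the trichotomy in the statement. The case $\operatorname{rank}=2$ forces all periods to vanish, which by \cite[Theorem 4]{LTT} (three convergent foliations, or two of irrational type) forces $(U_\varphi,C)\an(U_{1,0,0},C)$ and $G=\C^*\times\C^*$; the case $\operatorname{rank}=1$ gives $G=\{a^pb^q=1\}$, whose identity component is the one-parameter group of $pX\partial_X+qY\partial_Y$, pulled back to a holomorphic vector field on $(U_\varphi,C)$ tangent to a convergent foliation; the case $\operatorname{rank}=0$ is the generic finite case (one still has to note $G$ is \emph{a priori} bounded — it is a closed analytic subgroup of the torus with no constraint only if some periods vanish, and when $\operatorname{rank}=0$ the no-vanishing periods prevent any non-trivial relation, so $G$ is finite; more carefully, $G$ algebraic of dimension $0$ is finite).

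For the last assertion I would argue as follows. When $\operatorname{rank}\le 1$, the convergent foliations $\hat{\mathcal F}_t$ for the rational values $t=\frac{\tau n}{m+\tau n}$ with $(m,n)\in\Lambda$ all converge, and by Proposition \ref{prop:ExampleMartinetRamis} and Corollary \ref{cor:ExistenceFol} (applied to each such $t$, using that these foliations have trivial holonomy along the corresponding loop so are of rational type) one can choose, up to the equivalence $\approx$, a representative cocycle in which every $\varphi_{i,i+1}$ preserves each logarithmic form $\theta^0=m\frac{dX}{X}+n\frac{dY}{Y}$ for $(m,n)\in\Lambda$. Preserving $\theta^0$ for a rank-one (resp. rank-two) family of exponents forces $\varphi_{i,i+1}$ to have first integrals the monomials $X^pY^q$ (and $X^{p'}Y^{q'}$), hence $\varphi_{i,i+1}(X,Y)=(X\cdot u_{i,i+1},Y\cdot v_{i,i+1})$ with $u_{i,i+1},v_{i,i+1}$ first integrals as well, i.e. Laurent series in $X^pY^q$ (resp. in $X^pY^q$ and $X^{p'}Y^{q'}$); and in that normal form the torus elements $(aX,bY)$ with $a^pb^q=1$ (resp. also $a^{p'}b^{q'}=1$) conjugate the cocycle to itself, so $G$ acts linearly in each chart. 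The main obstacle I expect is the rigorous identification of ``period vanishing'' with the realizability of the torus action — i.e. proving that the necessary condition $a^mb^n=1$ coming from period invariance is also sufficient, which requires going back into the Sectorial Normalization Lemma \ref{lem:sectorialRappel} to see that a torus symmetry of the model $(U_{1,0,0},C)$ commuting with $F_\varphi$ up to the allowed coboundary exists precisely when the obstructing periods vanish; the rest is bookkeeping with $\mathrm{Diff}^1$ and Laurent expansions.
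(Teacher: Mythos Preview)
Your proposal has a genuine gap at the very first step. You invoke Corollary \ref{cor:formalcentralizer} to assert that an automorphism of $(U_\varphi,C)$ ``corresponds to a diffeomorphism of $(V_\varphi,D)$ that is linear in each chart''. But that corollary concerns the model $(S_0,C)=(U_{1,0,0},C)$, not a general $(U_\varphi,C)$. What one actually gets from the sectorial normalizations is a collection $g_i\in\mathrm{Diff}(V_i,L_i)$ with $g_i\circ\varphi_{i,i+1}=\varphi_{i,i+1}\circ g_{i+1}$, whose common linear part at the crossing points is $(aX,bY)$; the $g_i$ are only \emph{tangent} to that linear map along $L_i$. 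Showing that each $g_i$ can be linearized in its chart is the first nontrivial step in the paper's proof, and it uses the gluing conditions at the corners $p_{i,i+1}$ (the restriction $g_i|_{L_{i\pm1}}$ becomes linear in the adjacent chart, and $g_i$ must preserve the adjacent fibration). Once the action is linear, the cocycle must commute with $(aX,bY)$, so only monomials $X^mY^n$ with $a^mb^n=1$ survive in $u_{i,i+1},v_{i,i+1}$; this forces $G$ to contain the Zariski closure of each of its elements, hence to be algebraic, and yields the Laurent-series normal form directly. The dichotomy is then handled by averaging (finite case) or linearizing an infinite-order element and its Zariski closure.

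Your alternative route via ``periods attached to the monomial $X^mY^n$'' is not on solid ground as written. The periods in \cite{LTT} are integrals of closed meromorphic $1$-forms along loops in $\pi_1(C)$, not invariants attached to monomials, and you do not establish the claimed transformation law under $(X,Y)\mapsto(aX,bY)$. Even granting a necessary condition of that shape, you correctly identify sufficiency as the ``main obstacle'' and do not resolve it; without it you only get an algebraic group \emph{containing} $G$, not $G$ itself. (There is also a slip: your $\Lambda$ is described as the set of exponents with \emph{vanishing} period, whereas the constraints come from the nonvanishing ones.) The paper's chart-by-chart linearization bypasses all of this and gives the Laurent-series form and the algebraicity of $G$ in one stroke, without reference to periods or to convergence of foliations.
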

\begin{remark}
Note that $G$ is not algebraic in general as a subgroup of $\mathrm{Aut}(U_{1,0,0},C)=\mathrm{Aut}^0(S_0)$. Actually, the correspondance between  $\mathrm{Aut}(U_{1,0,0},C)$ and $\mathrm{Aut}^0(\mathbb P^1 \times \mathbb P^1,D)$ specified above is only of analytic nature.\end{remark}

\begin{proof} By similar arguments as in the proof of Lemma \ref{lem:SectorialFolFtNonRat},
we see that each automorphism of $(U_\varphi,C)$ corresponds to a collection of automorphisms
of $g_i\in\mathrm{Diff}(V_i,L_i)$ satisfying 
$$g_{i}\circ\varphi_{i,i+1}=\varphi_{i,i+1}\circ g_{i+1}$$ and that the corresponding element $(a,b)$ in $\C^*\times\C^*$ is the linear part of $g_i$ at the crossing points $p_{i-1,i},p_{i,i+1}$ (in particular, it is independant of $i$).

We first prove that $g_i$ can be linearized in each chart. 
Indeed, for instance on $(V_4,L_4)$, $g_4$ acts by transformations of the form 
$$(X,Y)\mapsto(aX\cdot u(X),bY\cdot v(X)),\ \ \ u(0)=v(0)=1.$$
The gluing condition for $\varphi_{4,1}$
shows that the restriction $\varphi_4\vert_{L_1}:X\mapsto aX\cdot u(X)$ becomes linear in the chart $(V_1,L_1)$.
Therefore, after changing $X$ coordinates on $(V_4,L_4)$, we can assume 
$$g_4(X,Y)=(aX,bY\cdot v(X)),\ \ \ v(0)=1.$$
But $g_4$ must also preserve the fibration $dY=0$ of $(V_1,L_1)$ which is preserved by $g_1$; 
that can be normalized to $dY=0$ in the chart $(V_4,L_4)$ and this implies that $v(X)\equiv 1$ also.
We therefore conclude that for each automorphism in $\mathrm{Aut}(U_\varphi,C)$, the corresponding 
transformation in $(V_\varphi,D)$ can be linearized in all charts $(V_i,L_i)$.

As a by-product,  $G$ must contain the Zariski closure of $<(a,b)>\subset\C^*\times\C^*$.
Indeed, all $\varphi_{i,i+1}$ have to commute with $g(X,Y)=(aX,bY)$.  
Writing $\varphi_{i,i+1}(X,Y)=(X\cdot u(X,Y),Y\cdot v(X,Y))$, we see that $u,v$ have to be invariant by $g$,
i.e. $u\circ g=u$ for instance; equivalently, all non zero monomials of $u$ and $v$ are $g$-invariant.
These monomials define an algebraic subgroup $H\subset\C^*\times\C^*$ which is the group of 
linear transformations commuting with  $\varphi_{i,i+1}$. We conclude that $(a,b)\in H\subset G$.

If $g$ was Zariski dense in $\C^*\times\C^*$, we are done: the commutation of  $\varphi_{i,i+1}$ with all linear transformations 
shows that  $\varphi_{i,i+1}$ is linear (hence trivial) and $(U_\varphi,C)\an(U_{1,0,0},C)$. Now, assuming that $G$ is a strict
subgroup of $\C^*\times\C^*$, we want to prove that it can be linearized globally.

Assume first that $G$ is finite. Then it can be linearized on each line neighborhood $(V_i,L_i)$.
Indeed, for instance on $(V_4,L_4)$, $G$ acts by transformations of the form 
$$g(X,Y)=(aX\cdot u(X),bY\cdot v(X)),\ \ \ u(0)=v(0)=1.$$
and we denote by $\mathrm{lin}(g)$ its linear part $(aX,bY)$.
Then the transformation 
$$\varphi_4:=\frac{1}{\# G}\sum_{g\in G}\mathrm{lin}(g)^{-1}\circ g$$
is of the form $\varphi_4(X,Y)=(X\cdot u(X),Y\cdot v(X))$, $u(0)=v(0)=1$ and linearizing the group: 
$$\varphi_4\circ g=\mathrm{lin}(g)\circ \varphi_4,\ \ \ \forall g\in G.$$
We can therefore assume that $G$ acts linearly in each chart $(V_i,L_i)$ and the cocycle $\varphi$
has to commute with all elements. It is well known that the group $G$ is generated by two elements
$(a_1,b_1)$ and $(a_2,b_2)$ of finite order; moreover, by duality, $G$ is defined by $2$ independant monomial equations
$a^pb^q=a^{p'}b^{q'}=1$. Gluing conditions with $\varphi_{i,i+1}$ show that  $u_{i,i+1},v_{i,i+1}$ must 
be $G$-right-invariant and therefore factor through the two monomial equations.

On the other hand, if $G$ contains an element $g$ of infinite order, then we can first linearize this element.
The Zariski closure $H$ of its iterates $<g>$ in $\C^*\times\C^*$ is one dimensional,
 if strictly smaller than  $\C^*\times\C^*$, and defined by a monomial equation $a^pb^q=0$. 
 If $G$ is larger than $H$, then it is generated by an element of finite order $g'$ and we can linearize 
 the finite group $<g'>$ like above; since $g'$ and its linear part both commute with $H$, 
 the linearizing transformations also commute with $H$ and $G$ is linearized. It is therefore algebraic,
 defined by monomial equations of $\varphi_{i,i+1}$, and they all factor into a single monomial. 
\end{proof}

\begin{remark}\label{rem:TransversalAutomorphism}
From the description above, we note that the convergence of a non trivial automorphism $g$ of $(U_\varphi,C)$ inducing the identity on $C$
implies that $(U_\varphi,C)\an(U_{1,0,0},C)$, since the group generated by $g$ 
must be Zariski dense in $\C^*\times\C^*$.
\end{remark}

\begin{remark}\label{rem:VectorField}
In Proposition \ref{prop:ExampleMartinetRamis}, the foliation $\hat{\F}_0$ is defined by a 
holomorphic vector field if, and only if, $\alpha_2(Y)=\alpha_4(Y)=Y$. Equivalently, 
the Martinet-Ramis invariant of $\hat{\F}_1$ are trivial, i.e. $\hat{\F}_1$ can be defined by a closed $1$-form.
\end{remark}

\section{Sectorial normalization}\label{S:sectorialnorm1}
We maintain the foregoing notations. 
Recall that we have set $\varpi=\arg{\tau}\in ]0,\pi[$.
Let $(U,C)$ be formally equivalent to $(U_{1,0,0},C)$. We want to show that $(U,C)$ has the form $U_\varphi$.
In other word, we want to prove Lemma \ref{LEM:sectorialnormalization}, or equivalently Lemma \ref{lem:sectorialRappel}.

\subsection{Overview of the proofs} \label{SS:overview}
One can suppose that  $(U,C)=(\tilde U, \tilde C)/F$ where 
$$F(z,\xi)=\underbrace{F_{1,0,0}(z,\xi)}_{(qz, \xi -1)}+(\Delta_1,\Delta_2)\ \ \ \text{with}\ \ \ \Delta_i=O(\xi^{-N})$$
where $N>>0$ is an arbitrarily large integer, so that there exists a formal diffeomorphism 
$$\hat{\Psi}(z,\xi)=(z+\hat{g},\xi+\hat{h}),\ \ \ \hat{h}=\sum_{n\geq 1}a_n \xi^{-n},\ \ \ \hat{g}=\sum_{n\geq 1}b_n \xi^{-n}$$
 where $a_n,b_n$ are entire functions on $\tilde C={\C}^*$ such that
$$F\circ \hat{\Psi}=\hat{\Psi}\circ F_{1,0,0}.$$

This can be reformulated as 
\begin{equation}\label{E:F2}
 \hat{g}\circ F_{1,0,0}-q \hat{g}=\Delta_1\circ \hat{\Psi}
\end{equation}
\begin{equation}\label{E:F1}
\hat{h}\circ F_{1,0,0} -\hat{h}=\Delta_2\circ \hat{\Psi}
\end{equation}

Basically, we will show that there exists a \textit{holomorphic} solution $\Psi=\mbox{Id} + (h,g)$ 
of the previous fonctional equations $F\circ {\Psi}={\Psi}\circ F_{1,0,0}$, i.e. with $h,g$ satisfying
\begin{equation}\label{E:F2hol}
 {g}\circ F_{1,0,0}-q {g}=\Delta_1\circ {\Psi}
\end{equation}
\begin{equation}\label{E:F1hol}
{h}\circ F_{1,0,0} -{h}=\Delta_2\circ {\Psi}
\end{equation}
defined on "suitable sectorial domains", namely on $U_i:=\Pi^{-1}(V_i -L_i)$, $i=1,2,3,4$, 
and admitting asymptotic expansion along $\tilde C$ compatible with the formal conjugacy map $\hat{\Psi}$.
More precisely $h,g\in {\mathcal A } (I_i)$ 
with the notations of section \ref{SS:sheaves} with respective asymptotic expansions $\hat h$ and $\hat g$.
To this end, we will first exhibit solutions in  ${\mathcal O } (I_i)$  satisfying some suitable growth behaviour  
of  the following linearized equations:
 \begin{equation}\label{E:lin2}
 g\circ F_{1,0,0}-q g=\Delta_1
\end{equation}
\begin{equation}\label{E:lin1}
h\circ F_{1,0,0}-h=\Delta_2
\end{equation}
Most of section \ref{S:sectorialnorm1} is devoted to the construction of such sectorial solutions 
on the sector $U_1$, and it will be explained in subsection \ref{SS:othersectors} how to deduce normalization
on other sectors.

\begin{remark}\label{R:renorm}
The equation (\ref{E:lin2}) can be reduced to equation (\ref{E:lin1}); indeed, after setting
$$g(z,\xi)=z\tilde{g}(z,\xi)\ \ \ \text{and}\ \ \ \Delta_1(z,\xi)=q z \widetilde{ \Delta}_1(z,\xi),$$
we get
$$\tilde{g}\circ F_{1,0,0}-\tilde{g}=\widetilde{\Delta}_1.$$
\end{remark}

This will enable us to solve by a fairly standard fixed point method the initial  functional equations 
(\ref{E:F2hol}) and (\ref{E:F1hol}). In order to get rid of the coefficient $q$ on the left hand side, 
note that both equations can be reformulated  as:
\begin{equation}\label{E:lin2an}
\tilde{g} (q z,\xi -1) -\tilde{g}(z,\xi)=(1+\tilde{g}(z,\xi))\widetilde{\Delta}_1\big( z(1+\tilde{g}(z,\xi)),\xi+h(z,\xi)\big)
\end{equation}
\begin{equation}\label{E:lin1an}
h( q z,\xi -1) - h(z,\xi)=\Delta_2\big(z(1 + \tilde{g}(z,\xi)),\xi +h(z,\xi)\big)
\end{equation}
where the symbol $\ \widetilde\ $ stands for the same modification than in Remark \ref{R:renorm} for the linear case.

\subsection{The linearized/homological equation}\label{SS:linarized} Our purpose is to construct some sectorial solution of the linearized functional equations (\ref{E:lin1}) (and therefore (\ref{E:lin2}) by Remark \ref{R:renorm}) belonging to $\mathcal O (I_1)$. Actually, one will just firstly state some results and use this material to undertake the resolution of the complete (non linear) conjugacy equation (over $I_1$). We will  detail the resolution of the linearized equation (the most technical part) in
subsection \ref{SS:sollin}.
The existence of other sectorial conjugacy maps over $I_i, i\not=1$  can be obtained in a very similar way and we indicate briefly how to proceed in subsection \ref{SS:othersectors}.

  Let us first settle some notations. Recall that $q=e^{2i\pi\tau}$ with $\Im\tau>0$, so that $\vert q\vert<1$.
As one only focuses on transversal sectorial domain determined by $I_1=]-\varpi,\pi-\varpi[$, $\varpi=\arg(\tau)$, 
we are  going to work in domains $S$ of the following shape. Fix $0<a<b$ such that $a<|q|b$ and 
consider  the annulus 
 $${\mathcal C}_{a,b}=\{{a}\leq|z|\leq b\}.$$ 
  Let $\delta_{(a,b)}>0$ small enough 
  and for $0<\delta\leq \delta_{(a,b)}$, set 
 $$S_{a,b, \delta}=\{(z,\xi)\in {\C}^2|\ |Y(z,\xi)|\le\delta\ \mbox{and}\ z\in {\mathcal C}_{a,b}\}$$ 
where $Y(z,\xi)=z e^{2i\pi\tau \xi}$ (recall that $\vert Y\vert<\delta$  corresponds to a neighborhood $V_1$ of $L_1$).
 Alternatively, this set can be described by the equation $\Re{(2i\pi\tau\xi)}<\log(\frac{ \delta}{ |z|})$, $z\in {\mathcal C}_{a,b}$ so that in particular $\arg{\xi}\in I_1=]-\varpi,\pi-\varpi[$ or equivalently $\Im{(\tau\xi)}>0$.  Note that $F_{1,0,0}(S_{a,b,\delta})=S_{|{q}| a,|{q}| b,\delta}$.
It is thus coherent to investigate the existence of a solution $h$ of (\ref{E:lin1}) on the domain $S_{ |q|a,b,\delta}=S_{ a, b,\delta}\cup F_{1,0,0}(S_{ a, b,\delta})$.

 In what follows, we will indeed provide a solution of (\ref{E:lin1}) with "good estimates" on a domain of the form $S_{ |{q}|a, b,\delta}$ using a "leafwise" resolution with respect to the foliation defined by the levels of $Y$. For the sake of notational simplicity we will omit for a while the subscript $a,b$ by setting $S_\delta:= S_{a,b,\delta}$ and ${S}_\delta':=  S_{|{q}|a, b,\delta}$. If $\delta_1\leq \delta_2$, remark that $S_{\delta_1}$ and $S_{\delta_1}'$ are respectively subdomains of  $S_{\delta_2}$ and $S_{\delta_2}'$. To state precisely our result, let us fix some additional notations and definitions. Let $m\geq 3$ a positive integer and consider the subspace $H_\delta^m$ of ${\mathcal H}(S_\delta)$  \footnote{Let $A\subset \C^N$, in this paragraph and hereafter,  
 ${\mathcal H}(A)$ will denote the algebra of \textit{holomorphic functions on $A$}, that is the  $\C$-valued continuous functions 
 on $A$ which are holomorphic in the interior in the usual sense.} 
 defined by the  functions $\Delta$ such that 
 $${\left\|\Delta\right\|}_m:=\sup_{(z,\xi)\in S_{\delta}}|\Delta(z,\xi)|{{|\xi|}^m}<\infty.$$ 
 We will also introduce the space ${H_\delta^\infty}'$ of \textit{bounded}  holomorphic functions $h$  on $S_{\delta}'$ equipped with the natural norm 
 $${\left\|h\right\|}_\infty:=\sup_{(z,\xi)\in S_{\delta}'}|h(z,\xi)|<\infty.$$
 
 \begin{thm}\label{TH:lin}
 Fix $a,b$ as above. 
 Then, there are positive constants $\delta_{(a,b)},C$ such that:
 for every $\delta\leq\delta_{(a,b)}$ and every function 
 $\Delta_\delta\in H_\delta^m$, $m\ge3$,
 there exists a unique function $h_\delta\in {H_\delta^\infty}'$ satisfying
\begin{itemize}
\item[(1)]\label{Item:holomorphicsolution} $h_\delta\circ F_{1,0,0}  -h_\delta=\Delta_\delta$.
\item[(2)] ${\left\|h_\delta\right\|}_\infty\leq C{\left\|\Delta_\delta\right\|}_m$.
\item[(3)] For every $(z,\xi)\in S_{\delta}'\cap \{\Im{(\xi)}\geq 1\}$, we have $|h_\delta(z,\xi)|\leq \frac{C{\left\|\Delta_\delta\right\|}_m}{\sqrt{\Im{({\xi})}}}$.
\end{itemize}
In addition, there exists a positive constant $D_\theta$ only depending on $\theta\in]0, \frac{\pi}{2}]$
such that 
$$|h_\delta(z,\xi)|\leq\frac{D_\theta{\left\|\Delta_\delta\right\|}_m}{|\xi|^{m-2}}$$
for every $(z,\xi)\in S_{\delta}'\cap \{\theta-\varpi\leq\arg{\xi}\leq \pi-\theta-\varpi\}$.
\end{thm}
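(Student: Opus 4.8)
The plan is to solve the homological equation $h\circ F_{1,0,0}-h=\Delta$ leafwise along the levels of $Y(z,\xi)=ze^{2i\pi\tau\xi}$, since $F_{1,0,0}$ preserves each such level curve and acts on it as a translation $\xi\mapsto\xi-1$ (equivalently $z\mapsto qz$). On a fixed leaf $\{Y=c\}$ one parametrizes by $\xi$, with $z=z(\xi)=c\,e^{-2i\pi\tau\xi}$, and the equation becomes the one-dimensional difference equation $h(\xi-1)-h(\xi)=\Delta(\xi)$; the sectorial domain $S'_\delta$ restricted to that leaf is a half-plane-like region $\{\Im(\tau\xi)>\log(|c|^{-1}\cdot\text{const})\}$ translated appropriately. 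First I would write the candidate solution as a convergent series
\begin{equation*}
h(\xi)=-\sum_{k\ge 0}\Delta(\xi+k),
\end{equation*}
which formally satisfies $h(\xi-1)-h(\xi)=\sum_{k\ge 0}\Delta(\xi-1+k)-\sum_{k\ge0}\Delta(\xi+k)=\Delta(\xi)$; note that summing forward (rather than backward) is forced by the need to stay inside $S'_\delta$, since $F_{1,0,0}$ moves $\xi$ toward the part of the sector where $|\xi|$ is large and $\Delta$ decays. Here $\Delta\in H^m_\delta$ gives the pointwise bound $|\Delta(z,\xi)|\le\|\Delta\|_m|\xi|^{-m}$ with $m\ge 3$, so the series converges absolutely and uniformly on compact subsets, is holomorphic in $(z,\xi)$ (each partial sum is, and convergence is uniform; invoke the stability of $\mathcal A$ and $\mathcal O$ under $F_{1,0,0}$ from the excerpt), and manifestly lies in $\mathcal O(I_1)$.

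Next I would establish the three quantitative bounds. For (2), I compare the sum $\sum_{k\ge0}|\xi+k|^{-m}$ with the integral $\int_0^\infty|\xi+t|^{-m}dt$; because $\arg\xi$ stays in $I_1=]-\varpi,\pi-\varpi[$ uniformly over $S_\delta$, the ray $\xi+[0,\infty)$ never turns back toward the origin by more than a fixed angle, so $|\xi+t|\ge c_0(|\xi|+t)$ for a constant $c_0$ depending only on $\varpi$, and the integral is $O(|\xi|^{-(m-1)})$, hence bounded on $S'_\delta$ — this gives $\|h\|_\infty\le C\|\Delta\|_m$ with $C=C(a,b)$ after absorbing the finitely many initial terms and the behaviour near the "short" edge of the sector. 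For the refined bound (3), on $\{\Im\xi\ge 1\}$ I split the sum at index $k\sim\Im\xi$: for $k\le\Im\xi$ one has $|\xi+k|\ge\Im(\xi+k)=\Im\xi\ge\sqrt{\Im\xi}$ controlling the first $\Im\xi$ terms by $\Im\xi\cdot(\Im\xi)^{-m}\le(\Im\xi)^{-1/2}$ for $m\ge 3$, while the tail $k>\Im\xi$ is estimated as before; the $1/\sqrt{\Im\xi}$ is exactly the loss one tolerates. For the last (most useful, exponential-type) bound on the subsector $\{\theta-\varpi\le\arg\xi\le\pi-\theta-\varpi\}$, there the direction is bounded away from the edges of $I_1$, so $|\xi+t|\ge\sin\theta\,(|\xi|+t)$ and one gets $\sum_{k\ge0}|\xi+k|^{-m}\le D_\theta|\xi|^{-(m-1)}$, even slightly better than the claimed $|\xi|^{-(m-2)}$; I would keep the weaker exponent as stated since it suffices and leaves room for the finitely many corrected terms.

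For uniqueness: if $h_1,h_2\in {H'_\delta}^\infty$ both solve the equation, then $\phi:=h_1-h_2$ is bounded and satisfies $\phi\circ F_{1,0,0}=\phi$, hence descends to a bounded holomorphic function on $V_1^*=\Pi^{-1}$-image near $L_1$; by Remark \ref{rem:BoundedSectorFunctions} it extends to $L_1$ with constant asymptotic value, and being $F_{1,0,0}$-invariant and bounded on the full leafwise half-planes it must be constant on each leaf (a bounded entire-type function on a half-plane that is also periodic is constant — or directly: iterate $\phi(\xi)=\phi(\xi+k)$ and let $k\to\infty$ along a direction where $|\xi+k|\to\infty$, using that $\phi$ tends to a limit there). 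That forces $\phi\equiv$ const, and since we sum forward the series solution already tends to $0$ as $\Im(\tau\xi)\to+\infty$; imposing the same for both solutions (implicit in "bounded with the natural decay"), the constant is $0$. Finally I would handle $\delta_{(a,b)}$: all estimates are uniform in $\delta\le\delta_{(a,b)}$ because shrinking $\delta$ only shrinks the domain, and the bounds above depend on $\Delta$ only through $\|\Delta\|_m$ on $S_\delta$ and on the angular aperture $\varpi$, not on $\delta$ itself; one picks $\delta_{(a,b)}$ small enough that $S_{a,b,\delta}$ lies in the common domain of definition and that $a<|q|b$ keeps $S'_\delta=S_{a,b,\delta}\cup F_{1,0,0}(S_{a,b,\delta})$ connected and with the stated angular range. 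The main obstacle is the first quantitative estimate (2)–(3): getting clean constants requires care about the short edge of the sector where $|\xi|$ is merely bounded below (not large), so that the naive bound $|\xi|^{-(m-1)}$ does not by itself give boundedness — one must argue that on that edge $S'_\delta$ is relatively compact in $z$ and $\xi$ stays in a fixed compact piece, so $h$ is trivially bounded there by continuity, and patch this with the decay estimate valid where $|\xi|$ is large.
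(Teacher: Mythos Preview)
Your approach has a genuine gap at the very first step. You claim that on a leaf $\{Y=c\}$, parametrized by $\xi$ with $z(\xi)=ce^{-2i\pi\tau\xi}$, the domain $S_\delta$ cuts out a ``half-plane-like region''. It does not: the constraint $a\le|z|\le b$ becomes
\[
\log(a/|c|)\le 2\pi\,\Im(\tau\xi)\le\log(b/|c|),
\]
which is a \emph{strip} of fixed width $\tfrac{1}{2\pi}\log(b/a)$. The action of $F_{1,0,0}$ on the leaf is $\xi\mapsto\xi-1$, and $\Im(\tau(\xi\pm 1))=\Im(\tau\xi)\pm\Im\tau$ with $\Im\tau>0$, so translating in either direction leaves the strip after finitely many steps. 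Consequently your candidate series $h(\xi)=-\sum_{k\ge0}\Delta(\xi+k)$ is not well-defined: $\Delta$ is only given as an element of $H^m_\delta$, defined on $S_\delta$, and for $k$ larger than roughly $\log(b/a)/(2\pi\Im\tau)$ the term $\Delta(\xi+k)$ simply has no meaning. (In the intended application this cannot be salvaged by appealing to the original $\Delta_i$'s being globally defined: in the fixed-point iteration of Section~\ref{SS:solvefunctional} the right-hand side involves the unknown $h$, which is itself only sectorial.)

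The paper resolves precisely this difficulty. After the change $\zeta=2i\pi\tau\xi$, the leafwise equation becomes $\varphi(\zeta+\lambda)-\varphi(\zeta)=\Delta(\zeta)$ on a vertical strip $A\le\Re\zeta\le B$ of width $B-A>\lambda_1:=\Re\lambda$. The key idea is to first decompose $\Delta$ via Cauchy's formula along the two boundary lines $L_\pm$:
\[
\Delta(\zeta)=F_0^+(\zeta)-F_0^-(\zeta),\qquad F_0^\pm(\zeta)=\frac{1}{2i\pi}\int_{L_\pm}\frac{\Delta(t)}{t-\zeta}\,dt.
\]
Each $F_0^\pm$ extends holomorphically to a \emph{half-plane} ($\Re\zeta>A$ for $F_0^-$, $\Re\zeta<B$ for $F_0^+$), and only then can one form the telescoping series
\[
\varphi(\zeta)=\sum_{n\ge1}F_0^+(\zeta-n\lambda)+\sum_{n\ge0}F_0^-(\zeta+n\lambda),
\]
whose terms now live in the correct domains. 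The estimates (2), (3) and the subsector decay are then obtained by bounding these Cauchy integrals; in particular the canonical solution with the $1/\sqrt{\Im\xi}$ decay is singled out by adding the constant $m_0=\tfrac{1}{2\lambda}\int_{L_+}\Delta$, via the identity $\sum_{n\in\Z}\tfrac{1}{\zeta+n\lambda}=\tfrac{\pi}{\lambda}\cot(\pi\zeta/\lambda)$. Your telescoping intuition is morally right, but it only becomes applicable after this Cauchy decomposition, which is the missing idea.
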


As mentioned before, we will postpone the proof of Theorem \ref{TH:lin} to  subsection \ref{SS:sollin}. 
Condition (3) is needed for the unicity, and after to produce a norm with a unique fixed point 
when solving the functional equation.
For the time being, we detail how it provides a section $\Psi_{1}$ of ${\mathscr G}^1(I_1)$ of the form $(z+g, \xi +h)$ such that the pair $(g,h)$ admits $(\hat g, \hat h)$ as asymptotic expansion and satisfies in addition the equations (\ref{E:lin1an}) and (\ref{E:lin2an}). In other words, we are going to exhibit a transversely sectorial conjugacy map between $F$ and $F_{1,0,0}$:
$$F\circ \Psi_1=\Psi_1\circ F_{1,0,0}.$$
   
\subsection{Solving the functional equation} \label{SS:solvefunctional}
Notations as in Theorem \ref{TH:lin}.  It is worth mentioning that the strategy developped here as well as the resolution of the linearized/homological equation in the forthcoming Section \ref{SS:sollin} owes a lot to \cite{VF}.

 As before, $a,b$ are fixed, $\delta_{(a,b)}>0$ is small enough  and may be adjusted from line to line in order to  guarantee the validity of the estimates below.  We will denote by $\delta$  any positive number such that $0<\delta\leq\delta_{(a,b)}$.
We will omit for a while the subscript $(a,b)$. Let us introduce two Banach spaces.

First, let $m\geq 3$, and consider $H_\delta^{m,m}$ the subspace of those 
$(D_1,D_2)\in{\mathcal H}(S_{\delta})\times{\mathcal H}(S_{\delta}) $ defined by $N_m(D_1,D_2)<\infty$ where 
$$N_m(D_1,D_2):=\|D_1\|_m + \|D_2\|_m$$
(recall that ${\left\| D\right\|}_m:=\sup_{(z,\xi)\in S_{\delta}}|D(z,\xi)|{{|\xi|}^m}$).

On the other hand, let ${H_\delta^{\infty,\infty}}'$ be the subspace of those $(h_1,h_2)\in{\mathcal H}(S_{\delta}')\times{\mathcal H}(S_{\delta}')$ defined by $N_\infty' (h,\tilde{g})<\infty$ where 
\[N_\infty' (h_1,h_2):=\Vert h_1\Vert_\infty'+\Vert h_2\Vert_\infty'$$
$$\text{with}\ \ \ 
\Vert h\Vert_\infty':=\underbrace{\sup_{(z,\xi)\in S_{\delta}'}|h(z,\xi)|}_{\Vert h\Vert_\infty} +\sup_{(z,\xi)\in S_{\delta}'\cap \mathfrak{I}{(\xi)}\geq 1}|h(z,\xi)|\sqrt{|\mathfrak{I}(\xi)}|.\]
Note that both normed spaces are Banach spaces, 
and from Theorem \ref{TH:lin}, one inherits a continuous linear map  between them: 
$$\mathcal L\ :\ (H_\delta^{m,m}, N_m)\to({H_\delta^{\infty,\infty}}',N_\infty')\ ;\ \overrightarrow{D}=(D_1,D_2)\mapsto \overrightarrow{h}=(h_1,h_2)$$ 
defined by solving
\[ h_i\circ F_{1,0,0}-h_i=D_i\ \ \ \text{for}\ \ \  i=1,2.\]
To be more precise, for every $\delta$ small enough, 
and every  $\overrightarrow{D}\in H_{\delta}^{m,m}$, one has 
\[N_\infty'({\mathcal L} (\overrightarrow{D}))\leq C\cdot N_m(\overrightarrow{D})\]
with the positive constant $C$ given by Theorem \ref{TH:lin}. 

We now define a non linear continuous map in the other way.
Let us come back to the expression of the transformation $F=F_{1,0,0}+(\Delta_1,\Delta_2)$ defining the formally equivalent neighborhood $(U,C)$ as explicited in subsection \ref{SS:overview}.
One can assume $\Delta_i(z,\xi)=O({\xi}^{-N})$ for a fixed arbitrary integer $N\ge4$. 
Recall that the $\Delta_i$'s are analytic on a neighborhood of $\{\xi=\infty\}$ and consequently are well defined as an element of $H_\delta^m$ whose ${\left\|\ \right\|}_m$ norm tends to zero when $\delta$ goes to zero. 
For every $M>0$, set us denote by 
$$H_\delta^{m,m}(M)\subset H_\delta^{m,m}\ \ \ \text{and}\ \ \  {H_\delta^{\infty, \infty}}'(M)\subset{H_\delta^{\infty, \infty}}$$ 
the respective balls of radius $M$.
Then, for $\delta$ small enough, we a have a   well defined map 
\[\mathcal {R}\ :\  {H_{\delta}^{\infty, \infty}}^{'}(1) \to {H_{\delta}^{m, m}} \ ;\ 
\overrightarrow{h}=(h_1,h_2)\mapsto\overrightarrow{D}=(D_1,D_2)\]
$$\text{where}\ \ \ \left\{\begin{matrix}D_1(z,\xi)&=&(1+h_2(z,\xi))\widetilde{\Delta}_1\big(z(1+ h_2(z,\xi)),\xi+h_1(z,\xi)\big)\\
D_2(z,\xi)&=&\hfill\Delta_2\big(z(1+ h_2(z,\xi)),\xi +h_1(z,\xi)\big)
\end{matrix}\right.$$
Indeed, if $N_\infty' (h_1,h_2)\le1$, then in particular $\Vert h_i\Vert_\infty\le1$, $i=1,2$, and therefore 
$\widetilde{\Delta}_1,\Delta_2$ are holomorphic at $(z(1+ h_2(z,\xi)),\xi+h_1(z,\xi))$ whenever $(z,\xi)\in S_\delta$.
Because $\Delta_i=O(\xi^{-N})$, note also that the image of ${H_{\delta}^{\infty, \infty}}^{'}(1)$ by  $\mathcal R$ 
lies in  ${H_{\delta}^{m, m}}(R_\delta)$ where $\lim\limits_{\delta\to 0}R_\delta=0$. 
The proof of the following is straighforward:

\begin{lemma}
Let $\varepsilon >0$.
Then,  for $\delta$ small enough,
one has
\[  \forall \overrightarrow h, \overrightarrow g \in {H_{\delta}^{\infty,\infty}}' (1)\  \ \ \Rightarrow\ \ \ N_m(\mathcal R ( \overrightarrow h)- \mathcal R ( \overrightarrow g))\leq \varepsilon\cdot  N_\infty^{'} ( \overrightarrow h- \overrightarrow g).\]
In particular, $\mathcal R$ is continuous (Lipschitz). 
 \end{lemma}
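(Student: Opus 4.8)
The plan is to prove the estimate by a straightforward combination of the mean value inequality with Cauchy's bounds on the first derivatives of $\Delta_1$ and $\Delta_2$; I indicate only the mechanism, the rest being bookkeeping. Fix $\overrightarrow h=(h_1,h_2)$ and $\overrightarrow g=(g_1,g_2)$ in ${H_\delta^{\infty,\infty}}'(1)$ and, for $(z,\xi)\in S_\delta$, introduce the two ``plug-in'' points
$$P^{(h)}(z,\xi)=\big(z(1+h_2(z,\xi)),\,\xi+h_1(z,\xi)\big),\qquad P^{(g)}(z,\xi)=\big(z(1+g_2(z,\xi)),\,\xi+g_1(z,\xi)\big),$$
so that $\mathcal R(\overrightarrow h)=\big((1+h_2)\,\widetilde\Delta_1\circ P^{(h)},\,\Delta_2\circ P^{(h)}\big)$ and similarly for $\overrightarrow g$. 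Since $\|h_i\|_\infty,\|g_i\|_\infty\le1$, for $\delta$ small both points, and the whole segment joining them, remain inside a fixed compact part of the common domain of holomorphy of $\widetilde\Delta_1$ and $\Delta_2$, at distance $\ge\rho_0>0$ from its boundary, with $z$-coordinate in a fixed compact annulus of $\C^*$ and $\xi$-coordinate satisfying $|\xi+h_1|,|\xi+g_1|\ge|\xi|-1\ge|\xi|/2$.

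I would then use two elementary bounds. First, since $\Delta_i=O(\xi^{-N})$ with $N$ chosen $>m$ (harmless, $N$ being taken arbitrarily large), and $\widetilde\Delta_1=\Delta_1/(qz)$, both $\widetilde\Delta_1$ and $\Delta_2$ are bounded by $c_0|\xi|^{-N}$ on that compact set, uniformly in $\delta$; by Cauchy's inequality on polydiscs of radius $\rho_0$, the same bound (with a larger $c_0$) holds for their $\partial_z$ and $\partial_\xi$ derivatives. Second, $|P^{(h)}(z,\xi)-P^{(g)}(z,\xi)|\le b\,|h_2-g_2|(z,\xi)+|h_1-g_1|(z,\xi)\le(b+1)\,N_\infty'(\overrightarrow h-\overrightarrow g)$.

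The difference is then controlled termwise. For the second component, the mean value inequality along the segment from $P^{(g)}$ to $P^{(h)}$ gives
$$\big|\Delta_2\circ P^{(h)}-\Delta_2\circ P^{(g)}\big|\le\Big(\sup_{[P^{(g)},P^{(h)}]}\|D\Delta_2\|\Big)\big|P^{(h)}-P^{(g)}\big|\le c_0(b+1)\,|\xi|^{-N}\,N_\infty'(\overrightarrow h-\overrightarrow g),$$
and for the first component I would split
$$(1+h_2)\,\widetilde\Delta_1\circ P^{(h)}-(1+g_2)\,\widetilde\Delta_1\circ P^{(g)}=(1+h_2)\big(\widetilde\Delta_1\circ P^{(h)}-\widetilde\Delta_1\circ P^{(g)}\big)+(h_2-g_2)\,\widetilde\Delta_1\circ P^{(g)},$$
bounding each summand as above with $|1+h_2|\le2$ and $|\widetilde\Delta_1\circ P^{(g)}|\le c_0|\xi|^{-N}$. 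Multiplying by $|\xi|^m$ and taking the supremum over $S_\delta$ yields $N_m\big(\mathcal R(\overrightarrow h)-\mathcal R(\overrightarrow g)\big)\le C'\big(\sup_{(z,\xi)\in S_\delta}|\xi|^{m-N}\big)\,N_\infty'(\overrightarrow h-\overrightarrow g)$ with $C'$ independent of $\delta$.

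To conclude, note that on $S_\delta$ one has $|z|\ge a$ and $|z\,e^{2i\pi\tau\xi}|\le\delta$, hence $\Im(\tau\xi)\ge\frac1{2\pi}\log(a/\delta)$ and $\inf_{S_\delta}|\xi|\ge\frac{1}{2\pi|\tau|}\log(a/\delta)\to+\infty$ as $\delta\to0$; since $N>m$, the factor $\sup_{S_\delta}|\xi|^{m-N}$ tends to $0$, so it suffices to take $\delta$ small enough that $C'\sup_{S_\delta}|\xi|^{m-N}\le\varepsilon$. This gives the Lipschitz (hence continuity) statement; taking $\overrightarrow g=0$ in the same computation re-establishes that $\mathcal R$ maps ${H_\delta^{\infty,\infty}}'(1)$ into $H_\delta^{m,m}(R_\delta)$ with $R_\delta\to0$. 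I expect the only genuinely fiddly point to be the domain bookkeeping of the first paragraph, namely that for $\delta$ small the plug-in points and the segments between them truly stay in a fixed compact portion of the domain of $\widetilde\Delta_1,\Delta_2$ with room for fixed-radius Cauchy discs, so that the derivative bounds are uniform in $\delta$.
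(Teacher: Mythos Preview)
Your argument is correct and is exactly the kind of ``straightforward'' computation the paper has in mind (it gives no proof beyond that word). The mechanism you describe --- mean value inequality along the segment $[P^{(g)},P^{(h)}]$, Cauchy estimates to control $D\widetilde\Delta_1$, $D\Delta_2$ by $c_0|\xi|^{-N}$, and the observation that $\inf_{S_\delta}|\xi|\to\infty$ as $\delta\to0$ so that the factor $|\xi|^{m-N}$ with $m<N$ absorbs everything --- is the standard and expected route; the paper's later Proposition~\ref{P:solfunctional} explicitly assumes $m<N$, confirming your use of that gap. The only genuinely delicate point is the one you yourself flag at the end: the domain bookkeeping ensuring that the plug-in points $(z(1+h_2),\xi+h_1)$ and the segments between them stay inside a region where $\widetilde\Delta_1,\Delta_2$ and their Cauchy discs make sense. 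In particular one should note that $|1+h_2|$ could a priori vanish when $\|h_2\|_\infty=1$; this is harmless in practice (one may work with a ball of radius $<1$, or observe that the eventual fixed point has $\|h_2\|_\infty$ small), and the paper glosses over it just as you do.
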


Let $\varepsilon >0$ such that $\varepsilon C<1$.
Then, the composition ${\mathcal L}\circ \mathcal R$ induces a (non linear) contracting map  of the complete metric space 
${H_{\delta}^{\infty,\infty} }^{'}(1)$. The unique fixed point is a  solution to the functional equations (\ref{E:lin1an}), (\ref{E:lin2an}). This provides a solution of the original functional equations (\ref{E:lin1}) and (\ref{E:lin2}), taking into account the renormalization indicated in Remark \ref{R:renorm}. By uniqueness, the solution $\overrightarrow h_\delta$ attached to $\delta$ induces by restriction the solution attached to $\delta'$ for $\delta'\leq \delta$. 
  
  One can complete this picture by taking into account all the properties required in the statement of Theorem \ref{TH:lin}. This leads to the following list of properties of the solution exhibited above as a fixed point of a non linear operator.  We reintroduce the susbcript $(a,b)$ (with obvious notations) in order to recall that the choice of $\delta$ depends on a fixed arbitrary annulus in the $z$ variable:

  \begin{prop}\label{P:solfunctional}
 Notations as above.
 Let $\widetilde {\Delta}_1,\Delta_2=O({\xi}^{-N})$ two germs of holomorphic functions in the neighborhood of $\tilde{C}\subset {\C}^2$ with $N\geq 4$ (as defined from the conjugation equation introduced in Section \ref{SS:overview}). Let $m<N$. Let $0<a<b< +\infty$ such that $a<|q|b$. Then there exists $\delta_{(a,b)}>0$ such that for every $0<\delta\leq \delta_{(a,b)}$, the system of equations  (\ref{E:lin2an}), (\ref{E:lin1an})  admits a unique solution $(h_{\delta,a,b},{\tilde {g}}_{\delta,a,b})\in {H_{\delta,a,b}^{\infty,\infty} }^{'}(1)\times{H_{\delta,a,b}^{\infty,\infty} }^{'}(1)$. 
 Moreover,
 \begin{itemize}
 \item $(h_{\delta',a,b},{\tilde {g}}_{\delta',a,b})$ is the restriction of $(h_{\delta,a,b},{\tilde {g}}_{\delta,a,b})$ if $0<\delta'\leq \delta\leq\delta_{(a,b)}$.
 \item $\lim\limits_{\delta\to 0} N_\infty'(h_{\delta,a,b},{\tilde {g}}_{\delta,a,b})=0$.
 \item there exists a positive number $D=D(\theta)$ depending only $\theta\in]0, \frac{\pi}{2}]$ such that for every 
 $\delta\leq\delta_{(a,b)}$ and every $(z,\xi)\in S_{\delta,a,b}'\cap \{\theta-\varpi\leq\arg{\xi}\leq \pi-\theta-\varpi\}$, 
 one has
$$|h_{\delta,a,b}(z,\xi)|  \leq\frac{D}{|\xi|^{m-2}}\  \ \ \mbox{and}\ \ \ |{\tilde {g}}_{\delta,a,b}(z,\xi)| \leq\frac{D}{|\xi|^{m-2}}.$$
  \end{itemize}
\end{prop}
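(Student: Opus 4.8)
The strategy is to realize the solution of the nonlinear system (\ref{E:lin2an}), (\ref{E:lin1an}) as the fixed point of the composition $\mathcal L\circ\mathcal R$ of the two maps already constructed above, and then to read off all the quantitative properties from the corresponding properties of $\mathcal L$ (Theorem \ref{TH:lin}) and of $\mathcal R$ (the preceding two lemmas). First I would fix $a<b$ with $a<|q|b$ and an exponent $m$ with $3\le m<N$. By Theorem \ref{TH:lin} applied with this $m$, there is a constant $C>0$ and a threshold $\delta_{(a,b)}>0$ such that $\mathcal L\colon (H_{\delta}^{m,m},N_m)\to({H_{\delta}^{\infty,\infty}}',N_\infty')$ is a well-defined bounded linear map of norm $\le C$ for every $\delta\le\delta_{(a,b)}$. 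Next, choosing $\varepsilon>0$ with $\varepsilon C<1$, the Lipschitz lemma for $\mathcal R$ provides (after possibly shrinking $\delta_{(a,b)}$) that $\mathcal R$ restricted to the unit ball ${H_{\delta}^{\infty,\infty}}'(1)$ is $\varepsilon$-Lipschitz with values in $H_{\delta}^{m,m}(R_\delta)$, where $R_\delta\to0$ as $\delta\to0$; in particular, shrinking $\delta_{(a,b)}$ once more so that $CR_\delta\le1$, the composition $\mathcal L\circ\mathcal R$ maps ${H_{\delta}^{\infty,\infty}}'(1)$ into itself and is $\varepsilon C<1$-contracting there. Since ${H_{\delta}^{\infty,\infty}}'(1)$ is a closed subset of the Banach space ${H_{\delta}^{\infty,\infty}}'$, hence a complete metric space, the Banach fixed point theorem yields a unique fixed point $\overrightarrow h_{\delta,a,b}=(h_{\delta,a,b},\tilde g_{\delta,a,b})$. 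By construction this fixed point satisfies $\overrightarrow h=\mathcal L(\mathcal R(\overrightarrow h))$, which unwinding the definitions of $\mathcal L$ and $\mathcal R$ is exactly the system (\ref{E:lin2an}), (\ref{E:lin1an}); after the renormalization of Remark \ref{R:renorm} this is the pair of original equations (\ref{E:lin1}), (\ref{E:lin2}) for $F\circ\Psi_1=\Psi_1\circ F_{1,0,0}$.

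For the three bulleted properties: the compatibility $(h_{\delta',a,b},\tilde g_{\delta',a,b})=(h_{\delta,a,b},\tilde g_{\delta,a,b})|_{S_{\delta',a,b}'}$ for $\delta'\le\delta$ follows from uniqueness, since $S_{\delta'}'\subset S_{\delta}'$ and the restriction of a solution on the larger domain is a solution of norm $\le1$ on the smaller one, hence equals the unique fixed point there. The estimate $\lim_{\delta\to0}N_\infty'(h_{\delta,a,b},\tilde g_{\delta,a,b})=0$ comes from the chain $N_\infty'(\overrightarrow h_\delta)=N_\infty'(\mathcal L(\mathcal R(\overrightarrow h_\delta)))\le C\,N_m(\mathcal R(\overrightarrow h_\delta))\le C R_\delta$, together with $R_\delta\to0$: indeed $\mathcal R(\overrightarrow h_\delta)$ lies in the ball of radius $R_\delta$ because $\Delta_i=O(\xi^{-N})$ forces the $\|\cdot\|_m$-norm of $\mathcal R$'s image to be dominated by the (shrinking) sup-norm of the $\Delta_i$ on $S_{\delta}$. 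Finally, the sectorial decay $|h_{\delta,a,b}(z,\xi)|\le D(\theta)/|\xi|^{m-2}$ (and the same for $\tilde g_{\delta,a,b}$) on $S_{\delta,a,b}'\cap\{\theta-\varpi\le\arg\xi\le\pi-\theta-\varpi\}$ is inherited directly from the last assertion of Theorem \ref{TH:lin}: apply that bound to $\mathcal L$ evaluated at $\overrightarrow D=\mathcal R(\overrightarrow h_{\delta,a,b})\in H_{\delta}^{m,m}$, using $\|\overrightarrow D\|_m\le R_\delta\le$ const.

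The genuinely hard input is Theorem \ref{TH:lin}, whose proof is deferred; granting it, the present proposition is essentially a bookkeeping exercise gluing the linear solver and the nonlinear remainder via a contraction argument. The only point that requires a little care is the ordering of the three successive shrinkings of $\delta_{(a,b)}$ — first to make $\mathcal L$ bounded, then to make $\mathcal R$ Lipschitz with small constant, then to get the composition to preserve the unit ball — and checking that each of these is achievable uniformly once $a,b,m$ are fixed, which is exactly what Theorem \ref{TH:lin} and the two preceding lemmas guarantee. It is also worth noting explicitly that asymptotic expansion of $(h_{\delta,a,b},\tilde g_{\delta,a,b})$ along $\tilde C$ coincides with $(\hat g,\hat h)$: this follows because the formal series solves the same equations formally and, by the flatness built into the kernel of the asymptotic-expansion morphism $\mathcal A\to\mathcal O(\C_z^*)[[\xi^{-1}]]$ together with the uniqueness in Theorem \ref{TH:lin}(1)-(3), any two solutions with controlled growth differ by a flat solution of the linearized equation, which must vanish; hence $\Psi_1$ is indeed a section of $\mathscr G^1(I_1)$ with the prescribed asymptotics.
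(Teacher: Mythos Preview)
Your proof is correct and follows the same approach as the paper: realize the solution as a fixed point of $\mathcal L\circ\mathcal R$ on the closed unit ball of ${H_\delta^{\infty,\infty}}'$, with the three bulleted items following from uniqueness, the bound $N_\infty'(\overrightarrow h_\delta)\le C R_\delta\to 0$, and the last estimate of Theorem~\ref{TH:lin} respectively.

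One caveat concerns your final remark on the asymptotic expansion. That statement is \emph{not} part of Proposition~\ref{P:solfunctional}; the paper treats it separately in Subsection~\ref{SS:asymptoticexpansion}, and by a different route than you sketch. Your argument (``two solutions with controlled growth differ by a flat solution of the linearized equation'') is not valid: the difference of two solutions of the \emph{nonlinear} functional equation does not satisfy the linearized equation, so the uniqueness clause of Theorem~\ref{TH:lin} cannot be invoked directly. The paper instead bootstraps: for each large $k$ it precomposes with the $k$-jet $J^k\hat\Psi$, applies the fixed-point argument to the new (more strongly decaying) perturbation $\Delta_i^k=O(\xi^{-p})$ with $p\gg N$, and then uses uniqueness of the \emph{nonlinear} fixed point in the ball to identify $\Psi$ with $J^k\hat\Psi\circ\Psi^k$, where $\Psi^k$ is flat to high order. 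This is how the asymptotic expansion $\hat\Psi$ is obtained.
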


  \subsection{Asymptotic expansion}\label{SS:asymptoticexpansion}
 Notations as above.  We start by fixing $N\geq 4$ and $a,b$ as before. Let us denote by $(h_{a,b}, ,{\tilde {g}}_{a,b})$ the germ of sectorial solution induced by  $(h_{\delta,a,b},{\tilde {g}}_{\delta,a,b})$ by taking $\delta\to 0$. Let $a',b'$ be positive real numbers such that $a'\leq a<b\leq b'$. By Proposition \ref{P:solfunctional}, note that the unique solution of (\ref{E:lin2an}), (\ref{E:lin1an}) lying in ${H_{\delta,a',b'}^{\infty,\infty} }^{'}(1)\times{H_{\delta,a',b'}^{\infty,\infty} }^{'}(1)$
 induces by restriction the unique solution of the same functional equation in ${H_{\delta,a,b}^{\infty,\infty} }^{'}(1)\times{H_{\delta,a,b}^{\infty,\infty} }^{'}(1)$.   In particular,$(h_{\delta,a,b},{\tilde {g}}_{\delta,a,b})$ is the restriction of $(h_{\delta,a',b'},{\tilde {g}})$. Then, if one takes projective limit with respect to $a\to 0$, $b\to +\infty$ and exploits the last asymptotic estimate in the Proposition \ref{P:solfunctional}, one get a solution $(h,\tilde{g})$ well defined as a flat element of ${\mathcal A}^{m-3}(I_1)\times {\mathcal A}^{m-3}(I_1)$. 
  
  Now, consider  an integer $p>>N$ arbitrarily large. Let  $k$ be a positive integer and consider the truncation 
  (or $k$-jet) $J^k\hat{\Psi}$ of $\hat{\Psi}$ at order $k$: 
  $$J^k\hat{\Psi} (z,\xi)=\left(z+\sum_{n=1}^k b_n \xi^{-n}\ ,\ \xi+\sum_{n=1}^k a_n \xi^{-n}\right).$$  
If $k$ is large enough, then one has
 $$
 {(J^k{\hat{\Psi}})}^{-1}\circ F\circ J^k{\hat{\Psi}}\ (\xi,z)= \big(q z + \Delta_1^k(z,\xi),\xi - 1+ \Delta_2^k(z,\xi)\big) 
 $$
where $\Delta_i^k(z,\xi)=O(\frac{1}{\xi^p})$. One can apply Proposition \ref{P:solfunctional} to get existence and uniqueness of $h_\delta^k, \tilde{g}_\delta^k\in  {H_{\delta,a,b}^{\infty,\infty} }^{'}(1)$  ($\delta$ small enough) such that 
 \[F\circ J^k{\hat{\Psi}} \circ \Psi^k=J^k{\hat{\Psi}}\circ \Psi^k\circ F_0\]  
 with $\Psi^k=\mbox{Id} +( g_\delta^k,h_\delta^k)$, where $g_\delta^k (z,\xi)=z{\tilde g}_\delta^k(z,\xi)$. 
 As before, these solutions are in fact induced by  a flat element of ${\mathcal A}^{p-4}(I_1)\times {\mathcal A}^{p-4}(I_1)$.
  Set $\Psi=\mbox{Id} +( g,h)$ with $g=z\tilde{g}$ and recall that $F\circ \Psi=\Psi\circ F_{1,0,0}$.
  Invoking again uniqueness and restrictions considerations, one  obtains that $\Psi=J^k{\hat{\Psi}} \circ \Psi^k$. 
  Thus, 
  $$\Psi= \left(z+\sum_{n=1}^{\mbox{Inf}(\alpha, p-4)}a_n \xi^{-n}\ ,\ \xi+\sum_{n=1}^{\mbox{Inf}(\alpha, p-4)}b_n \xi^{-n}\right) + R_k$$ 
  where $R_k\in \mathcal{A}^{p-4}(I_1)$ is flat. As $k$ (hence $p$) can be chosen arbitrarily large, 
  we eventually get that $\Psi\in  {\mathscr G}^1(I_1)$ and admits $\hat{\Psi}$ as asymptotic expansion. 
  We have thus obtain the sought normalization $\Psi_1:=\Psi$ on the germ of sector  of opening $I_1$.

\subsection{ Solving the linearized equation}\label{SS:sollin}

The goal of this section is to prove Theorem \ref{TH:lin}.  

Consider the foliation defined by the level sets $ \{Y=c \}$ of $Y=z e^{2i\pi\tau \xi}$.  
For every complex number $c$, $0<|c|< \delta$, consider 
 $$S_{a,b,c}:=\{J=c\}\cap S_{a,b,\delta} =\{(z,\xi)=(c e^{-2i\pi\tau\xi},\xi)\ ;\ \xi\in \Sigma_{a,b,c}\}$$ 
 where  
 $$ \Sigma_{a,b,c}=\{\xi\in\C:(\log{{|c|}}-\log{b})\leq\Re{(2i\pi\tau\xi)}\leq (\log{{|c|}}-\log{a})\}.$$
 Note that $\bigcup_{0<|c|<\delta} S_{a,b,c}=S_{a,b,\delta}$ and the linearized equation has a simple form 
 restricted to these slices. 
 To simplify the presentation, we introduce the {\bf notation}
$$\zeta=2i\pi\tau \xi\ \ \ \text{and}\ \ \ \lambda:=-2i\pi\tau\ \ \ \text{with}\ \ \ \lambda=\lambda_1+i\lambda_2,\ \ \ \lambda_1,\lambda_2\in\mathbb R$$
and note that the real part $\lambda_1>0$. In particular, the linearized/homological equation 
\[h_\delta\circ F_{1,0,0}-h_\delta=\Delta_\delta\] 
can be then rewritten as 
\begin{equation}\label{E:HE1}
\varphi_c(\zeta+\lambda)-\varphi_c(\zeta)=\Delta_c(\zeta) 
\end{equation}
where $\varphi_c(\zeta)=h(c e^{-\zeta},-\frac{\zeta}{\lambda})$, 
and $\Delta_c(\zeta)=\Delta_\delta(c e^{-\zeta},-\frac{\zeta}{\lambda})$.
We are then led to solve the family of difference equations (\ref{E:HE1}) with respect to the parameter $c$ in the vertical strip 
$$\Sigma_{a,b,c}=\{\zeta\in\C:\log{{|c|}}-\log{b}\leq\Re{(\zeta)}\leq \log{{|c|}}-\log{a}\}$$ 
where we impose $\varphi_c$ to be holomorphic,  defined on the larger strip 
$$\Sigma_{|q|a,b,c}=\{\zeta\in\C:\log{{|c|}}-\log{b}\leq\Re{(\zeta)}\leq \log{{|c|}}-\log{a}+\lambda_1\}$$ 
and to depend analytically on the parameter $c$ in order to recover a holomorphic solution to (1) in Theorem \ref{TH:lin}.

\begin{figure}[htbp]
\begin{center}
\includegraphics[scale=0.5]{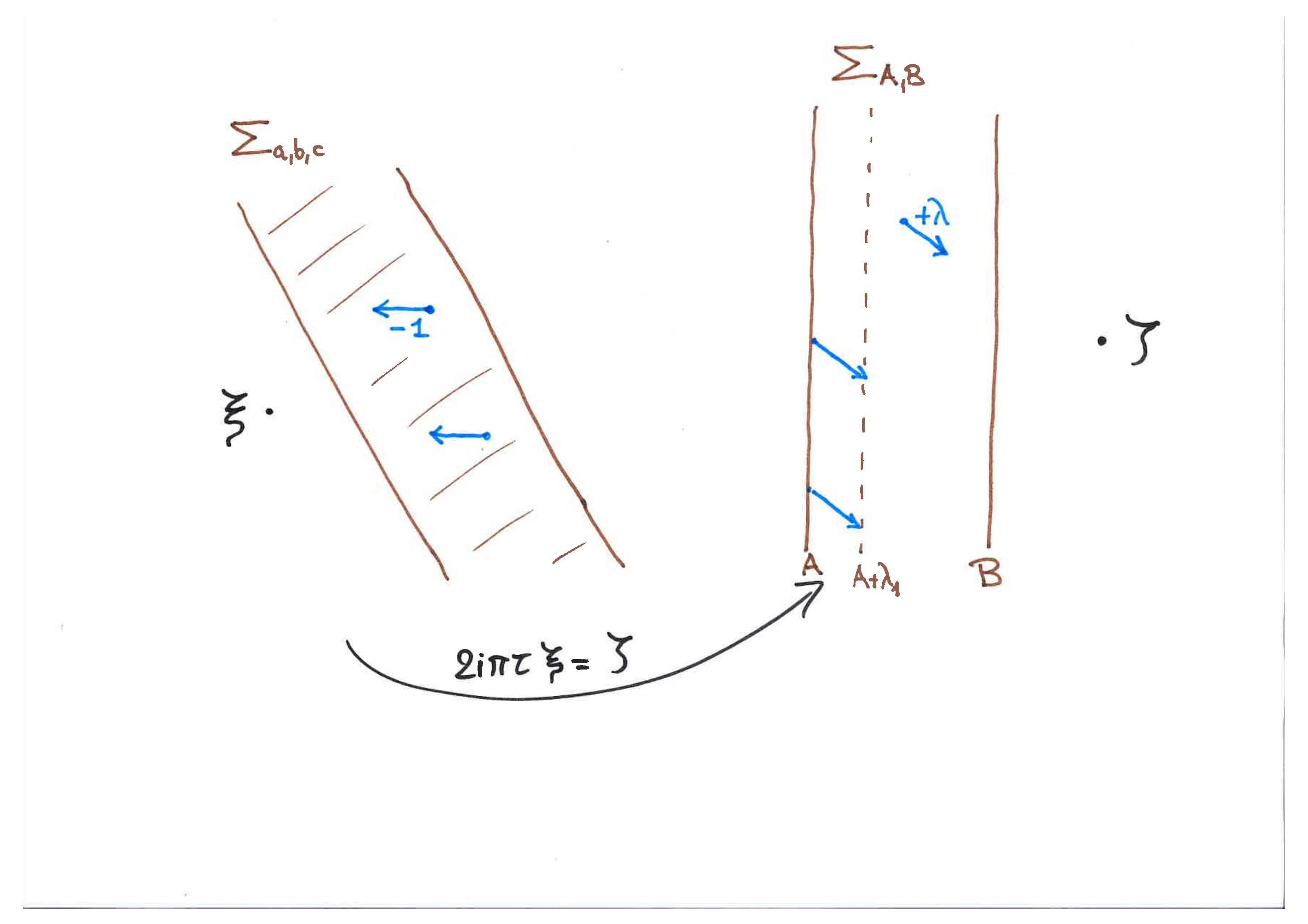}
\caption{{\bf Strips $\Sigma$ from $\xi$-plane to $\zeta$-plane}}
\label{pic:xitozeta}
\end{center}
\end{figure}

 \subsubsection{Resolution of a difference equation}\label{SS:resolHE}
 We now proceed to the contruction of $\varphi_c$. It is essentially a consequence of the following general result (with additional estimates).
 
 \begin{thm}\label{TH:soldiffeq}Let $A<A+\lambda_1<B\le1$, and 
let $\Delta$ be holomorphic on the strip 
$$\Sigma_{A,B}=\{\zeta\ ;\ A\leq\Re\zeta \leq B\}.$$
Suppose moreover that, for some $m\ge3$, we have:
$${\left\|\Delta\right\|}_m:={\mbox{sup}}_{\zeta\in S_{A,B}}|\Delta(\zeta)|{{|\zeta|}^m}<\infty.$$ 
Then, there exists a bounded holomorphic function $\varphi$ on $\Sigma_{A,B+\lambda_1}$ which solves
\begin{equation} \label{E:DE1}
 \varphi (\zeta+\lambda)-\varphi (\zeta)=\Delta (\zeta).
\end{equation}
Moreover $\varphi$ is unique modulo an additive constant.
\end{thm}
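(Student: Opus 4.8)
The plan is to solve the difference equation (\ref{E:DE1}) by a Borel--Laplace type summation adapted to the strip geometry, or equivalently by an explicit integral kernel built from the Fourier transform. First I would reduce to the normalized situation: after an affine change of variable in $\zeta$ we may assume $\lambda$ is a fixed vector with $\lambda_1>0$, and since $\Delta$ decays like $|\zeta|^{-m}$ with $m\ge 3$ along the strip, it is in particular integrable on every vertical line contained in $\Sigma_{A,B}$. The idea is to look for $\varphi$ as a series $\varphi(\zeta)=-\sum_{n\ge 0}\Delta(\zeta+n\lambda)$ when $\Re(\lambda)$ pushes $\zeta+n\lambda$ out of the strip to one side, and $\varphi(\zeta)=\sum_{n\ge 1}\Delta(\zeta-n\lambda)$ on the other side; but since the strip is bounded in the $\Re$-direction these naive series only make sense near the appropriate edge, so instead I would use a single contour-integral formula.

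Concretely, I would use the formula
\begin{equation}\label{eq:KernelFormula}
\varphi(\zeta)=\frac{1}{e^{2\pi i w/\lambda\cdot\lambda}-1}\cdots
\end{equation}
— more precisely, writing $w=\zeta$ and choosing a vertical line $L_{c_0}=\{\Re=c_0\}$ with $A<c_0<\Re(\zeta)<c_0+\lambda_1<B+\lambda_1$, set
\begin{equation}\label{eq:phiDef}
\varphi(\zeta)=\frac{1}{2\pi i}\int_{L_{c_0}}\frac{\Delta(s)}{1-e^{(\zeta-s)/\lambda\cdot 2\pi i}}\,\frac{ds}{\lambda}
\end{equation}
(the kernel being the one whose residues reproduce the shift by $\lambda$). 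The convergence of this integral is guaranteed by the decay $|\Delta(s)|\le \|\Delta\|_m|s|^{-m}$ with $m\ge 3>1$ and by the exponential decay of the kernel $1/(1-e^{2\pi i(\zeta-s)/\lambda})$ as $\Im(s)\to\pm\infty$ away from the poles. One then checks the functional equation (\ref{E:DE1}) by computing $\varphi(\zeta+\lambda)-\varphi(\zeta)$ via a contour shift: the difference of the two integrals picks up exactly the residue of the kernel at $s=\zeta$, which is $\Delta(\zeta)$ (up to the normalization of $\lambda$), by the residue theorem. The boundedness of $\varphi$ on $\Sigma_{A,B+\lambda_1}$ follows by estimating (\ref{eq:phiDef}): split the integral into the part near $\Re(s)=c_0$, $|\Im s|\le R$ (bounded because $\Delta$ is bounded there and the kernel stays away from its poles once $\Re(\zeta)-c_0$ is bounded below) and the two tails $|\Im s|>R$ where the kernel decays exponentially while $\Delta$ decays polynomially.

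For uniqueness modulo constants: if $\varphi_1,\varphi_2$ both solve (\ref{E:DE1}) and are bounded on $\Sigma_{A,B+\lambda_1}$, then $g=\varphi_1-\varphi_2$ satisfies $g(\zeta+\lambda)=g(\zeta)$, hence extends to a bounded $\lambda$-periodic entire function (the translates by $n\lambda$ tile $\C$ and $g$ is bounded on one period), so by Liouville it is constant. The main obstacle I expect is not the existence formula itself but making the estimates uniform enough for the intended application — in particular deriving the sharper decay statements of Theorem \ref{TH:lin} (the $1/\sqrt{\Im\xi}$ and the $1/|\xi|^{m-2}$ bounds in the interior sub-sectors). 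Those require more care: one must choose the contour $L_{c_0}$ depending on $\zeta$ (e.g. so as to maximize the distance from $\zeta$ to the line, or to exploit that far inside the strip the kernel can be expanded as a geometric series $\sum_n e^{2\pi i n(\zeta-s)/\lambda}$, turning $\varphi$ into $-\sum_{n\ge 0}\Delta(\zeta+n\lambda)$ or $\sum_{n\ge1}\Delta(\zeta-n\lambda)$ and then estimating these sums term by term against $\sum_n |\zeta+n\lambda|^{-m}$, which behaves like an integral $\int |\zeta+t\lambda|^{-m}\,dt$ and yields precisely the $|\zeta|^{-(m-1)}$ or $|\zeta|^{-(m-2)}$ type gains after translating back to the $\xi$ variable via $\zeta=2\pi i\tau\xi$). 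I would carry out the bare existence-and-uniqueness part here (Theorem \ref{TH:soldiffeq}) and defer the refined quantitative estimates to the reduction back to Theorem \ref{TH:lin}, where the annulus-in-$z$ and the parameter $c$ are reinstated and the dependence on $c$ is seen to be analytic because the kernel in (\ref{eq:phiDef}) is holomorphic in $\zeta$ and $\Delta_c(s)$ depends holomorphically on $c$.
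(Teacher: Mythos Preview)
Your uniqueness argument is correct and identical to the paper's. For existence you take a genuinely different route: a single contour integral with the periodic kernel $1/(1-e^{2\pi i(\zeta-s)/\lambda})$ (N\"orlund's classical formula), whereas the paper splits $\Delta=F_0^+-F_0^-$ via Cauchy integrals along the two boundary lines $L_\pm$ of the strip and writes the solution as the series $\varphi=\sum_{n\ge 0}F_0^-(\cdot+n\lambda)+\sum_{n\ge 1}F_0^+(\cdot-n\lambda)$.

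Two points about your formula deserve attention. First, the kernel does \emph{not} decay exponentially on both ends of the vertical line: as $\Im s\to -\infty$ (with $\lambda_1>0$) it tends to $1$, not $0$. The integral still converges absolutely thanks to the polynomial decay of $\Delta$ with $m\ge 3$, so this is a misstatement rather than a gap. Second, with a \emph{fixed} contour $L_{c_0}$ your integral is $\lambda$-periodic in $\zeta$ (the kernel is), so the functional equation only emerges once the contour moves with $\zeta$ as you indicate; the residue computation then works (up to a harmless normalizing constant), but you must still check that $\varphi$ is \emph{uniformly} bounded up to the edges of $\Sigma_{A,B+\lambda_1}$, where the admissible $c_0$ is forced close to the pole $s=\zeta$ and the naive sup bound on the kernel blows up. This is fixable --- bound on a slightly smaller strip and propagate to the edges via $\varphi(\zeta+\lambda)=\varphi(\zeta)+\Delta(\zeta)$ --- but it needs to be said.

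The paper's splitting is chosen precisely so that the refined estimates you defer --- the bound $|\varphi|\le C\|\Delta\|_m\int_{L_+}|t|^{-m}|dt|$ (Lemma~\ref{L:majDeltam}), the $1/\sqrt{|\Im(\zeta/\lambda)|}$ decay of the normalized solution $\psi=\varphi+\frac{1}{2\lambda}\int_{L_+}\Delta$, and the $|\zeta|^{-(m-2)}$ decay on interior subsectors (Lemma~\ref{L:imto0}) --- fall out term by term from the series via Cauchy--Schwarz on $\sum_n|t-\zeta\pm n\lambda|^{-2}$. Extracting the same information from your closed-form integral would amount to expanding the kernel back into the partial-fraction series $\sum_n(t-\zeta+n\lambda)^{-1}$, i.e.\ reproducing the paper's decomposition. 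So your approach is slicker for bare existence, but the paper's is tailored to the quantitative control that Theorem~\ref{TH:lin} and the fixed-point argument of Section~\ref{SS:solvefunctional} require.
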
 

\begin{proof}
First notice that, if $\varphi_1, \varphi_2$ are two \textit{bounded} holomorphic functions solving (\ref{E:DE1}), 
the difference $\varphi_1 -\varphi_2$ extends as a bounded $\lambda$-periodic entire function, hence constant. 
Uniqueness part of Theorem \ref{TH:soldiffeq} is therefore obvious. 

Concerning the existence part, let us first observe, by Cauchy formula, that
\begin{equation}\label{eq:CauchyF0}
\Delta(\zeta)=\underbrace{\frac{1}{2i\pi}\int_{L_+}\frac{\Delta (t)}{t-\zeta} dt}_{F_0^+(\zeta)} 
-\underbrace{\frac{1}{2i\pi}\int_{L_-} \frac{\Delta (t)}{t-\zeta} dt}_{F_0^-(\zeta)},\ \ \ A<\Re\zeta<B
\end{equation}
where $L_-=\{\Re{t}=A\}$ and $L_+=\{\Re{t}=B\}$ are both oriented from bottom to top. 
Since ${\left\|\Delta\right\|}_m<\infty$, we see that the two integrals 
are well defined, and holomorphic in $\zeta$.
Observe that $F_0^-$ and $F_0^+$ are respectively defined on the half-planes $A<\Re\zeta$ and $\Re\zeta<B$,
and can be extended to the boundary by continuity, likely as $\Delta$, by using equality (\ref{eq:CauchyF0}). 
Then we define, for $n\ge0$
\begin{equation} \label{E:defFnpm}\left\{\begin{matrix}
F_n^-(\zeta):=F_0^-(\zeta+n\lambda) & \text{holomorphic on}\ A-n\lambda_1\le\Re\zeta<\infty\\
F_n^+(\zeta):=F_0^+(\zeta-n\lambda) & \text{holomorphic on}\ -\infty<\Re\zeta\le B+n\lambda_1
\end{matrix}\right.\end{equation}
The solution $\varphi$ to (\ref{E:DE1}) is therefore given by the following series 
\begin{equation} \label{E:solseries}
\varphi(\zeta):=\sum_{n\ge1}F_n^+(\zeta)+\sum_{n\ge0}F_n^-(\zeta)\ 
=\ F_0^-(\zeta)+\sum_{n\ge1}\underbrace{F_n^+(\zeta)+F_n^-(\zeta)}_{F_n(\zeta)}
\end{equation}
that will be proved to converge uniformly on the large strip $\Sigma_{A,B+\lambda_1}$ in Lemma \ref{L:majDeltam}.
We can already check that it is indeed a solution:
$$\varphi(\zeta+\lambda)=\sum_{n\ge0}F_n^+(\zeta)+\sum_{n\ge1}F_n^-(\zeta)
=\varphi(\zeta)+\underbrace{F_0^+(\zeta)-F_0^-(\zeta)}_{\Delta(\zeta)}.$$
Therefore, Theorem \ref{TH:soldiffeq} is an immediate consequence of the following Lemma which actually provides further informations.
\end{proof}
 
\begin{lemma}\label{L:majDeltam}
For $\Delta$ like in Theorem \ref{TH:soldiffeq}, the series (\ref{E:solseries})
is well defined, holomorphic on $\Sigma_{A,B+\lambda_1}$ providing a solution of 
(\ref{E:DE1}).
Moreover, there exists a positive number $C=C(B-A)$ only depending on $B-A$ \footnote{In order to give an unambiguous statement, $\tau,m$ are fixed but $A,B,\Delta$ are allowed to vary provided they satisfy assumptions of Theorem \ref{TH:soldiffeq}.} such that
\begin{equation}\label{E:majDeltam}  
\sup_{\zeta\in \Sigma_{A,B+\lambda_1}}|\varphi (\zeta)|\leq C {\left\|\Delta\right\|}_m\int_{L^+}\frac{|dt|}{{|t|}^m}
\end{equation}
\end{lemma}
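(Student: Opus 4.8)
The plan is to prove the two assertions of the Lemma — uniform convergence of the series (\ref{E:solseries}) on the closed strip $\Sigma_{A,B+\lambda_1}$, and the estimate (\ref{E:majDeltam}) — by direct estimation of the Cauchy integrals $F_n^\pm$; the solution property $\varphi(\zeta+\lambda)-\varphi(\zeta)=\Delta(\zeta)$ is the termwise computation already recorded in the proof of Theorem~\ref{TH:soldiffeq}, and becomes legitimate as soon as convergence is known. Set $I:=\int_{L^+}\frac{|dt|}{|t|^m}=\int_{\mathbb R}(B^2+s^2)^{-m/2}\,ds<\infty$ (recall $m\ge 3$). First I would record the pointwise bounds for $F_0^\pm$: from $|\Delta(t)|\le\|\Delta\|_m|t|^{-m}$ and $|t-w|\ge B-\Re w$ on $L^+$ one gets $|F_0^+(w)|\le\frac{\|\Delta\|_m}{2\pi(B-\Re w)}I$ for $\Re w<B$, and symmetrically $|F_0^-(w)|\le\frac{\|\Delta\|_m}{2\pi(\Re w-A)}\int_{L^-}\frac{|dt|}{|t|^m}$ for $\Re w>A$, the latter integral being comparable to $I$ up to a constant depending on $B-A$ in the relevant range of parameters. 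On the two boundary lines themselves — reached only by the single term $F_1^+$ (when $\Re\zeta=B+\lambda_1$) and the single term $F_0^-$ (when $\Re\zeta=A$) — one uses Plemelj's formula $F_0^+(w_0)=\tfrac12\Delta(w_0)+\tfrac{1}{2i\pi}\,\mathrm{p.v.}\int_{L^+}\frac{\Delta(t)}{t-w_0}\,dt$, estimating the principal value by writing it as $\int_{L^+}\frac{\Delta(t)-\Delta(w_0)\mathbf 1_{|t-w_0|<1}}{t-w_0}\,dt$ (the p.v.\ of the subtracted piece vanishing by oddness); a Cauchy estimate for $\Delta'$ then yields a bound $\le C\,\|\Delta\|_m\,I$, and likewise for $F_0^-$ on $L^-$.

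For $\zeta\in\Sigma_{A,B+\lambda_1}$ and $n\ge 2$ one has $\Re(\zeta-n\lambda)\le B-(n-1)\lambda_1\le B-\lambda_1$, hence $|F_n^+(\zeta)|\le\frac{\|\Delta\|_m I}{2\pi(n-1)\lambda_1}$, and similarly $|F_n^-(\zeta)|\lesssim\frac{\|\Delta\|_m I}{n\lambda_1}$ for $n\ge1$. These are individually finite, but — and this is the crux — the series $\sum_n F_n^+$ has only the non‑summable $1/n$ decay; convergence is recovered only from the \emph{grouping} $F_n=F_n^++F_n^-$. I choose $n_0\sim(B-A)/\lambda_1$ so that for every $n>n_0$ and every $\zeta$ in the strip the poles $\zeta\pm n\lambda$ of the integrands lie strictly \emph{outside} $\Sigma_{A,B}$; Cauchy's theorem then allows me to move the contour of $F_n^-(\zeta)=F_0^-(\zeta+n\lambda)$ from $L^-$ to $L^+$, so that
\[
F_n(\zeta)=\frac{1}{2i\pi}\int_{L^+}\Delta(t)\Bigl(\frac{1}{t-\zeta+n\lambda}+\frac{1}{t-\zeta-n\lambda}\Bigr)dt=\frac{1}{i\pi}\int_{L^+}\frac{(t-\zeta)\,\Delta(t)}{(t-\zeta)^2-n^2\lambda^2}\,dt .
\]
On $L^+$ one has $|t-\zeta+n\lambda|\ge(n-1)\lambda_1$ and $|t-\zeta-n\lambda|\ge n\lambda_1-(B-A)$, so the denominator is $\gtrsim n^2\lambda_1^2$ for $n>n_0$, which upgrades the decay from $1/n$ to $1/n^2$.

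It remains to run the summation. Using $|t-\zeta|\le(B-A)+|s|+|\Im\zeta|$ in the last integral gives $|F_n(\zeta)|\lesssim\frac{\|\Delta\|_m}{n^2\lambda_1^2}\bigl((1+|\Im\zeta|)I+I'\bigr)$ with $I'=\int|s|(B^2+s^2)^{-m/2}ds$, finite for $m\ge 3$ and $\le C(B-A)\,I$. Summing over $n>\max\{n_0,|\Im\zeta|/2|\lambda|\}$ contributes $\sum 1/n^2\lesssim\min\{1/n_0,\,|\lambda|/|\Im\zeta|\}$, which absorbs the dangerous $|\Im\zeta|$; the complementary range $n_0<n\lesssim|\Im\zeta|/|\lambda|$ is handled by noticing that there $|\Im(\zeta\pm n\lambda)|$ is comparable to $|\Im\zeta|$, so $\Delta$ is very small on the part of $L^+$ that matters for $F_n^\pm(\zeta)$ and the individual terms decay in $|\Im\zeta|$ fast enough to be summed. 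In all cases $\sum_{n>n_0}|F_n(\zeta)|\le C(B-A)\|\Delta\|_m\,I$ uniformly on the strip; adding the finitely many terms with $n\le n_0$ (each $\le C(B-A)\|\Delta\|_m I$ by the first paragraph) gives uniform convergence and the bound (\ref{E:majDeltam}). The uniform limit of holomorphic functions is holomorphic on the interior of the strip and continuous up to the boundary, and $\varphi$ solves (\ref{E:DE1}) by the computation in the proof of Theorem~\ref{TH:soldiffeq}.

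The main obstacle I anticipate is precisely this borderline divergence of $\sum F_n^+$: one must resist estimating the series term by term and instead exploit the contour‑merging identity above, which turns $1/n$ into $1/n^2$. The secondary, more bookkeeping‑type, difficulty is that $\Sigma_{A,B+\lambda_1}$ is unbounded in the imaginary direction, so every estimate must be shown uniform in $\Im\zeta$; this is what forces the case split according to the size of $|\Im\zeta|$ against $n|\lambda|$, and it is the place where the decay of $\|\Delta\|_m|t|^{-m}$ at infinity along $L^+$ is genuinely used — beyond merely making $I$ finite.
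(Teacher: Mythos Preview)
Your core strategy agrees with the paper's: both hinge on the contour-merging identity that rewrites $F_n(\zeta)$, for $n>n_0\sim(B-A)/\lambda_1$, as the single integral
\[
F_n(\zeta)=\frac{1}{2i\pi}\int_{L^+}\frac{2(t-\zeta)\,\Delta(t)}{(t-\zeta-n\lambda)(t-\zeta+n\lambda)}\,dt,
\]
turning the non-summable $1/n$ into a summable $1/n^2$. Where you and the paper part ways is in how the uniformity in $\Im\zeta$ is obtained. The paper interchanges sum and integral and bounds the kernel
\[
A(t,\zeta)=\sum_{n\ge n_0}\frac{1}{|t-\zeta-n\lambda|\,|t-\zeta+n\lambda|}
\]
by Cauchy--Schwarz; each factor $\bigl(\sum_n |t-\zeta\pm n\lambda|^{-2}\bigr)^{1/2}$ is controlled by an integral comparison yielding $\lesssim |v|^{-1/2}$ with $v=\Im(t-\zeta)$. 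Since on $L^+$ the real part of $t-\zeta$ is bounded by $B-A+\lambda_1$, one has $|t-\zeta|\lesssim 1+|v|$, and the product $|t-\zeta|\,A(t,\zeta)$ is therefore uniformly bounded. A split according to $|v|\le 1$ versus $|v|>1$ (in the integration variable $t$, not in $\zeta$!) then finishes the estimate with no reference to $|\Im\zeta|$ at all.

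Your route---bounding $|t-\zeta|\le C+|s|+|\Im\zeta|$ and then absorbing the dangerous $|\Im\zeta|$ via a dyadic split $n\gtrless|\Im\zeta|/|\lambda|$---can be made to work, but the paragraph on the ``complementary range'' $n_0<n\lesssim|\Im\zeta|/|\lambda|$ is not yet a proof. Saying ``$\Delta$ is very small on the part of $L^+$ that matters'' hides a genuine computation: one must split $\int_{L^+}$ at $|s|\sim|\Im\zeta|$, getting $|F_n^\pm(\zeta)|\lesssim\|\Delta\|_m\bigl(I/|\Im\zeta|+C/(n\lambda_1|\Im\zeta|^{m-1})\bigr)$, and only then does the sum over $\sim|\Im\zeta|/|\lambda|$ terms stay bounded. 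Once written out this works, but the paper's Cauchy--Schwarz device is both shorter and conceptually cleaner, since it handles all $\zeta$ at once without case analysis.

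A minor point: for the boundary lines you invoke Plemelj plus a Cauchy estimate on $\Delta'$. The paper does something simpler: near $\Re\zeta=A$ one writes $F_0^-(\zeta)=F_0^+(\zeta)-\Delta(\zeta)$ via the original Cauchy decomposition (\ref{eq:CauchyF0}), and $F_0^+(\zeta)$ is already well-controlled since $B-\Re\zeta\ge\lambda_1/2$ there. This avoids any principal-value machinery.
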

  
 \begin{proof}
 Set $I_m:= \int_{L_+}\frac{|dt|}{{|t|}^m}$.  We will also use repeatedly (and without mentioning it) that, 
 for $a< 0$,  $\int_{\Re{t}=a}\frac{|dt|}{{|t|}^m}=|a|^{1-m}\int_{\Re{t}=1}\frac{|dt|}{{|t|}^m}$. 
 In particular, since $A<B\le1$, we have
 $$\int_{L_-}\frac{|dt|}{{|t|}^m}<\int_{L_+}\frac{|dt|}{{|t|}^m}= I_m.$$
 If $A +\frac{\lambda_1}{2}\le \Re{u} $, one has 
 $$\left|\frac{1}{2i\pi}\int_{L_-}\frac{\Delta (t)}{t-u} dt\right|
 \leq \frac{1}{2\pi}\int_{L_-}\frac{|\Delta (t)| |t|^m}{|t-u|} \frac{|dt|}{|t|^m}
 \leq \frac{1}{2\pi}\frac{\|\Delta\|_m}{\lambda_1/2}\int_{L_-}\frac{|dt|}{|t|^m}
 \leq\frac{\left\|\Delta\right\|_m}{\pi\lambda_1}I_m. $$ 
 If $A\leq \Re{u}\leq A +\frac{\lambda_1}{2}$, Cauchy's formula yields the inequality  
 $$\left\vert\frac{1}{2i\pi}\int_{L_-}\frac{\Delta (t)}{t-u} dt\right\vert
 \leq |\Delta(u)|+\frac{{\left\|\Delta\right\|}_m I_m}{\pi\lambda_1}
 \leq {\left\|\Delta\right\|}_m+\frac{{\left\|\Delta\right\|}_m I_m}{\pi\lambda_1}
 \leq C_1{\left\|\Delta\right\|}_mI_m$$
 with
 $$C_1=\left(\frac{1}{I_m} +\frac{1}{\pi\lambda_1}\right).$$
Following the same principle, we get the inequality 
 $$\left|\frac{1}{2i\pi}\int_{L_+}\frac{\Delta (t)}{t-u} dt\right|\leq C_1{\left\|\Delta\right\|}_m I_m$$
 where $A\leq \Re{u}\leq B$.
 This eventually leads to the upper bound
 \begin{equation}\label{eq:firstupperboundFn}
 |F_n(\zeta)|\leq  2C_1{\left\|\Delta\right\|}_m I_m\ \ \ \forall n\geq 0,\ \ \ \forall\zeta\in \Sigma_{A,B +\lambda_1}.
 \end{equation}

Moreover, when $n>\frac{A-B}{\lambda_1}$, Cauchy's formula allows to write 
$$\int_{L_-} \frac{\Delta (t)}{t-\zeta-n\lambda}dt=\int_{L_+} \frac{\Delta (t)}{t-\zeta-n\lambda}dt.$$ 
Consequently, for these values of $n$ one has
\[F_n(\zeta)=\frac{1}{2i\pi}\int_{L_+} \frac{2(t-\zeta)\Delta (t)}{(t-\zeta-n\lambda)(t-\zeta +n\lambda)}dt\]
The key point will be to find a suitable upper bound for 
$$A(t,\zeta)=\sum_{n\geq n_0}\left|\frac{1}{(t-\zeta-n\lambda)(t-\zeta +n\lambda)}\right|$$ 
where $n_0>2\left(\frac{B-A}{\lambda_1}+1\right)$, $t\in L_+, \zeta\in \Sigma_{A,B+\lambda_1}$.

For $n\geq n_0$, Cauchy-Schwarz inequality gives 
\begin{equation}\label{E:schwin}
A(t,\zeta)\leq {\left(\sum_{n\geq n_0}\frac{1}{{(v-n\lambda_2)}^2 +{(\frac{n}{2}\lambda_1)}^2}\right)}^\frac{1}{2}{\left(\sum_{n\geq n_0}\frac{1}{{(v+n\lambda_2)}^2 +{(\frac{n}{2}\lambda_1)}^2}\right)}^\frac{1}{2}
\end{equation}
where $v=\Im(t-\zeta)$ and $\lambda=\lambda_1+i\lambda_2$. Here, we have used the fact that 
$$
\vert\Re(t-\zeta)\vert\le B-A+\lambda_1\le \frac{n}{2}\lambda_1\ \ \ \text{whenever}\ n\ge n_0.
$$
Therefore, 
\begin{equation}\label{E:schwinbis}
A(t,\zeta)\le\sum_{n\geq n_0}\frac{1}{{(\frac{n}{2}\lambda_1)}^2}
\le\left(\frac{2}{\lambda_1}\right)^2\sum_{n\geq n_0}\frac{1}{n^2}
\le\left(\frac{2}{\lambda_1}\right)^2\int_{n_0-1}^{+\infty}\frac{dt}{t^2}
\le\left(\frac{2}{\lambda_1}\right)^2.
\end{equation}
This, together with the fact that ${\left\|\Delta\right\|}_m<\infty$ for some $m\geq 3$, 
 proves immediately that the serie $\sum F_n$ converges uniformly on every compact of $\Sigma_{A,B+\lambda_1 }$. The function $\varphi$ is then well defined and holomorphic on $\Sigma_{A,B+\lambda_1 }$.

For $n\geq n_0$, consider the decomposition $F_n=G_n +H_n$ where 
$$G_n(\zeta)=\frac{1}{2i\pi}\int_{\{t\in L_+, |v|\leq 1\}} \frac{2(t-\zeta)\Delta (t)}{(t-\zeta-n\lambda)(t-\zeta +n\lambda)}dt$$
and
$$H_n(\zeta)=\frac{1}{2i\pi}\int_{\{t\in L_+, |v|>  1\}} \frac{2(t-\zeta)\Delta (t)}{(t-\zeta-n\lambda)(t-\zeta +n\lambda)}dt.$$ 
Consequently, when $\vert v\vert\le1$, we deduce that $\vert t-\zeta\vert\le\sqrt{1+(B-A)^2}$ so that,
by (\ref{E:schwinbis}), we find $\sum_{n\geq n_0}|G_n(\zeta)|\leq C_2{\left\|\Delta\right\|}_m I_m$ where 
$$C_2= \left(\frac{2}{\lambda_1}\right)^2\frac{\sqrt{1 +{(B-A)}^2}}{\pi}.$$   
In order to achieve the proof of Lemma \ref{L:majDeltam}, one needs a little bit more analysis to estimate 
$\sum_{n\geq n_0} |H_n(\zeta)|$ (and justify a posteriori the choice of $|v| >1$). This also relies on (\ref{E:schwin}),  
firstly noticing that the first factor in the left-hand-side can be rewritten as 
$${\left(\sum_{n\geq n_0}\frac{1}{{(v-n\lambda_2)}^2 +{(\frac{n}{2}\lambda_1)}^2}\right)}^\frac{1}{2}
=\frac{1}{c_1\vert v\vert}{\left(\sum_{n\geq n_0}\frac{1}{{(c_2\frac{n}{v}+c_3)}^2 +1}\right)}^\frac{1}{2}$$
with constants $c_1,c_2,c_3\in\C$ depending only on $\lambda_1,\lambda_2$, so that it can be bounded by
\begin{equation}\label{eq:LHSA}
\frac{1}{c_1\vert v\vert}{\left(\int_\R\frac{dt}{{(c_2\frac{t}{v}+c_3)}^2 +1}\right)}^\frac{1}{2}=\frac{\sqrt{\pi}}{c_1c_2\sqrt{\vert v\vert}}.
\end{equation}
By a similar computation, we get the same bound for the second factor of $A(t,\zeta)$:
$${\left(\sum_{n\geq n_0}\frac{1}{{(v+n\lambda_2)}^2 +{(\frac{n}{2}\lambda_1)}^2}\right)}^\frac{1}{2}
\le\frac{\sqrt{\pi}}{c_1c_2\sqrt{\vert v\vert}}.$$
Therefore, using that $\frac{\vert t-\zeta\vert}{\vert v\vert}=\sqrt{\frac{(B-A)^2}{\vert v\vert^2}+1}\le \sqrt{(B-A)^2+1}$
for $\vert v\vert>1$, we conclude that
$$\sum_{n\ge n_0}\vert H_n(\zeta)\vert\le \underbrace{\pi\frac{\sqrt{(B-A)^2+1}}{2(c_1c_2)^2}}_{C_3}{\left\|\Delta\right\|}_m I_m$$
for a constant $C_3$ depending only on $B-A$. By putting together the inequalities above involving $C_1,C_2,C_3$, 
one obtains that $\varphi$ satisfies the estimate (\ref{E:majDeltam}) of Lemma \ref{L:majDeltam}, 
thus proving Theorem \ref{TH:soldiffeq}.
\end{proof}

 \subsubsection{Choice of a canonical solution}\label{SS:additional}
 In this section, we are going to replace the solution $\varphi(\zeta)=F_0^-(\zeta)+\sum_{n\ge1}F_n(\zeta)$ 
 constructed in the proof of Theorem \ref{TH:soldiffeq} by another one $\psi(\zeta)=\varphi(\zeta)+m_0$
 where $m_0$ is a constant, so that $\psi\to0$ while $\Im(\zeta)\to-\infty$.
 As we shall see, the right constant is 
 $$m_0=\frac{1}{2\lambda}\int_{L_-}\Delta(t) dt=\frac{1}{2\lambda}\int_{L_+}\Delta(t) dt.$$ 
Indeed, we have:
 
 \begin{lemma}\label{L:imto0} 
 Keeping notations as in the proof of Theorem \ref{TH:soldiffeq}, set
 $$\psi(\zeta)=\underbrace{F_0^-(\zeta)+\sum_{n\ge1}F_n(\zeta)}_{\varphi(\zeta)}+\underbrace{\frac{1}{2\lambda}\int_{L_+}\Delta(t) dt}_{m_0}.$$
 Then, $\psi$ is the unique solution to the difference equation (\ref{E:DE1}) such that,
for every $\zeta\in \Sigma_{A,B+\lambda_1}$ satisfying 
 $\Im{(\frac{\zeta}{\lambda})} \le -1$ (i.e. $\Im(\xi)\ge 1$), one has
 \begin{equation}\label{E:decpartieim}
  |\psi (\zeta)|\leq \frac{F{\left\|\Delta\right\|}_m}{\sqrt{|\Im{(\frac{\zeta}{\lambda})|}}}.
  \end{equation}
  for a constant $C=C(B-A)$ depending only on $B-A$.
 
 Moreover,  for every $\theta\in ]0,\frac{\pi}{2}]$, there exists a  real number $D=D(\theta, B-A)>0$ 
  such that for  $\zeta\in K_\theta:= \Sigma_{A,B+\lambda_1}\cap\{\frac{\pi}{2} +\theta\leq \arg{\zeta}\leq \frac{3\pi}{2}-\theta\}$,
  we have
 \begin{equation}\label{E:decsector}
 |\psi(\zeta)|\leq{(\frac{A}{B})}^{m-2}\frac{D{\left\|\Delta\right\|}_m}{|\zeta|^{m-2}} .
 \end{equation}
\end{lemma}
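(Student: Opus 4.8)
The plan is to work with the explicit series representation $\varphi(\zeta)=F_0^-(\zeta)+\sum_{n\ge1}F_n(\zeta)$ from the proof of Theorem \ref{TH:soldiffeq} and to track carefully the behavior as $\Im(\zeta/\lambda)\to-\infty$ (equivalently $\Im(\xi)\to+\infty$). First I would address the choice of the constant $m_0$. Since $\psi=\varphi+m_0$ still solves (\ref{E:DE1}) and since the only bounded solutions differ by an additive constant, uniqueness of a solution vanishing in the limit direction is automatic once existence is shown; so everything reduces to checking that with $m_0=\frac{1}{2\lambda}\int_{L_+}\Delta(t)\,dt$ one indeed has $\psi(\zeta)\to0$ with the stated rate. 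The heuristic is that $F_0^-(\zeta)=\frac{1}{2i\pi}\int_{L_-}\frac{\Delta(t)}{t-\zeta}\,dt$; when $\Im(\zeta/\lambda)\to-\infty$ the kernel $\frac{1}{t-\zeta}$ is, along $L_-$, close to $-\frac{1}{\zeta}$, and summing the analogous expansions for all $F_n$ produces a telescoping/geometric contribution whose leading term is exactly $-m_0$. Making this precise is the first main step: write $\frac{1}{t-\zeta-n\lambda}-\left(-\frac{1}{(n+?)\lambda}\right)$ type comparisons and sum in $n$, using $\|\Delta\|_m<\infty$ with $m\ge3$ to control remainders.

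For the quantitative bound (\ref{E:decpartieim}), I would split the sum as in the proof of Lemma \ref{L:majDeltam}: the finitely many terms $n<n_0$ are handled directly by the Cauchy-type estimates already established (they contribute $O(\|\Delta\|_m/|\Im(\zeta/\lambda)|)$ once one keeps track of the vertical distance, since $|t-\zeta|$ or $|t-\zeta-n\lambda|$ grows like $|\Im(\zeta/\lambda)|$ there), and the tail $n\ge n_0$ is estimated through the quantity $A(t,\zeta)=\sum_{n\ge n_0}\bigl|\frac{1}{(t-\zeta-n\lambda)(t-\zeta+n\lambda)}\bigr|$. The refined Cauchy--Schwarz bound from (\ref{E:schwin})--(\ref{eq:LHSA}) already gives, for $|v|=|\Im(t-\zeta)|>1$, a factor $\frac{\sqrt\pi}{c_1c_2\sqrt{|v|}}$ for each of the two sums, hence $A(t,\zeta)\lesssim 1/|v|$; combining with $\|\Delta\|_m<\infty$ (to integrate $|dt|/|t|^m$ over $L_+$) and with the fact that along the relevant region $|v|\gtrsim |\Im(\zeta/\lambda)|$, one gets the $1/\sqrt{|\Im(\zeta/\lambda)|}$ decay. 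The square-root (rather than full) power is an artifact of the Cauchy--Schwarz splitting and is exactly what is claimed; I would not try to improve it. The constant depends only on $B-A$ precisely because all the estimates in Lemma \ref{L:majDeltam} were arranged that way.

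For (\ref{E:decsector}), the point is that inside the sector $K_\theta$ one is bounded away from the real axis in the $\zeta$-plane \emph{and} $|\zeta|$ is comparable to $|\Re\zeta|$, so $|\Re\zeta|\ge |\zeta|\sin\theta$ and in particular $\Re\zeta\le A$ forces $|\zeta|\le |A|/\sin\theta$, while along $L_+$ one has $\Re t=B$; thus in the integrals defining $F_n$ the denominators stay uniformly away from zero and one can afford to bound $|\Delta(t)|\le \|\Delta\|_m/|t|^m$ and pull out a factor $|\zeta|^{-(m-2)}$ after two summations/integrations (one power lost to $\sum 1/n^2$ being replaced by the $n$-integral, one to the $|t-\zeta|$ in the numerator). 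The factor $(A/B)^{m-2}$ records the worst-case ratio of the horizontal coordinates. I would organize this as: (i) in $K_\theta$, $|t-\zeta|\asymp |t|+|\zeta|\asymp$ (whichever is larger) with constants depending on $\theta$; (ii) $|F_n(\zeta)|\le C_\theta\|\Delta\|_m\int_{L_+}\frac{|dt|}{|t|^m\,|t-\zeta-n\lambda|\,|t-\zeta+n\lambda|}\cdot|t-\zeta|$; (iii) sum in $n$ using $A(t,\zeta)$ bounded by $(2/\lambda_1)^2$ from (\ref{E:schwinbis}) for the crude part and by the refined $1/|v|$ bound to get the extra decay in $|\zeta|$; (iv) assemble.

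The main obstacle I anticipate is the first step — rigorously identifying the limiting constant as $\Im(\zeta/\lambda)\to-\infty$ and extracting the precise decay rate $1/\sqrt{|\Im(\zeta/\lambda)|}$ rather than merely ``$\to0$''. This requires a careful term-by-term comparison of $F_n^-(\zeta)=F_0^-(\zeta+n\lambda)$ with its value ``at $\zeta=-i\infty/\lambda$'', uniform in $n$, and then controlling the interchange of limit and summation; the delicate point is that the individual terms are only summable after the $n\ge n_0$ Cauchy--Schwarz trick, so the constant $m_0$ must be recognized from the \emph{crude} terms plus a convergent correction, and this bookkeeping is where the $\frac12$ in $m_0=\frac{1}{2\lambda}\int\Delta$ (as opposed to $\frac{1}{\lambda}$) comes from — it is the average of the two equal integrals over $L_-$ and $L_+$, reflecting that half the mass comes from the $F_n^+$ sum and half from the $F_n^-$ sum. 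Everything else is a routine, if somewhat lengthy, repetition of the estimates already carried out in Lemma \ref{L:majDeltam}.
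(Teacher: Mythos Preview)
Your plan for the second estimate (\ref{E:decsector}) is essentially correct and matches the paper: on $K_\theta$ one has $|\zeta|\le |A|/\sin\theta$ and $|t|\ge |\zeta|\,B\sin\theta/|A|$ for $t\in L_+$, so one simply feeds this into the already–proved bound (\ref{E:majDeltam}) and pulls out the factor $|\zeta|^{-(m-2)}(A/B)^{m-2}$. No new summation argument is needed there.

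For (\ref{E:decpartieim}), however, there are two genuine gaps. First, the mechanism you propose for the $1/\sqrt{|\Im(\zeta/\lambda)|}$ decay is wrong: you claim that ``along the relevant region $|v|\gtrsim |\Im(\zeta/\lambda)|$'' with $v=\Im(t-\zeta)$, but $t$ runs over the whole vertical line $L_\pm$, so $v$ takes all real values and is certainly not bounded below. Reusing the quantity $A(t,\zeta)=\sum_{n\ge n_0}\frac{1}{|(t-\zeta-n\lambda)(t-\zeta+n\lambda)|}$ from Lemma~\ref{L:majDeltam} only yields control in terms of $v$, not in terms of $\Im(\zeta/\lambda)$. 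The paper instead performs a \emph{different} partial-fraction decomposition: write
\[
\frac{1}{t-\zeta\mp n\lambda}=\frac{1}{-\zeta\mp n\lambda}+\frac{t}{(\zeta\pm n\lambda)(t-\zeta\mp n\lambda)},
\]
so that the ``remainder'' kernels carry a factor $\frac{1}{\zeta\pm n\lambda}$ that is \emph{independent of $t$}. The relevant tail quantity is then $B(t,\zeta)=\sum_{n\ge n_0}\frac{1}{|(\zeta+n\lambda)(t-\zeta-n\lambda)|}$, and Cauchy--Schwarz now gives one factor bounded by $\bigl(\sum_n\frac{1}{|\zeta+n\lambda|^2}\bigr)^{1/2}\le \frac{\sqrt\pi}{|\lambda|\sqrt{|\Im(\zeta/\lambda)|}}$; this is where the square-root decay actually comes from.

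Second, your identification of $m_0$ is only heuristic. The ``principal parts'' extracted above sum to $-\frac{1}{2i\pi}\bigl(\sum_{n\in\Z}\frac{1}{\zeta+n\lambda}\bigr)\int_{L_+}\Delta(t)\,dt$, and one needs the explicit identity
\[
\sum_{n\in\Z}\frac{1}{\zeta+n\lambda}=\frac{\pi}{\lambda}\cot\Bigl(\frac{\pi\zeta}{\lambda}\Bigr),
\]
together with the elementary fact that $\bigl|\frac{1}{2i\lambda}\cot(\pi\zeta/\lambda)-\frac{1}{2\lambda}\bigr|\le C\,e^{2\pi\Im(\zeta/\lambda)}$ for $\Im(\zeta/\lambda)\le -1$, to see that this term equals $-m_0$ up to an exponentially small error. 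Your ``telescoping/geometric'' description and the $\frac{1}{(n+?)\lambda}$ comparison do not capture this; without the cotangent identity there is no obvious way to sum the principal parts or to get a quantitative rate.
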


The first condition (\ref{E:decpartieim}) insures that the solution $\psi\to0$ at least when $\Im{(\frac{\zeta}{\lambda})}\to-\infty$
(or equivalently $\Im{({\zeta})}\to-\infty$ in the strip $S_{A,B+\lambda_1}$). The second condition is used
to prove existence of asymptotic expansions of the sectorial normalization.

 \begin{proof}
 Let $\zeta\in \Sigma_{A,B+\lambda_1}$ such that $\Im{(\frac{\zeta}{\lambda})}<0$, so that $\zeta\notin\lambda\Z$. 
 Write 
\[F_n(\zeta)=
\frac{1}{2i\pi}\left[
\left(\int_{L_+}\frac{\Delta (t)}{t-\zeta +n\lambda} dt + \int_{L_+}\frac{\Delta (t)}{\zeta -n\lambda} dt\right)
\right.\] 
\[\left. 
+\left(\int_{L_-} \frac{\Delta (t)}{t-\zeta -n\lambda} dt +\int_{L_-}\frac{\Delta (t)}{\zeta +n\lambda} dt\right)
-\int_{L_+}\frac{\Delta (t)}{\zeta -n\lambda} dt-\int_{L_-}\frac{\Delta (t)}{\zeta +n\lambda} dt 
\right]\]
Now using that $\int_{L_-}\Delta(t)dt= \int_{L_+}\Delta(t)dt$ and 
$$
\sum_{n\in\Z}\frac{1}{\zeta+n\lambda}=\frac{\pi}{\lambda}\cot{(\pi\frac{\zeta}{\lambda})},
$$
and summing-up the previous equalities, one obtains
$$
\varphi(\zeta)=\frac{1}{2i\pi}\sum_{n\geq 1}\int_{L_+}\frac{t\Delta (t)}{(\zeta-n\lambda)(t-\zeta +n\lambda)} dt
+\frac{1}{2i\pi}\sum_{n\geq 0}\int_{L_-}\frac{t\Delta (t)}{(\zeta+n\lambda)(t-\zeta -n\lambda)} dt 
$$
$$
-\frac{\pi}{2i\pi\lambda}\cot{(\pi\frac{\zeta}{\lambda})}\int_{L_+}\Delta(t) dt
$$
which makes sense whenever $\Im(\frac{\zeta}{\lambda})\not=0$.

Now, we start proceeding mimicking the proof of Lemma \ref{L:majDeltam} to bound the terms in $\varphi$
assuming that $\Im{(\frac{\zeta}{\lambda})} \le -1$. For each $n$, we can majorate
$$
\left|\frac{1}{2i\pi}\int_{L_-}\frac{t\Delta (t)}{(\zeta+n\lambda)(t-\zeta -n\lambda)} dt\right|
\le \frac{1}{2\pi|\lambda|}\int_{L_-}\frac{\Vert\Delta\Vert_m}{|\frac{\zeta}{\lambda}+n|\, |t-\zeta -n\lambda|} \frac{|dt|}{|t^{m-1}|}
$$
$$
\le \frac{1}{2\pi|\lambda|}\int_{L_-}\frac{\Vert\Delta\Vert_m}{|\Im(\frac{\zeta}{\lambda})|(\frac{n}{2}\lambda_1)} \frac{|dt|}{|t^{m-1}|}
\le \frac{\Vert\Delta\Vert_m I_{m-1}}{\pi n \lambda_1^2 |\Im(\frac{\zeta}{\lambda})|}
$$
(we have used that $|t-\zeta -n\lambda|\ge |\Re(t-\zeta -n\lambda)|=|\Re(t-\zeta) -n\lambda_1|\ge\frac{n}{2}\lambda_1$).
This allow us to bound terms for small values of $n$. 

Now, for large $n$, consider a positive integer 
$n_0\geq 2(\frac{B-A}{\lambda_1}+1)$, and define 
$$B(t,\zeta)=\sum_{n\geq n_0}\left|\frac{1} {(\zeta+n\lambda)(t-\zeta -n\lambda)}\right|\ \ \ \text{for}\ \ \ 
(t,\zeta)\in L^-\times \Sigma_{A,B+\lambda_1}.$$
 Here, proceeding as for proving (\ref{E:schwin}) and (\ref{E:schwinbis}), Cauchy-Schwarz inequality gives us
 $$
B(t,\zeta)\leq \left(\sum_{n\geq n_0}\frac{1} {|\zeta+n\lambda|^2}\right)^{1/2}
\left(\sum_{n\geq n_0}\frac{1} {|t-\zeta -n\lambda|^2}\right)^{1/2}.
$$
The left-hand-side is bounded by 
$$
\sum_{n\geq n_0} \frac{1}{{|\zeta+n\lambda|}^2}
\leq \frac{1}{{|\lambda|}^2}\int_\R\frac{dt}{(t+\Re(\frac{\zeta}{\lambda}))^2+\vert\Im(\frac{\zeta}{\lambda})\vert^2}
= \frac{\pi}{{|\lambda|}^2 |\Im{(\frac{\zeta}{\lambda})}|}
$$ 
and right-hand-side by (\ref{eq:LHSA}) so that we eventually get the bound
$$
B(t,\zeta)\leq \frac{C_1}{\sqrt{|\Im{(\frac{\zeta}{\lambda}})v|}}
$$
where $v=\Im{(t-\xi)}$ and $C_1$ is some positive constant. Then, as in the proof of Lemma \ref{L:majDeltam},
we end-up considering separately the cases $|v|\leq 1$ and $|v|>1$ when integrating along $L_-$
and establish in a similar way the following bound:
for every 
$\zeta\in \Sigma_{A,B+\lambda_1}$ satisfying $\Im{(\frac{\zeta}{\lambda})} \le -1$, we get:
$$
\left|\frac{1}{2i\pi}\sum_{n\geq 0}\int_{L-}\frac{t\Delta (t)}{(\zeta+n\lambda)(t-\zeta -n\lambda)} dt\right|
\leq  \frac{C_2{\left\|\Delta\right\|}_m}{\sqrt{|\Im{(\frac{\zeta}{\lambda})|}}}
$$
where $C_2>0$ only depends on $B-A$. We skip the details. Similarly, one gets
$$
\left|\frac{1}{2i\pi}\sum_{n\geq 1}\int_{L_+}\frac{t\Delta (t)}{(\zeta-n\lambda)(t-\zeta +n\lambda)} dt\right|
\leq \frac{C_3{\left\|\Delta\right\|}_m}{\sqrt{|\Im{(\frac{\zeta}{\lambda})|}}}
$$
where $C_3>0$ only depends on $B-A$ as well. 
Finally, for $\Im{(\frac{\zeta}{\lambda})}\le -1$, a straightforward calculation gives 
$$
\left|\frac{\pi}{2i\pi\lambda}\cot{(\pi\frac{\zeta}{\lambda})}-\frac{1}{2\lambda}\right|\leq 
\frac{e^{2\pi\Im{(\frac{\zeta}{\lambda})}}}{|\lambda|(1-e^{-2\pi})}\leq \frac{C_4}{\sqrt{|\Im{(\frac{\zeta}{\lambda})|}}}
$$
for some constant $C_4>0$. 
These different estimates prove (\ref{E:decpartieim}).

Now, let us establish the upper-bound (\ref{E:decsector}). For this end, observe that, on $K_\theta$, we have
 $$|B+\lambda_1|\leq |\zeta|\leq \frac{|A|}{\sin \theta}$$
so that we have in particular:
$$|\frac{t}{\zeta}|\geq \frac{B\sin{\theta}}{A}\ \ \ \text{for every}\ \ \ t\in L^+.$$
From Lemma \ref{L:majDeltam}, one deduces that 
 $$\sup_{\zeta\in K_\theta}|\varphi (\zeta)|\leq \frac{C{\left\|\Delta\right\|}_m}{{|\zeta|}^{m-2}}{\left(\frac{A}{B\sin{\theta}}\right)}^{m-2}\int_{L^+}\frac{|dt|}{{|t|}^2} $$ 
 where $C=C(B-A)$ is the constant appearing in \textit{loc.cit}. Moreover one gets trivially that    
 
 $$|m_0|\leq  {\left\|\Delta\right\|}_m\int_{L^+}\frac{|dt|}{{|t|}^m}$$
Consequently $D:= \frac{C+1}{{(\sin{\theta})}^{m-2}}\int_{\Re{t}=1}\frac{|dt|}{{|t|}^2}$ satisfies  (\ref{E:decsector}).  \end{proof}

\subsubsection{Version with parameters and end of the proof of Theorem \ref{TH:lin}}
We resume to notations introduced in Section \ref{SS:linarized} and the beginning of Section \ref{SS:sollin}.
We have fixed $\tau$ with $\Im\tau>0$ and set 
$$\lambda=-2i\pi\tau=\underbrace{\lambda_1}_{>0}+i\lambda_2.$$
Consider $0<a<b$ such that $a<|q| b$ or equivalently  $\log{a}+\lambda_1<\log{b}$, and let $\delta_{(a,b)}>0$ small enough.
For every positive number $\delta\leq\delta_{(a,b)}$ and  complex number $c$ such that $0<|c|\leq\delta $, 
consider the vertical strips defined in the complex line by
$$
\Sigma_{a,b,c}=\{\zeta\in\C:\underbrace{\log{|c|}-\log{b}}_{A_c}\leq\Re{(\zeta)}\leq \underbrace{\log{|c|}-\log{a}}_{B_c}\}.
$$
Consider also the subset of ${\C}^2$  
$${\mathscr S}_{a,b,\delta}=\bigcup_{|c|\leq\delta} \{c\}\times \Sigma_{a,b,c}.$$
It is worth mentioning that $B_c-A_c=\log{b}-\log{a}>\Re{(2i\pi\tau)}$ does not depend on $c$. 
By a suitable choice of $\delta_{(a,b)}$, one can moreover assume that $A_c, B_c\leq -1$ and $\frac{A_c}{B_c}\leq 2$. 
Let $\Delta(c,\zeta)$ be a holomorphic function defined on   ${\mathscr S}_{a,b,\delta}$   and assume in addition that 
$${\left\|\Delta\right\|}_m:=\sup_{{\mathscr S}_{a,b,\delta}}|\Delta(c,\zeta)|{{|\zeta|}^m}<\infty$$ 
for some $m\geq 3$. From the results collected in subsections \ref{SS:resolHE}, \ref{SS:additional}, one promptly obtains the following statement:
  
  \begin{prop}\label{P:sol1param}
  Let $\delta$ small enough and $\Delta\in{\mathcal H}({\mathscr S}_{a,b,\delta})$ with ${\left\|\Delta\right\|}_m<\infty$, $m\ge3$. 
  Then, there exists a unique function $\varphi\in{\mathcal H}({\mathscr S}_{|q|a,b,\delta})$ with the following properties
  \begin{enumerate}
  \item For every $(c,\zeta)\in {\mathscr S}_{a,b,\delta}$, one has 
$\varphi(c,\zeta+\lambda)-\varphi(c,\zeta)=\Delta(c,\zeta)$.
\item There exists a positive number $C=C(a,b)$  such that for all $0<\delta\leq \delta_{(a,b)}$,
    $$\sup_{{\mathscr S}_{|q|a,b,\delta}}|\varphi(c,\zeta)|
    \leq C{\left\|\Delta\right\|}_m$$
$$\sup_{{\mathscr S}_{|q|a,b,\delta}\cap\{\Im(\frac{\zeta}{\lambda})\le-1\}}
|\varphi(c,\zeta)|  \leq\frac{C{\left\|\Delta\right\|}_m}{\sqrt{|\Im{(\frac{\zeta}{\lambda})|}}}.$$
 \end{enumerate}
 \end{prop}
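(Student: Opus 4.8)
The plan is to read Proposition~\ref{P:sol1param} as the one--variable results of Subsections~\ref{SS:resolHE} and~\ref{SS:additional} applied fibrewise in the parameter $c$, the only genuinely new ingredient being the holomorphic dependence on $c$.

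First I would fix $c$ with $0<|c|\le\delta$, so that $A_c=\log|c|-\log b$ and $B_c=\log|c|-\log a$ satisfy $A_c<A_c+\lambda_1<B_c\le-1$ by the choice of $\delta_{(a,b)}$. Since $\Delta(c,\cdot)$ is holomorphic on $\Sigma_{a,b,c}=\Sigma_{A_c,B_c}$ with $\|\Delta(c,\cdot)\|_m\le\|\Delta\|_m$, Theorem~\ref{TH:soldiffeq} yields a bounded holomorphic solution of $\varphi(c,\zeta+\lambda)-\varphi(c,\zeta)=\Delta(c,\zeta)$ on $\Sigma_{A_c,B_c+\lambda_1}=\Sigma_{|q|a,b,c}$, unique up to an additive constant, and Lemma~\ref{L:imto0} singles out the unique such solution $\varphi(c,\cdot)$ which satisfies the decay~\eqref{E:decpartieim}. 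This gives property~(1) and, once holomorphy in $c$ is established, uniqueness. The estimates in~(2) are then immediate: because $B_c-A_c=\log b-\log a$ does not depend on $c$, the constant $C=C(B-A)$ occurring in Lemma~\ref{L:majDeltam} and in~\eqref{E:decpartieim} is one fixed number, and because $B_c\le-1$ one has $\int_{\{\Re t=B_c\}}|dt|/|t|^m=|B_c|^{1-m}\int_{\{\Re t=1\}}|dt|/|t|^m\le\int_{\{\Re t=1\}}|dt|/|t|^m<\infty$ since $m\ge3$; collecting these, $\varphi$ satisfies both inequalities of~(2) with a constant $C=C(a,b)$ independent of $\delta$ and of $c$.

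The remaining, and main, point is that $\varphi\in\mathcal H(\mathscr S_{|q|a,b,\delta})$, i.e. that $\varphi$ is continuous on $\mathscr S_{|q|a,b,\delta}$ and holomorphic in its interior; holomorphy in $\zeta$ being known, everything hinges on the dependence on $c$. The obstacle is that $\varphi(c,\cdot)$ is built from Cauchy integrals over the lines $\{\Re t=A_c\}$ and $\{\Re t=B_c\}$, which move with $|c|$, and $\log|c|$ is not holomorphic in $c$. I would remove this by a local substitution: fix $c_0\neq0$, choose a holomorphic branch of $\log c$ on a small disc $\{|c-c_0|<\rho\}$ that avoids $0$, and set $\eta=\zeta-\log c+\log a+1$ and $\widetilde\Delta(c,\eta)=\Delta\bigl(c,\eta+\log c-\log a-1\bigr)$. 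Under $\zeta\mapsto\eta$ the strip $\Sigma_{A_c,B_c}$ becomes the \emph{fixed} strip $\{\,1-(\log b-\log a)\le\Re\eta\le1\,\}$, still of width $\log b-\log a>\lambda_1$, so the hypotheses of Theorem~\ref{TH:soldiffeq} hold; the difference equation, the normalisation as $\Im\to-\infty$, and the weighted bound on $\widetilde\Delta$ are unchanged because $\log c$ stays bounded on the disc. Running the formulas of Subsections~\ref{SS:resolHE}--\ref{SS:additional} on $\widetilde\Delta(c,\cdot)$ now produces the canonical solution $\widetilde\varphi(c,\eta)$ as a series of Cauchy integrals over the two $c$--independent lines $\{\Re t=1\}$ and $\{\Re t=1-(\log b-\log a)\}$; each term is holomorphic in $c$ by differentiation under the integral sign (legitimate since $\partial_c\widetilde\Delta(c,t)=O(|t|^{-m})$ on a slightly smaller disc, by a Cauchy estimate in $c$), and the series converges locally uniformly in $c$ by Lemma~\ref{L:majDeltam}, whose constants depend only on $\log b-\log a$. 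Hence $\widetilde\varphi$, and therefore $\varphi(c,\zeta)=\widetilde\varphi\bigl(c,\zeta-\log c+\log a+1\bigr)$, is holomorphic near $(c_0,\zeta_0)$ and continuous up to the boundary; as $c_0\neq0$ was arbitrary and the fibre of $\mathscr S_{a,b,\delta}$ over $c=0$ is empty, this gives $\varphi\in\mathcal H(\mathscr S_{|q|a,b,\delta})$, and uniqueness follows from the fibrewise uniqueness in Lemma~\ref{L:imto0}. Finally, the same substitution carries the sectorial estimate~\eqref{E:decsector} over to the parametrised setting, and transporting everything through $(z,\xi)\leftrightarrow(c,\zeta)=\bigl(Y(z,\xi),2i\pi\tau\xi\bigr)$, under which $\mathscr S_{a,b,\delta}$ and $\mathscr S_{|q|a,b,\delta}$ correspond to $S_{a,b,\delta}$ and $S_{a,b,\delta}'$, completes the proof of Theorem~\ref{TH:lin}.
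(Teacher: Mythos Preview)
Your proof is correct, and your treatment of the holomorphic dependence on $c$ is in fact more explicit than the paper's. The paper simply writes down the integral formula
\[
\varphi(c,\zeta)=F_0^-(c,\zeta)+\sum_{n\ge1}F_n(c,\zeta)+m_0(c)
\]
with the \emph{moving} contours $L_\pm^c=\{\Re t=A_c\},\{\Re t=B_c\}$ and declares that ``it is clear'' that each term is holomorphic on $\mathscr S_{|q|a,b,\delta}$; the unstated reason is that, by Cauchy's theorem, one may locally replace $L_\pm^c$ by a nearby fixed vertical line without changing the integral, after which holomorphy in $c$ is immediate. Your device of the holomorphic translation $\eta=\zeta-\log c+\mathrm{const}$ is simply a global way of packaging this same contour deformation: it turns the moving strip into a fixed one and the moving contours into fixed ones, so that differentiation under the integral sign applies directly. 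The two arguments are equivalent; yours is more self-contained, the paper's more concise.

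One small cosmetic point: with your particular choice of constant the fixed strip has right edge $B=1$ and left edge $A=1-(\log b-\log a)$, which need not be negative, whereas the inequality $\int_{L_-}|dt|/|t|^m<\int_{L_+}|dt|/|t|^m$ used verbatim in the proof of Lemma~\ref{L:majDeltam} relies on $|A|>|B|$. This does not affect your argument, since you (correctly) extract the quantitative bounds of part~(2) from the \emph{original} strips $\Sigma_{A_c,B_c}$, where $A_c,B_c\le-1$, and use the substitution only for holomorphy; but if you wanted to run everything on the fixed strip you could simply shift by $\log c-\log c_0$ instead, landing on $\Sigma_{A_{c_0},B_{c_0}}$, which satisfies all the hypotheses of Subsections~\ref{SS:resolHE}--\ref{SS:additional} verbatim.
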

 
 \begin{proof}
 Define $\varphi_\delta$ by the formula (compare Lemma \ref{L:imto0})
$$\varphi_\delta(c,\zeta):=\underbrace{\frac{1}{2i\pi}\int_{L_-^c} \frac{\Delta_\delta (c,t)}{t-\zeta} dt}_{F_0^-(c,\zeta)}$$
$$+\sum_{n=1}^\infty \underbrace{\frac{1}{2i\pi}(\int_{L_+^c}\frac{\Delta_\delta (c,t)}{t-\zeta +n\lambda} dt +\int_{L_-^c} \frac{\Delta_\delta (c,t)}{t-\zeta -n\lambda} dt)}_{F_n(c,\zeta)} 
+\underbrace{\frac{1}{2\lambda}\int_{L_+^c}\Delta_\delta(c,t) dt}_{m_0(c)}$$ 
setting $L_-^c=\{\Re{t}=A_c\}$, $L_+^c=\{\Re{t}=B_c\}$. 
From this integral formula, it is clear that $F_0^-$ and $F_n$ are well defined as  holomorphic functions on 
${\mathscr S}_{|q|a,b,\delta}$ and that $m_0$ depends analytically on $c$. Moreover, one can easily verifies 
(as in subsection \ref{SS:resolHE}) that the series $\sum F_n$ converges uniformly on every compact subset 
of ${\mathscr S}_{|q|a,b,\delta}$. Thus $\varphi\in\mathcal H({\mathscr S}_{|q|a,b,\delta})$ and fullfills the properties 
stated in the Proposition as a direct application of the construction performed in subsections \ref{SS:resolHE} and \ref{SS:additional}. 
 \end{proof}
 
 Then, the proof of Theorem \ref{TH:lin} immediately follows when translating this existence and uniqueness result into the original variable $(z,\xi)$, with $c=ze^{-2i\pi\tau\xi}$ and $\zeta=2i\pi\tau\xi$ (see Section \ref{SS:sollin})  together with the upper bound (\ref{E:decsector}) of Lemma \ref{L:imto0}.\qed
\vskip0.5cm

Recall that once we have solutions to the linearized equation as achieved above, we are in position to obtain a solution to the general functional equations (\ref{E:lin1an}) and (\ref{E:lin2an}) by a standard fixed point Theorem as developped in subsection \ref{SS:solvefunctional}. \textbf{This eventually finishes (at least on sectors of opening $I_1$) the proof of Lemma \ref{LEM:sectorialnormalization}} (and more precisely Lemma \ref{lem:sectorialRappel}). One can construct others normalizing  conjugacy biholomorphisms on the remaining sectors, namely $I_2,I_3,I_4$ without further real complications. This is explained below.   

  \subsection{Construction of other sectorial normalizations} \label{SS:othersectors}
  
  \subsubsection{On $I_3$} It is just a slight modification of the previous construction for $I_1$. The starting domain consists again in the resolution of the linearized equation (\ref{E:lin1}) on the corresponding domain. In this situation, it is relevant to deal with the levels $Y=c$ of the first integral $Y=ze^{2i\pi\tau\xi}$ for $|c|>>0$. One adapts the notation of Section \ref{SS:linarized} by defining   $S_{a,b, \delta}=\{(z,\xi)\in {\C}^2|\ |Y(z,\xi)|>\delta\ \mbox{and}\ z\in {\mathcal C}_{a,b}\}$  ($\delta>>0$). Here again, this amounts to solve an analytic family of difference equations where the solutions satisfy some estimates which eventually leads by the same fixed point consideration to a normalizing conjugation map. This can be carried out following \textit{verbatim} the same method.
  
 \subsubsection{On $I_2$ and $I_4$}  Here, we can use the $\mathrm{SL}_2(\Z)$ action described in subsection \ref{ss:SL2intro} to deduce existence of sectorial normalization on remaining sectors.
Indeed, if we consider the cyclic covering determined by $M=\begin{pmatrix}
0 & -1 \\ 
1 & 0
\end{pmatrix}$
(see subsection \ref{ss:SL2intro}) then we are led to an alternate presentation of $(U,C)$.
Our model $(U_{1,0,0},C)$ is now viewed as the quotient of $(\C_{z'}^*\times\C_{\xi'}, \{ {\xi}'=\infty\})$ 
by the semi-hyperbolic transformation 
$$F_{1,0,0}'(z',\xi')=({q}' z',\xi' -1 )\ \ \ \text{where}\ \ \ 
\tau'=\frac{-1}{\tau},\ \ \ q'=e^{-\frac{2i\pi}{\tau}}$$ 
$$\text{and}\ \ \ z'=z^{\frac{1}{\tau}},\ \ \ {\xi}'=\tau\xi+\frac{\log(z)}{2i\pi}.$$
In these coordinates $(U,C)$ is isomorphic to a quotient of the form
$({\C}_{z'}^*\times{\C}_{{\xi}'}\ \{{\xi}'=\infty\})$ by a biholomorphism $F'$ such that $F'(z',\infty)=(z',\infty)$.
 We can then apply the results of previous sections and obtain an analytic sectorial conjugation 
$$F'\circ {\Psi}'={\Psi}'\circ F_{1,0,0}'$$
defined on sectorial domains of the shape 
$$\arg(\xi')\in I_1'=]\varpi-\pi,\varpi[\ \ \ \text{and}\ \ \ \arg(\xi')\in I_3'=I_1'+\pi,$$
$\varpi'=\arg(\tau')=\pi-\varpi$.
Moreover, $\Psi'$ admits an asymptotic expansion $\hat{\Psi}'$ solving the same conjugacy equation.
Finally, going back the $(z,\xi)$ coordinates, we get that $\Psi'$ provides the sought sectorial conjugation
on 
$$\arg(\xi)\in I_1'-\varpi= ]-\pi,0[=I_2\ \ \ \text{and}\ \ \ \arg(\xi)\in I_3'-\varpi=I_4$$
taking into account that $\arg(\xi')\sim\varpi+\arg(\xi)$ asymptotically.
Maybe composing $\Psi'$ by an automorphism of our model $(U_{1,0,0},C)$
(see subsection \ref{sec:sheavesautomorphisms}), we can assume that the asymptotic expansion
fits with that of previous sectorial normalizations:
$\hat{\Psi}'=\hat{\Psi}$.

 \section{Generalization to the case of trivial normal bundle}\label{S:Generalization}
 
 \subsection{General formal classification:  recollections and symmetries}\label{SS:formalclassification}
 We first recall here the results obtained in \cite{LTT} (and partially exposed in \ref{SS:formalclass}) under a slightly more synthesized form including the case of torsion normal bundle and finite Ueda type.  As already mentioned in Section \ref{SS:formalclass}, we do not address the case where $C$ fits into a formal fibration (corresponding to infinite Ueda type). According to a result due to Ueda, this fibration is indeed analytic and we fall into the case which satisfies the formal principal: there is no differences between analytic and formal classification and this latter is very easy to describe, see for instance \cite[Section 5.1]{LTT}.

  Let $m$ be the torsion order of the normal bundle $N_C$ and let $k>0$ be the Ueda type. Recall that this latter is necessary a multiple of $m$. The linear monodromy of the corresponding unitary connection along the loops $1$ and $\tau$ is respectively determined by two roots of unity $a_1$, $a_\tau$ of respective orders $m_1$ and $m_\tau$ such that $lcm(m_1,m_\tau)=m$. The triple $(a_1,a_\tau,k)$ is obviously a formal invariant of the neighborhood. A complete set of invariants is indeed provided in \cite[Section 5.2]{LTT}, from which we borrow and adapt the notation. In particular, there exists on each normal form described below a pencil of regular foliations $(\F_t)_{t\in {\mathbb P}^1}$ fullfilling one of the following properties:
  \begin{enumerate}
  	\item  $C$ is $\F_t$ invariant for $t\in\C$ \label{E:pencilproperty1},
  	\item $\F_{\infty}$ is either tangent to $C$, either totally transverse to $C$ \label{E:pencilproperty2}.
  \end{enumerate}

Conversely every formal foliation satisfying one of these two properties fits into this pencil and is in particular convergent. Moreover, this family of foliations  can be defined by a pencil of closed meromorphic forms $\omega_t=\omega_0+t\omega_\infty$ whose expression is recalled below according the value of the parameters characterizing the normal forms that we proceed to describe now.
 
Set $\varphi_{k,\nu}=\exp{(\frac{y^{k+1}}{1+\nu y^k}{\partial_y})}$,
and given $P(z)=\sum_{i=0}^{k'-1}\lambda_i z^i$  a polynomial, $k=mk'$,
define
\begin{equation}\label{eq:PrincPartOneForm}
\omega_P:=P(\frac{1}{y^m})\frac{dy}{y},\ \ \ \end{equation}
$$\ \ \ \text{and}\ \ \ g_{k,\nu,P}(y):=\tau\int_0^y \lbrack{(a_\tau\varphi_{k,\nu})}^*\omega_P-\omega_P\rbrack=\tau\int_0^y \lbrack{\varphi_{k,\nu}}^*\omega_P-\omega_P\rbrack.$$

The group $\Z_{k'}$ of $k'^{\text{th}}$ roots of unity acts on the set of polynomials $P$ as follows:
\begin{equation}\label{eq:RootUnityAction}
(\mu, P(X))\mapsto P(\mu X)
\end{equation}

\begin{thm} \cite{LTT}\label{thm:NormalFormNeigh0<p<k}
Notations as above.
There exist $\nu\in\C$ and $P\in \C[z]$ of degree at most $k'-1$, unique up to the $\Z_{k'}$-action
(\ref{eq:RootUnityAction}), such that $(U,C)$ is formally equivalent \footnote{As noticed in \cite{LTT}, 
this result is unchanged if one allows formal conjugacy maps inducing translation on $C$.} 
to the quotient $U_{k,\nu,P,a_1,a_\tau}$ of
 $(\C_x\times \C_y,\{y=0\})$ by the group generated by
\begin{equation}\label{eq:NormalFormNeighGroup0<p<k}
\left\{\begin{array}{ccc} \phi_1(x,y)&=& (x+1\ ,\ a_1 y) \\ 
\phi_\tau(x,y)&=&(x+\tau+g_{k,\nu,P}(y)\ ,\ a_\tau \varphi_{k,\nu}(y))\end{array}\right. 
\end{equation}
The pencil $\omega_t=\omega_0+t\omega_\infty$ of closed $1$-forms
 is generated by
\begin{equation}\label{eq:NormalFormNeigh1Form0<p<k}
-\omega_0=\frac{dy}{y^{k+1}}+\nu\frac{dy}{y}\ \ \ \text{and}\ \ \ \omega_\infty=\frac{dx}{\tau}-\omega_P.
\end{equation}
\end{thm}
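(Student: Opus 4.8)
The statement being a \emph{recollection} from \cite{LTT}, the plan is to indicate how its proof parallels, in a more general setting, the normalization carried out in Proposition \ref{prop:PrenormalFormU100} (which is the case $m=1$, $k=1$). First I would lift $(U,C)$ to the universal cover of $C$: writing $C=\C_x/\lattice$, the neighborhood becomes the quotient of a germ $(\C_x\times\C_y,\{y=0\})$ by a representation $\pi_1(C)\simeq\lattice\to\mathrm{Diff}(\C,0)$, generated by two commuting germs $\phi_1(x,y)=(x+1+O(y),\lambda_1(x)y+O(y^2))$ and $\phi_\tau(x,y)=(x+\tau+O(y),\lambda_\tau(x)y+O(y^2))$; this is the elliptic analogue of Lemma \ref{L:firstpresentation}, again built on Siu's theorem. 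The linear parts define a flat connection on $N_C$; since $N_C$ is torsion of order $m$, a multiplicative gauge change $y\mapsto\varphi(x)y$ solving a $q$-difference equation of type (\ref{eq:qdiffeq}) normalizes $\lambda_1,\lambda_\tau$ to constant roots of unity $a_1,a_\tau$ of orders $m_1,m_\tau$ with $\mathrm{lcm}(m_1,m_\tau)=m$, so that $(a_1,a_\tau,k)$, where $k$ is the Ueda type, is manifestly a formal invariant.

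Next I would normalize $\phi_1,\phi_\tau$ recursively in powers of $y$. At order $n$, a coordinate change of the form $(x+a(x)y^{n-1}+\cdots,\,y+b(x)y^{n}+\cdots)$ alters the order-$n$ coefficients by additive terms $a(x)-a(qx)$ and $b(x)-b(qx)$ (solving equations of type (\ref{eq:qdiffeq0})), together with finitely many resonant contributions forced by the commutation relation $\phi_1\circ\phi_\tau=\phi_\tau\circ\phi_1$. The obstruction to removing the order-$n$ term thus lies in a finite-dimensional cohomology space, empty for the $y$-component when $n\le k$, and — by the period computation of \cite[Section 2.4]{LTT} — accounting exactly for the scalar $\nu$ (through $\varphi_{k,\nu}$) and the polynomial $P$ of degree $<k'=k/m$ (through $g_{k,\nu,P}$). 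All higher-order terms are then killed formally, yielding the normal form (\ref{eq:NormalFormNeighGroup0<p<k}).

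For uniqueness, the residual ambiguity is the group of formal automorphisms of the model fixing $C$ up to translation; a computation in the spirit of Lemma \ref{L:formalcentralizer} identifies it with the one-parameter flow of $\frac{y^{k+1}}{1+\nu y^k}\partial_y$ (which preserves $(\nu,P)$) extended by the rotations $y\mapsto\mu y$ with $\mu^{k'}=1$, which act on $P$ by (\ref{eq:RootUnityAction}). Hence $(\nu,P)$ is well defined modulo $\Z_{k'}$. Finally, the two commuting vector fields underlying the construction are $\phi_1,\phi_\tau$-invariant, so the dual pencil of closed $1$-forms $\omega_0+t\omega_\infty$ given by (\ref{eq:NormalFormNeigh1Form0<p<k}) descends to $(U,C)$; that every formal foliation satisfying (1) or (2) belongs to this pencil — hence converges — is \cite[Section 2.3]{LTT}.

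The main obstacle is the order-by-order step: pinning down the obstruction space at each order and checking that the surviving terms assemble \emph{exactly} into $\varphi_{k,\nu}$ and $g_{k,\nu,P}$ with $\deg P<k/m$. This is where the fine structure of the lattice and of the periods of the closed $1$-forms enters, and it is the technical heart of \cite{LTT}; I would simply cite it rather than reproduce it here.
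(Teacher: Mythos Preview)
The paper does not prove this theorem: it is stated purely as a recollection from \cite{LTT}, with no argument given beyond the citation. Your proposal is therefore already more than the paper offers, and your sketch --- lifting to the universal cover, normalizing order by order via the difference equations (\ref{eq:qdiffeq0})--(\ref{eq:qdiffeq}), identifying the obstruction space with the data $(\nu,P)$, and handling uniqueness through the formal centralizer --- is an accurate outline of the strategy in \cite{LTT}, faithfully extending the special case worked out here in Proposition~\ref{prop:PrenormalFormU100}.
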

Actually (see \cite{LTT}), the case $P=0$ (i.e $\omega_P=0$) corresponds exactly to the existence 
of a (necessarily unique) transverse fibration (Property (\ref{E:pencilproperty2}) above), given at the level of the formal normal forms described above by $\omega_\infty=\frac{dx}{\tau}$. 
If in addition, $k=1$, $\nu=0$ (and necessarily $m=1$) one recovers Serre's example. The case where $P=\mu$ is the constant polynomial, is covered by \cite[Theorem 5.3]{LTT} whereas $\deg(P)>0$ is covered by \cite[Theorem 5.4]{LTT}.

When $m=1$, that is $N_C$ analytically trivial, one will denote (as in \ref{SS:formalclass}) the corresponding neighborhood by $U_{k,\nu,P}$.   One then observes that $\phi_\tau$ is the flow at time $1$ of the holomorphic vector field
$$v_{k,\nu,P}=v_0+ v_\infty $$ where $$v_0=\frac{y^{k+1}}{1+\nu y^k}{\partial_y} +\tau P(\frac{1}{y} )\frac{y^{k}}{1+\nu y^k}{\partial_x}.$$
and
$$v_\infty=\tau{\partial_x}$$ are the dual vector fields of the pair $(-\omega_0,\omega_\infty)$. For general neighborhoods $U_{k,\nu,P,a_1,a_\tau}$, one can notice that  $\phi_\tau=A_\tau\circ \exp{v_{k,\nu,P}}$ where $A_\tau(x,y)=(x,a_\tau y)$. 

 In the non torsion case ($m=1$) and in the coordinates $z=e^{2i\pi x}$ and $\xi=1/y$, the corresponding presentations are (in accordance with \ref{SS:formalclass})  
$$(U_{k,\nu,P}, C):= ({\C}_z^*\times\overline{{\C}_\xi} , \{\xi=\infty\})/\langle F_{k,\nu,P }\rangle$$ 
where $F_{k,\nu,P}= \exp{v_{k,\nu,P}}$  and ${v_{k,\nu,P}}=v_0+ v_\infty$ with $v_0=  \frac{1}{{\xi}^k +\nu} (-\xi{\partial_\xi} +2i\pi\tau zP(\xi){\partial_z})$, $v_\infty=2i\pi \tau z{\partial_z}$.  This can be directly borrowed from the presentation given in the $(x,y)$ variable. 

Concerning the structure of the automorphism group $\mathrm{Aut}(U_{k,\nu,P}, C)$ of formal automorphisms inducing translations on $C$, we have the following characterization which generalizes Lemma \ref{L:formalcentralizer}

\begin{lemma}\label{L:formalcentralizergen}
	  Any formal automorphism in $\mathrm{Aut}(U_{k,\nu,P}, C)$  is actually
	convergent and there exists a nonnegative integer $n=n(k,P)$ dividing $k$ such that   $\mathrm{Aut}(U_{k,\nu,P}, C)$ is isomorphic to $\Z/n \times ({\C}^* \times  {\C}^*) $.  Moreover the Lie algebra associated to the infinitesimal action of the second factor  ${\C}^* \times  {\C}^*$ is spanned by $v_0$ and $v_\infty$
\end{lemma}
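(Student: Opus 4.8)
\textbf{Plan of proof for Lemma \ref{L:formalcentralizergen}.}

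The plan is to follow the same strategy as in the proof of Lemma \ref{L:formalcentralizer}, replacing "the only formal regular foliation is given by a closed $1$-form in $E$" by the analogous statement for $U_{k,\nu,P}$ coming from \cite{LTT} (recalled in Subsection \ref{SS:formalclassification}): the only formal regular foliations on $(U_{k,\nu,P},C)$ either tangent or transverse to $C$ are the members $\hat{\mathcal F}_t$ of the pencil $\omega_t=\omega_0+t\omega_\infty$, and these are all convergent. First I would fix a formal automorphism $\hat\Phi\in\mathrm{Aut}(U_{k,\nu,P},C)$ inducing a translation on $C$; composing with the flows of $v_\infty=2i\pi\tau z\partial_z$ (which realizes arbitrary translations on $C$, cf. the exact sequence (\ref{eq:exactsequenceautomorphism}) and its generalization) I reduce to the case where $\hat\Phi$ fixes $C$ pointwise. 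The key observation is that $\hat\Phi$ must permute the set of regular formal foliations tangent or transverse to $C$, hence permutes the pencil $\{\hat{\mathcal F}_t\}_{t\in\mathbb P^1}$; since $\hat\Phi$ fixes $C$ and acts on the $2$-dimensional space of closed meromorphic $1$-forms with poles on $C$ (the analogue of $E$, generated by $\omega_0,\omega_\infty$), and in fact it must preserve the "period lattice" invariants of \cite[Section 2.4]{LTT} (this is the content of the footnote to Corollary \ref{cor:formalcentralizer}), the induced action on $\mathbb P^1_t$ is a projective transformation preserving the data that distinguishes the various $\hat{\mathcal F}_t$ — in particular it must preserve $\hat{\mathcal F}_0$ and $\hat{\mathcal F}_\infty$ (the connection foliation and the transverse fibration when $P=0$, resp. the two canonical foliations in general), so it fixes at least two points of $\mathbb P^1_t$.

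The next step is to translate the constraint "$\hat\Phi$ preserves $\omega_0$ up to a constant and $\omega_\infty$ up to a constant, and commutes with $F_{k,\nu,P}$" into an explicit normal form, exactly as in Lemma \ref{L:formalcentralizer}. Writing $\hat\Phi$ in the $(z,\xi)$ (or $(x,y)$) coordinates and imposing $\hat\Phi^*\omega_0=\alpha\,\omega_0$, $\hat\Phi^*\omega_\infty=\beta\,\omega_\infty+(\text{mult.\ of }\omega_0)$ forces $\hat\Phi$ to lie in the group generated by the flows of $v_0$ and $v_\infty$ together with the finite symmetries (\ref{eq:RootUnityAction}): the $\mathbb Z_{k'}$-action $P(X)\mapsto P(\zeta X)$ with $\zeta^{k'}=1$ lifts, on the normal form, to automorphisms of the form $(x,y)\mapsto(x,\zeta^{-1}y)$ up to a flow correction, and more generally the roots of unity $\zeta^{k}=1$ compatible with the group $\langle\phi_1,\phi_\tau\rangle$ give the finite part. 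The commutation with $F_{k,\nu,P}=\exp(v_{k,\nu,P})$ then rigidifies the continuous parameters so that the connected component is precisely $\exp(\mathbb C v_0 + \mathbb C v_\infty)\simeq\mathbb C^*\times\mathbb C^*$ (using that $v_0,v_\infty$ span a commutative Lie algebra whose time-$1$ flows of the torus directions are well-defined on the quotient), and the component group is the cyclic group $\mathbb Z/n$ where $n=n(k,P)$ is the order of the subgroup of $\mathbb Z_{k'}$ (hence a divisor of $k$) stabilizing $P$ up to the action (\ref{eq:RootUnityAction}) and compatible with $(a_1,a_\tau)$; convergence is automatic since every element of this group is explicitly a composition of flows of the convergent vector fields $v_0,v_\infty$ with a linear finite-order map.

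The main obstacle I expect is the bookkeeping identifying the exact finite group $\mathbb Z/n$ and showing $n\mid k$: one must check which of the a priori possible finite symmetries $(x,y)\mapsto(x+c,\zeta y)$ genuinely descend to $U_{k,\nu,P}$, i.e.\ normalize the group $\langle\phi_1,\phi_\tau\rangle$ and fix $P$ modulo (\ref{eq:RootUnityAction}); this requires re-reading how the normal form in \cite[Theorems 5.3, 5.4]{LTT} was pinned down and extracting the residual ambiguity. A secondary technical point is justifying rigorously that a \emph{formal} automorphism must preserve the finite-dimensional space of closed meromorphic $1$-forms and the period invariants — but this is exactly the generalization promised in the footnote to Corollary \ref{cor:formalcentralizer} and should follow from \cite[Section 2.4]{LTT} essentially verbatim, since periods are defined intrinsically and are therefore automorphism-invariant, and they determine $\nu,P$. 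Once these two points are settled, the statement about the Lie algebra of the identity component is immediate: the infinitesimal automorphisms are the vector fields $w$ with $[w,v_{k,\nu,P}]=0$ preserving $C$ and the pencil, which by the same $1$-form argument is exactly $\mathbb C v_0\oplus\mathbb C v_\infty$.
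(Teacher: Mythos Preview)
Your plan is essentially the paper's approach: use the classification of formal foliations from \cite{LTT} to see that $\hat\Phi$ acts on the pencil $(\mathcal F_t)$ and hence on the $2$-plane $E=\C\omega_0\oplus\C\omega_\infty$, then invoke the period invariants of \cite[Section 2.4]{LTT} to rigidify this action, and finally read off the explicit form of $\hat\Phi$ in $(x,y)$-coordinates. The paper's execution is slightly sharper at one point: rather than concluding only that $\hat\Phi$ fixes two points of $\mathbb P^1_t$ and preserves $\omega_0,\omega_\infty$ \emph{up to constants}, it uses the injectivity of the period map \cite[Corollary 2.7]{LTT} (together with the fact that $g$ acts trivially on $H^1(C)$, since it induces a translation) to conclude directly that $g^*$ is the \emph{identity} on $E$. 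This immediately forces, after composing with a time-$t'$ flow of $v_\infty$, the shape
\[
g(x,y)=\Bigl(x+yh(y),\ a\cdot\exp\bigl(t\tfrac{y^{k+1}}{1+\nu y^k}\partial_y\bigr)(y)\Bigr),\quad a^k=1,\ \omega_P\ \text{invariant under } y\mapsto ay,
\]
with $yh(y)$ determined; the case $a=1$ is exactly $\exp(tv_0)$. Your weaker ``up to a constant'' would still get there but with extra residue/leading-term bookkeeping you can avoid.

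On the finite part, your description is right in spirit but slightly misstated: the relevant condition is $a^k=1$ \emph{and} $P(aX)=P(X)$ (i.e.\ $P$ is \emph{fixed}, not merely ``stabilized up to'' the action (\ref{eq:RootUnityAction})). The paper sets $n=k$ when $P$ is constant and $n=\gcd(k,d_P)$ otherwise, where $d_P$ is the largest $d$ with $P$ invariant under all $d$-th roots of unity; then $h_n(z,\xi)=(z,e^{2i\pi/n}\xi)$ generates the $\Z/n$ factor and every $g$ writes uniquely as $h_n^l\circ\exp(tv_0)\circ\exp(t'v_\infty)$, giving $\Z/n\times(\C^*\times\C^*)$ with Lie algebra $\C v_0\oplus\C v_\infty$.
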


\begin{proof}
	 Let $E$ be the $\C$-vector space generated by $\omega_0$ and $\omega_\infty$. Set $G= \mathrm{Aut}(U_{k,\nu,P}, C)$. This group acts naturally on the pencil of foliations $(\F_t)$ and because there is no constant meromorphic function constant on $C$ (finiteness of the Ueda type), also on $E-\{0\}$. Moreover,  any $g\in G$ acts trivially on $H^1(C)$, then  for any $\omega\in E-\{0\}$, $\omega\not=\omega_\infty$, $g^*\omega$ and $\omega$ must have the same periods (one refers to \cite[Section 2.4]{LTT} for the notion of periods involved). As the period mapping is injective on $ E-\{0\}$ (see \cite[Corollary 2.7]{LTT}), one concludes that $g^*$ is the identity on $E$. In particular, according to the writing of $\omega_0$ and $\omega_\infty$ and up to composition with the flow of $v_\infty$, we deduce that $g$ takes the form 
$$g(x,y)=\big(x+y h(y), a\exp{(\frac{ty^{k+1}}{1+\nu y^k}{\partial_y})}\big)$$  
where 
\begin{itemize}
\item $a^k=1$, 
\item $\omega_P$ is invariant by the rotation $y\to ay$ and 
\item $yh(y)=\tau\int_{0}^y [{\exp{(\frac{ty^{k+1}}{1+\nu y^k}{\partial_y})}}^*\omega_P -\omega_P]$.
\end{itemize}
 Note that, when $a=1$, $g$ is  nothing but $\exp{tv_0}$. Conversely, the transformation defined by $(x,y)\to (x,ay)$ where $a$ satisfies the conditions above and $\exp{tv_0}$ are elements of $G$. Set $n=k$ if $P$ is constant. When $P$ has positive degree, let $d_P$ be the supremum of the positive integer $d$ such that $P$ is invariant under the action of the group of $d^{th}$ roots of unity over the polynomials

 $$(\mu,P(X))\to P(\mu X)$$
 
  Set $n=\mbox{gcd}(k,d_P)$. From the remarks above, we observe that for every $g\in G$, there exists $(l,t)\in \Z/n\times \C$ uniquely defined such that $g={h_n}^l\circ \exp{tv_0}$ and $h_n(z,\xi)=(z, e^{\frac{2i\pi}{n}}\xi)$, whence the sought isomorphism. Note that $n=1$ in general.
\end{proof}

We are now ready to undertake the analytic classification.

 \subsection{Trivial normal bundle, finite Ueda type: the fundamental isomorphism}\label{Uedatypek}
 Still using the formal classification of \cite{LTT} as recalled in Section \ref{SS:formalclassification}, one can investigate, without any further fundamental change, the analytic classification of neighborhood of elliptic curves with arbitrary finite Ueda type. For the sake of simplicity, we will first focus on the case where the normal bundle $N_C$ of $C$ is analytically trivial.  The formal normal forms  $(U_{k,\nu,P}, C)$ are parametrized by the triple $(\nu,k,P)$ where $\nu$ is a complex number, $k\in {\mathbb N}_{>0}$ is the Ueda type and $P$ is a polynomial map  of one indeterminate of degree $<k$; $P$ is uniquely defined modulo a certain action of the $k^{th}$ roots of unity on the coefficients of $P$ as described in subsection \ref{SS:formalclassification}.

Following subsection \ref{SS:formalclassification}, one has  
$$F_{k,\nu, P} ={\Phi}^{-1}\circ F_{1,0,0} \circ {\Phi}\ \ \ \text{where}\ \ \ \Phi(z,\xi)= (ze^{\int \frac{2i\pi\tau P(\xi)d\xi}{\xi}},\frac{\xi^k}{k} +\nu\log{\xi} )$$ 
and consequently 
$${\Phi}^{-1}(z,\xi)= (ze^{-Q(\varphi^{-1}(\xi))},\varphi^{-1}(\xi))\ \ \ \text{where}\ \ \ 
\left\{\begin{matrix}
\varphi(\xi)=\frac{\xi^k}{k} +\nu\log{\xi},\ \text{and}\\
Q(\xi)={\int \frac{2i\pi\tau P(\xi)d\xi}{\xi}}\hfill\hfill
\end{matrix}\right.$$ 
Note that $\Phi$ and $\Phi^{-1}$ make sense as univalued functions on sectors (with respect to the $\xi$ variable) of opening $< 2\pi$ and that their expressions depend on the choice of the determination of  $\log{\xi}$. Moreover $\varphi^{-1}(\xi)= k^{\frac{1}{k}}\xi^{\frac{1}{k}}  +o({|\xi|}^{\frac{1}{k}})$. Note that $\Phi$ provides a conjugation between the pencils of foliations respectively attached to $(U_{k,\nu,P},C)$ and $(U_{1,0,0},C)$.

 Let us choose $4$ intervals $I_i^l$, $i=1,...,4 \in\Z_4$,  $l=0,...,k-1 \in \Z_{k}$ as follows:
 $$I_1^0=]-\frac{\varpi}{k},\frac{\pi-\varpi}{k}[,\ \ \  I_2^0=]0,\frac{\pi}{k}[,\ \ \ I_3^0=I_1^0+\frac{\pi}{k},\ \ \ I_4^0=I_2^0+\frac{\pi}{k}$$
so that $kI_i^0=I_i$ and $I_i^0\cap I_{i+1}^0\not=\emptyset$, and for $l\not=0$, set
$$I_i^l=I_i^0 +\frac{2l\pi}{k}.$$
Define  
$$\Pi_{k,\nu,P}(z,\xi):=\Pi\circ \Phi(z,\xi)=\left(e^{\frac{2i\pi\xi^k}{k}}\xi^{2i\pi\nu},z e^{P_1(\xi)+2i\pi\tau\frac{\xi^k}{k}}\xi^{2i\pi\tau(\nu+ a_0)}\right)$$ 
where $a_0=P(0)$ is the constant term of $P$ and $P_1$ is a polynome of degree $<k$ whose exact expression is not relevant in the sequence. 
This transformation $\Pi_{k,\nu,P}$ maps  the germ of transversely sectorial domains $U_i^l$ of opening $I_i^l$ (which is $F_{k,\nu, P}$ invariant) onto the germ of deleted neighborhood of $L_i$. The full picture  is thus obtained by taking the successive family of sectors with consecutive overlaps $(I_1^0, ..., I_4^0, I_1^1,..........., I_1^{k-1},...., I_4^{k-1})$ and their corresponding system of transversal sectorial neighborhoods. This defines a cover of the deleted neighborhhood $U_{k,\nu,P}\setminus C$ equipped with a system of semi-local charts defined by $\Pi_{k,\nu,P}$. If one follows the cyclic order defined by the succession of overlapping sectors,  one conclude that this deleted neighborhood can be represented by the union of deleted neighborhoods  $V_i^l-L_i^l$   of the components of a cycle  $D=\bigcup L_i^l$ of $4k$ rational curves of zero type, namely the image under $\Pi_{k,\nu,P}$ of these sectors. The neighborhood $V_i^l$ of $L_i^l$ is equipped with the standard coordinate system of $V_i$ (neighborhood of $L_i$ in ${\mathbb P}^1\times {\mathbb P}^1$).

We will also set $V_{i,i+1}^l:=V_i^l\cap V_{i+1}^l$ if $i\not=0 \mod 4$,  $V_{4,1}^l:=V_4^{l-1}\cap V_{1}^l$ and consider $p_{i,i+1}^l\in V_{i,i+1}^l$ the crossing points of $D$.

\begin{figure}[htbp]
\begin{center}
\includegraphics[scale=0.5]{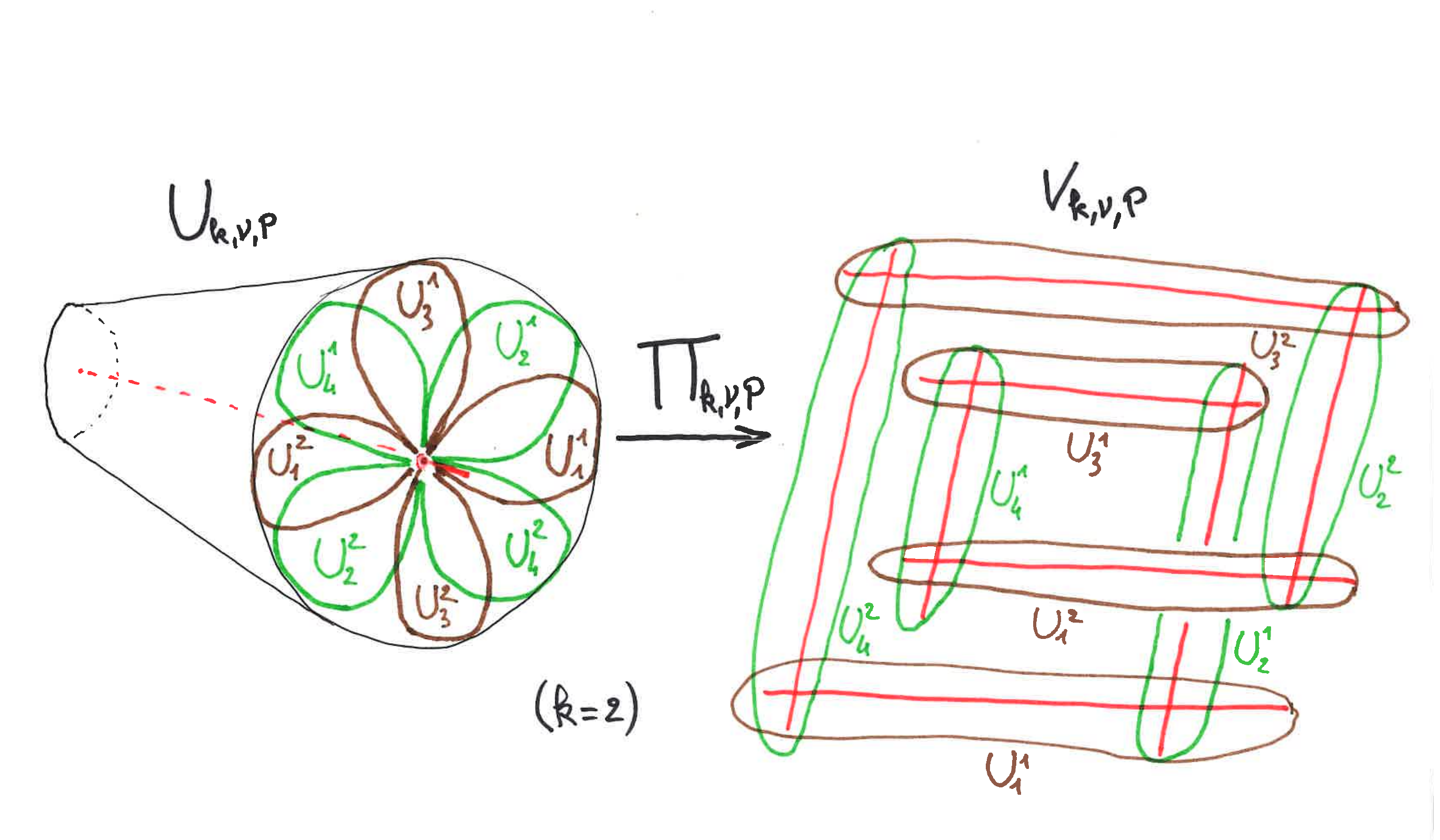}
\caption{{\bf The isomorphism  $\Pi_{k,\nu,P}$ for $k=2$}}
\label{pic:GeneralizedSerrek=2}
\end{center}
\end{figure}

  To summarize we have thus constructed via $\Pi_{k,\nu,P}$ a two dimensional germ of neighborhood $V_{k,\nu,P}$ of $D$ such that  each component of $D$ is embedded with zero self intersection and such that 
$$V_{k,\nu,P}\setminus D\simeq U_{k,\nu,P}\setminus C.$$
  
 The complex structure of $V_{K,\nu,P}$ is determined by trivial glueings (with respect to the $(X,Y)$ coordinate) along the intersections $V_{i,i+1}^l$ except on $V_{4,1}^0$  
 where identification is made by the monodromy  of $\Pi_{k,\nu,P}$ which is given by the linear diagonal map 
 
 \[d_{\nu,P}:\left.\begin{array}{ccc}(V_1^0,p_{4,1}^0) & \to&(V_4^{k-1},p_{4,1}^0)  \\(X,Y) & \to &(e^{-4\pi^2 \nu}X, e^{-4\pi^2\tau(\nu+a_0)}Y)\end{array}\right.\]

 Let us denote by $V_{k,\nu,P}$ the neighborhood of four lines obtained by this process. If one specializes this serie of observations to $k=1$ and then $P=\mu$ automatically constant, one obtains an isomorphism between the deleted neighborhoods 
 
  $$V_{1,\nu,\mu}\setminus D\simeq U_{1,\nu,\mu}\setminus C.$$
  
  In this context,  $D$ is a cycle of $4$ rational curve, and one recovers the description of Serre's example for the value $(0,0)$ of the parameter $(\nu,\mu)$.
 
 \begin{remark}\label{R:autvknp}
 On $V_{k,\nu,P}$,  the  vector fields induced respectively by $v_\infty$ and $v_0$ simply read as $X_\infty=2i\pi\tau Y{\partial_Y}$ and $X_0=-2i\pi X{\partial_X}-2i\pi\tau Y{\partial_Y}$.
 In particular the $\C^*\times \C^*$ part of $$\mathrm{Aut}(U_{k,\nu,P}, C)$$ corresponds at the level of $V_{k,\nu,P}$ to transformations of type $(X,Y)\to (aX,bY)$, $a,b\in\C^*$.
According to the description   given in (the proof of) Lemma \ref{L:formalcentralizergen} 
 it remains to determine the action corresponding to 
 $g=h_n:(z,\xi)\to (z,e^{\frac{2i\pi}{n}}\xi)$. These latter is given in restriction to $V_i^l$, $0\leq l<k$, of the following transformation 
 \begin{enumerate}
 \item $\Theta_n:(X,Y)\in V_i^l\to \left(e^{\frac{-4\pi^2\nu}{n}}X,e^{\frac{-4\pi^2\tau(\nu +a_0)}{n}}Y\right)\in V_i^{{l+\frac{k}{n}}}$, 
 when $l\leq k-1-\frac{k}{n}$, 
 \item 
 $\Theta_n:(X,Y)\in V_i^l\to \left(e^{{4\pi^2\nu}(1-\frac{1}{n})}X,e^{-4\pi^2\tau(\nu +a_0)(1-\frac{1}{n})}Y\right)\in V_i^{{l+\frac{k}{n}}}$
 otherwise.
\end{enumerate}
This is done by a straightforward computation.
\end{remark}

  \subsection{The analytic moduli space}
  Notations of Section \ref{Uedatypek}.
  Our purpose is to give a simple characterization of neighborhhoods $(U,C)$ formally conjugated to $(U_{k,\nu,P},C)$ up to analytic equivalence. The way to proceed is actually suggested in the previous Section  where things are  basically "modeled" on  Serre's example.
  
   Let us settle some notations in accordance with Section \ref{sec:SectorialDecompSym}.  
 For this, denote by $\mathrm{Diff}^1(V_i^{l},L_i^l)$ the group of germs of biholomorphisms of $(V_i^l,L_i^l)$
 which preserves the germ of divisor $(D\cap V_i^l)$ and tangent to the identity along $L_i^l$ and by 
$\mathrm{Diff}^1(V_{i,i+1}^{l},p_{i,i+1}^l)$ the  group  germs of diffeomorphisms of $(V_{i,i+1}^l,p_{i,i+1}^l)$ which preserves the germ of divisor $(D\cap V_{i,i+1})$ and tangent to the identity at $p_{i,i+1}^l$. 

\begin{dfn}
By definition, a cocycle consists in the datum  of a family of $4k$ germs of diffeomorphisms $\varphi=(\varphi_{i,i+1}^l)$ where $\varphi_{i,i+1}^l\in \mathrm{Diff}^1(V_{i,i+1}^{l},p_{i,i+1}^l)$.
\end{dfn}

Analogously to the case of Serre's example, we investigate in terms of cocycle the structure of the moduli space $\mathcal U_{k,\nu,P}$ of neighborhoods $(U,C)$ formally equivalent to $(U_{k,\nu,P},C)$ up to analytic equivalence.\footnote{Recall that we allow for this statement analytic isomorphisms inducing translations on $C$} 

\subsubsection{Normalizing sectorial maps}

Concerning the construction of normalizing maps,  let us explicit as before the modifications needed: let $(U,C)$ be formally equivalent to $(U_{k,P,\nu},C)$.

One can suppose that  $(U,C)=(\tilde U, \tilde C)/F$ where $F(z,\xi)=F_{k,\nu,P}(z,\xi)+(\Delta_1,\Delta_2)$ with $\Delta_i=O(\xi^{-N})$ where $N>>0$ is a positive integer arbitrarily large and that there exists a formal diffeomorphism of $(\tilde U,\tilde{C})$ 
$$\hat{\Psi}(z,\xi)=(z+\hat{g},\xi+\hat{h})=\left(z+\sum_{n\geq 1}b_n \xi^{-n},\xi+\sum_{n\geq 1}a_n \xi^{-n}\right)$$
where $a_n,b_n$ are entire functions on $\tilde C={\C}^*$, such that
\[F\circ \hat{\Psi}=\hat{\Psi}\circ F_{k,\nu,P}.\] 

One wants to generalize the construction performed  in Section \ref{S:sectorialnorm1} and replace  $\hat{H}$ by a collection of transversely sectorial biholomorphisms $\Psi_i^l\in \mathscr G^1({I_i^l})$, $\widehat{\Psi_i^l}=\hat{\Psi}$, \footnote{Notations and definitions of Section \ref{sec:sheavesautomorphisms}} 
that is
\begin{equation}\label{E:conjequedatype1bis}
F\circ {\Psi_i^l}={\Psi_i^l}\circ F_{k,\nu,P}
\end{equation}
 One adopts the notation of loc.cit without systematically mentioning the parameters $(k,\nu,P)$.

As before, we will just detail the construction of a normalizing conjugation map on the germ of transversal sectorial domain determined by $I_1:=I_1^{l_0}$ where $l_0\in \Z_k$ has been fixed.  Consider  the annulus ${\mathcal C}_{a,b}=\{{a}\leq|z|\leq b\}$, $a,b>0, b>{|q|}^{-1} a$. Consider the locus where the foliation $\F_1 $ is defined by the level sets $ \{Y_{k,\nu,P}=c \}\cap \{\arg{\xi}\in I_1\}$, $c$ "small" of 
$$Y_{k,\nu,P}=Y\circ\Phi=z e^{P_1(\xi)+2i\pi\tau\frac{\xi^k}{k}}\xi^{2i\pi\tau(\nu+ a_0)}
=ze^{2i\pi\tau\left(\frac{\xi^k}{k} +R(\xi)\right)}$$ 
where $R(\xi) 
=o(\xi^k)$.  It corresponds to the fibration $dY=0$ on a neighborhood $V_1^l$ of $L_1^l$.  Let $\delta (a,b)>0$ sufficiently small. 
 For every $0<\delta\leq \delta(a,b)$,  set 
 $$S_{a,b, \delta}=\{(z,\xi)\in {\C}^2|\ |Y_{k,\nu,P}(z,\xi)|\leq\delta,\  \ z\in {\mathcal C}_{a,b},\ \arg{\xi}\in I_1\}.$$ 
  For every complex number $c$, $0<|c|\leq\delta$, consider $S_{a,b,c}=\{Y_{k,\nu,P}=c\}\cap S_{a,b,\delta}$. 
  Note that $\bigcup_{0<|c|\leq\delta} S_{a,b,c}=S_{a,b,\delta}$ and that $F_{k,\nu,P}(S_{a,b,\delta})\approx S_{|q| a,|q| b,\delta}$.
 It is thus coherent to investigate the existence of a solution $\Psi=\Psi_{1}\in  \mathscr G^1({I_1}) $ of (\ref{E:conjequedatype1bis}) on the domain $S_{ a, b,\delta}'=S_{ a, b,\delta}\cup F_{k,\nu,P}(S_{ a, b,\delta})$. 
 
 Remark now that the conjugacy equation (\ref{E:conjequedatype1bis}) can be equivalently rewritten
  \[F_\Phi\circ \Psi_\Phi=\Psi_\Phi\circ F_{1,0,0}\]
 where $F_\Phi= \Phi\circ F\circ \Phi^{-1}$, $\Psi_\Phi=\Phi\circ \Psi\circ \Phi^{-1}$.   
  This amounts to determine $\Psi_\Phi$ on the domain $\Phi(S_{ a, b,\delta}')$.
 To do this, note that in view of asymptotic behavior of $\varphi$ and $\varphi^{-1}$ described in Section \ref{Uedatypek}, one has $F_\Phi=F_{1,0,0} +(\Delta_{1,\Phi},\Delta_{2,\Phi})$ where one can verifies that $(\widetilde{\Delta_{1,\Phi}},\Delta_{2,\Phi})= O(\frac{1}{\xi^{N}})$ setting $z\widetilde{\Delta_{1,\Phi}}= \Delta_{1,\Phi}$. 

 We are looking for solution of the form $\Psi_\Phi=\mbox{Id} +(h_\Phi,g_\Phi)$.
Here again,  this can be reformulated as 
\begin{equation}\label{E:F1U1gen}
{h}_\Phi\circ F_{1,0,0} -{h_\Phi}=\Delta_{2,\Phi}\circ {\Psi_{\Phi}}
\end{equation}
\begin{equation}\label{E:F2U2gen}
 {g_\Phi}\circ F_{1,0,0}-q {g_\Phi}=\Delta_{1,\Phi}\circ {\Psi_\Phi}
\end{equation}

First, we will still deal with the linearized equations
This can be reformulated as 
\begin{equation}\label{E:lin1U1gen}
{h}\circ F_{1,0,0} -{h}=\Delta_2
\end{equation}
\begin{equation}\label{E:lin2U2gen}
 {g}\circ F_{1,0,0}-q {g}=\Delta_1
\end{equation}
where we have omit the subscript $\Phi$ for notational convenience.

 In what follows, we will indeed provide a solution of (\ref{E:lin1U1gen}) (hence also for (\ref{E:lin2U2gen}) by the usual transform) with "good estimates" on a domain of the form $\Phi(S_{ a, b,\delta}')$ using a "leafwise" resolution with respect to the foliation defined by the levels of $Y$. This consists as before in solving a family of difference equations parametrized by the leaves space in the variable $\xi$ where the domains are slightly modified as explained now. For every complex number $c$, $0<|c|\leq\delta$, consider $S_{a,b,c}=\{Y_{k,\nu,P}=c\}\cap S_{a,b,\delta}$ and $\Phi(S_{a,b,c}) =\{(z,\xi)=(ce^{-2i\pi\tau\xi},\xi)\}$ where $\xi$ belongs to  
 \[  \Sigma_{c,a,b}=\{\xi\in\C:(\log{|c|}-\log{b})\leq\Re{(2i\pi\tau \xi+Q({\varphi^{-1}(\xi))}}\leq (\log{|c|}-\log{b})\}\]
 where a determination of the logarithm has been chosen in the sector in which we are working, namely $\arg{\xi}\in I_1$. Note also that $Q({\varphi^{-1}(\xi))}=o(\xi)$ and consequently the middle term in the above inequation "behaves" like $\Re{(2i\pi\tau \xi)}.$ 
  The remaining part of the proof then follows \textit{mutatis mutandis} the same line   by noticing that the 
   equation \[h\circ F_{1,0,0}-h=\Delta_2\] can be then rewritten as 
   \begin{equation}\label{E:HE2}
\varphi_c(\zeta+\lambda)-\varphi_c(\zeta)=\Delta_c(\zeta) 
\end{equation}
where $\lambda=-2i\pi\tau$, $\zeta=2i\pi\tau \xi$,  $\varphi_c(\zeta)=h(c e^{-\zeta},\frac{\zeta}{2i\pi\tau})$, and $\Delta_c(\zeta)=\Delta_\delta(c e^{-\zeta},\frac{\zeta}{2i\pi\tau})$  with $0<|c|\leq\delta$.

 We are then reduced to solve a family difference equations  in the "quasi" vertical strip  $$\Sigma_{c}=\{\zeta\in\C:\log{|c|}-\log{b}\leq\Re{(\zeta+Q{(\varphi^{-1}(\frac{\zeta}{2i\pi\tau}))})}\leq \log{|c|}-\log{a}\}$$ depending analytically on the parameter $c$ and we investigate the existence of an solution $\varphi_c$ on the domain $\Sigma_c\cup (\Sigma_c +\lambda)$. This can be carried out by resolving equation (\ref{E:DE1}) using the same method (that is, essentially Cauchy formula) replacing accordingly in the expression (\ref{E:solseries}) the integration along the vertical lines $L^-$, $L^+$ by $l(\zeta)=A,B$ where $l(\zeta)= \Re{(\zeta-R{(\varphi^{-1}(\frac{\zeta}{2i\pi\tau}))})}$. One also obtain in a similar way the same kind of estimates imposing the uniqueness of the solution. This allows by the fixed point method detailed in the previous section to solve the conjugation equation under the forms (\ref{E:F1U1gen}) and (\ref{E:F2U2gen}) on the relevant domains and finally exhibit a conjugacy sectorial transformation $$\Psi=\Phi^{-1}\circ \Psi_\Phi\circ\Phi,$$ well defined as a section of ${\mathscr G}^1$ over $I_1$ and having $\hat{\Psi}$ as asymptotic expansion. One can analogously construct conjugacy maps on  other sectors of opening $I_i^l$, $i=1,3$, $l\in \Z_k$ and also on $I_i^l$, $i=2,4$ by exchanging the roles of the foliations $\F_0$ and $\F_1$ following the process described in Section \ref{SS:othersectors}.

 \subsubsection{Cocycles versus analytic class and statement of the main result}

One borrows notation from Section \ref{sec:SectorialDecompSym} and defines  ${\mathscr G}^\infty [F_{k,\nu,P}]$ to be the subsheaf of ${\mathscr G}^\infty$ formed by germs sectorial holomorphic transformations flat to identity along $\tilde C$ and commuting to $F_{k,\nu,P}$. By exploiting the Proposition \ref{P:sectorstoP1XP1} and the fact that $(U_{k,\nu,P},C)$ is "sectorially modeled" on $(U_0,C)$ as described in Section \ref{Uedatypek}, one obtains the following Proposition:

\begin{prop}\label{P:sectorstoP14k} We have the following characterizations:
	\begin{itemize}
		\item $\Phi\in {\mathscr G}^\infty [F_{k,\nu,P}](I_i^l)$ if and only if $\Pi_{k,\nu,P}\circ \Phi=\varphi\circ\Pi_{k,\nu,P}$ where $\varphi\in\mathrm{Diff}^1(V_i^l,L_i^l)$;
		\item $\Phi\in{\mathscr G}^\infty [F_{k,\nu,P}](I_{i,i+1}^l)$ if and only if $\Pi_{k,\nu,P}\circ \Phi=\varphi\circ\Pi_{k,\nu,P}$ where $\varphi\in\mathrm{Diff}^1(V_{i,i+1}^l,p_{i,i+1}^l)$ except for $i=4$ and $l=k-1$ for which one has $$\Pi_{k,\nu,P}\circ \Phi=d_{\nu,P}\circ\varphi\circ\Pi_{k,\nu,P}.$$
\end{itemize}
\end{prop}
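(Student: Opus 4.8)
The plan is to reduce Proposition~\ref{P:sectorstoP14k} to the already-established Proposition~\ref{P:sectorstoP1XP1} by pulling everything back through the (multivalued, but univalued on each sector) transformation $\Phi$ relating $(U_{k,\nu,P},C)$ to $(U_{1,0,0},C)$. Recall from Section~\ref{Uedatypek} that $F_{k,\nu,P}=\Phi^{-1}\circ F_{1,0,0}\circ\Phi$ and that $\Pi_{k,\nu,P}=\Pi\circ\Phi$, with $\Phi$ a well-defined biholomorphism on each sector of opening $<2\pi$, in particular on (a lift of) each $I_i^l$. First I would fix $i\in\Z_4$ and $l\in\Z_k$, choose the determination of $\log\xi$ adapted to that sector, and note that conjugation by $\Phi$ gives an isomorphism of sheaves of groups ${\mathscr G}^\infty[F_{k,\nu,P}](I_i^l)\xrightarrow{\ \sim\ }{\mathscr G}^\infty[F_{1,0,0}](\Phi(I_i^l))$ sending $\Phi_0\mapsto \Phi\circ\Phi_0\circ\Phi^{-1}$. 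Here one must check that this conjugation preserves flatness along $\tilde C$; this is where the asymptotic estimates $\varphi^{-1}(\xi)=k^{1/k}\xi^{1/k}+o(|\xi|^{1/k})$ and the polynomial/logarithmic growth of $Q$ recorded in Section~\ref{Uedatypek} are used: composing a flat section with $\Phi^{\pm1}$ only changes the variable by something of polynomial growth in $\xi$, so exponential flatness (hence membership in ${\mathcal A}^\infty$, in the sense of Definition~\ref{def:asymptoticexpansion}) is preserved. Since $\Phi$ maps the transversely sectorial domain $U_i^l$ of opening $I_i^l$ onto a deleted neighborhood of $L_i$ inside $U_{1,0,0}$, one is then exactly in the situation of Proposition~\ref{P:sectorstoP1XP1}.

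The second step is to transport the conclusion of Proposition~\ref{P:sectorstoP1XP1} back. Write $\Phi\circ\Phi_0\circ\Phi^{-1}=\Phi_1\in{\mathscr G}^\infty[F_{1,0,0}]$; by Proposition~\ref{P:sectorstoP1XP1} there is $\psi\in\mathrm{Diff}^1(V_i,L_i)$ (resp.\ $\mathrm{Diff}^1(V_{i,i+1},p_{i,i+1})$) with $\Pi\circ\Phi_1=\psi\circ\Pi$. Composing with $\Phi$ on the right and using $\Pi_{k,\nu,P}=\Pi\circ\Phi$ yields $\Pi_{k,\nu,P}\circ\Phi_0=\psi\circ\Pi_{k,\nu,P}$, which is the desired statement once we identify the standard coordinate chart on $V_i^l$ with that of $V_i$ (this is precisely how $V_i^l$ was defined in Section~\ref{Uedatypek}). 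The converse direction is symmetric: given $\varphi\in\mathrm{Diff}^1(V_i^l,L_i^l)$, regard it as an element of $\mathrm{Diff}^1(V_i,L_i)$, apply the converse half of Proposition~\ref{P:sectorstoP1XP1} to get $\Phi_1\in{\mathscr G}^\infty[F_{1,0,0}](\Phi(I_i^l))$ with $\Pi\circ\Phi_1=\varphi\circ\Pi$, and set $\Phi_0=\Phi^{-1}\circ\Phi_1\circ\Phi$; the flatness-preservation argument above shows $\Phi_0\in{\mathscr G}^\infty[F_{k,\nu,P}](I_i^l)$, and it commutes with $F_{k,\nu,P}$ by construction.

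The one genuinely new point is the exceptional index $(i,l)=(4,k-1)$, i.e.\ the overlap $V_{4,1}^0=V_4^{k-1}\cap V_1^0$. There the two sectors $I_4^{k-1}$ and $I_1^0$ sit on opposite ends of the full circle of directions, so the two determinations of $\log\xi$ used on the two sides differ by $2i\pi$; equivalently, the two local uniformizations $\Phi$ differ by the monodromy of $\Pi_{k,\nu,P}$, which by the computation in Section~\ref{Uedatypek} is exactly the linear diagonal map $d_{\nu,P}(X,Y)=(e^{-4\pi^2\nu}X,\,e^{-4\pi^2\tau(\nu+a_0)}Y)$. Carrying this extra factor through the pullback/pushforward computation above replaces the relation $\Pi_{k,\nu,P}\circ\Phi_0=\varphi\circ\Pi_{k,\nu,P}$ by $\Pi_{k,\nu,P}\circ\Phi_0=d_{\nu,P}\circ\varphi\circ\Pi_{k,\nu,P}$, as claimed. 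I expect this bookkeeping of determinations and the verification that $d_{\nu,P}$ is the right correction to be the main obstacle — it is not hard, but it is the only place where the global structure of the $4k$-sector covering (and the fact that going once around in $\xi$ is a nontrivial deck-type transformation of $V_{k,\nu,P}$) really enters; everything else is a mechanical transfer through $\Phi$ of Proposition~\ref{P:sectorstoP1XP1}.
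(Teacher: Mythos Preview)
Your proposal is correct and follows exactly the route the paper indicates: the paper does not give a detailed proof but simply states that the proposition is obtained ``by exploiting Proposition~\ref{P:sectorstoP1XP1} and the fact that $(U_{k,\nu,P},C)$ is sectorially modeled on $(U_0,C)$ as described in Section~\ref{Uedatypek}'', which is precisely your reduction via conjugation by $\Phi$ together with the bookkeeping of the monodromy $d_{\nu,P}$ on the closing overlap. One small terminological slip: membership in ${\mathcal A}^\infty$ means decay faster than any power of $|\xi|^{-1}$, not exponential decay; your argument (polynomial change of variable preserves ``faster than any polynomial'' decay) is nonetheless the right one.
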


 \begin{dfn}\label{def:EquivalentCocyclesgen}
 	We say that two cocycles $\varphi$ and $\varphi'$ are equivalent if
 	$$\exists t,t'\in\C,\ \exists\varphi_{i}^l\in\mathrm{Diff}^1(V_i^l,L_i^l), \exists m\in \Z_n$$
 	\begin{equation}\label{eq:anequivcocyclegen}
 	\text{such that}\ \ \ {\varphi_{i,i+1}^{l+m\frac{k}{n}}}'=\theta_n^m\circ\phi\circ\varphi_i^l \circ\varphi_{i,i+1}^l \circ{\varphi_{i+1}^{l+\varepsilon (i)}}^{-1}  \circ\phi^{-1}\circ\theta_n^{-m}
 	\end{equation}
 	where $\phi=\exp{(tX_0}+{t'X_\infty})$  has thus the form $\phi(X,Y)=(aX,bY)$ (cf. Remark \ref{R:autvknp}).
 	We will denote this equivalence relation by $\approx$.
 \end{dfn}

	 Firstly, let us explain as in Section \ref{sec:SectorialDecompSym} how to associate a cocycle to a neighborhood  $(U,C)=(\tilde U,\tilde C)/<F>$ formally conjugated to $(U_{k,\nu,P},C)$. 
Recall that this means that there is a formal diffeomorphism (that can be assumed to be tangent to the identity along $\tilde C$) $\hat\Psi$ conjugating $F$ to $F_{k,\nu,P}$
	i.e. $F\circ \hat\Psi=\hat\Psi\circ F_{k,\nu,P}$.

	Recall (cf.\ref{Uedatypek}) that  for every pair $(i,l)$, there exists a section $\Psi_i^l$ of ${\mathscr G}^1(I_i^l)$
	such that
	$$\Psi_i^l\circ F=F_{k,\nu,P}\circ \Psi_i^l.$$
According to the description of $\mathrm{Aut}(U_{k,\nu,P},C)$, namely the fact that it contains only \textit{convergent} automorphisms,  
one can assume that the asymptotic expansion of the $\hat{\Psi_i^l}$ is constant equal to $\hat \Psi$. 
The $\Psi_i^l$ are unique up to post composition by a sectorial diffeomorphism $g_i^l\in {\mathscr G}^1[F_{k,\nu,P}](I_i^l)$ having the form 
\begin{equation}\label{E:leftcomposition}
 g_i^l=\exp{(tv_0)}\circ h_{i}^l
 \end{equation}
where $h_{i,l}\in\mathscr G^\infty[F_{k,\nu,P}](I_i^l)$. Actually, the flow of $v_0$ contains exactly all the automorphisms tangent to identity along $C$.

It follows that, on intersections $I_{i,i+1}^l= I_i^l\cap I_{i+1}^{l+\varepsilon(i)}$, $\varepsilon(i)=\delta_{i4}$, one obtains $4k$ germs of sectorial glueing biholomorphisms 
$$\Phi_{i,i+1}^l:=\Psi_i^l\circ\left({\Psi_{i+1}^{l+\varepsilon(i)}}\right)^{-1}\in\mathscr G^\infty[F_0](I_{i,i+1}^l).$$
One can now invoke Proposition \ref{P:sectorstoP14k}: setting $\Pi_i^l:=\Pi_{k,\nu,P}\circ\Psi_i^l$ (and taking into account the determination of $\Pi_{k,\nu,P}$ on the corresponding sectors), one can associate $4k$ corresponding cocycles:
\begin{equation}\label{eq:PiVarphiCocycleIdentity}
\Pi_i^l=\Pi\circ\Psi_i=\Pi\circ\Phi_{i,i+1}\circ\Psi_{i+1}=\varphi_{i,i+1}^l\circ\Pi\circ\Psi_{i+1}^l=\varphi_{i,i+1}^l\circ\Pi_{i+1}^l
\end{equation}
except for $i=4$ and $l=k-1$ for which one has 
$$\Pi_4^{k-1}=d_{\nu,P}\circ \varphi_{1,4}^0 \circ \Pi_1^0.$$
We have therefore associated to each neighborhood $(U,C)$ formally equivalent to $(U_{k,\nu,P},C)$
a cocycle $\varphi=(\varphi_{i,i+1}^l)_{i\in\Z_{4k}}$ which is unique up to the freedom for the choice of $\Psi_i$'s.
 We have now all the ingredients to state and prove our main Theorem
 
  \begin{THM}\label{prop:AnalyticClassifgen}
 Two neighborhood $(U,C)$ and $(U',C)$ formally equivalent to $(U_{k,\nu,P},C)$  are
analytically equivalent \footnote{Recall that that formal/analytic conjugations are allowed to induce tranlations on $C$.} 
if, and only if, the corresponding cocycles are equivalent
 	\begin{equation}\label{eq:anequivcocyclegen}
 	(U,C)\stackrel{\text{an}}{\sim}(U',C)\ \ \ \Leftrightarrow\ \ \ \varphi\approx\varphi'.
 	\end{equation}
Moreover, every cocycle can be realized by the process described above.
 \end{THM}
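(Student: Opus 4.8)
The plan is to mimic the proof of Theorem~\ref{THM:MainClassif} (i.e. Proposition~\ref{prop:AnalyticClassif1} together with the surjectivity argument of Section~\ref{SS:Construction}), replacing Serre's cocycle on $4$ curves by the cocycle on $4k$ curves constructed in Section~\ref{Uedatypek}, and keeping track of two extra subtleties: the diagonal monodromy map $d_{\nu,P}$ sitting on $V_{4,1}^0$, and the finite cyclic factor $\Z/n$ in $\mathrm{Aut}(U_{k,\nu,P},C)$ coming from Lemma~\ref{L:formalcentralizergen}.

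\medskip\noindent\textbf{Injectivity of $\varphi\mapsto$ analytic class.}
First I would show that equivalent cocycles give analytically equivalent neighborhoods, and conversely. Given an analytic isomorphism $\Phi:(U,C)\to(U',C)$ inducing a translation on $C$, it lifts to a section of $\mathscr G^1(\bS^1)$ (after composing with an element of $\mathrm{Aut}(U_{k,\nu,P},C)$, which by Lemma~\ref{L:formalcentralizergen} is convergent and of the stated product form). Write $\Psi_i^{l\,\prime}\circ\Phi=g_i^l\circ\Psi_i^l$ with $g_i^l=\exp(tv_0)\circ h_i^l$ as in~(\ref{E:leftcomposition}); since all $\widehat{\Psi_i^l}=\widehat{\Psi_j^{l'}}=\hat\Psi$ and likewise for the primed family, the exponent $t$ is forced to be the same on every sector. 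The finite part of $\mathrm{Aut}(U_{k,\nu,P},C)$ acts by the maps $\Theta_n^m$ of Remark~\ref{R:autvknp}, which renumber the sectors $l\mapsto l+m\tfrac kn$. Computing $\Phi_{i,i+1}^{l\,\prime}=(\Psi_i^{l\,\prime}\circ\Phi)\circ(\Psi_{i+1}^{l+\varepsilon(i)\,\prime}\circ\Phi)^{-1}$ and pushing everything through $\Pi_{k,\nu,P}$ via Proposition~\ref{P:sectorstoP14k}, and being careful that the push-forward of $\exp(tv_0)$ is $\phi(X,Y)=(aX,bY)$ while the push-forward of $\Theta_n^m$ is $\theta_n^m$, I obtain exactly the relation~(\ref{eq:anequivcocyclegen}) of Definition~\ref{def:EquivalentCocyclesgen}. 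The conjugation identities are unaffected across the seam $V_{4,1}^0$ because $d_{\nu,P}$ commutes with the diagonal $\phi$ and with $\theta_n$ (both being diagonal), so the extra $d_{\nu,P}$ factor on $i=4$, $l=k-1$ is carried along without creating a new obstruction. Conversely, reversing each of these steps (exactly as in the last sentence of the proof of Proposition~\ref{prop:AnalyticClassif1}) produces an analytic conjugacy from $\varphi\approx\varphi'$.

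\medskip\noindent\textbf{Surjectivity: realizing an arbitrary cocycle.}
Given a cocycle $\varphi=(\varphi_{i,i+1}^l)$ with $\varphi_{i,i+1}^l\in\mathrm{Diff}^1(V_{i,i+1}^l,p_{i,i+1}^l)$, I first use Proposition~\ref{P:sectorstoP14k} backwards to produce sectorial biholomorphisms $\Phi_{i,i+1}^l\in\mathscr G^\infty[F_{k,\nu,P}](I_{i,i+1}^l)$ with $\Pi_{k,\nu,P}\circ\Phi_{i,i+1}^l=\varphi_{i,i+1}^l\circ\Pi_{k,\nu,P}$ (and $\Pi_{k,\nu,P}\circ\Phi_{4,1}^{k-1}=d_{\nu,P}\circ\varphi_{1,4}^0\circ\Pi_{k,\nu,P}$ for the special seam, which is consistent since $d_{\nu,P}$ is just the monodromy of $\Pi_{k,\nu,P}$ built into the model). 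The covering of $\bS^1$ by the $4k$ intervals $I_i^l$ has only consecutive overlaps, so Corollary~\ref{cor:surgeries} (Newlander--Nirenberg surgery, verbatim as in Section~\ref{SS:Construction}) yields $\Psi_i^l\in\mathscr G^1(I_i^l)$ with $\Phi_{i,i+1}^l=\Psi_i^l\circ(\Psi_{i+1}^{l+\varepsilon(i)})^{-1}$ and common asymptotic expansion. Since each $\Phi_{i,i+1}^l$ commutes with $F_{k,\nu,P}$, the transformations $(\Psi_i^l)^{-1}\circ F_{k,\nu,P}\circ\Psi_i^l$ agree on overlaps and glue (extended by the identity along $\tilde C$) to a global $F_\varphi$ on $(\tilde U,\tilde C)$; the quotient $(U_\varphi,C):=(\tilde U,\tilde C)/\langle F_\varphi\rangle$ is formally equivalent to $(U_{k,\nu,P},C)$ and, by construction, has cocycle $\varphi$. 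Finally, the normalizing sectorial maps for a genuinely given $(U,C)\formal(U_{k,\nu,P},C)$ are exactly the sections $\Psi_i^l$ constructed in Section~\ref{Uedatypek} via the $\Phi$-conjugation trick and the modified difference-equation resolution, so every neighborhood in the formal class does give rise to such a cocycle, and the two constructions are mutually inverse on equivalence classes.

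\medskip\noindent\textbf{Main obstacle.}
The technically delicate point is the bookkeeping around the finite group $\Z/n$ and the seam $V_{4,1}^0$: one must verify that when the normalizations $\Psi_i^l$ are modified by an element of $\mathrm{Aut}(U_{k,\nu,P},C)$, the induced change of cocycle is precisely~(\ref{eq:anequivcocyclegen}) --- in particular that the renumbering $l\mapsto l+m\tfrac kn$ produced by $h_n$ matches the index shift in the definition of $\approx$, and that the two different scaling formulas for $\Theta_n$ in Remark~\ref{R:autvknp} (the ``wrap-around'' case $l>k-1-\tfrac kn$ versus the generic case) assemble correctly with the single diagonal seam map $d_{\nu,P}$ so that the composite is still a well-defined automorphism of the cycle $V_{k,\nu,P}$. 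Everything else is a routine transcription of Sections~\ref{sec:AnalyticClass} and~\ref{SS:Construction}. Once this is checked, the equivalence~(\ref{eq:anequivcocyclegen}) and the realization statement follow, completing the proof. \qed
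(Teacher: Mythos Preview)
Your proposal is correct and follows essentially the same route as the paper: you lift an analytic conjugacy to $(\tilde U,\tilde C)$, decompose it via $\mathrm{Aut}(U_{k,\nu,P},C)=\Z/n\times(\C^*\times\C^*)$ from Lemma~\ref{L:formalcentralizergen}, compare the two families of sectorial normalizations using the freedom~(\ref{E:leftcomposition}), push through $\Pi_{k,\nu,P}$ with Proposition~\ref{P:sectorstoP14k}, and realize an arbitrary cocycle by the Newlander--Nirenberg surgery of Corollary~\ref{cor:surgeries}. The paper's proof is the same computation, written slightly more compactly (it carries the factor $h_n^m\circ\exp(t'v_\infty)$ explicitly through the conjugacy rather than absorbing it first), and it leaves the seam $d_{\nu,P}$ and the wrap-around formulas for $\Theta_n$ implicit where you spell them out.
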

 
\begin{proof}
Now, consider a biholomorphism between two neighborhood  $(U,C)\to(U',C)$ formally conjugated to $(U_{k,\nu,P},C)$.  According to the description of $\mathrm{Aut}(U_{k,\nu,P},C)$, it is represented by a bihomorphism of $(\tilde{U},\tilde{C})$ taking the form $h_n^m\circ \exp{(t'v_\infty)} \circ\Psi$ where
	$\Psi\in\mathscr G^1(\bS^1)$ and satisfying $h_n^m \circ \exp{(t'v_\infty)} \circ\Psi\circ F=F'\circ h_n^m \circ \exp{(t'v_\infty)}\circ\Psi$. Let $(\Psi_i^l)$ and 
	$({\Psi_i'}^l)$ be the sectorial normalizations used to compute the invariants $\varphi$
	and $\varphi'$. Clearly, ${(h_n^m\circ \exp{(t'v_\infty)})}^{-1}\circ{\Psi_i'}^{l+m\frac{k}{n}}\circ(h_n^m\circ \exp{(t'v_\infty)})\circ\Psi$ provides for $(U,C)$ a new collection of sectorial trivializations defined on $I_i^{l}$ 
tangent to identity.

 By virtue of \ref{E:leftcomposition},  we can then write
	$${\Psi_i'}^{l+m\frac{k}{n}}\circ h_n^m \circ \exp{(t'v_\infty)}\circ \Psi=(h_n^{m}\circ \exp{(t'v_\infty)})\circ\exp(tv_0)\circ\Phi_i^{l}\circ\Psi_i^{l}$$ {with} $\Phi_i^l\in\mathscr G^\infty[F_{k,\nu,P}](I_i^l).$
Therefore,
	we have
	$$\Phi_{i,i+1}'^{l+m\frac{k}{n}}=(\Psi_i'^{l+m\frac{k}{n}}\circ h_n^m \circ \exp{(t'v_\infty)}\circ\Psi)\circ(\Psi_{i+1}'^{l+m\frac{k}{n}+\varepsilon (i)}\circ h_n^m \circ \exp{(t'v_\infty)}\circ\Psi)^{-1}$$
$$=h_n^m\circ\varphi\circ\Phi_i^{l}\circ\Phi_{i,i+1}^{l}\circ{\Phi_{i+1}^{l+\varepsilon (i)}}^{-1}\circ{\varphi}^{-1}\circ\ h_n^{-m}.$$
	where $\varphi:= \exp{(tv_0)}\circ\exp{(t'v_\infty)}=\exp{(t'v_\infty)}\circ\exp{(tv_0)}$.	After factorization through $\Pi_{k,\nu,P}$, using 
	Proposition \ref{P:sectorstoP14k},
	we get the expected equivalence relation (\ref{eq:anequivcocyclegen}) for $\varphi$ and $\varphi'$.
	Conversely, if $\varphi\approx\varphi'$, then we can trace back the existence of an analytic conjugacy 
	$\Phi:(U,C)\to(U',C)$ by reversing the above implications. Finally, it suffices to mimick the construction performed in Section \ref{SS:Construction} in order to realize every cocycle.
\end{proof}

\begin{remark}\label{rem:weakequivalence}
One thus observes that two  cocycles are equivalent iff they lie on the same orbit over some action (that the reader will easily explicit) of  $\Z/n \times{({\mathcal O}^* \times{\mathcal O}^*)}^{\times_{\C^* \times\C^* }^{4k}}$. 	
One can strenghten the analytic equivalence by demanding that conjugations induce the identity on $C$. In that case, we have to replace $\phi$ by $\exp{(tX_0)}$ in the statement of definition \ref{def:EquivalentCocyclesgen}. \end{remark}

 \section{Torsion normal bundle.}\label{S:torsion}

  Recall that the elliptic curve is regarded as the quotient $C=\C/\Z\oplus \tau\Z$. We maintain notations of Theorem \ref{thm:NormalFormNeigh0<p<k} and we also set $d:=\mbox{gcd}(m_1,m_\tau)$.
   Let $(U_{m}, C_{m})$ be an $m$-cyclic cover  trivializing the normal bundle having the following form: 
$$(U_{m}, C_{m})=(\C_x\times \C_y,\{y=0\})/\langle\phi_1^{m_1}, \phi_1^l\circ \phi_\tau^{\frac{m_\tau}{d}}\rangle$$ where $l$ is an integer such that $a_1^l a_\tau^{\frac{m_\tau}{d}}=1$. The deck transformation group $G=\Z/m$ is then generated by $\phi_1^v \circ \phi_{\tau} ^w$
where $(v,w)$ is any pair of integers fullfilling $a_1^v a_\tau^w=e^{\frac{-2i\pi}{m}}$.
Note that, as an effect of this  cover, the modulus of the elliptic curve changes accordingly and more precisely $C_{m}$ is determined by the lattice $\langle m_1,l+ \frac{m_\tau}{d} \tau\rangle$.

An easy calculus yields
\begin{equation}\label{eq:NormalFormNeighGroup0<p<k}
\left\{\begin{array}{ccc} \phi_1^{m_1}(x,y)= (x+m_1\ , y) \hfill\hfill\\ 
\phi_1^l\circ \phi_\tau^{\frac{m_\tau}{d}}(x,y)=(x+l+\frac{m_\tau}{d} \tau+\tau\int_0^y \lbrack({{\varphi_{k,\nu}}^{\frac{ m_\tau}{d}})}^*\omega_P-\omega_P\rbrack \ ,\ {\varphi_{k,\nu}}^{\frac{ m_\tau}{d}}(y))\end{array}\right. 
\end{equation}
and conjugating by the transformation $\alpha:(x,y)\to (\frac{x}{m_1}, ay)$, where $a={(\frac{m_\tau}{d})}^{\frac{1}{k}}$,  one can reduce to the simplest and usual normal formal form
$$(U_{m}, C_{m})\simeq(\C_x\times \C_y,\{y=0\})/(F_1, F_{\tau'})$$ 
where 
$$F_1(x,y)= (x+1,y),\ \ \ 
F_{\tau'}(x,y)= (x+\tau' + h_{k,\nu', P'}(y), \varphi_{k, \nu'}(y)),$$
and 
$$\tau'=\frac{1}{m_1}(l+\frac{m_\tau}{d} \tau),\ \ \ \nu'= \frac{d\nu}{m_\tau},\ \ \ P'(X)=\frac{\tau P(X^m)}{\tau' m_1},$$
 $$h_{k,\nu', P'}(y)=\tau\int_0^y \lbrack\varphi_{k,\nu'}^*\omega_{P'}-\omega_{P'}\rbrack.$$ 
 If one adopts the presentation in the variable $(z,\xi)$, this corresponds to the neighborhood  labeled identically as in Section \ref{SS:formalclassification} (except that $C$ is replaced by its cover $C_m$ or equivalently $\tau$ par $\tau'$): its presentation as a quotient is given by $({\C}_z^*\times\overline{{\C}_\xi} , \{\xi=\infty\})/\langle F_{k,\nu',P' }\rangle$ where $F_{k,\nu',P'}= \exp{v_{k,\nu',P'}}$ and $v_{k,\nu',P'}=  \frac{1}{\xi^k +\nu'} (-\xi{\partial_\xi} +2i\pi\tau' zP'(\xi){\partial_z})+2i\pi\tau'z{\partial_z}$. The foliations $\F_0$ and $\F_1$ are respectively defined by the levels  of $\xi$ and $ Y_{k,\nu',P'}=ze^{2i\pi\tau'(\frac{\xi^k}{k} +V(\xi^m))}\xi^{2i\pi\tau'{(\nu'+ a_0')}}$ where $V$ is a polynomial of degree $<k'=\frac{k}{m}$ that we do not need to explicit and $a'(0)=P'(0)$. From Section \ref{Uedatypek}, one knows that $(U_m,C_m)$ is  parametrized by a neighborhood $V_{k,\nu', P'}=\bigcup V_i^l$ of $4k$ lines via the map 
  $$\Pi_{k,\nu',P'}=(e^{\frac{2i\pi\xi^k}{k}}\xi^{2i\pi\nu'},J_{k,\nu',P'}). $$ 
  For the sake of clarity and coherence, it will be natural to denote this neighborhood by $(U_{k,\nu,P'},C')$ where $C'=C_m$.
  
In order to recover the structure of the original neighborhood, we have to determine the identifications induced by the deck transformation group $G$ which is cyclic of order $m$ and generated by a transformation 
     $$g_{v,w}=\Phi_1^v \circ \Phi_{\tau} ^w=\mathcal A\circ\exp{(-\beta_1 v_{k,\nu',P'})},\ \ \ \Phi_1:=\alpha\circ\phi_1\circ \alpha^{-1},\ \ \  \Phi_{\tau}:=\alpha\circ\phi_{\tau}\circ\alpha^{-1}$$ 
     where 
     \begin{itemize}
     	\item  $(v,w)$ is the fixed pair of integers such that $a_1^v a_\tau^w=e^{-\frac{2i\pi}{m}}$,
     	\item $\beta_1=-\frac{wd}{m_\tau}$,
     	\item  $\mathcal A$ is the affine map $\mathcal A (x,y)=(x+\beta_2,  e^\frac{-2i\pi}{m} y)$ with  $\beta_2=\frac{v}{m_1}-\frac{wld}{m}$.
     \end{itemize}
        In the $(z,\xi)$ coordinates, $\mathcal{A}$ corresponds to the linear map $d(z,\xi)= (e^{{2i\pi}{\beta_2}}z, e^\frac{2i\pi}{m} \xi )$. In particular, one has $d^m=\mbox{id}$. Note that the vector field $X_{k,\nu',P'}=-2i\pi X\partial_X$ on $V_{k,\nu', P'}$ corresponds via $\Pi_{k,\nu',P'}$  to $v_{k',\nu',P'}$. Its flow at time $-\beta_1$ preserves each individual neighborhood $V_i^l$ of the component $L_i^l$ of the cycle and reads as  
     $$(X,Y)\to (e^{2i\pi\beta_1}X,Y).$$
     
     The action 
  of $g_{v,w}$  by "sectorial permutation" 
  is then explicitely given in the  $(X,Y)$ coordinates (determining the neighborhood of $4k$ lines as described in Section\ref{Uedatypek})  by 
 \begin{equation}
   	 \mathscr D:(X,Y)\in V_i^l\to (e^{\frac{-4\pi^2\nu'}{m}+2i\pi{\beta_1}}X,e^{\frac{-4\pi^2\tau'(\nu' +a_0')}{m}+2i\pi\beta_2}Y) \in V_i^{l+\frac{k}{m}}
 \end{equation}
when $l\leq k-1-\frac{k}{m}$, and by 
   \begin{equation}
    \mathscr D:(X,Y)\in V_i^l\to (e^{{4\pi^2\nu'}(1-\frac{1}{m})+2i\pi\beta_1}X,e^{4\pi^2\tau'(\nu' +a_0')(1-\frac{1}{m})+2i\pi\beta_2}Y)\in V_i^{l+\frac{k}{m}} 
   \end{equation}
   	otherwise.
We will denote by $$(U_{k,\nu',P'}^G,C):=(U_{k,\nu'},C')/G $$ this description of the original neighborhood as a finite quotient.

  At the level of the neighborhood $V_{k,\nu',P'}$ of rational curves, these identifications are given is  by the action of the order $m$ cyclic permutation $\mathscr D$   
  and we eventually end up with a neighborhood of $4k'$ rational curves of zero type, namely the quotient  $$V_{k,\nu',P'}^G:=V_{k,\nu',P'}/\mathscr D. $$
  
  Indeed, set $u=\xi^m$ and consider the function 
  $$\Pi_{k,\nu',P'}^G(z,\xi)=(\xi^{-m\beta_1 }e^{\frac{2i\pi u^{k'}}{k}} u^{\frac{2i\pi\nu'}{m}+\beta_1},\xi^{-m\beta_2}ze^{2i\pi\tau'(\frac{u^{k'}}{k} +V(u))} u^{2i\pi\tau'(\frac{\nu'+ a_0'}{m})+\beta_2})$$
as  defined and univaluate on each of the $4k'$ "multisectorial domains " $$\tilde U_i^{l'}:=\bigsqcup_j U_i^{l+\frac{jk}{m}},$$ $l'=l[\frac{k}{m}]$ by the choice of a determination of the logarithm of $u$. One can notice that $G$ acts transitively on the set of sectors of $\tilde U_i^{l'}$ and that $\Pi_{k,\nu',P'}^G$ is $G$ invariant. On the other hand,  $\Pi_{k,\nu',P'}^G$ coincides with  $\Pi_{k,\nu',P'}$ (up to left composition with a diagonal linear map) on each sector $V_i^l$, hence is constant and separating along the orbits of $F_{k,\nu',P'}$. This shows, as claimed previously, that the (deleted) neighborhood $(U_{k,\nu',P'}^G,C)$ can be represented as the (deleted) neighborhood $V_{k,\nu',P'}^G$ of a cycle $\tilde D=\bigcup \tilde {L}_i^{ l'}$ of $4k'$ rational curves of zero type. More precisely, each individual neighborhood $\tilde{V}_i^{ l'}$, of $\tilde{L}_i^{ l'}$ is the image under $\Pi_{k,\nu',P'}^G$ of $\tilde{U}_i^{l'}$ and is thus equipped as before by coordinates $(X,Y)$ determined by each component of  $\Pi_{k,\nu',P'}^G$. These coordinates glue together trivially on overlaps except on $\tilde {V}_1^0$ (defined as in Section \ref{Uedatypek}) 
where it is given by the linear diagonal map 
$$d^G :(X,Y)\to (e^{(-4\pi^2 \frac{\nu'}{m}+2i\pi \beta_1)}X,e^{(-4\pi^2\tau' (\frac{nu'+ a_0'}{m})+2i\pi\beta_2)}Y ).$$
The reader should compare with the case of resonant diffeomorphisms of one variable \cite[Section 10.3, p. 592 ]{MartinetRamis2}.

Let $(U^1,C)$ be  a neighborhood formally conjugated to $(U_{k,\nu',P'}^G,C)$. Again by making use of \cite{Siu} (See proof of Lemma \ref{L:firstpresentation}), one can suppose that $(U^1,C)$ can be represented as the quotient 
$$ (\C_x\times \C_y,\{y=0\})/\langle\phi_1,  \phi_\tau^1\rangle$$ 
so that there exists a formal diffeomorphism of $(\C_x\times \C_y,\{y=0\})$ commuting with $\phi_1$ that can be assumed to be tangent to the identity along $C$ and conjugating $\phi_\tau^1$ to $\phi_\tau$. 

By taking the same triple of integer $(l,v,w)$ than before, one can argue passing through the $m$-cyclic $(U_m^1,C_m)$ cover of $(U^1,C)$ trivializing the normal bundle. It can be represented as the quotient 
$( \C_z^* \times \bar \C_\xi, \{\xi=\infty\})/F$. Let $G^1$ be the attached deck transformation group with generator $g_{v,w}^1=\Phi_1^v \circ {\Phi_{\tau}^1} ^w$ 
where one sets as before $\Phi_1:=\alpha\circ\phi_1\circ \alpha^{-1}, {\Phi_{\tau}^1}:=\alpha\circ{\phi_{\tau}^1} \circ\alpha^{-1}$. Let $\rho:G\to G^1$ the isomorphism of cyclic group mapping $g_{v,w}$ to $g_{v,w}^1$. 
The formal conjugation between the original neighborhoods, 
can be translated into the existence of a formal transformation map $\hat \Psi$
conjugating $F$ to $F_{k,\nu,P}$, 
i.e. $F\circ \hat\Psi=\hat\Psi\circ F_{k,\nu,P}$ and which is in addition \textit{equivariant} with respect to $\rho$:

$$\forall g\in G, \hat \Psi\circ g=\rho(g)\circ \hat\Psi .$$

Recall (cf.\ref{Uedatypek}) that  for every pair $(i,l)$, there exists a section $\Psi_i^l$ of ${\mathscr G}^1(I_i^l)$ 
such that
$$\Psi_i^l\circ F_{k,\nu,P}=F\circ \Psi_i^l.$$ such that $\hat{\Psi_i^l}=\hat \Psi$. Moreover, the collection of these sectorial normalisations can be chosen equivariantly, ie: $$\Psi_i^{l+j\frac{jk}{m}}\circ g_{v,w}^j=\rho(g_{v,w}^j)\circ\Psi_i^l.$$ This last point is justified by the fact that $G$ and $G^1$ lie respectively in the centralizer of $F_{k,\nu',P'}$ and $F$.

This thus provides a "multisectorial conjugation" on each $\tilde {U_i^{l'}}$. One can mimick the arguments presented in  Section \ref{S:Generalization} and thus obtain the description of the analytic moduli space which is thus determined by a cocycle $\varphi$ with $4k'$ components modulo the identifications given in Theorem \ref{prop:AnalyticClassifgen}.


\begin{thebibliography}{9}

\bibitem{Arnold} V. I. Arnol$'$d, ,
\emph{Bifurcations of invariant manifolds of differential equations,
              and normal forms of neighborhoods of elliptic curves},
{Funkcional. Anal. i Prilo\v zen.}, {\bf10} (1976) 1-12.

\bibitem{Atiyah} M. F. Atiyah, 
\emph{Complex fibre bundles and ruled surfaces},  
Trans. Amer. Math. Soc.  {\bf 85}  (1957) 181-207.



\bibitem{leaves} B. Claudon, F. Loray, J. V. Pereira  and F. Touzet,
\emph{Compact leaves of codimension one holomorphic foliations on projective manifolds},
Ann. Scient. \'Ec. Norm. Sup. {\bf 51} (2018) 1457-1506.

\bibitem{Ecalle1} J. \'Ecalle, 
\emph{Les fonctions r\'esurgentes. Tome I.
Les alg\`ebres de fonctions résurgentes.} Publications Math\'ematiques d'Orsay 81, 5. 
Universit\'e de Paris-Sud, D\'epartement de Math\'ematique, Orsay, 1981. 247 pp. 

\bibitem{Ecalle2} J. \'Ecalle, 
\emph{Les fonctions résurgentes. Tome II. 
Les fonctions r\'esurgentes appliqu\'ees \`a l'it\'eration.} Publications Math\'ematiques d'Orsay 81, 6. 
Universit\'e de Paris-Sud, D\'epartement de Math\'ematique, Orsay, 1981. pp. 248–531. 


\bibitem{MaycolFrank} M. Falla Luza  and F. Loray,
\emph{Projective structures and neighborhoods of rational curves},
 arXiv:1707.07868


\bibitem{GS} X. Gong and L. Stolovitch,
\emph{Equivalence of Neighborhoods of Embedded Compact Complex Manifolds and Higher Codimension Foliations},
arXiv:2007.05256


\bibitem{Grauert} H. Grauert,
\emph{\"{U}ber {M}odifikationen und exzeptionelle analytische {M}engen},
{Math. Ann.} {\bf 146} (1962) 331-368.

\bibitem{Hartshorne} R. Hartshorne,
\emph{Ample subvarieties of algebraic varieties.}
Notes written in collaboration with C. Musili. Lecture Notes in Mathematics, 
Vol. 156 Springer-Verlag, Berlin-New York 1970 xiv+256 pp. 

\bibitem{Hwang} J.-M. Hwang,
\emph{An application of Cartan's equivalence method to Hirschowitz's conjecture on the formal principle.} 
Ann. of Math. {\bf 189} (2019) 979-1000. 

\bibitem{Ilyashenko} Y. S. Il{$'$}yashenko,
\emph{Imbeddings of positive type of elliptic curves into complex surfaces},
{Trudy Moskov. Mat. Obshch.} {\bf 45} (1982) 37-67.

\bibitem{Kodaira} K. Kodaira, 
\emph{A theorem of completeness of characteristic systems for analytic families 
of compact submanifolds of complex manifolds},  Ann. of Math. {\bf 75} (1962) 146-162.

\bibitem{Koike} T. Koike, 
\emph{ Higher codimensional Ueda theory for a compact submanifold with unitary flat normal bundle.} 
Nagoya Math. J. 238 (2020) 104-136.

\bibitem{LTT} F. Loray, O. Thom and F. Touzet,
\emph{Two dimensional neighborhoods of elliptic curves: formal classification and foliations}, 
Mosc. Math. J.  {\bf 19} (2019) 1-36.


	
\bibitem{Malgrange} B. Malgrange,
\emph{Travaux d'\'{E}calle et de {M}artinet-{R}amis sur les
              syst\`emes dynamiques},
{Bourbaki {S}eminar, {V}ol. 1981/1982}.
{Ast\'erisque} {\bf 92} (1982) 59-73.
 
\bibitem{MartinetRamis} J.Martinet and J.-P. Ramis,
\emph{Probl\`emes de modules pour des \'equations diff\'erentielles non lin\'eaires du premier ordre},
{Inst. Hautes \'Etudes Sci. Publ.Math.} {\bf 55} (1982) 63-164.	

\bibitem{MartinetRamis2} J. Martinet and J.-P. Ramis,
\emph{Classification analytique des \'equations diff\'erentielles
              non lin\'eaires r\'esonnantes du premier ordre},
{Ann. Sci. \'Ecole Norm. Sup. (4)} {\bf 16} (1983) 571-621.
	
\bibitem{Mishustin0} M. B. Mishustin,
\emph{Neighborhoods of the {R}iemann sphere in complex surfaces},
Funktsional. Anal. i Prilozhen. {\bf 27} (1993) 29-41.

\bibitem{Mishustin1} M. B. Mishustin, 
\emph{Neighborhoods of Riemann curves in complex surfaces},
Funktsional. Anal. i Prilozhen. {\bf 29} (1995) 25-40. 

\bibitem{Mishustin} M. B. Mishustin,
\emph{On foliations in neighborhoods of elliptic curves},
 {Arnold Math. J.} {\bf 2} (2016) 195-199.



\bibitem{Neeman} A. Neeman,
\emph{Ueda theory: theorems and problems},
{Mem. Amer. Math. Soc.} {\bf 81} (1989) vi+123 pp.  

\bibitem{JorgeOlivier} J. V. Pereira and O. Thom,
\emph{On the formal principle for curves on projective surfaces},
arXiv:1906.12219


\bibitem{PerezMarco} R. P\'erez Marco,
\emph{Solution compl\`ete au probl\`eme de {S}iegel de lin\'earisation
              d'une application holomorphe au voisinage d'un point fixe
             (d'apr\`es {J}.-{C}.\ {Y}occoz)},
{S\'eminaire Bourbaki, Vol. 1991/92}. {Ast\'erisque} {\bf 206} (1992) 273-310.

\bibitem{Savelev} V. I. Savelev, 
\emph{Zero-type embeddings of the sphere into complex surfaces.} 
Mosc. Univ. Math. Bull. {\bf 37} (1982) 34-39.

\bibitem{Shabat}B. V.  Shabat, 
\emph{Introduction to complex analysis. Part II. Functions of several variables.} 
Translated from the third (1985) Russian edition by J. S. Joel. 
Translations of Mathematical Monographs, 110. American Mathematical Society, Providence, RI, 1992. 

\bibitem{SV} P. A.  Shaĭkhullina, S. M.  Voronin, 
\emph{Functional invariants of typical germs of semihyperbolic mappings.} 
Chelyab. Fiz.-Mat. Zh. 2 (2017), no. 4, 447-455. 

\bibitem{Simpson} C. Simpson, 
\emph{The dual boundary complex of the $SL_2$ character variety of a punctured sphere.} 
Ann. Fac. Sci. Toulouse Math. {\bf 25} (2016) 317-361. 

\bibitem{Siu} Y. T. Siu,
\emph{Every Stein subvariety admits a Stein neighborhood},
Inventiones Mathematicae {\bf 38} (1976/77) 89-100.

\bibitem{Teyssier}, L. Teyssier,
\emph{Analytical classification of singular saddle-node vector fields.} 
J. Dynam. Control Systems 10 (2004) 577-605. 

\bibitem{Olivier} O. Thom,
\emph{Formal classication of two-dimensional neighborhoods of genus $g\ge2$ curves with trivial normal bundle},
arXiv:1807.01046

\bibitem{UedaCompact} T. Ueda,
\emph{Compactifications of {$\mathbb C\times\mathbb C^*$} and {$(\mathbb C^*)^2$}}, 
Tohoku Math. J.  {\bf 31} (1979) 81-90. 

\bibitem{Ueda} T. Ueda,
\emph{On the neighborhood of a compact complex curve with
              topologically trivial normal bundle},
{J. Math. Kyoto Univ.} {\bf 22} (1982/83) 583-607.

\bibitem{Ueda1} T. Ueda,
\emph{Local structure of analytic transformations of two complex variables. I.} 
J. Math. Kyoto Univ. {\bf 26} (1986) 233-261.

\bibitem{Ueda2} T. Ueda,
\emph{Local structure of analytic transformations of two complex variables. II.} 
J. Math. Kyoto Univ. {\bf 31} (1991) 695-711.

\bibitem{Voronin} S. M. Voronin,
\emph{Analytic classification of germs of conformal mappings {$({\bf C},\,0)\rightarrow ({\bf C},\,0)$}},
{Funktsional. Anal. i Prilozhen.} {\bf 15} (1981) 1-17.

\bibitem{VM} S. M. Voronin, Yu. I.  Meshcheryakova, 
\emph{Analytic classification of germs of holomorphic vector fields with a degenerate elementary singular point.}
Vestnik Chelyab. Univ. Ser. 3 Mat. Mekh. Inform. 2003, no. 3(9), 16-41. 

\bibitem{VF} S. M. Voronin, P. A. Fomina, 
\emph{Sectorial normalization of semihyperbolic mappings (Russian)},
{Vestn.Chelyab.Gos.Univ.Mat.Mekh.Inform.} {\bf 28(16)} (2013) 94-113,132.

\bibitem{Yoccoz} J.-C. Yoccoz,
\emph{Th\'eor\`eme de {S}iegel, nombres de {B}runo et polyn\^omes quadratiques}, 
{Petits diviseurs en dimension $1$}, {Ast\'erisque} {\bf 231} (1995) {3-88}.
  	

\end{thebibliography}
\end{document}